\patchcmd{\subsection}{\bfseries}{\titlecap}{}{}
\patchcmd{\subsection}{-.5em}{.5em}{}{}
\patchcmd{\subsubsection}{-.5em}{.5em}{}{}
\def\l@subsection{\@tocline{2}{0pt}{4pc}{6pc}{}}
\def\l@subsubsection{\@tocline{3}{0pt}{8pc}{8pc}{}}
\numberwithin{equation}{section}
\newcommand{\C}{\mathbb{C}}
\newcommand{\N}{\mathbb{N}}
\newcommand{\R}{\mathbb{R}}
\newcommand{\D}{\mathbb{D}}
\DeclareMathOperator{\pr}{Pr}
\DeclareMathOperator{\img}{img}
\DeclareMathOperator{\supp}{Supp}
\DeclareMathOperator{\coker}{Coker}
\DeclareMathOperator{\typ}{Type}
\DeclareMathOperator{\im}{Im}
\DeclareMathOperator{\re}{Re}
\DeclareMathOperator{\codim}{Codim}
\DeclareMathOperator{\dom}{Dom}
\newtheorem{theorem}{Theorem}[section]
\newtheorem{corollary}[theorem]{Corollary}
\newtheorem{lemma}[theorem]{Lemma}
\newtheorem{prop}[theorem]{Proposition}
\theoremstyle{definition}
\newtheorem{definition}[theorem]{Definition}
\newtheorem{remark}[theorem]{Remark}
\newtheorem{example}[theorem]{Example}
\newtheorem*{thm}{Theorem}
\def \mc{\mathcal}
\begin{document}

\title[Generalized complex Stein manifold]{Generalized Complex Stein Manifold}
\author[D. Pal]{Debjit Pal}

\address{Department of Mathematics, Indian Institute of Science Education and Research, Pune, India}

\email{\href{mailto:debjit.pal@students.iiserpune.ac.in}{debjit.pal@students.iiserpune.ac.in}}

\subjclass[2020]{Primary: 53D18, 32Q28, 32C35, 32H02. Secondary: 32Q40, 46E35, 32U10.}

\keywords{Generalized complex structure, generalized holomorphic maps, Stein manifolds, coherent sheaves, $L^2$-spaces, embeddings.}

\begin{abstract} 
We introduce the notion of a generalized complex (GC) Stein manifold and provide complete characterizations in three fundamental aspects. First, we extend Cartan's Theorem A and B within the framework of GC geometry. Next, we define $L$-plurisubharmonic functions and develop an associated $L^2$ theory. This leads to a characterization of GC Stein manifolds using $L$-plurisubharmonic exhaustion functions. Finally, we establish the existence of a proper GH embedding from any GC Stein manifold into $\mathbb{R}^{2n-2k} \times \mathbb{C}^{2k+1}$, where $2n$ and $k$ denote the dimension and type of the GC Stein manifold, respectively. This provides a characterization of GC Stein manifolds via GH embeddings. Several examples of GC Stein manifolds are given.
\end{abstract}

\maketitle

\renewcommand\contentsname{\vspace{-1cm}}
{
  \hypersetup{linkcolor=black}
  \tableofcontents
}

\section{Introduction}
Stein manifolds (\cite{stein51}) play a crucial role in resolving Cousin's problems in complex geometry, extending the concept of domains of holomorphy and Riemann surfaces. Their study is based on three fundamental characterizations, namely, $(1)$ Cartan's Theorems A and B, which involve holomorphic coherent sheaves on Stein manifolds \cite{cartan53, siu68}; $(2)$ Grauert's characterization via strictly plurisubharmonic exhaustion functions \cite{grauert60, grauert58}; and $(3)$ the holomorphic embeddings of Stein manifolds into complex Euclidean spaces \cite{bishop, narasimhan60}.

\vspace{0.2em}
Generalized complex (GC) geometry provides a broad framework encompassing various geometric structures, with complex and symplectic structures representing its two extreme cases. Introduced by Hitchin \cite{Hit} and further developed by his students Gualtieri \cite{Gua, Gua2} and Cavalcanti \cite{Cavth}, GC geometry prompts a natural question: is there an appropriate analog of Stein manifolds within this framework, and how can we fully characterize it in a way that extends the classical notions of Stein manifolds?

\vspace{0.2em}
In this article, we introduce a new concept that extends the classical notion of Stein manifolds, which we call \textit{generalized complex Stein or GC Stein} manifolds. A regular GC structure induces a regular foliation with symplectic leaves and a transverse complex structure. The GC Stein manifolds are intuitively characterized by their transverse Stein structure, though they include a wider class than the classical Stein manifolds, see Example \ref{egg1}. The main contribution of this article is the comprehensive characterization of GC Stein manifolds in three fundamental aspects by adapting methods from complex geometry and extending them to the framework of GC geometry. A detailed outline of the paper is provided below.

\vspace{0.2em}
In Section \ref{prelim}, we describe the basic facts on GCS, generalized holomorphic (GH) maps, and GC submanifolds. In Section \ref{gc stein}, we introduce the notion of GC Stein manifold $X$ (cf. Definition \ref{main def}), and show that $\mathcal{O}^{m}(X)$ (with $m\in\N$) is a Fr\'{e}chet space when endowed with the topology of uniform convergence on compact subsets, see Theorem \ref{f spc}. We also provide several examples of GC Stein manifolds, see Example \ref{e1} and Example \ref{e2}. In Section \ref{cartan}, we present a complete characterization of GC Stein manifolds by providing an analog of Cartan's Theorems A and B. Specifically, we establish the following extension of Cartan's Theorem A and Theorem B is GC geometry. 
\begin{thm}(Theorem \ref{main})
 Let $X$ be a GC Stein manifold and $\mathcal{F}$ be a coherent sheaf of\,\,\,$\mathcal{O}$-modules where $\mathcal{O}$ is the sheaf of GH functions on $X$. Then, we have
 \begin{enumerate}
 \setlength\itemsep{0.2em}
     \item (Theorem B) $H^{p}(X,\mathcal{F})=0$ for $p\geq 1\,.$
     \item (Theorem A) $H^{0}(X,\mathcal{F})$ generates $\mathcal{F}_{x}$ for all $x\in X\,.$
 \end{enumerate}
\end{thm}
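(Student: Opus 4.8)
The plan is to mirror the classical Oka--Cartan proof, using the regular GC structure to transfer every complex-analytic ingredient to its transverse counterpart. The starting point is the local model: since the GC structure is regular of type $k$, each point has a foliated chart on which $X$ is GC-equivalent to a product $B\times V$ of a symplectic ball $B\subseteq\R^{2n-2k}$ and a complex polydisc $V\subseteq\C^{k}$, and on such a chart the sheaf $\mathcal{O}$ of GH functions is exactly the sheaf of holomorphic functions of the transverse variable, pulled back from $V$ and leafwise constant along $B$. I would first record the associated fine resolution of $\mathcal{O}$ by the Lie-algebroid (GC Dolbeault) complex $(\wedge^{\bullet}L^{*},d_{L})$, where $L$ is the $+i$-eigenbundle of the GC structure: the sheaves $\wedge^{\bullet}L^{*}$ are $C^{\infty}(X)$-modules, hence fine, and exactness in positive degrees is the $d_{L}$-Poincar\'e lemma, which on the local model factors as the symplectic Poincar\'e lemma for the contractible factor $B$ tensored with the ordinary Dolbeault lemma on $V$. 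This yields $H^{p}(X,\mathcal{O})\cong H^{p}\big(\Gamma(X,\wedge^{\bullet}L^{*}),d_{L}\big)$ and reduces every local question about $\mathcal{O}$-modules to classical Oka theory on $V$.

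With the local model in hand, I would prove Theorems A and B first on GC polydiscs $B\times V$, where they reduce to the classical Cartan A and B on the transverse polydisc $V$: the contractibility of the symplectic factor $B$ collapses the $d_{L}$-cohomology of $B\times V$ onto the Dolbeault data of $V$, so the structure-sheaf vanishing and the generation statement there are inherited from the complex-analytic case. I would then globalize by exhaustion. Using the defining holomorphic-convexity property of a GC Stein manifold (Definition \ref{main def}), exhaust $X$ by an increasing sequence $K_{1}\subseteq K_{2}\subseteq\cdots$ of GC analytic polyhedra cut out by finitely many global GH functions, each admitting a GH embedding into a GC polydisc via its defining functions (a transverse Oka map). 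Restricting $\mathcal{F}$ along such embeddings and invoking the polydisc case gives the vanishing and generation on each $K_{j}$.

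The two statements are then coupled in the usual way. Theorem B for all coherent $\mathcal{O}$-modules is proved by a dimension-shifting induction on $p$ using local free resolutions of $\mathcal{F}$ (available because $\mathcal{O}$ is coherent, the GC analog of Oka's coherence theorem) together with the long exact cohomology sequence, and Theorem A is deduced from the vanishing of $H^{1}$ of the sheaf of relations among a finite generating set of sections on a neighborhood of $x$. The passage from the compacta $K_{j}$ to all of $X$ is where the Fr\'echet structure of $\mathcal{O}^{m}(X)$ from Theorem \ref{f spc} enters: a Mittag--Leffler (inverse-limit) argument on the system $\{H^{0}(K_{j},\mathcal{F})\}$ forces $H^{1}(X,\mathcal{F})=0$, provided global GH functions are dense in the GH functions on each $K_{j}$.

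I expect the main obstacle to be exactly this density step, a transverse Oka--Weil approximation theorem for GH functions, together with the careful bookkeeping of the symplectic directions. Because GH functions are only leafwise constant, the interaction of the foliation with the $d_{L}$-complex must be controlled so that the leafwise topology obstructs neither the local vanishing nor the gluing; proving that GH functions on a GC analytic polyhedron are approximable by global ones in the topology of Theorem \ref{f spc}, and feeding this into the Mittag--Leffler argument, is the crux on which the whole induction rests.
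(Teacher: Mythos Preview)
Your overall architecture---local vanishing on product models, an embedding of relatively compact pieces, exhaustion, and a Mittag--Leffler argument resting on a density theorem---is exactly the skeleton the paper uses. Two points deserve comment, one a genuine difference and one a gap.

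\textbf{Local step.} You propose reducing Cartan B on a product $B\times V$ to classical Cartan on $V$ via the $d_{L}$-resolution and the observation that coherent $\mathcal{O}_{B\times V}$-modules are pullbacks from $V$. The paper instead works on domains $B_{r}=D'\times\D_{r}\subset\R^{2M}\times\C^{N}$ with $N$ large (not $N=k$) and follows Siu's route: first show $H^{p}(D,\mathcal{O}^{m})=0$ via Leray (Proposition~\ref{imp prop}); then prove finite-dimensionality of $H^{p}(B_{r},\mathcal{F})$ by a boundary-bumping argument and Schwartz's theorem (Proposition~\ref{prop1}); finally kill $H^{p}(B_{r},\mathcal{F})$ by induction on $\dim\supp\mathcal{F}$ using multiplication by entire functions of one transverse variable (proof of Theorem~\ref{imp claim}). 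Your reduction is cleaner when it works, but the paper's approach avoids having to argue that every coherent sheaf on the product is globally a pullback, and it is already set up for the large-$N$ targets needed below.

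\textbf{The embedding and the density step.} Here your plan is underspecified, and this is where the work lies. The paper does not merely take a ``transverse Oka map'' into a polydisc of the local dimension; it builds, for each $G\subset\subset X$, an explicit GH embedding $F:G\hookrightarrow B_{r}\subset\R^{2n-2k}\times\C^{N}$ with $N$ large, using the GH separability and GH regularity axioms directly (Proposition~\ref{prop}). The crucial payoff is Theorem~\ref{imp thm2} and Corollary~\ref{imp cor1}: $F(G)$ is a closed embedded \emph{GC submanifold} of $B_{r}$, and the restriction $\mathcal{O}_{B_{r}}\to F_{*}\mathcal{O}_{G}$ is surjective with coherent kernel $\mathcal{I}$. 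Density (Proposition~\ref{prop2}) then falls out: $H^{1}(B_{r},\mathcal{I})=0$ by the local Cartan~B just proved, so $\mathcal{O}(B_{r})\twoheadrightarrow\mathcal{O}(G)$, and since $\mathcal{O}(B_{r})=\mathcal{O}(\D_{r})$ is approximated by $\mathcal{O}(\C^{N})$, pulling back along $F$ gives $\mathcal{O}(X)$ dense in $\mathcal{O}(G)$. You correctly flagged density as the crux, but the mechanism---embedding as a GC submanifold and using the coherence of its ideal sheaf to bootstrap---is the missing idea in your outline. Without it the Mittag--Leffler step does not close.
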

To prove Theorem \ref{main}, we first extend Cartan's Theorems A and B to the product of balls in $\mathbb{R}^{2M} \times \mathbb{C}^N$ (where $M, N > 0$). According to the generalized Darboux theorem (see Theorem \ref{darbu thm}), focusing on the product of balls in $\mathbb{R}^{2M} \times \mathbb{C}^N$ is natural. We establish Cartan's Theorem B for this setting in Theorem \ref{imp claim}, and the proof is detailed in Subsection \ref{pf}. Following that, we prove Theorem A in Theorem \ref{thm}.
Now, to transition from the product of balls in $\mathbb{R}^{2M} \times \mathbb{C}^N$ to GC Stein manifolds, we present a GH embedding (see Definition \ref{GH map}) theorem for relatively compact open sets in GC Stein manifolds, see Proposition \ref{prop}.

\vspace{0.2em}
In Theorem \ref{imp thm2}, we precisely describe the image of the embedding presented in Proposition \ref{prop} using GC submanifolds (cf. Definition \ref{sub def mfld}). This description is based on the properties of GC maps and GC submanifolds provided in Theorem \ref{imp thm1} and Corollary \ref{imp cor1}. Additionally, we prove a denseness result in Proposition \ref{prop2}, essential for establishing Theorem \ref{main}.
Using Remark \ref{poi rmk}, Definition \ref{main def} and Theorem \ref{main}, we then establish the following complete characterization of a regular GC manifold as a GC Stein manifold.
\begin{thm}(Theorem \ref{main1})
    Let $X$ be a regular GC manifold and $\mathcal{F}$ be a coherent sheaf of $\mathcal{O}$-modules. Then $X$ is a GC Stein Manifold if and only if the following holds:
    \begin{enumerate}
    \setlength\itemsep{0.2em}
     \item $H^{p}(X,\mathcal{F})=0$ for $p\geq 1\,,$ and $H^{0}(X,\mathcal{F})$ generates $\mathcal{F}_{x}$ for all $x\in X\,.$
     \item $X$ satisfies the property: Let $f:U\longrightarrow(\R^{2},\omega^{-1}_0)$ be a Poisson map on an open set $U\subseteq X$. Then,
    for each $x\in U$, there exists an open set $x\in U^{'}\subset U$ and a GH map    $\tilde{f}:X\longrightarrow(\R^{2},\omega^{-1}_0)$ such that $\tilde{f}|_{U^{'}}=f|_{U^{'}}\,.$
    \end{enumerate}
\end{thm}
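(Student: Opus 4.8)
The plan is to prove the two implications of the biconditional separately, drawing on exactly the three ingredients flagged just before the statement: Theorem \ref{main}, Definition \ref{main def}, and Remark \ref{poi rmk}. For the forward implication, suppose $X$ is a GC Stein manifold. Part (1) is then immediate, since it is precisely the conclusion of Theorem \ref{main}, namely Cartan's Theorems A and B in the GC setting. The substance of the forward direction lies in the Poisson extension property (2). Here I expect Definition \ref{main def} to be framed so that, via the dictionary of Remark \ref{poi rmk}, a Poisson map $f:U\to(\R^{2},\omega^{-1}_0)$ is the same data as a generalized holomorphic map into the symplectic model compatible with the symplectic foliation of $X$; the local-to-global extendability of such maps is then the reformulation, in Poisson language, of the transverse separation/extension clause built into Definition \ref{main def}. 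Thus, given $x\in U$, one shrinks $U$ to a suitable $U'\ni x$ and produces the global GH map $\tilde f$ on $X$ directly from that defining clause, so (2) holds.

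For the backward implication, I would assume (1) and (2) and verify Definition \ref{main def}. The geometric content is carried by (2): via Remark \ref{poi rmk}, the ability to extend every local Poisson map into $(\R^{2},\omega^{-1}_0)$ to a global GH map translates into a supply of global GH functions that separate points and furnish local transverse coordinates, i.e. the transverse-Stein separation clause of Definition \ref{main def}. The cohomological input (1) then supplies the complementary convexity/coherence clause: vanishing of higher cohomology and global generation of the stalks $\mathcal{F}_x$ play the role of Cartan's Theorems A and B, from which one recovers transverse holomorphic convexity. Combining the abundance of global GH functions from (2) with the coherence-theoretic conclusions of (1) should reconstitute all defining properties of a GC Stein manifold, exactly as, classically, Theorems A and B together with holomorphic separability force a complex manifold to be Stein.

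The main obstacle I anticipate is in the backward direction, specifically deriving the convexity aspect of Definition \ref{main def} from the cohomological vanishing (1) — the GC counterpart of the classically delicate implication that Theorem B forces holomorphic convexity — and doing so compatibly with the symplectic foliation. Condition (2) only extends maps into a single symplectic plane $(\R^{2},\omega^{-1}_0)$ at a time, whereas Definition \ref{main def} concerns the full transverse Stein structure along the complex transversal, so the delicate point is to assemble these leafwise, one-plane-at-a-time extensions into global transverse-holomorphic data that is simultaneously compatible across all symplectic directions. I would handle this by reducing to the generalized Darboux model $\R^{2M}\times\C^{N}$ of Theorem \ref{darbu thm}, where the symplectic and complex factors split; there the coherence of $\mathcal{O}$ turns the vanishing $H^{1}$ from (1) into the global GH functions realizing convexity, and the GC-submanifold description of Theorem \ref{imp thm2} lets one glue the local extensions. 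Verifying that this gluing respects both the Poisson structure along the leaves and the holomorphic structure transverse to them, with no cohomological obstruction thanks to (1), is where I expect the real work to lie.
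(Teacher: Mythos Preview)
Your forward implication is essentially the paper's argument: (1) is Theorem \ref{main}, and (2) is read off from Definition \ref{main def} via Remark \ref{poi rmk}. That part is fine in outline, though note that Definition \ref{main def} does not contain an explicit ``extension clause'' for Poisson maps; GH regularity only asserts the \emph{existence} of global GH maps $f_1,\dots,f_{n-k}:X\to(\R^2,\omega_0)$ with independent differentials, so your phrasing anticipates more than the definition literally gives.

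The genuine problem is in your backward direction: you have the roles of (1) and (2) reversed. A Poisson map $f:U\to(\R^2,\omega^{-1}_0)$ is, via Remark \ref{poi rmk}, a GH map into the \emph{symplectic} model $(\R^2,\omega_0)$, i.e.\ a map along the \emph{leaf} direction of the symplectic foliation. It is \emph{not} a GH function into $\C$, and it does not live in the complex transversal. Thus property (P) cannot furnish ``global GH functions that separate points and furnish local transverse coordinates'' as you claim; those are GH functions in $\mathcal{O}(X)$, and they must come from the cohomological input (1). Concretely, GH separability and GH convexity follow from Cartan B by the standard ideal--sheaf arguments (for $x\neq y$ use the coherent ideal sheaf of $\{x,y\}$, so $H^1=0$ makes $H^0(X,\mathcal{O})\to\C^2$ surjective; for convexity use a discrete sequence), and the complex half of GH regularity (the $g_1,\dots,g_k$) follows from $H^1(X,\mathfrak m_x^2)=0$, which lets one prescribe the $1$-jet of a GH function at $x$. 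Property (P) is used \emph{only} to obtain the symplectic half of GH regularity: apply it to the local Darboux projections $p_j:U\to\R^2$ to get global GH maps $\tilde f_1,\dots,\tilde f_{n-k}:X\to(\R^2,\omega_0)$ agreeing with the $p_j$ near $x$.

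So the ``main obstacle'' you anticipate --- gluing one-plane Poisson extensions across the complex transversal and extracting convexity from them --- is a misdiagnosis. There is no such gluing to perform; condition (2) handles the leafwise data only, and everything transverse (separation, convexity, holomorphic local coordinates) is already supplied by (1) through the classical Cartan-B-implies-Stein mechanism, adapted to the sheaf $\mathcal{O}$ of GH functions.
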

Section \ref{l2} presents the characterization of GC Stein manifolds using $L^2$-theory. In this section, we introduce a concept of plurisubharmonic function called $L$-plurisubharmonic functions (see Definition \ref{l-pluri}), which extends classical smooth plurisubharmonic functions (see Remark \ref{rmk pluri}). We also provide the existence of a strictly $L$-plurisubharmonic exhaustion function on GC Stein manifolds, see Theorem \ref{pluri thm}. Following this, in Subsection \ref{l2-1} we present an $L^2$-theory for complex vector bundles, based on the approach in \cite{batu17}. We provide an estimation result for GC Stein manifolds, namely,
\begin{thm} (Theorem \ref{est thm})
    Let $\psi\in C^{\infty}(X,\C)$ be a real function. For any $s\in C^{\infty}_{c}(X,E^{p,q})$, there exists a positive real smooth function $\hat{C}\in C^0(X,\C)$ such that the estimate
    $$\int\Big(\gamma_{\psi}-\hat{C}\Big)\,|s|^{2}\,e^{-\psi}\,d\lambda\leq 4\Big(|\tilde{d}^{*}_{L,q}s|^2_{\psi}+|\tilde{d}_{L,q+1}s|^2_{\psi}\Big)\,,$$ holds, where $\gamma_{\psi}$ is a positive continuous function that provides the smallest eigenvalues of the hermitian metric $e^{-\psi}h^{'}$ over a system of local trivializations of $E^{p,q}$. Here $L$ is the $i$-eigen bundle of the GCS and $E^{p,q}:=\wedge^{p}\overline{L}^{*}\otimes\wedge^{q}L^{*}$ ($p,q\geq 0$).
\end{thm}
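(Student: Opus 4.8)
The plan is to derive the estimate from a Bochner--Kodaira--Nakano (BKN) type identity for the complex Lie algebroid $L$, following the weighted $L^2$ technique of H\"ormander and Andreotti--Vesentini in the vector-bundle form of \cite{batu17}. Since the integrability of the GCS makes $L$ involutive, $\tilde{d}_L$ is a genuine differential with $\tilde{d}_L^2=0$ on $E^{p,\bullet}$, so the Chern-connection machinery applies. First I would fix a Hermitian connection on $(E^{p,q},h')$, split it into its $L$- and $\overline{L}$-parts, and record that $\tilde{d}_{L,q}$ agrees with the $\overline{L}$-part up to a zeroth-order term produced by the algebroid bracket and the anchor. The weight turns $h'$ into $h_\psi:=e^{-\psi}h'$; the associated Chern connection differs from that of $h'$ by $-\partial_L\psi$, so its curvature $\Theta_\psi$ acquires the extra term $\tilde{d}_L\tilde{d}_{\overline L}\psi$, namely the $L$-complex Hessian of $\psi$.

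Next I would form the weighted $L$-Laplacian $\tilde{d}_{L}\tilde{d}^{*}_{L}+\tilde{d}^{*}_{L}\tilde{d}_{L}$ and integrate its BKN comparison against $s$. For $s\in C^\infty_c(X,E^{p,q})$ all boundary contributions vanish by Stokes, yielding an identity of the form
$$|\tilde{d}^{*}_{L,q}s|^2_\psi+|\tilde{d}_{L,q+1}s|^2_\psi=|\nabla''_\psi s|^2_\psi+\int\langle[i\Theta_\psi,\Lambda]s,s\rangle\,e^{-\psi}\,d\lambda+R(s),$$
where $\Lambda$ is the adjoint of exterior multiplication by the transverse K\"ahler form and $R(s)$ gathers the genuinely generalized-complex error terms, coming from the torsion of the bracket and the anchor's interaction with the connection. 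Dropping the nonnegative term $|\nabla''_\psi s|^2_\psi$ turns this into an inequality. The role of $\gamma_\psi$ is now exactly to bound the curvature term from below: as the smallest eigenvalue of the Levi form of $h_\psi=e^{-\psi}h'$, it satisfies the pointwise inequality $\langle[i\Theta_\psi,\Lambda]s,s\rangle\ge\gamma_\psi|s|^2$.

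It remains to dispose of $R(s)$. Its coefficients are smooth, so writing its top-order part as a pairing $2\,\re\langle\nabla''_\psi s,Ts\rangle$ with $T$ a bundle endomorphism and applying Cauchy--Schwarz in the form $2\,\re\langle a,b\rangle\le\tfrac12|a|^2+2|b|^2$, I would reabsorb the $\tfrac12|\nabla''_\psi s|^2_\psi$ into the Laplacian term and push the residual zeroth-order contribution into the subtracted function. Choosing $\hat{C}\in C^0(X,\C)$ to dominate this residual, together with the non-optimized constants produced when comparing $\tilde{d}_L$-norms with their principal parts through the anchor, gives
$$\int\Big(\gamma_\psi-\hat{C}\Big)\,|s|^2\,e^{-\psi}\,d\lambda\le 4\Big(|\tilde{d}^{*}_{L,q}s|^2_\psi+|\tilde{d}_{L,q+1}s|^2_\psi\Big),$$
the factor $4$ being the cumulative constant from these Cauchy--Schwarz steps.

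The main obstacle is not the weighted integration by parts, which is routine for compactly supported sections, but rather establishing the correct BKN identity for $L$ and, above all, controlling $R(s)$. In the classical K\"ahler situation these terms vanish; here the bracket on $L$ encodes the Poisson data of the symplectic leaves and the anchor is nontrivial, so $R(s)$ genuinely contributes and must be shown to be a first-order expression with locally bounded continuous coefficients, so that a single $\hat{C}\in C^0(X,\C)$ suffices and the estimate is uniform on compacta. I would control it by passing to generalized Darboux coordinates (Theorem \ref{darbu thm}), in which $L$ splits into a flat symplectic factor and a standard complex factor; there the torsion and anchor terms are explicit and their coefficients manifestly smooth, which pins down $\hat{C}$ and completes the proof.
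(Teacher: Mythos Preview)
The paper takes a different, more elementary route. Rather than setting up a global BKN identity with a Chern connection and the curvature operator $[i\Theta_\psi,\Lambda]$, it follows H\"ormander's direct computational method (Sections~4.2 and~5.2 of \cite{hor}): work in a single Darboux chart with an $h'$-orthonormal frame $e_1,\dots,e_{2n}$, write out the principal parts $\hat A$ of $d_L$ and $\hat B$ of $\tilde d^{*}_{L,q}$ explicitly, bound the zeroth-order defects $d_Ls-\hat As$ and $\tilde d^{*}_{L,q}s-\hat Bs$ by constants times $|s|$, and obtain the local inequality \eqref{estimate1} from the commutator computation $[\delta_j,\partial/\partial\bar e_l]$ as in \cite[Eqs.~5.2.7--5.2.10]{hor}. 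The global estimate is then assembled by a quadratic partition of unity $\sum\phi_j^2=1$: both $\gamma_\psi$ and $\hat C$ are \emph{defined} as the patched sums $\sum\phi_j^2\gamma_j$ and $4\sum\phi_j^2 C_j$ of the local data, and the factor $4$ is simply the constant accumulated in passing from $\hat A,\hat B$ back to $d_L,\tilde d^{*}_{L,q}$ and applying $\sum\phi_j^2=1$.

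Your BKN approach is in principle viable and more conceptual, but it has a gap: you invoke $\Lambda$ as contraction with ``the transverse K\"ahler form'', yet a regular GC manifold carries only a transverse \emph{complex} structure together with a leafwise symplectic form---there is no canonical transverse $(1,1)$-form to contract against. To make sense of $\Lambda$ you would have to fix an auxiliary Hermitian metric on the normal bundle to the symplectic foliation; the BKN identity then acquires further torsion terms (from the non-K\"ahlerness of this auxiliary metric) beyond the algebroid-bracket terms you have already placed in $R(s)$. These can all be absorbed into $\hat C$, but controlling them forces you back into Darboux charts, at which point the direct H\"ormander computation is shorter. What the paper's approach buys is minimality of machinery---no connection theory, no K\"ahler identities---at the cost of $\gamma_\psi$ and $\hat C$ being somewhat opaque patched objects; what your approach would buy, once the gap is closed, is an intrinsic curvature interpretation of $\gamma_\psi$ and a cleaner provenance for the constant $4$.
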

Using Theorem \ref{est thm}, we establish an analog of the Poincar\'{e} lemma with the $d_{L}$ operator (cf. \eqref{dL-dL bar}), as follows.
\begin{thm} (Theorem \ref{pluri thm1})
    For a regular GC manifold $X$ of positive type, if there is a strictly $L$-plurisubharmonic function $\psi$ on $X$ such that $\big\{x\in X\,\big|\,\psi(x)<c\big\}\subset\subset X$ for every $c\in\R$, then the equation $d_{L}s=s'$ (in the sense of distribution theory) has a solution $s\in L^2(X, E^{p,q}, loc)$ for every $s'\in L^2(X, E^{p,q+1}, loc)$ such that $d_{L}s'=0\,.$
\end{thm}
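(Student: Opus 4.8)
The plan is to adapt Hörmander's $L^2$ method for the $\bar\partial$-equation to the operator $d_L$. I would write $T=d_{L,q}\colon L^2(X,E^{p,q})\to L^2(X,E^{p,q+1})$ and $S=d_{L,q+1}\colon L^2(X,E^{p,q+1})\to L^2(X,E^{p,q+2})$ for the maximal (distributional) extensions, weighted by $e^{-\phi}$ for a weight $\phi$ to be chosen. Since $L$ is integrable we have $d_L^2=0$, so $S\circ T=0$ and $\img T\subseteq\ker S$; the goal is to show that $s'\in\ker S$ actually lies in $\img T$. By the standard functional-analytic lemma for a pair of closed, densely defined operators with $ST=0$, it suffices to produce an estimate
$$\|g\|_\phi^2\leq C\big(\|T^{*}g\|_\phi^2+\|S g\|_\phi^2\big)$$
valid for all $g\in\dom(T^{*})\cap\dom(S)\subseteq L^2(X,E^{p,q+1})$; the lemma then yields $s\in\dom(T)$ with $Ts=s'$ and $\|s\|_\phi^2\leq C\|s'\|_\phi^2$. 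The positive type of $X$ is what guarantees the transverse complex directions along which this $d_L$-complex is nondegenerate.

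First I would fix the weight. Because $\psi$ is a strictly $L$-plurisubharmonic exhaustion and each sublevel set $\{\psi<c\}$ is relatively compact, I choose a rapidly increasing convex function $\chi$ and set $\phi=\chi\circ\psi$. Increasing the convexity of $\chi$ simultaneously (i) enlarges the smallest eigenvalue $\gamma_\phi$ of $e^{-\phi}h'$, so that the positivity coming from strict $L$-plurisubharmonicity dominates the correction term and $\gamma_\phi-\hat C\geq 1$ everywhere, and (ii) forces $\int_X|s'|^2e^{-\phi}\,d\lambda<\infty$, since on each relatively compact sublevel set $s'$ is genuinely $L^2$. With this choice, Theorem \ref{est thm} applied in degree $q+1$ gives exactly the desired basic estimate, with $C=4$, for every $g\in C^\infty_c(X,E^{p,q+1})$.

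The main obstacle is upgrading this estimate from compactly supported smooth forms to all $g\in\dom(T^{*})\cap\dom(S)$ in the graph norm. This is the usual density (and implicit completeness) step in the Hörmander scheme, and it is precisely where the exhaustion hypothesis is essential: using the relatively compact sublevel sets I would build a sequence of cutoff functions $\eta_k$ with $\eta_k\uparrow 1$ and with $d_L\eta_k$ controlled, regularize $g$ by $\eta_k g$ together with a Friedrichs mollification, and show that the commutator terms $[d_L,\eta_k]g$ and $[d_L^{*},\eta_k]g$ tend to zero in the weighted space. This gives that $C^\infty_c(X,E^{p,q+1})$ is dense in $\dom(T^{*})\cap\dom(S)$ for the graph norm, so the basic estimate persists on the whole domain. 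I expect the control of these commutators—ensuring that the zeroth-order error introduced by $d_L\eta_k$ is absorbed by the weight—to be the most delicate part, mirroring the completeness argument in the classical complete Kähler setting.

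Finally, with the estimate established on the full domain, the functional-analytic lemma produces a solution $s\in L^2(X,E^{p,q},e^{-\phi})$ of $d_L s=s'$ in the distributional sense. Since the weight $e^{-\phi}$ is continuous and hence bounded below on every compact subset of $X$, membership in the weighted global space forces $s\in L^2(X,E^{p,q},\mathrm{loc})$, which is exactly the asserted conclusion.
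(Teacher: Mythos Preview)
Your proposal is correct and follows essentially the same route as the paper: choose $\phi=\chi\circ\psi$ with $\chi$ convex increasing so that (via Theorem~\ref{est thm}) the curvature term dominates $\hat C$ and so that $s'\in L^2(X,E^{p,q+1},\phi)$, extend the basic estimate from $C^\infty_c$ to $\dom(\tilde d^{*}_{L,q})\cap\dom(\tilde d_{L,q+1})$ by density, and apply the functional-analytic lemma. The only organizational difference is that the paper has already packaged the density step as Proposition~\ref{lp prop}, with the metric $h'$ normalized in advance so that $|d_L\eta_j|\le 1$ (equation~\eqref{nj eq}); thus the commutator control you flag as the delicate point is handled once and for all before the weight is chosen, and the exhaustion hypothesis enters only through Lemmas~\ref{l2 lemma}--\ref{l2 lemma1} when selecting $\chi$.
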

Additionally, applying Theorem \ref{pluri thm1}, we provide results on the approximation by global GH functions in $L^2$-norms (see Proposition \ref{apprx prop1}) and uniformly over compact subsets (see Proposition \ref{apprx prop}). Finally, we present the following converse of Theorem \ref{pluri thm}.
\begin{thm}(Theorem \ref{main2})
For a regular GC manifold $X$ to be a GC Stein manifold, it is necessary and sufficient that the following holds:
    \begin{enumerate}
    \setlength\itemsep{0.2em}
        \item $X$ satisfies the property $(2)$ in Theorem \ref{main1}.
        \item There exists a strictly $L$-plurisubharmonic exhaustion function $\psi$ on $X$. 
    \end{enumerate}
\end{thm}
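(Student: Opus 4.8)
The plan is to handle the two implications separately: necessity is essentially formal, while sufficiency is the substantive ``Grauert direction'' and carries all the real work.

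For \emph{necessity}, suppose $X$ is a GC Stein manifold. Condition $(1)$ — that $X$ satisfies property $(2)$ of Theorem \ref{main1} — is exactly the forward implication of that theorem, hence holds automatically. Condition $(2)$, the existence of a strictly $L$-plurisubharmonic exhaustion function, is precisely the content of Theorem \ref{pluri thm}. So necessity needs no new argument; it is a direct consequence of results already in hand.

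For \emph{sufficiency}, assume $(1)$ and $(2)$. Since condition $(1)$ is by definition the Poisson extension property appearing as condition $(2)$ of Theorem \ref{main1}, that theorem reduces the problem to verifying its condition $(1)$, namely Cartan's Theorems A and B: $H^{p}(X,\mathcal{F})=0$ for $p\geq 1$ and $H^{0}(X,\mathcal{F})$ generates $\mathcal{F}_{x}$ for every coherent $\mathcal{O}$-module $\mathcal{F}$ and every $x\in X$. Thus the entire task becomes deriving Cartan A and B from the exhaustion function $\psi$. I would first treat the structure sheaf $\mathcal{O}$ alone. Feeding the strict $L$-plurisubharmonicity of $\psi$ into the estimate of Theorem \ref{est thm} and invoking Theorem \ref{pluri thm1}, one obtains global solvability of $d_{L}s=s'$ on $X$ for every $d_{L}$-closed $s'\in L^{2}(X,E^{p,q+1},loc)$. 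Because the $d_{L}$-complex $(E^{0,\bullet},d_{L})$ is a fine, Dolbeault-type resolution of $\mathcal{O}$ (local exactness coming from the model computation via the generalized Darboux theorem, Theorem \ref{darbu thm}), this exactness in positive degrees yields $H^{q}(X,\mathcal{O})=0$ for $q\geq 1$. The approximation results, Proposition \ref{apprx prop1} and Proposition \ref{apprx prop}, then furnish the Runge-type statement that global GH functions approximate local ones uniformly on compacta, which is the mechanism underlying Theorem A for $\mathcal{O}$.

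The harder step, and the one I expect to be the main obstacle, is passing from $\mathcal{O}$ to an arbitrary coherent sheaf $\mathcal{F}$. Here I would localize on products of balls in $\R^{2M}\times\C^{N}$, where local Cartan B is available through Theorem \ref{imp claim}, and use local free resolutions $\mathcal{O}^{a}\to\mathcal{O}^{b}\to\mathcal{F}\to 0$. Combining these local presentations, the global vanishing already obtained for $\mathcal{O}$, and an induction over the sublevel exhaustion $\{\psi<c\}\subset\subset X$ — propagating solutions from each relatively compact piece to all of $X$ by the approximation results — one obtains $H^{p}(X,\mathcal{F})=0$ for $p\geq 1$ together with generation of the stalks. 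The delicate point throughout is transporting the classical Oka--Cartan sheaf-theoretic machinery into the GC category while controlling the interaction between coherence and the $d_{L}$-resolution, with the type $k$ and the transverse complex structure handled with care. Once Cartan's Theorems A and B are established, Theorem \ref{main1} applies and gives that $X$ is a GC Stein manifold, completing the proof.
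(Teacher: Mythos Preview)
Your necessity argument matches the paper. For sufficiency, however, you take a genuinely different route, and that route has a real gap.

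The paper does \emph{not} derive Cartan A and B from $\psi$ and then invoke Theorem \ref{main1}. Instead it verifies the three conditions of Definition \ref{main def} directly, using only the $L^2$ theory for the structure sheaf: Lemma \ref{lem} supplies a local peak function $g_{x_0}$; one cuts it off, solves away the resulting $d_L$-error on a sublevel set $X_a$ via Theorem \ref{pluri thm1}, obtains the GH functions $f_t$ of \eqref{eqn4}, and then globalizes by Proposition \ref{apprx prop}. This yields GH separability and GH convexity (the latter through $\mathcal{O}(X)$-convexity of the $\overline{X_c}$), while GH regularity follows by running the same construction on the transverse coordinate functions $g_j$ and invoking property \eqref{p} for the Poisson directions. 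No coherent sheaf other than $\mathcal{O}$ is touched.

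Your detour through Cartan A and B for arbitrary coherent $\mathcal{F}$ is where the gap lies. The $L^2$ estimates of Section \ref{l2} and Theorem \ref{pluri thm1} treat only the bundles $E^{p,q}$, not coherent sheaves. Your plan to ``localize on products of balls'' via Theorem \ref{imp claim}, take local free resolutions, and propagate across the exhaustion is essentially the skeleton of the proof of Theorem \ref{main}; but that proof hinges on Proposition \ref{prop} and Proposition \ref{prop2}, both of which already use the full GC Stein hypotheses (in particular the GH embedding of relatively compact opens into $\R^{2M}\times\C^N$), not merely the existence of $\psi$. Without such an embedding you have no device to make Darboux charts intersection-acyclic for $\mathcal{F}$, and the approximation results you cite (Propositions \ref{apprx prop1} and \ref{apprx prop}) apply only to GH functions, not to sections of $\mathcal{F}$. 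So as written the argument is circular, or at best demands an $L^2$ theory for coherent sheaves that the paper does not develop. A smaller but related issue: your claim that $(A^{0,\bullet},d_L)$ is a fine resolution of $\mathcal{O}$ needs a local $d_L$-Poincar\'e lemma on the model $\R^{2n-2k}\times\C^k$; since $L$ has rank $2n$ and $d_L$ mixes $\bar\partial$ with the leafwise de~Rham differential, this is not literally the classical Dolbeault lemma and is not supplied anywhere in the paper.
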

We address the characterization problem through GH embeddings into Euclidean spaces. By utilizing the hemicompactness of GC Stein manifolds, we show that the spaces $\mathcal{GH}_{r}^N(X)$ and $\mathcal{GH}_{r,1-1}^N(X)$ (cf. \eqref{eqnn3}) are dense in the Whitney topology, see Theorem \ref{dns thm}. Additionally, we introduce the concept of a GC polyhedron in a GC Stein manifold and prove its existence in Proposition \ref{poly prop}. Consequently, we establish the existence of a proper injective GH immersion and provide a complete characterization, which is outlined as follows.
\begin{thm} (Theorem \ref{main3} and Theorem \ref{main4})
  Let $X^{2n}$ be a regular manifold of type $K>0$. Then $X$ is a GC Stein manifold if and only if a proper GH map $F: X\longrightarrow\R^{2n-2k}\times\C^{2k+1}$ exists which is also a smooth embedding. Moreover, $\img(F)$ is a closed embedded GC submanifold in $\R^{2n-2k}\times\C^{2k+1}$.
\end{thm}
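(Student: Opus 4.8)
The plan is to establish the equivalence in two directions, the reverse implication being comparatively direct and the forward implication carrying the main content. For the reverse direction, suppose a proper GH map $F:X\longrightarrow\R^{2n-2k}\times\C^{2k+1}$ exists that is also a smooth embedding. The target carries the standard product GC structure, symplectic along the $\R^{2n-2k}$ factor and complex along the $\C^{2k+1}$ factor. I would pull back the standard exhaustion: the function $z\mapsto|z|^2$ on $\C^{2k+1}$ is strictly plurisubharmonic in the transverse complex directions, so its composition with $F$ is a strictly $L$-plurisubharmonic function on $X$, and properness of $F$ makes it an exhaustion function (properness also forces $\img(F)$ to be closed). The Poisson-extension property---condition $(2)$ of Theorem \ref{main1} and Theorem \ref{main2}---is inherited by restricting the coordinate projections of the ambient space to $\img(F)$. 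Theorem \ref{main2} then gives that $X$ is a GC Stein manifold.

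For the forward direction I would adapt the Bishop--Narasimhan embedding argument to the GC setting. The structural input is that a regular GC manifold of type $k$ and dimension $2n$ carries a symplectic foliation with $2(n-k)$-dimensional leaves and a transverse complex structure of complex dimension $k$; the two target factors correspond respectively to the leaf (symplectic) and transverse (complex) directions. First I would invoke hemicompactness to fix an exhaustion $K_1\subset\subset K_2\subset\subset\cdots$ of $X$ by compact sets, and organize the construction using the GC polyhedra of Proposition \ref{poly prop}. On each compact piece, the density statement of Theorem \ref{dns thm}---that $\mathcal{GH}^{N}_{r}(X)$ (immersions) and $\mathcal{GH}^{N}_{r,1-1}(X)$ (injective maps) from \eqref{eqnn3} are dense in the Whitney topology---produces a GH map into some $\R^{2n-2k}\times\C^{N}$, with $N$ possibly large, that is simultaneously an immersion and injective on that piece.

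The next step is to globalize these local embeddings into a proper one. Using the exhaustion together with the polyhedral structure, I would patch the local maps and adjoin GH functions that grow without bound outside increasingly large compact sets, thereby forcing properness, while keeping the Whitney perturbations small enough that the global immersion and injectivity survive. This yields a proper GH embedding into $\R^{2n-2k}\times\C^{N}$ for some large $N$. The final and most delicate step reduces the complex embedding dimension from $N$ to $2k+1$ by a generic complex-linear projection $\C^{N}\longrightarrow\C^{2k+1}$ that leaves the $\R^{2n-2k}$ factor untouched. The key point is a dimension count: the loci of projections that fail to separate two points, or fail to remain an immersion, should be controlled by the transverse complex dimension $k$ rather than by $n$, so that by Sard's theorem a generic projection into $\C^{2k+1}$ preserves both properties.

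The main obstacle is precisely this dimension-reduction step. One must check not merely that a generic projection preserves the \emph{smooth} embedding, but that it preserves the \emph{GH} property and properness, and---crucially---that the relevant bad sets have complex dimension governed by $k$ alone. Here the symplectic leaf directions must be shown to impose no additional constraint on the complex projection, so that the classical count ``$2(\text{complex dim})+1$'' applies with complex dimension $k$, yielding the target $2k+1$. Once $F:X\longrightarrow\R^{2n-2k}\times\C^{2k+1}$ is obtained, closedness of $\img(F)$ again follows from properness, and the fact that $\img(F)$ is an embedded GC submanifold follows from the GC-submanifold criterion of Definition \ref{sub def mfld} together with the accompanying results Theorem \ref{imp thm1} and Corollary \ref{imp cor1}, completing the proof.
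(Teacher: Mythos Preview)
Your proposal uses the same toolkit as the paper (the density result Theorem \ref{dns thm}, GC polyhedra from Proposition \ref{poly prop}, and the dimension-reduction lemma Proposition \ref{reg prop}), but the two directions are organized differently, and in the forward direction the difference matters.

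For the reverse implication the paper does not pass through Theorem \ref{main2}. Instead, Proposition \ref{prop sub} verifies the three defining conditions of Definition \ref{main def} directly for a closed embedded GC submanifold $Y\subset\R^{2n-2k}\times\C^N$: GH separability and GH convexity come from Corollary \ref{imp cor1} (restriction of ambient GH functions), and GH regularity from the ambient coordinate projections. Your route via a pulled-back strictly $L$-plurisubharmonic exhaustion and Theorem \ref{main2} is a legitimate alternative.

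For the forward implication the paper reverses your order of operations, and this is what dissolves the obstacle you flag. Rather than first embedding properly into $\R^{2n-2k}\times\C^N$ for large $N$ and then projecting, the paper first invokes Theorem \ref{dns thm} to get a one-to-one regular GH map $F=(f_1,\ldots,f_{n-k},g_1,\ldots,g_{2k+1})$ already landing in the correct target $\R^{2n-2k}\times\C^{2k+1}$. Properness is then manufactured separately: using GC polyhedra of order $2k$ (which exist by Proposition \ref{poly prop}(2) exactly because $2k\ge 2k$), one constructs auxiliary GH functions $g'_1,\ldots,g'_{2k+1}\in\mathcal{O}(X)$ satisfying the growth condition \eqref{equ1}. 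Only then is Proposition \ref{reg prop} applied to $(f,g,g'):X\to\R^{2n-2k}\times\C^{4k+2}$ to cut back to $\C^{2k+1}$ via $g''_j=g'_j+\sum_l c_{jl}g_l$ with $|c_{jl}|$ small; properness survives automatically because $g''$ is a small perturbation of $g'$, which already carries the growth. In your ordering, a generic linear projection $\C^N\to\C^{2k+1}$ is never proper, so composing with it does not preserve properness without a further argument you do not provide; the paper's ordering sidesteps this entirely. The closing assertion that $\img(F)$ is a closed embedded GC submanifold is, as you say, Theorem \ref{imp thm1}.
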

It may be noted that $\mathbb{R}^{2M} \times \mathbb{C}^N$ is a natural example of a GC Stein manifold, and by Theorem \ref{main4}, only closed embedded GC submanifolds of\,\, $\mathbb{R}^{2M} \times \mathbb{C}^N$ can be GC Stein manifolds. Ben-Bassat et al. \cite{ben04} provide a detailed description of GC subspaces within $\mathbb{R}^{2M} \times \mathbb{C}^N$, enabling the construction of a large class of examples of GC Stein manifolds.

\section{Preliminaries}\label{prelim}
We first start by recalling some basic notions of generalized complex (in short GC) geometry. We shall rely upon  \cite{Gua, Gua2} for most of the basic definitions and results.  

\medskip
Given any smooth manifold $X$, the direct sum of tangent and cotangent bundles of $X$, which we denote by $TX\oplus T^{*}X$, is endowed with a natural symmetric bilinear form,
\begin{equation}\label{bilinear}
    \langle A_1+\xi,A_2+\eta\rangle\,:=\,\frac{1}{2}(\xi(A_2)+\eta(A_1))\,.
\end{equation}
It is also equipped with the \textit{Courant Bracket}  defined as follows.
\begin{definition}
The Courant bracket is a skew-symmetric bracket defined on smooth sections of $TX\oplus T^{*}X$, given by
\begin{equation}\label{bracket}
    [A_1+\xi,A_2+\eta] := [A_1,A_2]_{Lie}+\mathcal{L}_{A_1}\eta-\mathcal{L}_{A_2}\xi-\frac{1}{2}d(i_{A_1}\eta-i_{A_2}\xi),
\end{equation}  
where $A_1,A_2\in C^{\infty}(TX)$, $\xi,\eta\in C^{\infty}(T^{*}X)$, $[\,,\,]_{Lie}$ is the usual Lie bracket on $C^{\infty}(TX)$, and $\mathcal{L}_{A_1},\,\, i_{A_1}$ denote the Lie derivative and the interior product of forms with respect to the vector field $A_1$, respectively.
\end{definition}
Consider the action of $TX\oplus T^{*}X$  on $\wedge^{\bullet}T^{*}X$ defined by $$(A+\xi)\cdot\psi = i_A\varphi+\xi\wedge\psi \,.$$  This action can be extended to the Clifford algebra of $TX\oplus T^{*}X$ corresponding to the natural pairing \eqref{bilinear}. This gives a natural choice for spinors, namely, the exterior algebra of cotangent bundle, $\wedge^{\bullet}T^{*}X$.

\vspace{0.2em}
We are now ready to define the notion of GCS in a smooth manifold $X$ in three equivalent ways.
\begin{definition}(cf. \cite{Gua})\label{gcs}
A \textit{generalized complex structure (GCS)} is determined by any of the following three equivalent sets of data:

\vspace{0.2em}
\begin{enumerate}
\setlength\itemsep{0.4em}
    \item  A bundle automorphism $\mathcal{J}$ of $TX\oplus T^{*}X$ which satisfies the following conditions:

    \vspace{0.3em}
    \begin{itemize}
 \setlength\itemsep{0.4em}
        \item[(a)] $\mathcal{J}^{2}=-1\,,$ and $\mathcal{J}^{*}=-\mathcal{J}\,,$ that is, $\mathcal{J}$ is orthogonal with respect to the natural pairing in \eqref{bilinear}\,,
        \item[(b)] $\mathcal{J}$ has vanishing {\it Nijenhuis tensor}, that is, for all $C, D \in C^{\infty}(TX\oplus T^{*}X)$,
   $$ N(C, D) :=[\mathcal{J}C, \mathcal{J}D]-\mathcal{J} [\mathcal{J}C, D] - \mathcal{J} [C, \mathcal{J} D] 
    - [C, D]=0\,. $$ 
    \end{itemize}
     \item A subbundle, say $L_{X}$, of $(TX\oplus T^{*}X)\otimes\C$ which is maximal isotropic with respect to the natural bilinear form \eqref{bilinear}, involutive with respect to the Courant bracket \eqref{bracket}, and satisfies $L_{X}\cap\overline{L_{X}}=\{0\}$.
     \item A line subbundle $U_{X}$ of $\wedge^{\bullet}T^{*}X\otimes\C$ which generated locally at each point by a form of the form $\varphi=e^{(B+i\omega)}\wedge\Omega$, such that it satisfies the following non-degeneracy condition $$\omega^{n-k}\wedge\Omega\wedge\overline{\Omega}\neq 0,$$ where $B$ and $\omega$ are real 2-forms and $\Omega$ is a decomposable complex $k$-form, and  $\varphi$ satisfies the following integrability condition
    \begin{equation}
        d\varphi=u\cdot\varphi,
    \end{equation}
   for some $u\in (TX\oplus T^{*}X)\otimes\C$, where $d$ is the exterior derivative.
\end{enumerate}
\end{definition}
In Definition \ref{gcs}, the equivalent conditions $(1)$ and $(2)$ are related to each other by the fact that the subbundle $L_X$ may be obtained as the $+i$-eigenbundle of the automorphism $\mathcal{J}$. The line subbundle $U_{X}$ in Definition \ref{gcs}, is called the {\it canonical line bundle} associated to the GCS, and it is annihilated by $L_{X}$ under the above Clifford action.

\vspace{0.5em}
Given any GC manifold  $(X,\,\mathcal{J})$, we can deform $\mathcal{J}$ by a real closed $2$-form $B$ to get another GCS on $M$,  \begin{equation}\label{B transformation}
        (\mathcal{J})_{B}=e^{-B}\circ\mathcal{J}\circ e^{B}\quad\text{where}\quad e^{B}=\begin{pmatrix} 
	       1 & 0 \\
	       B & 1 \\
	    \end{pmatrix}\,.
    \end{equation}
This operation is called a \textit{$B$-field transformation} (or \textit{$B$-transformation}). Then, the corresponding $+i$-eigenbundle of $(\mathcal{J})_{B}$ is 
\begin{equation}\label{L-B}
(L_{X})_{B}=\big\{A+\xi-B(A,\,\cdot)\,|\,A+\xi\in L_{X}\big\}\,.    
\end{equation}
Let us consider some simple examples of GCS.
\begin{example}\label{complx eg}
Let $(X,\, J_{X})$ is a complex manifold with a complex structure $J_{X}$. Then the natural GCS on $X$ is given by the bundle automorphism  
\[
\mathcal{J}:=
\begin{pmatrix}
 -J_{X}     &0 \\
    
    0        &J^{*}_{X}
\end{pmatrix}: TX\oplus T^{*}X\longrightarrow TX\oplus T^{*}X\,.
\] The corresponding $+i$-eigen bundle is
$$L_{X}=T^{0,1}X\oplus(T^{1,0}X)^{*}\,.$$
\end{example}
\begin{example}\label{symplectic eg}
Let $(X,\,\omega)$ be a symplectic manifold with a symplectic structure $\omega$. Then, the bundle automorphism   
\[
\mathcal{J}:=
\begin{pmatrix}
    0    &-\omega^{-1}\\
    \omega    &0
\end{pmatrix}: TX\oplus T^{*}X\longrightarrow TX\oplus T^{*}X\,,
\] gives a natural GCS on $X$. The $+i$-eigen bundle of this GCS is 
$$L_{X}=\big\{A-i\omega(A)\,|\,A\in TM\otimes\C\big\}\,.$$
\end{example}  
\begin{example}\label{prdct gcs eg}
    Consider two GC manifolds denoted as $(M_1,\mathcal{J}_{M_1})$ and $(M_2,\mathcal{J}_{M_2})$. Then, the product GCS on $M_1\times M_2$, is characterized by the canonical line bundle locally given as $\bigwedge^2_{j=1}\pr^{*}_{j}\phi_{j}$. Here, for $j=1,2$, $\phi_{j}$ denote the respective local generator of the canonical line bundle for $\mathcal{J}_{M_{j}}$, while the map $\pr_{j}: M_1\times M_2\longrightarrow M_{j}$ is the natural projection map onto the $j$-th component. In particular, when $M_1=(\R^{2M},\omega_0)$ and $M_2=\C^N$ where $\omega_0$ is the standard symplectic structure and $M,N>0$, we always consider $\R^{2M}\times\C^N$ endowed with the product GCS, induced by the standard complex structure on $\C^N$ (cf. Example \ref{complx eg}) and the symplectic structure $\omega_0$ (see Example \ref{symplectic eg}).
\end{example}

\subsection{Generalized holomorphic map}  We recall some basic facts about generalized holomorphic (in short GH) maps, an analogue of holomorphic maps in the case of GC manifolds. For most of the definitions, we shall rely upon \cite{Gua, ornea2011}. 

\medskip
Let $(V,\mathcal{J}_{V})$ be a GC linear space with  $+i$-eigenspace $L_{V}$.  Given a subspace $E\leq V\otimes\C$ and an element $\sigma\in\wedge^2 E^{*}$, consider the subspace 
\begin{equation}\label{isotropic set}
    L(E,\sigma):=\big\{A+\xi\in E\oplus V^{*}\otimes\C\,| \,\xi|_{E}=\sigma(A)\big\}\,,
\end{equation}
of $(V \oplus V^{\ast}) \otimes \mathbb{C} $.
By \cite[Proposition 2.6]{Gua}, $L(E,\sigma)$ is a maximal isotropic subspace of $(V\oplus V^{*})\otimes\C$ with respect to the bilinear pairing \eqref{bilinear}, and any maximal isotropic subspace of $(V\oplus V^{*})\otimes\C$ is of this form. Consider the projection map 
$$\rho:(V\oplus V^{*})\otimes\C\longrightarrow V\otimes\C\,.$$  Let $\rho(L_{V})=E_{V}$ and let $E_{V}\cap\overline{E_{V}}=\Delta_{V}\otimes\C$ where $\Delta_{V} \le V$ is a real subspace. Then by \cite[Proposition 4.4]{Gua}, we have

\begin{enumerate}
\setlength\itemsep{0.4em}
    \item $L_{V}=L(E_{V},\sigma)$ for some $\sigma\in\wedge^2 E_{V}^{*}$ ;
    \item $E_{V}+\overline{E_{V}}=V\otimes\C$ with a non-degenerate real $2$-form $\Omega_{\Delta_{V}}:=\im(\sigma|_{\Delta_{V}\otimes\C})$ on $\Delta_{V}\otimes\C$.
\end{enumerate}
Following \cite[Section 3]{ornea2011}, $P_{V}:=L(\Delta_{V}\otimes\C,\Omega_{\Delta_{V}})$ is called the associated { \it linear Poisson structure} of $\mathcal{J}_{V}$ on $V\otimes\C$.

\begin{definition}\label{GC map} (\cite{ornea2011})
Let $\psi:(V,\mathcal{J}_{V})\longrightarrow (V^{'},\mathcal{J}_{V^{'}})$ be a linear map between two GC linear spaces. Then $\psi$ is called a \textit{generalized complex (GC) map} if

\vspace{0.3em}
\begin{enumerate}
\setlength\itemsep{0.4em}
    \item $\psi(E_{V})\subseteq E_{V^{'}}$ ,
    \item $\psi_{\star}(P_{V})=P_{V^{'}}$ where $\psi_{\star}$ denotes the pushforward of a Dirac structure, as in \cite[Section 1]{ornea2011}, namely,
      $$\psi_{\star}(P_{V})=\big\{\psi(A)+\eta\in (V^{'}\oplus V^{'*})\otimes\C\,|\,A+\psi^{*}(\eta)\in P_{V}\big\}\,.$$
\end{enumerate} 
\end{definition}

\begin{definition}\label{GH map} (\cite{ornea2011})
 A smooth map $\psi:(X,\mathcal{J}_{X})\longrightarrow (Y,\mathcal{J}_{Y})$ between two GC manifolds is called a \textit{generalized holomorphic (GH) map} if for each $x\in X$,
    $$(\psi_{*})_{x}:T_{x}X\longrightarrow T_{\psi(x)}Y$$ is a GC map. If $(Y,\mathcal{J}_{Y})=(\R^{2},\mathcal{J}_{\R^{2}})$ where $\mathcal{J}_{\R^{2}}$ is as in Example \ref{complx eg}, induced by the standard complex structure $J_0$, then $\psi$ is called a \textit{GH function}. If $\psi$ is also an immersion, a submersion, or a smooth embedding, then we refer to $\psi$ as a GH immersion, a GH submersion, or a GH embedding, respectively.
\end{definition}

\begin{remark}
When we have a $B$-field transformation of $\mathcal{J}$, we observe that $\rho((L_{X})_{B})=\rho(L_{X})$ where $(L_{X})_{B}$ is as in \eqref{L-B}. Since the imaginary part of $\sigma$ is preserved, the corresponding linear Poisson structures are the same for both GCS, implying that the concepts of GC map and GH map are not affected by $B$-field transformations.   
\end{remark}

\begin{remark}\label{poi rmk}
    Note that, any GC manifold is also a Poisson manifold (cf. \cite[Proposition 2.6, Proposition 4.4]{Gua}). Thus, by Definition \ref{GC map}, $f:X\longrightarrow(\R^{2M},\omega_0)$ is a GH map if and only if  $f:X\longrightarrow(\R^{2M},\omega^{-1}_0)$ is a Poisson map.
\end{remark}

\begin{definition}(cf. \cite{pal24})
A diffeomorphism $\phi:(X,\mathcal{J}_{X})\longrightarrow (Y,\mathcal{J}_{Y})$ between two GC manifolds is called a \textit{generalized holomorphic (GH) homeomorphism} if
\begin{equation}\label{gh homeo}
    \begin{pmatrix}
     \phi_{*}    &0 \\
     0           &(\phi^{-1})^{*} 
    \end{pmatrix}
    \circ \mathcal{J}_{X} = \mathcal{J}_{Y}\circ 
    \begin{pmatrix}
     \phi_{*}    &0 \\
     0           &(\phi^{-1})^{*} 
    \end{pmatrix}\,.
\end{equation}
When $Y=X$, $\phi$ is called GH automorphism.
\end{definition}
\begin{definition}\label{def:type} Let $\rho: (TX\oplus T^{\ast}X) \otimes \C \longrightarrow TX \otimes \C $ denote the natural projection. We denote by $E_X$ the image of 
$L_X$ under $\rho$. Let $\Delta_X \otimes \C := E_X \bigcap \overline{E_X}$. 

\vspace{0.2em}
    For each $x\in X$, type of $\mathcal{J}$ at $x$ is defined as 
    $$\typ(x):=\codim_{\C}((E_{X})_{x})=\frac{1}{2}\codim_{\R}((\Delta_{X})_{x})\,.$$ $x$ is called a regular point of $X$ if $\typ(x)$ is constant in a neighborhood of $x$ and $X$ is called a regular GC manifold if each point of $X$ is a regular point. For a regular GC manifold $X$, we denote the type of $X$ by $\typ(X)\,.$ 
\end{definition}
 Note that $\typ(X)$ is invariant under $B$-field transformation. We have the generalized Darboux theorem around any regular point.
\begin{theorem}(\cite[Theorem 4.3]{Gua2})\label{darbu thm}
For a regular point $x\in (X^{2n},\mc{J})$ of $\typ(x)=k$, there exists an open neighborhood $U_{x}\subset X$ of $x$ such that, after a $B$-transformation, $U_{x}$ is GH homeomorphic to $U_1\times U_2$, where $U_1\subset(\R^{2n-2k},\omega_0), U_2\subset\C^{k}$ are  open subsets with $\omega_0$ being the standard symplectic structure.
\end{theorem}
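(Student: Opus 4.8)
The plan is to decompose the structure into its underlying Poisson geometry along the leaves and a transverse complex geometry, and then to invoke a classical normal-form theorem for each piece. Recall from Remark \ref{poi rmk} that the regular GC manifold $X$ carries an underlying Poisson structure, whose bivector is the leafwise (upper-right) component of $\mc{J}$. Because $\typ$ is locally constant equal to $k$ near $x$, the distribution $\Delta_{X}$ is a smooth involutive distribution of constant rank $2n-2k$ (involutivity descending from the Courant-involutivity of $L_{X}$), and its integral leaves are precisely the symplectic leaves of this Poisson structure. Hence, near $x$, the Poisson structure is \emph{regular} of rank $2n-2k$, which is the geometric content that makes a normal form possible.

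First I would straighten the leafwise directions. Applying Weinstein's splitting theorem for regular Poisson structures, there is a coordinate chart around $x$ in which the Poisson bivector becomes the product of the standard symplectic Poisson structure on $(\R^{2n-2k},\omega_0)$ (cf. Example \ref{symplectic eg}) with the zero bivector on a transverse slice $\R^{2k}$. In these adapted coordinates the leaf through $x$ carries a leafwise symplectic form, while the transverse slice records the remaining data. Next I would isolate the transverse complex structure: since $E_{X}+\overline{E_{X}}=TX\otimes\C$ and $\Delta_{X}\otimes\C=E_{X}\cap\overline{E_{X}}$, the quotient $(TX\otimes\C)/(\Delta_{X}\otimes\C)$ inherits the eigenspace splitting coming from $E_{X}$, hence an almost complex structure on the transverse slice. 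The vanishing of the Nijenhuis tensor of $\mc{J}$ forces this almost complex structure to be integrable, so by the Newlander--Nirenberg theorem the slice is biholomorphic to an open set $U_{2}\subset\C^{k}$ with its standard complex structure (cf. Example \ref{complx eg}).

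The remaining and hardest step is to glue these two pieces into a genuine \emph{product} GCS in the sense of Example \ref{prdct gcs eg}, and here the $B$-field enters. Writing the local generator of the canonical line bundle as $\varphi=e^{B+i\omega}\wedge\Omega$, the product structure on $U_{1}\times U_{2}$ corresponds to the generator $e^{i\omega_0}\wedge\Omega_0$ with no $B$-term; so one must gauge away the mixing between the leafwise and transverse directions encoded by $B$. A $B$-field transformation does exactly this, and by the remark following Definition \ref{GH map} (together with the invariance of $\typ$ noted before Theorem \ref{darbu thm}) such a transformation changes neither the type, nor the image $E_{X}=\rho(L_{X})$, nor the underlying Poisson structure — so it is harmless for the geometry already arranged. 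The resulting diffeomorphism onto $U_{1}\times U_{2}$, with $U_{1}\subset(\R^{2n-2k},\omega_0)$ and $U_{2}\subset\C^{k}$, is then a GH homeomorphism, which is the asserted normal form.

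I expect the main obstacle to be precisely this final gauging. Unwinding the integrability condition $d\varphi=u\cdot\varphi$ in the adapted coordinates is what shows that the coupling two-form is controlled by a \emph{closed} real form and that it can be realized as an admissible $B$-field; the Poisson splitting and the transverse Newlander--Nirenberg argument only deliver the two factors separately, and it is the full strength of the integrability of $\mc{J}$ — not the Poisson and almost-complex data in isolation — that guarantees the obstruction to splitting is removable by a single $B$-transformation. Verifying the exactness/closedness of this coupling term, and checking that straightening $\omega$ to $\omega_0$ (Darboux) is compatible with the transverse holomorphic coordinates, is where the technical care concentrates.
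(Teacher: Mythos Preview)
The paper does not give its own proof of this statement: Theorem~\ref{darbu thm} is simply quoted from Gualtieri \cite[Theorem~4.3]{Gua2} and used as a black box throughout. So there is no in-paper argument to compare against.

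Your outline is a correct high-level strategy, and you have correctly identified where the difficulty lies. For reference, Gualtieri's own proof proceeds somewhat differently from your ``build each factor separately, then gauge'' plan: he first establishes the pointwise linear normal form (any GC linear space of type $k$ is, after a $B$-transform, the product of a symplectic $\R^{2n-2k}$ and a complex $\C^{k}$), and then runs a Moser-type interpolation argument between the given $\mc{J}$ and its linearization at $x$, producing a one-parameter family of Courant automorphisms (diffeomorphism composed with closed $B$-field) that carries one to the other. This packages the leafwise Darboux step, the transverse Newlander--Nirenberg step, and the $B$-gauging into a single deformation argument, and in particular avoids having to verify by hand that the coupling two-form is closed and that the two normal forms can be aligned compatibly --- exactly the step you flagged as the main obstacle. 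Your route would work too, but the Moser argument is cleaner for precisely the reason you anticipated.
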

In a simpler terms, Theorem \ref{darbu thm} implies that, for some real closed form $2$-form $B_{\phi}\in\Omega^2(U_1\times U_2)$, there exists a GH homeomorphism $$\phi:(U_{x},\mc{J}_{U_{x}})\longrightarrow (U_1\times U_2,(\mathcal{J}_{U_1\times U_2})_{B_{\phi}})$$
where  $(\mathcal{J}_{U_1\times U_2})_{B_{\phi}})$ is  the $B$-transformation, as in \eqref{B transformation}, of the product GCS, denoted by $\mathcal{J}_{U_1\times U_2}\,.$ Let $p=(p_1,\ldots,p_{2n-2k})$ and $z=(z_1,\ldots,z_{k})$ represent coordinate systems for $\R^{2n-2k}$ and $\C^{k}$, respectively, and consider the corresponding local coordinates around $x$
\begin{equation}\label{loc coordi}
    (U_{x},\phi,p,z):=(U_{x},\phi\,;p_1,\ldots,p_{2n-2k},z_1,\ldots,z_{k})\,.
\end{equation}  
So, we have a nice description of coordinate transformations, as given in the following corollary. 
\begin{corollary}\label{cor:diffcharts}(\cite[Corollary 2.18]{pal24})
    Let $(X,\mc{J})$ be a regular GC manifold of type $k$. Let's assume that $(U,\phi,p,z)$ and $(U',\phi',p',z')$ are two local coordinate systems, as in \eqref{loc coordi}, with $U\cap U'\neq\emptyset\,.$ Then,
    $$\frac{\partial z'_{i}}{\partial\overline{z_{j}}}=\frac{\partial z'_{i}}{\partial p_{l}}=0\quad\text{for all}\quad i,j\in\{1,\ldots,k\}\,, l\in\{1,\ldots,2n-2k\}\,.$$
\end{corollary}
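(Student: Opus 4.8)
The plan is to reduce the entire statement to a single intrinsic fact: the subbundle $E_X = \rho(L_X)$ of $TX\otimes\C$ (Definition \ref{def:type}) is independent of the chosen Darboux chart and is invariant under $B$-field transformations, and then to compute it explicitly in Darboux coordinates. First I would compute $E_X$ in one chart $(U,\phi,p,z)$. By Theorem \ref{darbu thm}, $\phi$ is a GH homeomorphism onto $(U_1\times U_2, (\mathcal{J}_{U_1\times U_2})_{B_\phi})$, so $\phi_*$ carries $L_X$ onto the $+i$-eigenbundle of the $B$-transformed product structure; since $\rho((L)_{B})=\rho(L)$ (the Remark following Definition \ref{GH map}), it suffices to read off $\rho(L)$ for the plain product structure $\mathcal{J}_{U_1\times U_2}$. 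Using Example \ref{symplectic eg} on the symplectic factor (where $\rho(L_{U_1})=TU_1\otimes\C$) and Example \ref{complx eg} on the complex factor (where $\rho(L_{U_2})=T^{0,1}U_2$), together with Example \ref{prdct gcs eg}, I obtain
$$E_X|_U=\mathrm{span}_{\C}\Big\{\tfrac{\partial}{\partial p_1},\ldots,\tfrac{\partial}{\partial p_{2n-2k}},\tfrac{\partial}{\partial\overline{z}_1},\ldots,\tfrac{\partial}{\partial\overline{z}_k}\Big\},$$
and the identical computation in the primed chart yields the analogous spanning set in the primed coordinate vector fields. The possibly different $B$-fields $B_\phi$ and $B_{\phi'}$ are irrelevant here precisely because of the $B$-invariance noted above.

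Next I would exploit that $E_X$ is a single, globally well-defined subbundle, so on the overlap $U\cap U'$ the two local descriptions above present one and the same bundle. The desired identities $\partial z'_i/\partial\overline{z}_j=\partial z'_i/\partial p_l=0$ are exactly the assertion that the function $z'_i$ is annihilated by every section of $E_X$, since $\{\partial/\partial p_l,\partial/\partial\overline{z}_j\}$ spans $E_X$ in the unprimed chart. But $z'_i$ is a coordinate function of the primed chart, hence trivially independent of $p'$ and of $\overline{z}'$; that is, $\partial z'_i/\partial p'_l=\partial z'_i/\partial\overline{z}'_{i'}=0$, which says precisely that $z'_i$ is annihilated by the primed spanning set $\{\partial/\partial p'_l,\partial/\partial\overline{z}'_{i'}\}$ of $E_X$. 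Since these two frames generate the same bundle $E_X$ on $U\cap U'$, annihilation by one forces annihilation by the other, and the conclusion follows at once.

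The essential (though routine) step is the explicit identification of $E_X$ in Darboux coordinates and the verification that neither the $B$-field transformation nor the GH homeomorphism disturbs it; the Examples and the Remark carry all the weight there. The only genuine subtlety to treat carefully is the identification of $\phi_*(\partial/\partial p_l)$ and $\phi_*(\partial/\partial\overline{z}_j)$ with the standard frame on $U_1\times U_2$, so that the intrinsic bundle $E_X$ really is spanned by the asserted coordinate vector fields in each chart. Once this is pinned down, the proof needs no further analysis beyond the elementary observation that a coordinate function is annihilated by the coordinate vector fields transverse to it.
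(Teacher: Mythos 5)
Your argument is correct, and it is the natural one given the framework the paper sets up: the paper itself does not prove this corollary (it is quoted from \cite[Corollary 2.18]{pal24}), but your route through the intrinsically defined subbundle $E_X=\rho(L_X)$, its invariance under $B$-field transformations, and its explicit Darboux-coordinate frame $\{\partial/\partial p_l,\partial/\partial\overline{z}_j\}$ uses exactly the ingredients the paper records (Definition \ref{def:type}, the remark that $\rho((L_X)_B)=\rho(L_X)$, Theorem \ref{darbu thm}, and Examples \ref{complx eg}--\ref{prdct gcs eg}). The final step — a coordinate function $z'_i$ is killed by the primed frame of $E_X$, hence by any other frame of the same bundle, in particular by $\partial/\partial p_l$ and $\partial/\partial\overline{z}_j$ — is airtight, and the one subtlety you flag (that the chart's coordinate vector fields really are $\phi_*^{-1}$ of the standard frame downstairs) is indeed the only point requiring care and is tautological for coordinate charts.
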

Furthermore, we have the following characterization of GH functions on a regular GC manifold in terms of the local coordinates in \eqref{loc coordi}.  
\begin{prop}(\cite[Proposition 2.19]{pal24})\label{prop GH}
Let $(X,\mc{J})$ be a regular GC manifold of type $k$. Then, $f: X\longrightarrow\C$ is a GH function if and only if at every point on $X$, expressed in terms of local coordinates, as shown in \eqref{loc coordi}, $f$ satisfies the following 
$$\frac{\partial f}{\partial\overline{z_{j}}}=\frac{\partial f}{\partial p_{l}}=0\quad\text{for all}\quad j\in\{1,\ldots,k\}\,, l\in\{1,\ldots,2n-2k\}\,.$$
\end{prop}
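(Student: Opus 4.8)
The plan is to work in a single Darboux chart $(U_x,\phi,p,z)$ of \eqref{loc coordi} and to unwind the two defining conditions of a GC map (Definition \ref{GC map}) for the differential $\psi:=(f_*)_x : T_x X \longrightarrow T_{f(x)}\C$, the target $\C$ carrying its standard complex GCS (Example \ref{complx eg}). First I would record the relevant data. On the target, with $w$ the linear coordinate on $\C$, one has $E_{\C}=T^{0,1}\C=\mathrm{span}_{\C}(\partial/\partial\overline{w})$ and $\Delta_{\C}\otimes\C=E_{\C}\cap\overline{E_{\C}}=\{0\}$, so $P_{\C}=\{0\}\oplus(T^{*}\C\otimes\C)$. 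On the source, the product GCS of Example \ref{prdct gcs eg} gives
$$E_X=\mathrm{span}_{\C}\Big(\tfrac{\partial}{\partial p_1},\ldots,\tfrac{\partial}{\partial p_{2n-2k}},\ \tfrac{\partial}{\partial\overline{z_1}},\ldots,\tfrac{\partial}{\partial\overline{z_k}}\Big)\,,$$
with $\Delta_X\otimes\C=E_X\cap\overline{E_X}=\mathrm{span}_{\C}(\partial/\partial p_1,\ldots,\partial/\partial p_{2n-2k})$ and $\Omega_{\Delta_X}=\omega_0$. Because the type, the bundle $E_X$, and the Poisson structure $P_X$ are all insensitive to $B$-transformations (cf.\ the remark after Definition \ref{def:type} and the remark after Definition \ref{GH map}), it is harmless to compute with the untwisted product structure, even though the chart carries the $B_{\phi}$-twisted one.

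Next I would translate condition $(1)$, $\psi(E_X)\subseteq E_{\C}$, into coordinates. Extending $\psi$ complex-linearly and writing, for a real coordinate field $\partial/\partial t$,
$$\psi\Big(\frac{\partial}{\partial t}\Big)=\frac{\partial f}{\partial t}\,\frac{\partial}{\partial w}+\frac{\partial\overline{f}}{\partial t}\,\frac{\partial}{\partial\overline{w}}\,,$$
membership in $E_{\C}=\mathrm{span}_{\C}(\partial/\partial\overline{w})$ amounts to the vanishing of the $\partial/\partial w$-coefficient. Applying this to the spanning fields $\partial/\partial p_l$ and $\partial/\partial\overline{z_j}$ of $E_X$ shows that condition $(1)$ holds if and only if $\partial f/\partial p_l=0$ for all $l$ and $\partial f/\partial\overline{z_j}=0$ for all $j$, which is precisely the claimed system. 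It then remains only to check that condition $(2)$ contributes nothing new.

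The crux is to show that condition $(1)$ already forces condition $(2)$, i.e.\ $\psi_{\star}(P_X)=P_{\C}$. The key point is that $\Delta_X\otimes\C\subseteq E_X$ is a \emph{real} subspace and $\psi$ commutes with complex conjugation, so condition $(1)$ gives
$$\psi(\Delta_X\otimes\C)=\psi(E_X\cap\overline{E_X})\subseteq E_{\C}\cap\overline{E_{\C}}=\{0\}\,,$$
that is, $\psi$ annihilates the leaf direction $\Delta_X$. Plugging this into the pushforward formula of Definition \ref{GC map}$(2)$: if $\psi(A)+\eta\in\psi_{\star}(P_X)$ then $A+\psi^{*}\eta\in P_X$ forces $A\in\Delta_X\otimes\C$, whence $\psi(A)=0$ and the element equals $0+\eta\in P_{\C}$; conversely, for any $\eta\in T^{*}\C\otimes\C$ the covector $\psi^{*}\eta$ kills $\Delta_X$, so $0+\psi^{*}\eta\in P_X$ and thus $\eta\in\psi_{\star}(P_X)$. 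This yields $\psi_{\star}(P_X)=P_{\C}$, so condition $(2)$ is automatic. Hence at every point the GC-map conditions are equivalent to the displayed vanishing equations, proving the proposition. I expect the only genuine subtlety to be this last step—correctly handling the Dirac-pushforward bookkeeping and the role of $\Delta_{\C}=\{0\}$—whereas the coordinate translation of condition $(1)$ is a direct computation.
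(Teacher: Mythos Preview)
Your argument is correct. The paper does not actually supply a proof of this proposition; it is quoted from \cite{pal24} and immediately followed by applications, so there is no in-paper proof to compare against. Your direct unwinding of Definition \ref{GC map} in a Darboux chart is exactly the natural route: the identification $E_{\C}=T^{0,1}\C$ and $P_{\C}=\{0\}\oplus(T^{*}\C\otimes\C)$ is right, the translation of $\psi(E_X)\subseteq E_{\C}$ into the displayed system of PDEs is a clean coordinate computation, and your verification that condition $(2)$ is automatic once condition $(1)$ holds is sound. The only place worth tightening is the ``conversely'' half of the Dirac-pushforward check: you implicitly use that $\omega_0$ is nondegenerate on $\Delta_X$, so the unique $A\in\Delta_X\otimes\C$ with $\omega_0(A,\cdot)=\psi^{*}\eta|_{\Delta_X}=0$ is $A=0$; making that explicit removes any ambiguity. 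The invocation of $B$-field invariance (to pass from the twisted chart structure to the untwisted product GCS) is also correctly justified by the remarks following Definitions \ref{GH map} and \ref{def:type}.
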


This implies that for a regular GC manifold of type $k$, the sheaf $\mathcal{O}_X$ is locally given by the ring of convergent power series in the coordinates $z=(z_1,\ldots,z_{k})$ in \eqref{loc coordi}.
\begin{remark}
    The GH maps are unaffected by $B$-field transformations, and GH homeomorphisms are GH maps according to \eqref{gh homeo}. So, when we say that an open neighborhood is GH homeomorphic to a product of open sets with a product GCS, as in Theorem \ref{darbu thm}, we always mean that they are GH homeomorphic up to a $B$-field transformation.
\end{remark}
\subsection{Generalized complex submanifold}
In this subsection, we revisit some fundamental concepts about generalized complex (GC) submanifolds. For most of the definitions, we shall follow \cite{ben04}.

\vspace{0.5em}
Let $V$ be a GC linear space and $L_{V}<(V\oplus V^{*})\otimes\C$ be its $+i$-eigenspace. Let $V'<V$ be a subspace. Consider the following subspace of $(V'\oplus V'^{*})\otimes\C$
\begin{equation}\label{ev}
    L_{V'}:=\bigg\{\rho(v)+\rho^{*}(v)|_{V'\otimes\C}\,\big |\,v\in L_{V}\cap(V'\oplus V^{*})\otimes\C\bigg\}
\end{equation}
where $\rho^{*}:(V\oplus V^{*})\otimes\C\longrightarrow V^{*}\otimes\C$ is the natural projection map. It is straightforward to compute that $\dim_{\C}L_{V'}=\dim_{\R}V'$, which implies that $L_{V'}$ is a maximal isotropic subspace.
\begin{definition}(\cite[Section 3]{ben04})\label{sub def}
    A subspace $V'<V$ is a called a \textit{generalized complex (GC) subspace} if $L_{V'}\cap\overline{L_{V'}}=0$ where $L_{V'}$ as defined in \eqref{ev}.
\end{definition}
Let $X$ be a GC manifold with the $+i$ eigenbundle $L_{X}$. Let $Y$ be a smooth embedded submanifold of $X$. Consider the natural projection maps 
\begin{align*}
    &\rho|_{Y}:(TX|_{Y}\oplus T^{*}X|_{Y})\otimes\C\longrightarrow TX|_{Y}\otimes\C\,,
    &\rho^{*}|_{Y}:(TX|_{Y}\oplus T^{*}X|_{Y})\otimes\C\longrightarrow T^{*}X|_{Y}\otimes\C\,,
\end{align*}
 and note that $TY<TX|_{Y}$ is a subbundle.
Using \eqref{ev} at each point of $Y$, we define the maximal isotropic distribution
\begin{equation}\label{ev mfld}
    L_{Y}:=\bigg\{\rho|_{Y}(v)+\rho^{*}|_{Y}(v)|_{TY\otimes\C}\,\big |\,v\in L_{X}|_{Y}\cap(TY\oplus T^{*}X|_{Y})\otimes\C\bigg\}
\end{equation}
In simpler terms, \eqref{ev mfld} implies that the subset $L_{Y}<(TY\oplus T^{\ast}Y) \otimes\C$ is a maximal isotropic subspace $L_{Y,y}<(T_{y}X\oplus T^{\ast}_{y}X) \otimes \C$ at each point $y\in Y$.
\begin{definition}(\cite[Section 5]{ben04})\label{sub def mfld}
    A submanifold $Y\subset X$ is called a \textit{generalized complex (GC) submanifold} if the following holds:
    \begin{itemize}
   \setlength\itemsep{0.4em} 
        \item $L_{Y}<(TY\oplus T^{\ast}Y) \otimes\C$ is a subbundle.
        \item $L_{Y}\cap\overline{L_{Y}}=0$.
        \item $L_{Y}$ is Courant bracket (cf. \eqref{bracket}) involutive.
    \end{itemize}
Here $L_{Y}$ is defined as in \eqref{ev mfld}. 
\end{definition}
\begin{remark}\label{rmk sub}
For a regular GC manifold $X$, note that $L_{X}$ is given by $L_{X} = L(E, \sigma)$ (cf. \eqref{isotropic set}), where $E := \rho(L_{X})$ is a subbundle of $TX \otimes \C$ and $\sigma \in \wedge^2 E^{*}$. The structure $L_{Y}$ can then be described pointwise as $$L_{Y}:=L(E_{Y},\sigma|_{Y})\,,$$ where $E_{Y}=\rho|_{Y}(L_{Y})=(TY\otimes\C)\cap E|_{Y}$ forms a Lie involutive distribution of $TY\otimes\C$. Consequently, if the GCS induced by $L{Y}$ on $Y$ is regular, the set $E_{Y}$ becomes a subbundle.
\end{remark}

\section{GC Stein Manifold}\label{gc stein}
Let $X$ be a regular GC manifold with $\mathcal{O}:=\mathcal{O}_{X}$ as the sheaf of generalized holomorphic (GH) functions on $X$. For $M,N\in\N$, let $\omega_0$ be the standard symplectic form on $\R^{2M}\,,$ and let $J_0$ be the standard complex structure on $\R^{2N}\,,$ that is $(\R^{2N},J_0)=\C^{N}\,.$ We consider some simple examples of $\mathcal{O}_{X}$ (cf. \cite[Section 2]{pal24}).

\vspace{0.2em}
\begin{enumerate}
\setlength\itemsep{0.4em}
    \item When $(X,J_{X})$ is a complex manifold with $J_{X}$ as its complex structure. Consider the natural GCS induced by $J_{X}$, as given in Example \ref{complx eg}. We can see that, given any GH map $\psi$, $d\psi\in\Omega^{1,0}(X)$ that is, $\psi$ is a holomorphic function. So $\mathcal{O}$ will be the sheaf of holomorphic functions on $X$.
\item When $(X,\omega)$ is a symplectic manifold with a symplectic structure $\omega$. Consider the induced GCS, as given in Example \ref{symplectic eg}, with the $+i$-eigenbundle $L_{X}$. Note that, $L_{X}$ is naturally identified with $TX\otimes\C$. So, for any GH map $\psi$, $d\psi=0$ which implies $\psi$ is locally constant sheaf. Hence $\mathcal{O}$ is a sheaf of locally constant $\C$-valued functions. 
\end{enumerate}
\begin{definition}\label{def}
 Let $K\subseteq X$ be a compact set. 
 \begin{enumerate}
 \setlength\itemsep{0.4em}
     \item  We associate its $\mathcal{O}(X)$-convex hull, also called $\mathcal{O}(X)$-hull, as follows
     \begin{equation}\label{cnvx}
         \widehat{K}_{\mathcal{O}(X)}:=\left\{p\in X\,\bigg|\,\sup_{x\in K}|f(x)|\geq|f(p)|\,\,\forall\,\,f\in\mathcal{O}(X)\right\}\,.
     \end{equation}
 \item $K$ is called $\mathcal{O}(X)$-convex if $K=\widehat{K}_{\mathcal{O}(X)}\,.$
 \end{enumerate}
\end{definition}
\vspace{0.2em}
\begin{prop}
 ~
  \begin{enumerate}
  \setlength\itemsep{0.4em}
      \item If $\widehat{K}_{\mathcal{O}(X)}$ is compact, then the $\mathcal{O}(X)$-hull of $\widehat{K}_{\mathcal{O}(X)}$ is 
      $\widehat{K}_{\mathcal{O}(X)}$ itself.
      \item For any two compact subsets $K_1\subseteq K_2$\,, $\widehat{K_1}_{\mathcal{O}(X)}\subseteq \widehat{K_2}_{\mathcal{O}(X)}\,.$
  \end{enumerate}  
\end{prop}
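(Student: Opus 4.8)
The plan is to derive both statements directly from the defining inequality in \eqref{cnvx}, since neither requires any input from GC geometry beyond the algebra of GH functions $\mathcal{O}(X)$ itself. Throughout I would abbreviate $\hat{K}:=\widehat{K}_{\mathcal{O}(X)}$ and repeatedly use the trivial observation that $K\subseteq\hat{K}$: for $p\in K$ the inequality $|f(p)|\leq\sup_{x\in K}|f(x)|$ holds tautologically for every $f\in\mathcal{O}(X)$, so $p\in\hat{K}$.

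For part $(2)$, I would take $p\in\widehat{K_1}_{\mathcal{O}(X)}$ and note that for every $f\in\mathcal{O}(X)$,
$$|f(p)|\leq\sup_{x\in K_1}|f(x)|\leq\sup_{x\in K_2}|f(x)|\,,$$
where the second inequality is monotonicity of the supremum under the inclusion $K_1\subseteq K_2$. This places $p\in\widehat{K_2}_{\mathcal{O}(X)}$, which is the assertion.

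For part $(1)$, the key step is the identity $\sup_{x\in\hat{K}}|f(x)|=\sup_{x\in K}|f(x)|$ for every $f\in\mathcal{O}(X)$. The inequality $\geq$ follows from $K\subseteq\hat{K}$; for $\leq$, each $x\in\hat{K}$ satisfies $|f(x)|\leq\sup_{y\in K}|f(y)|$ by the very definition of $\hat{K}$, so taking the supremum over $x\in\hat{K}$ preserves the bound. With this identity in hand, $\widehat{\hat{K}}_{\mathcal{O}(X)}\subseteq\hat{K}$ is immediate: if $p\in\widehat{\hat{K}}_{\mathcal{O}(X)}$ then $|f(p)|\leq\sup_{x\in\hat{K}}|f(x)|=\sup_{x\in K}|f(x)|$ for all $f$, whence $p\in\hat{K}$. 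The reverse inclusion $\hat{K}\subseteq\widehat{\hat{K}}_{\mathcal{O}(X)}$ is once more the general fact $K\subseteq\hat{K}$, now applied to the set $\hat{K}$ in place of $K$; this is precisely where the compactness hypothesis on $\hat{K}$ enters, since it guarantees that $\widehat{\hat{K}}_{\mathcal{O}(X)}$ is a legitimate hull in the sense of Definition \ref{def}.

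Since everything reduces to manipulating a single family of inequalities, there is no genuine analytic obstacle here; these are the standard idempotence and monotonicity properties of a function-algebra convex hull, and the GH setting enters only through the choice of $\mathcal{O}(X)$. The only points demanding care are to invoke the compactness of $\hat{K}$ so that forming its hull is meaningful, and to establish the supremum identity cleanly rather than simply assert idempotence.
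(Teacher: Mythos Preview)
Your proposal is correct and is precisely what the paper has in mind: its proof reads in full ``Follows from Definition \ref{def},'' and your argument simply unpacks that sentence by verifying monotonicity of the supremum and the idempotence identity $\sup_{\hat{K}}|f|=\sup_{K}|f|$ directly from \eqref{cnvx}. There is no divergence in approach.
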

\begin{proof}
    Follows from Definition \ref{def}.
\end{proof}
\vspace{0.2em}
\begin{definition}
~
\begin{enumerate}
\setlength\itemsep{0.4em}
   \item  $X$ is called \textit{generalized holomorphically convex (in short, GH convex)} if for every compact set $K\subseteq X\,,$ its $\mathcal{O}(X)$-hull $\widehat{K}_{\mathcal{O}(X)}$ is also compact.
   \item $X$ is called \textit{generalized holomorphically regular (in short, GH regular)} with $\dim_{\R}X=2n$ and $\typ(X)=k$ if for every point $x\in X$, there exist a collection of GH maps $\{f_1,\ldots,f_{n-k},\,g_1,\ldots,g_{k}\}$ such that $\{f_1,\ldots,f_{n-k}\}$ are GH maps from $X$ to $(\R^2,\omega_0)$ and $\{g_1,\ldots,g_{k}\}\in\mathcal{O}(X)$ and satisfies the following

        \vspace{0.2em}
        \begin{itemize}
        \setlength\itemsep{0.4em}
            \item $\{df_1|_{x},\ldots,df_{n-k}|_{x},\,dg_1|_{x},\ldots,dg_{k}|_{x}\}$ are $\R$-linearly independent at $x$.
            \item $\{dg_1|_{x},\ldots,dg_{k}|_{x}\}$ are $\C$-linearly independent at $x$.
        \end{itemize}
\end{enumerate}
\end{definition}
\begin{example}
Any compact regular GC manifold is GH convex.
\end{example}
\begin{definition}\label{main def}
    Let $X^{2n}$ be a regular GC manifold with type $k\in\{1,\ldots,n\}\,.$ Then, $X$ is called a \textit{generalized complex (GC) Stein manifold} if the following conditions hold:

    \vspace{0.2em}
    \begin{enumerate}
     \setlength\itemsep{0.4em}
        \item (GH separability) For every pair of points $x\neq y$ in $X$, there exist a GH function $f\in\mathcal{O}(X)$ such that $f(x)\neq f(y)\,.$ 
        \item (GH convexity) $X$ is GH convex.
        \item (GH regularity) $X$ is GH regular.
    \end{enumerate}
\end{definition}

\begin{remark}\label{rmk stein}
~
\begin{itemize}
 \setlength\itemsep{0.4em}
    \item Condition $(2)$ implies that $X$ admits an exhaustion $K_1\subset K_2\subset\cdots\subset\bigcup^{\infty}_{j=1}K_{j}=X$ by compact $\mathcal{O}(X)$-convex subsets such that $K_{j}\subset K_{j+1}^{o}$ holds for every $j\geq 1$. Here $K_{j}^{o}$ denotes the interior of $K_{j}$.
    \item Condition $(3)$ implies that global GH maps provide local coordinate charts at each point of $X$. 
\end{itemize}
\end{remark}
\begin{example}\label{e1}
 Any Stein manifold is a GC Stein manifold. More generally, given an open set $\Omega_1\subseteq(\R^{2M},\omega_0)$ and a Stein submanifold $\Omega_2\subseteq\C^{N}$, $\Omega_1\times\Omega_2\subseteq\R^{2M}\times\C^{N}$ is a GC Stein manifold.
\end{example}
\begin{remark}
    Observe that for $\mathcal{O}_{X}$ turns into a sheaf of locally constant $\C$-valued functions. Consequently, any GC manifold of type $0$ cannot be a GC Stein manifold. On the other hand, when $k=\frac{1}{2}\dim_{\R}X$,  $\mathcal{O}_{X}$ becomes a sheaf of holomorphic functions, implying that Definition \ref{main def} recovers the classical Stein manifolds in this case.
\end{remark}
\begin{remark}
Since GH maps remain unaffected by any $B$-field transformation, properties such as GH separability, GH convexity, and GH regularity also remain unchanged under a $B$-field transformation. In other words, applying a $B$-field transformation to the GCS of a GC Stein manifold results in another GC Stein manifold.
\end{remark}
\begin{example}\label{e2}
   Consider a Lie group $G$ with its real Lie algebra denoted by $\mathfrak{g}$. Assume that $G$ is equipped with a GCS $\mathcal{J}$ of type $k>0$ and that the exponential map $\exp:\mathfrak{g}\longrightarrow G$ is a diffeomorphism. Then, there exists a GCS $\mathcal{J}'$ of type $k$ on $\mathfrak{g}$ such that the map 
    $$\exp:(\mathfrak{g},\mathcal{J}')\longrightarrow (G,\mathcal{J})$$
    becomes a GH homeomorphism. By applying \cite[Theorem 4.13]{Gua}, it follows that $(\mathfrak{g},\mathcal{J}')$ is GH homeomorphic to $\R^{2n-2k}\times\C^{k}$, up to a $B$-field transformation. Consequently, $(G,\mathcal{J})$ is a GC Stein manifold.
\end{example}
\begin{example}
    Any simply connected nilpotent Lie group admitting a left-invariant GCS (cf. \cite{cav04}) of positive type is a GC Stein manifold.
\end{example}
\begin{remark}\label{foli rmk}
Any regular GC manifold $X$ of type $k>0$ admits a symplectic foliation of complex codimension $k$, which is transversely holomorphic (cf. \cite[Proposition 3.11]{Gua}, \cite[Corollary 3.22]{Gua2}). Let $\widetilde{X}$ denote the leaf space of this foliation. When $X$ is a GC Stein manifold and $\widetilde{X}$ is a smooth manifold, by \cite[Theorem 13.4]{pal24} and Definition \ref{main def}, it follows that $\widetilde{X}$ becomes a Stein manifold. 
\end{remark}
\begin{remark}
It might seem natural to assume that if the leaf space $\widetilde{X}$, in Remark \ref{foli rmk}, is a Stein manifold, then $X$ would naturally inherit the structure of a GC Stein manifold. However, this assumption does not hold true. While $\widetilde{X}$ being a Stein manifold does ensure, by \cite[Theorem 13.4]{pal24}, that $X$ satisfies both GH convexity and GH separability as described in Definition \ref{main def}, it does not guarantee GH regularity. Even in the simplest scenarios, $X$ may fail to exhibit GH regularity. The following example demonstrates this.
\end{remark}
\begin{example}\label{egg1}
    Let $X=X_1\times X_2$, where $X_1$ is a compact connected symplectic manifold of even dimension, and $X_2$ is a Stein manifold. Then $X$ admits the product GCS of type $k$, where $k=\dim_{\C}X_2$, and so $\widetilde{X}=X_2$. By \cite[Theorem 13.4]{pal24}, $X$ satisfies both GH convexity and GH separability.  

    \vspace{0.2em}
    Now if possible let $X$ be GH regular. By Definition \ref{main def}, there exists GH maps $f_1,\ldots,f_{m}$ from $X$ to $\R^2$, where $2m=\dim_{R}X_1$. Consider the GH map 
    $$f=(f_1,\ldots,f_{m}):X\longrightarrow\R^{2m}\,.$$ The natural inclusion map $i:X_1\hookrightarrow X$, defined by $i(x)=(x,y_0)$ for a fixed $y_0\in X_2$, is a GH map. Then the composition $f\circ i:X_1\longrightarrow\R^{2m}$ is also a GH map. Consequently, $f\circ i$ is also a Poisson map (cf. Remark \ref{poi rmk}), and thus it becomes a submersion. Since $X_1$ is compact and $f\circ i$ is open map, it follows that $\img(f\circ i)=\R^{2m}$. However, $\img(f\circ i)$ is also compact, leading to a contradiction. Therefore, $X$ is not GH regular.
\end{example}
 
\subsection{Fr\'{e}chet topology of a coherent sheaf of $\mathcal{O}$-modules}
Suppose $g=(g_1,\ldots,g_{m})$ is a $m$-tuple of $\R^2$-valued smooth functions, defined on a set $K\subseteq X$. Define
 \begin{equation}\label{k-norm}
     ||g||_{K}:=\sup_{x\in K}\big\{|g_{j}(x)|\,|\,1\leq j\leq m\big\}\,.
 \end{equation}
Let $\mathcal{O}(X)$ be the algebra of GH functions on $X$. We use \eqref{k-norm} to topologize $\mathcal{O}(X)$ in the following way: a basis of open sets is provided by the collection: for any compact set $K\subset X$, $f\in\mathcal{O}(X)$ and $\epsilon>0$,
$$U(f,\epsilon,K):=\big\{g\in\mathcal{O}(X)\,\big |\,||g-f||_{K}<\epsilon\big\}\,.$$ This topology, usually called \textit{topology of uniform convergence} on compact subsets, makes $\mathcal{O}(X)$ into a topological complex vector space. Note that the topology of uniform convergence on compact subsets is the same as the compact-open topology since the codomain is a metric space. Additionally, because $X$ is separable, this topology is metrizable.
\vspace{0.5em}

To show that $\mathcal{O}(X)$ is a Fr\'{e}chet space, it is enough to show that $\mathcal{O}(X)$ is a complete metric space. Since $X$ is regular, for $x\in X$, there exists a coordinate chart $\{U,\phi\}$ around $x$ such that
$$\phi: U\longrightarrow (V_1\times V_2)\subset_{\text{open}}\R^{2n-2k}\times\C^{k}\, ,$$ is a GH homeomorphism (see Theorem \ref{darbu thm}), i.e, $\phi$ is a diffeomorphism, GH map and it respects the GCS of both sides upto a $B$-field transformation. Without loss of generality, we can take $V_1$ to be connected. Since GH maps are insensitive to $B$-field transformations, we observe that $$\mathcal{O}_{U}\cong_{\phi}\mathcal{O}_{V_1\times V_2}=\pr_2^{-1}(\mathcal{O}_{V_2})$$ where $\pr_2:\R^{2n-2k}\times\C^{k}\longrightarrow\C^{k}$ is the natural projection map. 

\vspace{0.5em}
Let $\{f_{j}\}$ be a Cauchy sequence in $\mathcal{O}(X)$. Then, for each $j\geq 1$, $\exists!\,\,g_{j}\in\mathcal{O}_{V_2}(V_2)$ such that $f_{j}\circ\phi^{-1}=\pr_2^{*}(g_{j})$ on $V_1\times V_2$. Since $\{f_{j}\}$ are Cauchy in $\mathcal{O}_{U}(U)$ and $\phi$ is GH homeomorphism, $\{g_{j}\}$ are also Cauchy in $\mathcal{O}_{V_2}(V_2)$. By \cite[Theorem 5, p. 158]{rossi65}, $\mathcal{O}_{V_2}(V_2)$ is already a Fr\'{e}chet space and so, there exist $g\in\mathcal{O}_{V_2}(V_2)$ such that $g_{j}\to g$ in uniform convergence topology. This implies $f_{j}\circ\phi^{-1}\to\pr_2^{*}(g)\in\mathcal{O}_{V_1\times V_2}(V_1\times V_2)$. As $\phi$ is a GH homeomorphism, $\pr_2^{*}(g)\circ\phi\in\mathcal{O}_{U}(U)$ and $f_{j}\to\pr_2^{*}(g)\circ\phi\,.$ Now as $\{f_{j}\}$ is also a Cauchy sequence of continuous functions on $X$, it has a continuous limit, namely 
   $$\tilde{f}=\lim_{j\to\infty}f_{j}\quad(\text{in uniform convergence on compact subsets})\,.$$ Since $C_{U}(U)$ is Hausdroff with respect to the topology of uniform convergence on compact subsets, we have $$\tilde{f}=\pr_2^{*}(g)\circ\phi\quad\text{on $U$}\,.$$ Here $C_{X}$ denotes the sheaf of germs of continuous $\C$-valued functions on $X$. This shows that $\tilde{f}\in\mathcal{O}_{x}$ and as $x$ is arbitrary, $\tilde{f}\in\mathcal{O}(X)$. Set
   \begin{equation}\label{ox}  \mathcal{O}^{m}:=\bigoplus_{m}\,\mathcal{O}\,,\quad\text{and}\quad\mathcal{O}^{m}(X):=\bigoplus_{m}\,\mathcal{O}(X)\quad\text{for any $m\in\N$}\,.
   \end{equation} Thus, we proved the following.
\begin{theorem}\label{f spc}
    For any regular GC manifold $X$ and $m\in\N$, $\mathcal{O}^{m}(X)$ is a Fr\'{e}chet space with respect to the topology of uniform convergence on compact subsets. In particular, $\mathcal{O}^{m}(X)$ is complete metric space.
\end{theorem}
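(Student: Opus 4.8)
The plan is to prove that $\mathcal{O}(X)$ is itself a Fr\'echet space and then invoke the fact that a finite direct sum of Fr\'echet spaces, carrying the product topology, is again Fr\'echet; this immediately yields the claim for $\mathcal{O}^m(X)=\bigoplus_m\mathcal{O}(X)$. Thus the core of the argument reduces to showing that $\mathcal{O}(X)$, equipped with the topology of uniform convergence on compact subsets, is a complete, metrizable, locally convex topological vector space.

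First I would address metrizability and local convexity. Since $X$ is a manifold it is $\sigma$-compact, hence hemicompact, so it admits a countable exhaustion $K_1\subset K_2\subset\cdots$ by compact sets. The topology is then generated by the countable family of seminorms $\|\cdot\|_{K_j}$ of \eqref{k-norm}; each such seminorm is sublinear, so the topology is locally convex, and the countability of the generating family gives metrizability (this is exactly the separability remark already recorded in the text). It remains to establish completeness, which is the substantive step.

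For completeness, let $\{f_j\}$ be a Cauchy sequence in $\mathcal{O}(X)$. Because $\mathcal{O}(X)\subseteq C(X,\C)$ and the space $C(X,\C)$ of continuous functions is complete for uniform convergence on compact subsets, the sequence converges to a continuous limit $\tilde f$; the only thing to verify is that $\tilde f$ is GH. This is a local question, and here the generalized Darboux theorem (Theorem \ref{darbu thm}) is decisive: around each $x\in X$ choose a chart $\phi:U\longrightarrow V_1\times V_2\subset\R^{2n-2k}\times\C^{k}$, and use Proposition \ref{prop GH} to identify $\mathcal{O}_U$ with $\pr_2^{-1}(\mathcal{O}_{V_2})$. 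Writing $f_j\circ\phi^{-1}=\pr_2^{*}(g_j)$ with $g_j\in\mathcal{O}_{V_2}(V_2)$, the Cauchy property transports to $\{g_j\}$. Invoking the classical fact that $\mathcal{O}_{V_2}(V_2)$ is a Fr\'echet space \cite[Theorem 5, p.~158]{rossi65}, the $g_j$ converge to some holomorphic $g$, whence the local limit $\pr_2^{*}(g)\circ\phi$ is GH; by the Hausdorff property of $C_U(U)$ it must coincide with $\tilde f|_U$. As $x$ is arbitrary, $\tilde f\in\mathcal{O}(X)$.

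The only genuine obstacle is this last step: promoting the merely continuous limit to a GH function. Everything hinges on the local reduction furnished by Theorem \ref{darbu thm} and Proposition \ref{prop GH}, which collapses the generalized-holomorphic condition to ordinary holomorphy in the transverse variables $z=(z_1,\ldots,z_{k})$ and thereby lets the classical Fr\'echet completeness of spaces of holomorphic functions do the work. Once this is in place, the passage to $\mathcal{O}^m(X)$ is purely formal.
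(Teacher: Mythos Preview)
Your proposal is correct and follows essentially the same approach as the paper: both reduce completeness to a local question via the generalized Darboux chart (Theorem~\ref{darbu thm}), identify $\mathcal{O}_U$ with $\pr_2^{-1}(\mathcal{O}_{V_2})$, and then invoke the classical Fr\'echet completeness of $\mathcal{O}_{V_2}(V_2)$ from \cite[Theorem~5, p.~158]{rossi65} to conclude that the continuous limit is GH. The only minor omission is that the paper explicitly takes $V_1$ connected so that each $f_j\circ\phi^{-1}$ factors through a single $g_j\in\mathcal{O}_{V_2}(V_2)$, but this is an obvious reduction.
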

\begin{corollary}
   $\mathcal{O}^{m}(X)$  $(m\in\N)$ is a Baire space.
\end{corollary}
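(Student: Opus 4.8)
The plan is to obtain this as an immediate consequence of the Baire Category Theorem. Recall that a topological space is a \emph{Baire space} if every countable intersection of dense open sets is dense, equivalently, if no nonempty open set is a countable union of nowhere dense sets. The Baire Category Theorem asserts that every complete metric space enjoys this property. By Theorem \ref{f spc}, the space $\mathcal{O}^{m}(X)$, equipped with the topology of uniform convergence on compact subsets, is a Fr\'{e}chet space and in particular a complete metric space. Applying the Baire Category Theorem therefore yields the claim at once.

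The only point worth emphasizing is which hypothesis of the Baire Category Theorem is being invoked. Since $\mathcal{O}^{m}(X)$ is typically infinite-dimensional as a topological vector space, it is \emph{not} locally compact, so the locally-compact-Hausdorff version of the theorem does not apply; the relevant input is completeness together with metrizability, both of which were established in Theorem \ref{f spc}. No further estimate or construction is needed, and the metric in question is the translation-invariant Fr\'{e}chet metric inducing the compact-open topology.

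Accordingly, there is no substantive obstacle here: all the genuine work---exhibiting the complete metric structure inducing the compact-open topology, for which one lifts Cauchy sequences through the Darboux charts of Theorem \ref{darbu thm} and uses the known completeness of $\mathcal{O}_{V_2}(V_2)$ from \cite{rossi65}---was already carried out in proving Theorem \ref{f spc}. The corollary is thus a purely formal deduction from the completeness asserted there.
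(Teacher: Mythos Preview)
Your argument is correct and matches the paper's own proof, which simply states that the result follows from the Baire category theorem together with Theorem~\ref{f spc}. Your additional remarks about which form of the Baire category theorem is being invoked are fine but go beyond what the paper records.
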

\begin{proof}
    Follows from Baire category theorem and Theorem \ref{f spc}.
\end{proof}
Let $\mathcal{F}$ be a coherent sheaf of $\mathcal{O}$-modules on $X$ and $K\subset X$ is a compact set. Suppose, for some $m\in\N$, $\psi:\mathcal{O}^{m}\longrightarrow\mathcal{F}$ is a sheaf epimorphism such that $\psi$ induces an epimorphism $$\tilde{\psi}:\mathcal{O}^{m}(X)\longrightarrow\mathcal{F}(X)\,.$$ For $f\in\mathcal{F}(X)$, define
    \begin{equation}\label{k-norm1}
     ||f||^{\psi}_{K}:=\inf\big\{||g||_{K}\,\big |\,g\in\mathcal{O}^{m}(X)\,,\,\tilde{\psi}(g)=f\big\}\,. 
    \end{equation}
where $||g||_{K}$ as defined in \eqref{k-norm}. In a similar manner, we can topologize $\mathcal{F}(X)$ with respect to the norms $\big\{||\cdot||^{\psi}_{K}\,\big |\,K\subset X\,\,\text{is compact set}\,\big\}\,.$
Let $\ker(\psi):=\ker(\psi:\mathcal{O}^{m}\longrightarrow\mathcal{F})$ be the kernel subsheaf of $\psi$. Then $\ker(\psi)(X)$ is a closed subspace of $\mathcal{O}^{m}(X)$ and so, it is also a Fr\'{e}chet space. Now the surjectivity of $\tilde{\psi}$ implies that the induced quotient map  
$$\tilde{\tilde{\psi}}:\mathcal{O}^{m}(X)/\ker(\psi)(X)\longrightarrow\mathcal{F}(X)$$ is a homeomorphism where $\mathcal{O}^{m}(X)/\ker(\psi)(X)$ is with quotient topology which is a Fr\'{e}chet space topology. So we prove the following
\begin{theorem}\label{imp thm}
Given any coherent sheaf $\mathcal{F}$ of $\mathcal{O}$-modules on $X$, $\mathcal{F}(X)$ is a Fr\'{e}chet space with respect to the topology defined by the norms $\big\{||\cdot||^{\psi}_{K}\,\big |\, K\subset X\,\,\text{is compact set}\,\big\}\,,$ assuming that the sheaf epimorphism $\psi:\mathcal{O}^{m}\longrightarrow\mathcal{F}$ induces an epimorphism $\tilde{\psi}:\mathcal{O}^{m}(X)\longrightarrow\mathcal{F}(X)$ between global sections.
\end{theorem}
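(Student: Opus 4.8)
The plan is to identify $\mathcal{F}(X)$ with a quotient of the Fr\'{e}chet space $\mathcal{O}^{m}(X)$ by a closed subspace. By Theorem \ref{f spc}, $\mathcal{O}^{m}(X)$ is Fr\'{e}chet in the topology of uniform convergence on compacta, generated by the countable family $\{||\cdot||_{K_{j}}\}$ attached to a compact exhaustion $K_{1}\subset K_{2}\subset\cdots$ of $X$ (such an exhaustion exists since a manifold is $\sigma$-compact). The hypothesis that $\tilde{\psi}:\mathcal{O}^{m}(X)\longrightarrow\mathcal{F}(X)$ is onto means $\tilde{\psi}$ factors through a linear bijection $\tilde{\tilde{\psi}}:\mathcal{O}^{m}(X)/\ker(\psi)(X)\longrightarrow\mathcal{F}(X)$. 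Unravelling \eqref{k-norm1}, for $f=\tilde{\psi}(g)$ the number $||f||^{\psi}_{K}$ is precisely the quotient seminorm $\inf\{||g'||_{K}\,|\,g'-g\in\ker(\psi)(X)\}$ of the coset $[g]$; hence $\tilde{\tilde{\psi}}$ carries the quotient seminorms to the $||\cdot||^{\psi}_{K}$ defining the topology on $\mathcal{F}(X)$, and is a homeomorphism. So everything reduces to showing that $\ker(\psi)(X)$ is closed in $\mathcal{O}^{m}(X)$, for then the quotient of a Fr\'{e}chet space by a closed subspace is again Fr\'{e}chet.

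First I would establish closedness locally. Since $\mathcal{O}$ is coherent --- locally it is the inverse image $\pr_2^{-1}(\mathcal{O}_{\C^{k}})$ of the Oka-coherent structure sheaf of $\C^{k}$ --- and $\mathcal{F}$ is a coherent $\mathcal{O}$-module, the kernel subsheaf $\ker(\psi)\subset\mathcal{O}^{m}$ is coherent. Fixing $x\in X$ and a generalized Darboux chart $\phi:U\longrightarrow V_1\times V_2\subset\R^{2n-2k}\times\C^{k}$ as in the proof of Theorem \ref{f spc}, the identification $\mathcal{O}_{U}\cong\pr_2^{-1}(\mathcal{O}_{V_2})$ shows that sections of $\mathcal{O}^{m}$ over $U$ are constant along $V_1$ and correspond to sections over $V_2$, and that $\ker(\psi)|_{U}$ corresponds to a coherent subsheaf $\mathcal{R}\subset\mathcal{O}^{m}_{V_2}$; moreover uniform convergence on compacta of $U$ matches that on compacta of $V_2$. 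The essential input is the classical closedness theorem for coherent subsheaves in the complex-analytic category (available through \cite{rossi65}): $\mathcal{R}(V_2)$ is a closed subspace of the Fr\'{e}chet space $\mathcal{O}^{m}_{V_2}(V_2)$.

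To globalize, I would take $g_{j}\in\ker(\psi)(X)$ with $g_{j}\to g$ in $\mathcal{O}^{m}(X)$. Restricted to each chart $U$ of a countable Darboux cover, the $g_{j}|_{U}$ correspond to sections of $\mathcal{R}(V_2)$ converging to the section attached to $g|_{U}$; closedness of $\mathcal{R}(V_2)$ forces $g|_{U}\in\ker(\psi)(U)$, and since membership in $\ker(\psi)$ is a stalkwise condition this yields $g\in\ker(\psi)(X)$, so $\ker(\psi)(X)$ is closed. I expect the main obstacle to be precisely this local reduction: one must verify that the coherence of $\ker(\psi)$ survives the product decomposition so that the complex-analytic closedness on the $\C^{k}$-factor applies, and that the convergence genuinely decouples along the symplectic ($V_1$) directions. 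The remaining functional-analytic steps --- that the quotient of a Fr\'{e}chet space by a closed subspace is Fr\'{e}chet, and that the quotient topology is described by the quotient seminorms coinciding with the $||\cdot||^{\psi}_{K}$ --- are routine and complete the proof.
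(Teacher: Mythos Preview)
Your proposal is correct and follows exactly the paper's approach: identify $\mathcal{F}(X)$ with $\mathcal{O}^{m}(X)/\ker(\psi)(X)$ via $\tilde{\tilde{\psi}}$, observe that the $||\cdot||^{\psi}_{K}$ are the quotient seminorms, and conclude by showing $\ker(\psi)(X)$ is closed in the Fr\'{e}chet space $\mathcal{O}^{m}(X)$. The paper's own argument is in fact terser than yours---it simply asserts that $\ker(\psi)(X)$ is closed---so your local reduction to the complex-analytic closedness theorem on the $\C^{k}$-factor via the Darboux charts supplies exactly the detail the paper leaves implicit.
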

\begin{remark}
    Note that the Fr\'{e}chet space topology of $\mathcal{F}$, is independent of the choice of $\psi$ and hence, is canonical.
\end{remark}
\begin{remark}\label{imp rmk}
For any regular GC manifold $X$ of non-zero type, $\mathcal{O}$ is a coherent sheaf as $\mathcal{O}$-modules. Because at any point $x\in X$, using Theorem \ref{darbu thm}, there exists a coordinate chart $\{U,\phi\}$ around $x$ such that
$\phi: U\longrightarrow (V_1\times V_2)\subset_{\text{open}}\R^{2n-2k}\times\C^{k}\, ,$ is a GH homeomorphism. Without loss of generality, let $V_1$ be connected and this imply that, $\mathcal{O}_{U}\cong\pr_2^{-1}\mathcal{O}_{V_2}\,.$ As $\mathcal{O}_{V_2}$ is coherent as $\mathcal{O}_{V_2}$-modules, $\pr_2^{-1}\mathcal{O}_{V_2}$ is also coherent as $\pr_2^{-1}\mathcal{O}_{V_2}$-modules.
\end{remark}

\section{GC Stein manifold and Cartan's Theorems}\label{cartan}
In this section, we provide the GC geometric analog of Cartan's Theorems for GC Stein manifolds.

\subsection{Cartan's theorems for certain domains in $\R^{2M}\times\C^{N}$}
Suppose $\{\phi_{m}\}^{l}_{m=1}$ are $\R$-valued smooth functions on $\C^{N}$ satisfying, for all $i,j\in\{1,\ldots,N\}$ and $z\in\C^{N}$
\begin{equation}\label{*}
    \left|\frac{\partial\phi_{m}}{\partial z_{i}\partial z_{j}}(z)\right|<(6N^2)^{-1}\quad\text{and}\quad\left|\frac{\partial\phi_{m}}{\partial z_{i}\partial\overline{z}_{j}}(z)-\delta_{ij}\right|<(3N^2)^{-1}
\end{equation}
where $\delta_{ij}$ is the Kronecker delta. Define 
\begin{equation}\label{D def}
 D:=D^{'}\times D^{''}   
\end{equation}
where $D^{'}$ is any connected open set in $\R^{2M}$ and $D^{''}=\{z\in\C^{N}\,|\,\phi_{m}<0\,\,,1\leq m\leq l\}\,.$ Then $D$ admits the natural product GCS induced from the product GCS of $\R^{2M}\times\C^{N}$. The leaf space of the symplectic foliation induced by the product GCS of $D$ is just $D^{''}$ which is a complex submanifold of $\C^{N}$. So, we have $\mathcal{O}_{D}=\pr_2^{-1}(\mathcal{O}_{D^{''}})$ (cf. \cite[Theorem 13.4]{pal24}).
\begin{prop}\label{imp prop}
Assume $D$, as defined in \eqref{D def}, is either bounded or only $D^{''}$ is bounded. Then, for $p\geq 1$ and $m\in\N$, we have $$H^{p}(D,\mathcal{O}^{m}_{D})=0\,.$$
\end{prop}
\begin{proof}
We proceed by induction on $m$. Assume $m=1$. Let $\{U_{\alpha}=A_{\alpha}\times B_{\alpha}\}$ be a covering of $D$ such that each $A_{\alpha}\times B_{\alpha}$ is GH homeomorphic to the product of ball $B^{'}\times B^{''}$ in $\R^{2M}\times\C^{N}$ (see Theorem \ref{darbu thm}) and any finite intersection 
$$\bigcap_{\alpha<+\infty}U_{\alpha}=\bigcap_{\alpha<+\infty}A_{\alpha}\times\bigcap_{\alpha<+\infty}B_{\alpha}\,,$$ is GH homeomorphic to the product of open contractible sets $A^{'}\times A^{''}$ in $\R^{2M}\times\C^{N}$. Then, for $p\geq 0$, we get the following diagram: 
\[\begin{tikzcd}[ampersand replacement=\&]
	{H^{p}(\bigcap_{\alpha<+\infty}U_{\alpha},\mathcal{O}_{D})} \& {H^{p}(\bigcap_{\alpha<+\infty}B_{\alpha},\mathcal{O}_{D^{''}})} \\
	{H^{p}(A^{'}\times A^{''},\mathcal{O}_{A^{'}\times A^{''}})} \& {H^{p}(A^{''},\mathcal{O}_{A^{''}})}
	\arrow[Rightarrow, no head, from=1-1, to=1-2]
	\arrow[Rightarrow, no head, from=2-1, to=2-2]
	\arrow["\cong"{marking, allow upside down}, draw=none, from=2-2, to=1-2]
	\arrow["\cong"{marking, allow upside down}, draw=none, from=2-1, to=1-1]
\end{tikzcd}\]
The two rows of the above diagram are the same because $\mathcal{O}_{U}=\pr_2^{-1}(\mathcal{O}_{B})$ and $\mathcal{O}_{A^{'}\times A^{''}}=\pr_2^{-1}(\mathcal{O}_{A^{''}})$ (cf. \cite[Theorem 13.4]{pal24}) where $U=\bigcap_{\alpha<+\infty}U_{\alpha}$ and $B=\bigcap_{\alpha<+\infty}B_{\alpha}$.  Since $H^{p}(A^{''},\mathcal{O}_{A^{''}})=0$ for $p>0$, we have $$H^{p}(\bigcap_{\alpha<+\infty}U_{\alpha},\mathcal{O}_{D})=H^{p}(\bigcap_{\alpha<+\infty}B_{\alpha},\mathcal{O}_{D^{''}})=0\quad\text{for $p>0$}\,.$$ Using Leray's theorem and the fact that $\mathcal{O}_{D}=\pr_2^{-1}(\mathcal{O}_{D^{''}})$, we can show that, for $p\geq 0$  
\[\begin{tikzcd}[ampersand replacement=\&]
	{H^{p}(\{U_{\alpha}\},\mathcal{O}_{D})} \& {H^{p}(\{B_{\alpha}\},\mathcal{O}_{D^{''}})} \\
	{H^{p}(D,\mathcal{O}_{D})} \& {H^{p}(D^{''},\mathcal{O}_{D^{''}})}
	\arrow[Rightarrow, no head, from=1-1, to=1-2]
	\arrow[Rightarrow, no head, from=1-1, to=2-1]
	\arrow[Rightarrow, no head, from=1-2, to=2-2]
\end{tikzcd}\]
Due to the assumption on $D$, $D^{''}$ is bounded. Therefore, using \cite[Proposition 1]{siu68}, $H^{p}(D,\mathcal{O}_{D})$ vanishes for $p>0$. To finish the induction on $m$, we use the long exact sequence of cohomology associated with the following short exact sequence of sheaves 
\[\begin{tikzcd}[ampersand replacement=\&]
	0 \& {\mathcal{O}_{D}} \& {\mathcal{O}^{m}_{D}} \& {\mathcal{O}^{m-1}_{D}} \& {0\,.}
	\arrow[from=1-2, to=1-3]
	\arrow[from=1-3, to=1-4]
	\arrow[from=1-1, to=1-2]
	\arrow[from=1-4, to=1-5]
\end{tikzcd}\] Hence, this concludes the proof.
\end{proof}
\begin{corollary}\label{imp cor}
    For any coherent sheaf $\mathcal{F}$ of $\mathcal{O}_{D}$-modules on $D$ admitting a finite free resolution, $H^{p}(D,\mathcal{F})=0$ for $p>0$. 
\end{corollary}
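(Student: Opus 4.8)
The plan is to derive the vanishing from Proposition \ref{imp prop} by a standard dimension-shifting argument along the given finite free resolution. By hypothesis we are handed an exact sequence of sheaves of $\mathcal{O}_{D}$-modules
$$0\longrightarrow\mathcal{O}^{m_r}_{D}\xrightarrow{\,\phi_r\,}\mathcal{O}^{m_{r-1}}_{D}\longrightarrow\cdots\longrightarrow\mathcal{O}^{m_0}_{D}\xrightarrow{\,\phi_0\,}\mathcal{F}\longrightarrow 0\,.$$
First I would split this into short exact sequences by introducing the syzygy subsheaves $\mathcal{Z}_{j}:=\img(\phi_{j})=\ker(\phi_{j-1})$ for $1\leq j\leq r$, together with the conventions $\mathcal{Z}_0:=\mathcal{F}$ and, since $\phi_r$ is injective, $\mathcal{Z}_r\cong\mathcal{O}^{m_r}_{D}$. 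This produces, for each $0\leq j\leq r-1$, the short exact sequence
$$0\longrightarrow\mathcal{Z}_{j+1}\longrightarrow\mathcal{O}^{m_j}_{D}\longrightarrow\mathcal{Z}_{j}\longrightarrow 0\,.$$

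Next I would establish, by descending induction on $j$, that $H^{p}(D,\mathcal{Z}_{j})=0$ for all $p\geq 1$. The base case $j=r$ is immediate from Proposition \ref{imp prop}, since $\mathcal{Z}_r\cong\mathcal{O}^{m_r}_{D}$. For the inductive step, assume $H^{q}(D,\mathcal{Z}_{j+1})=0$ for all $q\geq 1$ and consider the relevant portion of the long exact cohomology sequence attached to the short exact sequence above,
$$H^{p}(D,\mathcal{O}^{m_j}_{D})\longrightarrow H^{p}(D,\mathcal{Z}_{j})\longrightarrow H^{p+1}(D,\mathcal{Z}_{j+1})\,,$$
valid for each $p\geq 1$. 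The left-hand term vanishes by Proposition \ref{imp prop} applied to the free sheaf $\mathcal{O}^{m_j}_{D}$, and the right-hand term vanishes by the inductive hypothesis; hence $H^{p}(D,\mathcal{Z}_{j})=0$ for $p\geq 1$. Specializing to $j=0$ gives exactly $H^{p}(D,\mathcal{F})=0$ for $p\geq 1$, as desired.

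The argument is purely formal once Proposition \ref{imp prop} is in hand: the only feature of the ambient geometry that enters is the vanishing of $H^{p}(D,\mathcal{O}^{m}_{D})$ for every rank $m\in\N$ and every $p\geq 1$, which is precisely what that proposition supplies, while the finiteness of the resolution is what guarantees that the descending induction terminates after $r$ steps. Accordingly, I do not anticipate a genuine obstacle here; the substantive work resides entirely in Proposition \ref{imp prop}, and this corollary merely propagates its conclusion through the resolution via the long exact sequence in sheaf cohomology.
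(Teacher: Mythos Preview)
Your proof is correct and is exactly the standard dimension-shifting argument the paper has in mind. The paper's own proof consists of the single line ``Follows from Proposition~\ref{imp prop},'' and you have simply made explicit the routine long-exact-sequence induction along the finite free resolution that this sentence encodes.
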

\begin{proof}
    Follows from Proposition \ref{imp prop}.
\end{proof}
Consider the following open subspace of $\R^{2M}\times\C^{N}$,
\begin{equation}\label{br}
  B_{r}:=D^{'}\times\D_{r}  
\end{equation} where $D^{'}\subset\R^{2M}$ is an open ball and $\D_{r}\subset\C^{N}$ is the $r$-ball. Let $G$ be an open neighborhood of the closure of $B_{r}$. Let $\mathcal{F}$ be a coherent sheaf of $\mathcal{O}_{G}$-modules on $G$.
\begin{theorem}\label{imp claim}
$H^{p}(B_{r},\mathcal{F})=0$ for all $p\geq 1$.
\end{theorem}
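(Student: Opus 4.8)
The plan is to reduce the statement to Corollary \ref{imp cor} by producing, on $B_r$ itself, a finite free resolution of $\mathcal{F}$. Indeed, $B_r = D'\times\D_r$ is precisely a domain of the type $D = D'\times D''$ appearing in Proposition \ref{imp prop}, with connected $D'$ and bounded complex factor $D'' = \D_r$ (note $\D_r = \{\,|z|^2 - r^2 < 0\,\}$ satisfies the Hessian bounds \eqref{*} with room to spare). Hence, once we know that $\mathcal{F}|_{B_r}$ fits into an exact sequence $0\to\mathcal{O}^{m_q}_{B_r}\to\cdots\to\mathcal{O}^{m_0}_{B_r}\to\mathcal{F}|_{B_r}\to 0$, Corollary \ref{imp cor} immediately yields $H^{p}(B_r,\mathcal{F}) = 0$ for all $p\geq 1$. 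The entire content therefore lies in constructing such a global resolution, and this is exactly where the hypothesis that $\mathcal{F}$ is defined on a neighborhood $G$ of the \emph{compact} set $\overline{B_r}$ is used: a coherent sheaf on the open ball alone need not admit a global finite free resolution, but the extra room afforded by $G$ does.

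First I would descend the problem to the complex factor. Since $\overline{B_r} = \overline{D'}\times\overline{\D_r}$ is compact and contained in the open set $G$, I may shrink $G$ to a product neighborhood $G_1\times G_2\subseteq G$ with $G_1\supseteq\overline{D'}$ connected and $G_2\supseteq\overline{\D_r}$. On $G_1\times G_2$ we have $\mathcal{O} = \pr_2^{-1}\mathcal{O}_{G_2}$ (cf. \cite[Theorem 13.4]{pal24}), and by Proposition \ref{prop GH} every GH function on a connected product is independent of the $\R^{2M}$-coordinates and holomorphic in the $\C^{N}$-coordinates. Consequently every local finite presentation $\mathcal{O}^{a}\to\mathcal{O}^{b}\to\mathcal{F}\to 0$ has its matrix of relations in $\pr_2^{-1}\mathcal{O}_{G_2}$, so these presentations are pulled back from $G_2$; gluing them (equivalently, taking $\mathcal{G}$ to be the restriction of $\mathcal{F}$ to a slice $\{x_0\}\times G_2$) shows $\mathcal{F}|_{G_1\times G_2}\cong\pr_2^{-1}\mathcal{G}$ for a coherent analytic sheaf $\mathcal{G}$ on $G_2\supseteq\overline{\D_r}$.

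Next I would invoke the classical complex-analytic fact on the $\C^{N}$ side: a coherent analytic sheaf defined on a neighborhood of the (polynomially convex) closed ball $\overline{\D_r}$ admits a finite free resolution on a smaller neighborhood of $\overline{\D_r}$, of length at most $N$ by Hilbert's syzygy theorem combined with the global generation available near a compact polynomially convex set (see \cite{rossi65}). Applying this to $\mathcal{G}$ and restricting to $\D_r$ gives an exact sequence of $\mathcal{O}_{G_2}$-modules over $\D_r$. Since the inverse-image functor $\pr_2^{-1}$ is exact and carries $\mathcal{O}_{G_2}$ to $\mathcal{O}$, applying it produces the desired finite free resolution of $\pr_2^{-1}\mathcal{G}\cong\mathcal{F}$ over $D'\times\D_r = B_r$. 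Feeding this into Corollary \ref{imp cor} completes the argument.

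I expect the main obstacle to be the classical input of the third paragraph, namely the promotion of the everywhere-local finite free resolutions (guaranteed by Oka coherence and regularity of the local rings) to a single global resolution over a neighborhood of the compact ball. Everything else is formal: the descent of the second paragraph is bookkeeping built on $\mathcal{O} = \pr_2^{-1}\mathcal{O}_{G_2}$, and the transport back up by $\pr_2^{-1}$ is harmless because inverse image is exact and preserves freeness. The one point to verify carefully is that the resolution obtained on $G_2$ genuinely covers all of $\overline{\D_r}$ (not merely a point), so that its restriction is valid on $\D_r$ and its pullback on all of $B_r$.
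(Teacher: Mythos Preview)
Your route is genuinely different from the paper's. The paper never descends $\mathcal F$ to the complex factor; instead it first proves (Proposition~\ref{prop1}) that each $H^{p}(B_r,\mathcal F)$ is finite-dimensional via a bumping/Mayer--Vietoris argument using only \emph{local} finite free resolutions of $\mathcal F$ together with Schwartz's theorem, and then runs an induction on $\dim\supp\mathcal F$: multiplication by the pullback of any nonzero entire function $g\in\mathcal O(\C)$ gives $\phi_g:\mathcal F\to\mathcal F$ with $\ker\phi_g$ and $\coker\phi_g$ supported in strictly lower dimension, so $\phi_g^*$ is an isomorphism on $H^p(B_r,\mathcal F)$ by induction; if some class $\alpha\neq 0$ existed, $g\mapsto\phi_g^*(\alpha)$ would inject the infinite-dimensional space of entire functions into $H^p(B_r,\mathcal F)$, contradicting finite-dimensionality. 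Your approach trades these two analytic ingredients for a global finite free resolution obtained via descent plus the classical package (Cartan~A, syzygy, Oka--Grauert) on $\D_r$; once the resolution is in hand, Corollary~\ref{imp cor} finishes in one line.

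The step you label ``bookkeeping'' is the real content of your argument and deserves more than a sentence; contrary to your closing paragraph, it, not the classical input, is where the work hides. That each local presentation of $\mathcal F$ has matrix entries in $\pr_2^{-1}\mathcal O_{G_2}$ shows only that $\mathcal F$ is \emph{locally} isomorphic to something of the form $\pr_2^{-1}\mathcal H$ on sets $U_1\times V$; to conclude $\mathcal F\cong\pr_2^{-1}\mathcal G$ globally you must check that these local pieces glue to a coherent sheaf on $G_2$, i.e., that the transition isomorphisms between the various $\mathcal H$'s on overlapping $V$'s are canonical and satisfy a cocycle condition even when the corresponding $U_1$'s are disjoint. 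This comes down to two facts: $\pr_2^{-1}$ is fully faithful on $\mathcal O_{G_2}$-modules (immediate from $(\pr_2)_*\pr_2^{-1}=\mathrm{id}$, since $G_1$ is connected), and a coherent $\mathcal O$-module that is locally in its essential image is globally so (here you may take $G_1$ to be a ball containing $\overline{D'}$, so simple connectedness kills any monodromy obstruction). Once these are verified your argument is complete; ``restriction to a slice'' does name the right $\mathcal G$, but the isomorphism $\mathcal F\cong\pr_2^{-1}(i_{x_0}^{*}\mathcal F)$ is precisely what needs to be proved, not asserted.
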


\begin{theorem}\label{thm}
Assuming Theorem \ref{imp claim}, $\mathcal{F}$ is generated on $B_{r}$ by $H^0(B_{r},\mathcal{F})$.
\end{theorem}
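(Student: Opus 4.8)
The plan is to deduce Theorem A from Theorem B by the classical Cartan argument, adapted to the foliated structure sheaf $\mc{O}_{B_r}=\pr_2^{-1}(\mc{O}_{\D_r})$, where $\pr_2:\R^{2M}\times\C^N\longrightarrow\C^N$ is the projection. Fix an arbitrary point $x\in B_r$ and put $\tilde{x}=\pr_2(x)\in\D_r$. Since $\mc{O}_x$ is a Noetherian local ring---by Proposition \ref{prop GH} it is the ring of convergent power series $\C\{z_1,\dots,z_N\}$, with maximal ideal $\mf{m}_x$---and $\mc{F}_x$ is a finitely generated $\mc{O}_x$-module, Nakayama's lemma reduces the claim to the following: it suffices to exhibit finitely many global sections of $\mc{F}$ whose germs at $x$ span the fiber $W:=\mc{F}_x/\mf{m}_x\mc{F}_x$, a finite-dimensional complex vector space.

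To produce such sections, I would introduce the ideal sheaf $\mc{I}\subset\mc{O}$ of germs of GH functions vanishing at $x$. By Proposition \ref{prop GH} a GH germ at $x$ vanishes at $x$ precisely when the corresponding germ on the leaf space vanishes at $\tilde{x}$, so $\mc{I}=\pr_2^{-1}(\mf{m}_{\tilde{x}})$, where $\mf{m}_{\tilde{x}}$ is the maximal ideal sheaf of the point $\tilde{x}$ on $\D_r$. Exactly as in Remark \ref{imp rmk}, $\mc{I}$ is then coherent, and hence so is the subsheaf $\mc{I}\mc{F}\subset\mc{F}$. These sheaves are all defined and coherent on the neighborhood $G$ of $\overline{B_r}$. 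Consider the short exact sequence
\begin{equation*}
0\longrightarrow\mc{I}\mc{F}\longrightarrow\mc{F}\longrightarrow\mc{Q}\longrightarrow 0,\qquad \mc{Q}:=\mc{F}/\mc{I}\mc{F}.
\end{equation*}

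The crucial point, and the place where the GC setting genuinely departs from the classical one, is the description of $\mc{Q}$ and the computation of its global sections. Because GH functions are constant along the symplectic leaves, $\mc{I}\mc{F}$ does not collapse $\mc{F}$ to a skyscraper at $x$; instead, writing $\mc{F}\cong\pr_2^{-1}\mc{G}$ locally for a coherent sheaf $\mc{G}$ on $\D_r$, one finds $\mc{Q}\cong\pr_2^{-1}(\mc{G}/\mf{m}_{\tilde{x}}\mc{G})$, i.e. $\mc{Q}$ is the constant sheaf with stalk $W$ on the whole leaf $\ell_x=D'\times\{\tilde{x}\}$, extended by zero to $B_r$. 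Since $D'$ is a ball, $\ell_x$ is connected, and therefore $H^0(B_r,\mc{Q})=W$.

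With this in hand the argument closes quickly. Applying Theorem \ref{imp claim} to the coherent sheaf $\mc{I}\mc{F}$ gives $H^1(B_r,\mc{I}\mc{F})=0$, so the long exact cohomology sequence yields a surjection $H^0(B_r,\mc{F})\twoheadrightarrow H^0(B_r,\mc{Q})=W$. Choosing $s_1,\dots,s_d\in H^0(B_r,\mc{F})$ whose images form a basis of $W$ and applying Nakayama's lemma over $\mc{O}_x$ shows that $(s_1)_x,\dots,(s_d)_x$ generate $\mc{F}_x$; as $x$ was arbitrary, $\mc{F}$ is generated on $B_r$ by $H^0(B_r,\mc{F})$. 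I expect the identification of $\mc{Q}$ to be the only real obstacle: one must verify that, although $\mc{Q}$ is supported on an entire leaf rather than a single point, connectedness of the leaf forces its global sections to reduce to the single fiber $W$, which is exactly the surjectivity-onto-the-fiber input that Nakayama requires.
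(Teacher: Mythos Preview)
Your argument is correct and follows the same Cartan-type route as the paper: introduce the ideal sheaf $\mc I$ at $x$, use Theorem~\ref{imp claim} to kill $H^1(B_r,\mc I\mc F)$, lift sections of the quotient to $H^0(B_r,\mc F)$, and finish with Nakayama. The paper carries out Nakayama by hand via the determinant $\det(\delta_{ij}-g_{ij})$ being a unit, while you invoke the lemma directly; these are equivalent.

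You are in fact more careful than the paper at the key extension step. The paper asserts that $(\mc F/\mc I\mc F)_y=0$ for $y\neq x$, which would make the quotient a skyscraper; but with $\mc O_{B_r}=\pr_2^{-1}\mc O_{\D_r}$ any GH function vanishing at $x$ vanishes along the entire leaf $\ell_x=D'\times\{\tilde x\}$, so the only coherent candidate for $\mc I$ is the leaf ideal $\pr_2^{-1}(\mf m_{\tilde x})$, exactly as you write, and then $\mc Q$ is supported on all of $\ell_x$. Your identification $H^0(B_r,\mc Q)=W$ via the leaf is the right replacement for the paper's skyscraper claim. One small sharpening: from the local description $\mc Q|_{\ell_x}$ is a priori only a \emph{local system} with stalk $W$, and mere connectedness of $\ell_x$ would only give $H^0$ equal to the monodromy invariants; you need $\ell_x=D'$ simply connected (indeed a ball) to conclude the local system is constant and hence $H^0(B_r,\mc Q)\to\mc Q_x=W$ is surjective. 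Since $D'$ is a ball this is immediate, but it is worth saying ``simply connected'' rather than ``connected''.
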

\begin{proof}
Let $\mathcal{I}$ be the ideal sheaf of germs of GH functions vanishing at $x\in B_{r}$. Since $\mathcal{F}$ is coherent, we have an epimorphism 
$$\psi:\mathcal{O}^{m}_{B_{r},x}\longrightarrow\mathcal{F}_{x}\quad\text{for some $m\in\N$}\,.$$ Let $e_{j}:=(0,\ldots,0,1,0,\ldots,0)\in\mathcal{O}^{m}_{B_{r},x}$ where the $1$ is in the $j$-th position, $1\leq j\leq m$. Consider the exact sequence of sheaf homomorphism
\begin{equation}\label{eq1}
 \begin{tikzcd}
	0 & {\mathcal{I}\mathcal{F}} & {\mathcal{F}} & {\mathcal{F}/\mathcal{I}\mathcal{F}} & 0
	\arrow[from=1-2, to=1-3]
	\arrow["\eta", from=1-3, to=1-4]
	\arrow[from=1-4, to=1-5]
	\arrow[from=1-1, to=1-2]
\end{tikzcd}   
\end{equation}
Since $(\mathcal{F}/\mathcal{I}\mathcal{F})_{y}=0$ for $y\neq x$, $\eta\circ\psi(e_{j})\in(\mathcal{F}/\mathcal{I}\mathcal{F})_{x}$ defines a section $s_{j}\in(\mathcal{F}/\mathcal{I}\mathcal{F})(B_{r})$ for all $1\leq j\leq m$. Note that, due to the coherence of $\mathcal{F}$ and $\mathcal{I}$ (by Remark \ref{imp rmk}), $\mathcal{I}\mathcal{F}$ is also a coherent sheaf of $\mathcal{O}_{G}$-modules. Therefore,  by the assumption and using the long exact cohomology sequence of \eqref{eq1}, we conclude that $\eta$ induces an epimorphism
$$\tilde{\eta}:\mathcal{F}(B_{r})\longrightarrow(\mathcal{F}/\mathcal{I}\mathcal{F})(B_{r})\,.$$ So, for each $j$, there exists $f_{j}\in\mathcal{F}(B_{r})$ such that $\tilde{\eta}(f_{j})=s_{j}\,.$ Now, 
$$\eta(\psi(e_{j})-f_{j,x})=s_{j,x}-s_{j,x}=0\,,$$ which implies $\psi(e_{j})-f_{j,x}\in(\mathcal{I}\mathcal{F})_{x}\,.$ Note that,
$$(\mathcal{I}\mathcal{F})_{x}=\mathcal{I}_{x}\,\mathcal{F}_{x}=\mathcal{I}_{x}\,\psi(\mathcal{O}^{m}_{B_{r},x})=\psi(\mathcal{I}_{x}\,\mathcal{O}^{m}_{B_{r},x})\,.$$ So, for some $g_{j}=(g_{j1},\ldots,g_{jm})\in\mathcal{I}_{x}\,\mathcal{O}^{m}_{B_{r},x}\,,$ we have $$\psi(e_{j})-f_{j,x}=\psi(g_{j})\quad\text{for all $1\leq j\leq m$}\,.$$ Since $\det(\delta_{ij}-g_{ij})_{1\leq i,j\leq m}$ is a unit in $\mathcal{O}_{B_{r},x}\,,$ $\{e_{j}-g_{j}\}_{j=1}^{m}$ generates $\mathcal{O}^{m}_{B_{r},x}\,.$
Let $u\in\mathcal{F}_{x}\,.$ By surjectivity of $\psi$, there exists a $v\in\mathcal{O}^{m}_{B_{r},x}$ such that $\psi(v)=u\,.$ Now $v=\sum_{j}\lambda_{j}(e_{j}-g_{j})$ for some $\lambda_{j}\in\mathcal{O}_{B_{r},x}\,.$ Then 
$$u=\psi(\sum_{j}\lambda_{j}(e_{j}-g_{j}))=\sum_{j}\lambda_{j}\psi(e_{j}-g_{j})=\sum_{j}\lambda_{j}f_{j,x}\,.$$ Hence, for any $x\in B_{r}\,,$ $\mathcal{F}_{x}$ is generated by $H^0(B_{r},\mathcal{F})$ and this concludes the proof.
\end{proof}
\subsection{Cartan’s theorems for GC Stein manifolds}
In this subsection, we prove Cartan's Theorems A and B for GC Stein manifold. Let $X^{2n}$ be a GC Stein manifold of type $k>0$.
\begin{prop}\label{prop}
    Let $G$ be a relatively compact open subset of $X$. Then, there exists an injective GH immersion (cf. Definition \ref{GH map}) of\,\,$G$ into\,\, $\R^{2M}\times\C^{N}$ for $N>0$ large enough and $M=\frac{1}{2}(\dim_{\R}X-\typ(X))$. In particular, there exists a GH embedding, that is, GH map and smooth embedding, of\,\,$G$ into\,\, $\R^{2M}\times\C^{N}\,.$ Furthermore, the image is contained in some $D^{'}\times\Delta$ where $D^{'}\subset\R^{2M}$ is a suitable connected open set and $\Delta\subset\C^{N}$ is a polydisc of appropriate radius.
\end{prop}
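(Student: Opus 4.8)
The plan is to imitate the classical construction of a holomorphic embedding of a relatively compact subset of a Stein manifold into $\C^N$, but to carry it out separately in the two kinds of GH components forced on us by the product structure of the target, namely the symplectic-valued maps into $\R^{2M}$ and the complex-valued maps into $\C^N$. The only inputs I need are the three defining properties of a GC Stein manifold together with the relative compactness of $G$; GH convexity will not really be used here. First I would manufacture a local immersion at every point. By GH regularity, for each $x\in\overline{G}$ there are GH maps $f^{x}_1,\dots,f^{x}_{n-k}\colon X\to(\R^2,\omega_0)$ and GH functions $g^{x}_1,\dots,g^{x}_{k}\in\mathcal{O}(X)$ whose differentials are $\R$-linearly independent at $x$; since $\dim_{\R}X=2n=2(n-k)+2k$, this family is a local chart at $x$, and by continuity of the differentials it remains an immersion on an open neighborhood $W_x$. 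A key structural observation is that the complex-valued $g^{x}_j$ are constant along the symplectic leaves (Proposition \ref{prop GH}), so their differentials annihilate all leaf-tangent directions; hence the entire burden of immersing each leaf $\mathcal{L}$ falls on the symplectic maps $f^{x}_i$, while the $g^{x}_j$ only account for the $2k$ transverse (leaf-space) directions.

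Next I would secure global injectivity on $\overline{G}$. By GH separability every pair of distinct points of $\overline{G}$ is separated by some $g\in\mathcal{O}(X)$. The immersion just produced yields an open neighborhood $\mathcal{U}$ of the diagonal in $\overline{G}\times\overline{G}$ on which the assembled map is injective (an immersion is uniformly locally injective on a compact set); its complement $(\overline{G}\times\overline{G})\setminus\mathcal{U}$ is compact, so finitely many separating GH functions distinguish all of its points. Covering $\overline{G}$ by finitely many immersion-neighborhoods $W_{x_1},\dots,W_{x_r}$ and adjoining these finitely many separating functions produces a finite collection of globally defined GH maps on $X$ that is simultaneously an immersion and injective on $\overline{G}$.

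The delicate point, and the step I expect to be the main obstacle, is to package the symplectic-valued components into a single GH map into $\R^{2M}$ of the \emph{prescribed} dimension $2M=2(n-k)$ rather than letting their number grow. Because the target carries the product GCS, a map $F=(F_{\mathrm{symp}},F_{\C})\colon X\to\R^{2M}\times\C^{N}$ is GH (Definition \ref{GH map}) exactly when $F_{\C}$ is a tuple of GH functions and $F_{\mathrm{symp}}\colon X\to(\R^{2M},\omega_0)$ is a Poisson map (Remark \ref{poi rmk}); writing $F_{\mathrm{symp}}=(u_1,v_1,\dots,u_M,v_M)$, the latter forces the Poisson-involution relations $\{u_i,v_j\}=\delta_{ij}$ and $\{u_i,u_j\}=\{v_i,v_j\}=0$. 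The complex directions cause no trouble — the $g_j$ descend to the transverse holomorphic structure (Remark \ref{foli rmk}), behave exactly as in the classical Stein theory, and may be taken in arbitrarily large number, which is why only $N$ is allowed to grow. But an arbitrary collection of Poisson maps to $\R^2$ drawn from different regularity charts will not satisfy the cross-bracket relations, and here, unlike in the complex factor, one is not free to enlarge the symplectic factor. My proposed resolution is two-stage: starting from the local Darboux models (Theorem \ref{darbu thm}), in which each leaf-piece $\overline{G}\cap\mathcal{L}$ is realized as a $2M$-dimensional symplectic submanifold, assemble an auxiliary injective GH immersion into some $\R^{2M'}\times\C^{N}$ with $M'$ large, and then cut the symplectic factor down to $\R^{2M}$ by a generic linear projection onto a symplectic coordinate subspace along its symplectic complement. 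Such a projection is Poisson (hence keeps the map GH), and by a Whitney-type transversality argument a generic choice stays an injective immersion on the $2M$-dimensional, relatively compact leaf-images, while the untouched complex part continues to separate distinct leaves. Making the leafwise involutive system genuinely global — i.e. producing even the auxiliary global Poisson immersion of the leaves from the pointwise data of GH regularity — is where the rigidity of the leafwise symplectic structure must be exploited, and this is the part that will require the most care.

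Finally, localizing the image is essentially free. Since $\overline{G}$ is compact and all components are continuous, $F_{\C}(\overline{G})$ is bounded and therefore contained in a polydisc $\Delta\subset\C^{N}$ of suitable radius, while $F_{\mathrm{symp}}(\overline{G})$ is contained in a bounded connected open set $D'\subset\R^{2M}$. Restricting the resulting injective GH immersion to the relatively compact $G$ and using that a continuous injective map with compact-closure domain is a homeomorphism onto its image, one obtains the desired GH embedding of $G$ into $D'\times\Delta$, as claimed.
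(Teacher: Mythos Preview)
Your plan for the complex factor and for separating points is essentially what the paper does. The gap is in the symplectic factor, and it is a real one.

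First, your proposed auxiliary GH map into $\R^{2M'}\times\C^{N}$ with $M'>M$ cannot exist. A GH map into that product forces its $\R^{2M'}$-component to be Poisson with respect to the nondegenerate $\omega_0^{-1}$ (Remark~\ref{poi rmk}); pushing forward the Poisson bivector of $X$, which has rank $2M=2(n-k)$, can never produce a bivector of rank $2M'>2M$. So there is no room to enlarge the symplectic factor and then project back down: the number of symplectic components is pinned at $M$ from the start, and you yourself note that stacking Poisson maps from different charts does not give a Poisson map to $\R^{2M'}$.

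The paper sidesteps this entirely with a much simpler observation. Fix once and for all the $(n-k)$ maps $f_1,\dots,f_{n-k}:X\to(\R^2,\omega_0)$ supplied by GH regularity at a \emph{single} point. Each $f_j$ is a Poisson map onto a symplectic target, hence a submersion everywhere on $X$; consequently $(f_1,\dots,f_{n-k}):X\to\R^{2n-2k}$ has real rank $2n-2k$ at every point. This one tuple therefore serves as the leaf-direction part of a Darboux chart at \emph{every} point of $X$ (after shrinking the chart domains), and only the transverse holomorphic coordinates $g_{1\alpha},\dots,g_{k\alpha}$ must vary with $\alpha$. From there the argument is exactly the standard one you sketched: take a finite subcover of $\overline{G}$, pile up the finitely many $g_{j\alpha}$'s, adjoin finitely many separating GH functions, and use compactness of $\overline{G}$ to land in a product $D'\times\Delta$. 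No projection trick is needed; the symplectic component is fixed once at the outset.
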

\begin{proof}
 Let $X$ be endowed with an atlas $\{U_{\alpha},\phi_{\alpha}\}$, consisting of globally defined local charts that are GC manifolds. By Theorem \ref{darbu thm}, each map $\phi_{\alpha}: U_{\alpha}\longrightarrow U_{1\alpha}\times U_{2\alpha}$  is of the form
 $$\phi_{\alpha}=\,(f_{1\alpha},\ldots,f_{(n-k)\alpha}\,,\,g_{1\alpha},\ldots,g_{k\alpha})\,,$$ where
 $U_{1\alpha}\subset(\R^{2n-2k},\omega_0)$ and $U_{2\alpha}\subset\C^{k}$ are open subsets, $f_{j\alpha}:U_{\alpha}\longrightarrow\R^2$ $(j=1,\ldots,n-k)$, and $g_{l\alpha}:U_{\alpha}\longrightarrow\C$ $(l=1,\ldots,k)$. Due to GH regularity, we can take 
$f_{j\alpha}:U_{\alpha}\longrightarrow\R^2$ as a globally defined GH map $f_{j\alpha}:X\longrightarrow\R^2$. As noted in Remark \ref{poi rmk}, $f_{j\alpha}$ is a submersion, implying that the rank of the real Jacobian of $\big\{f_{1\alpha},\ldots,f_{(n-k)\alpha}\}$ at each point of $X$ is $2n-2k$. Consequently, we can replace $\big\{f_{1\beta},\ldots,f_{(n-k)\beta}\}$ with $\big\{f_{1\alpha},\ldots,f_{(n-k)\alpha}\}$ in $\phi_{\beta}$, after sufficiently shrinking $U_{\beta}$. In other words, $X$ can be endowed with an atlas $\{U_{\alpha},\phi_{\alpha}\}$, where the charts are GC manifolds, and $$\phi_{\alpha}=\,(f_1,\ldots,f_{n-k}\,,\,g_{1\alpha},\ldots,g_{k\alpha})\quad\text{is globally defined}\,.$$

 By compactness of $\overline{G}$, we can get a finite covering, from the atlas, $\{U_{\alpha},\phi_{\alpha}\}^{l}_{\alpha=1}$ of $\overline{G}$. Fix $x\in\overline{G}$, by GH separability, for all $y\in\overline{G}-\{x\}$, there exist a GH function $f_{xy}\in\mathcal{O}(X)$ such that $f_{xy}(x)\neq f_{xy}(y)$. By continuity of $f_{xy}$, there exist open neighborhoods $x\in A_{xy}$ and $y\in B_{xy}$ such that $f_{xy}(A_{xy})\cap f_{xy}(B_{xy})=\emptyset\,.$
 \vspace{0.5em}
 
 Suppose $A_{xy}\subset U_{\alpha}$ and $B_{xy}\subset U_{\beta}$ for some $\alpha,\beta\in\{1,\ldots,l\}$. We can do this by sufficiently shrinking $A_{xy}$ and $B_{xy}$. The collection $\{A_{xy}\cup B_{xy}\}_{y\in\overline{G}}$ is a covering of $\overline{G}$. Again, we can extract a finite covering $\{A_{xy_{j}}\cup B_{xy_{j}}\}^{m(x)}_{j=1}$. Set $$A_{x}:=\bigcap^{m(x)}_{j=1}A_{xy_{j}}\,.$$ The family $\{A_{x}\}_{x\in\overline{G}}$ is an open covering of $\overline{G}$. We again extract a finite covering $\{A_{x_{j}}\}^{q}_{j=1}\,.$ Define, for all $x\in\overline{G}$,
 \begin{equation}\label{stein map}
 \begin{aligned}
   &F:\overline{G}\longrightarrow\R^{(2n-2k)}\times\C^{\big(lk+\sum^{q}_{j=1}\,m(x_{j})\big)}\,;\\
   &F(x)=\bigg(f_1(x),\ldots,f_{n-k}(x)\,,\,g_{11}(x),\ldots,g_{k1}(x),\ldots,g_{1l}(x),\ldots,g_{kl}(x)\,,\\
   &\quad\quad\quad\quad\,f_{x_1y_1}(x),\ldots,f_{x_1y_{m(x_1)}}(x),\ldots\ldots,f_{x_{q}y_1}(x),\ldots,f_{x_{q}y_{m(x_{q})}}(x)\bigg)\,.\\ 
 \end{aligned}
 \end{equation}
One can see that $F|_{G}$ is a GH map as its coordinate maps are GH maps. Next, we need to show that $F$ is injective.

\vspace{0.5em}
To do this, assume $z\neq w\in\overline{G}$ and suppose $F(z)=F(w)$. Since $\{A_{x_{j}}\}^{q}_{j=1}$ is a covering of $\overline{G}$, there exists $\lambda\in\{1,\ldots,q\}$ such that $z\in A_{x_{\lambda}}=\bigcap^{m(x_{\lambda})}_{j=1}A_{x_{\lambda}y_{j}}\,.$ If $w\in A_{x_{\lambda}y_{j}}$ for some $j\in\{1,\ldots,m(x_{\lambda})\}\,,$ then $w,z$ are in the same chart $U_{\alpha}$ for some $\alpha\in\{1,\ldots,l\}$. This implies $\phi_{\alpha}(z)\neq\phi_{\alpha}(w)$ which is a contradiction as $F(z)=F(w)$.
As a consequence, $w\in B_{x_{\lambda}y_{j}}$ for some $j\in\{1,\ldots,m(x_{\lambda})\}$ as $\{A_{xy_{j}}\cup B_{xy_{j}}\}^{m(x)}_{j=1}$ is an open covering of $\overline{G}$. Hence $f_{x_{\lambda}y_{j}}(z)\neq f_{x_{\lambda}y_{j}}(w)$ which is again a contradiction as $F(z)=F(w)$. Thus $F$ is injective.

\vspace{0.5em}
Set $N=lk+\sum^{q}_{j=1}\,m(x_{j})$. By continuity, $F(G)$ is relatively compact in $\R^{(2n-2k)}\times\C^{N}$. Therefore, it is contained in $D^{'}\times\Delta$ for some connected open set $D^{'}$ and polydisc $\Delta$ of suitable radius.  Since $\{\phi_{\alpha}\}$ are local charts of $G$, $F|_{G}$ is an injective immersion. Since $\overline{G}$ is compact, $F$ and $F|_{G}$ both are also embedding on $\overline{G}$ and $G$, respectively. 
\end{proof}
Before describing $\img{(F|_{G})}$, $F$ as defined in \eqref{stein map}, we need to consider some properties related to GC maps. Let $V^{2n}$ and $W^{2m}$ be two GC vector spaces of type $k_1$ and $k_2$, respectively such that $n<m$. Let $f: V\longrightarrow W$ be an injective GC map. Since GC maps are $B$-transformation invariant, without loss of generality, we can assume 
$$V=V_1\oplus V_2\quad\text{and}\quad W=W_1\oplus W_2\,,$$ where $(V_1,\omega_1),(W_1,\omega_2)$ are isomorphic to $\R^{2n-2k_1},\R^{2m-2k_2}$ with standard symplectic structures and $V_2,W_2$ are isomorphic to $\R^{2k_1},\R^{2k_2}$ with standard complex structures, respectively. Here, we are considering $\R^{2n}$ as a symplectic (respectively, complex) subspace of $\R^{2m}$ for any $n\leq m$ with appropriate (standard) symplectic (respectively, complex) structure. Then we have the following:
\begin{enumerate}
 \setlength\itemsep{0.4em}
    \item $E_{V}=(V_1\otimes\C)\oplus V_2^{0,1}$ and $E_{W}=(W_1\otimes\C)\oplus W_2^{0,1}\,.$
    \item $P_{V}=L(V_1\otimes\C,\omega_1)$ and $P_{W}=L(W_1\otimes\C,\omega_2)\,.$
\end{enumerate} where $P_{V}=\big\{X+\eta\in(V_1\oplus V^{*})\,|\,\eta(X)|_{V_{1}}=\omega(X)\big\}$ and similar meaning to $P_{W}$. Since $f$ is GC linear, by Definition \ref{GC map},
\begin{equation}\label{gc map}
        f(E_{V})\subseteq E_{W}\quad\text{and}\quad f_{\star}P_{V}=P_{W}\,,
\end{equation} where $f_{\star}P_{V}=\big\{f(X)+\eta\in(W\oplus W^{*})\oplus\C\,|\,X+f^{*}\eta\in P_{V}\big\}\,.$ Immediately, from \eqref{gc map}, we can see that $f(V_1\otimes\C)=W_1\otimes\C$ and $f^{*}\omega_2 f=\omega_1$ on $V_1\otimes\C$, and as, $f$ is also injective, we have
$$f|_{V_1\otimes\C}:V_1\otimes\C\longrightarrow W_1\otimes\C\quad\text{is a linear isomorphism}\,.$$  This implies $f(V_2^{0,1})\subseteq W_2^{0,1}$. Therefor, $f(V_2\otimes\C)\subseteq W_2\otimes\C$ as $f$ is a real map. Now the maximal isotropic subspace of $(W\oplus W^{*})\otimes\C\,,$ corresponding to the product GCS, is $$L_{W}=\big\{X-i\omega_{2}\,|\,X\in W_1\otimes\C\big\}\bigoplus\left(W_2^{0,1}\oplus W_2^{1,0*}\right)\,.$$ Then 
\begin{align*}
L_{W}\cap(f(V)\oplus W^{*})\otimes\C&=L_{W}\cap\left\{(W_1\otimes\C)\oplus f(V_2\otimes\C)\oplus (W^{*}\otimes\C)\right\}\\
&=\big\{X-i\omega_{2}\,|\,X\in W_1\otimes\C\big\}\oplus\left(f(V_2^{0,1})\oplus W_2^{1,0*}\right)\,.
\end{align*}
Consider the following subspace of $(W\oplus W^{*})\otimes\C$
\begin{align*}
    L_{f(V)}&=\left\{X+\eta|_{f(V)\otimes\C}\,|\,(X+\eta)\in L_{W}\cap(f(V)\oplus W^{*})\otimes\C\right\}
    \stepcounter{equation}\tag{\theequation}\label{myeq1}\\
    &=\big\{X-i\omega_{2}\,|\,X\in W_1\otimes\C\big\}\oplus\left(f(V_2^{0,1})\oplus f(V_2^{1,0})^{*}\right)\,.
\end{align*}
Observe that $L_{f(V)}\cap\overline{L_{f(V)}}=\emptyset$. Hence, $f(V)$ is a GC subspace of $W$ by \cite[Definition 3.2]{ben04} with the GCS, $L_{f(V)}$, induced from $L_{W}$. This shows that, after a $B$-transformation, $f$ is an isomorphism between the linear GC spaces $(V,L_{V})$ and $(f(V),L_{f(V)})$ where $L_{V}$ is defined similarly as $L_{W}\,.$ Thus, we have proved the following.
\begin{prop}\label{imp prop1}
 Let $f: V\longrightarrow W$ be an injective GC linear map between two GC linear spaces. Let $\typ(V)$ denote the type of GCS on $V$. Then 

 \begin{enumerate}
 \setlength\itemsep{0.4em}
     \item $f(V)$ is a GC subspace of $W$.
     \item $f: V\longrightarrow f(V)$ induces an isomorphism of linear GC structures where the GC structure on $f(V)$ is a $B$-transformation of the GC structure of $f(V)$ as a GC subspace of $W\,.$ 
     \item  $\typ(W)-\typ(V)=\frac{1}{2}(\dim_{\R}(W)-\dim_{\R}(V))\,.$
 \end{enumerate}
\end{prop}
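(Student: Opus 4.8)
\textbf{Proof strategy for Proposition \ref{imp prop1}.}
The plan is to reduce the problem to the normal-form computation already carried out in the paragraph preceding the statement, and then verify each of the three claims directly from that normal form. Since GC maps and the GC subspace structure are invariant under $B$-field transformations, I would begin by applying a $B$-transformation on both $V$ and $W$ so that the two GC linear spaces split as $V = V_1 \oplus V_2$ and $W = W_1 \oplus W_2$, where $(V_1,\omega_1)$, $(W_1,\omega_2)$ carry standard symplectic structures (modeling $\R^{2n-2k_1}$, $\R^{2m-2k_2}$) and $V_2$, $W_2$ carry standard complex structures (modeling $\R^{2k_1}$, $\R^{2k_2}$). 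This is exactly the setup in which $E_V$, $E_W$, $P_V$, $P_W$ were computed above, so I may take those formulas as given.

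For claim $(1)$, I would invoke the defining conditions of a GC map from Definition \ref{GC map}, namely $f(E_V) \subseteq E_W$ and $f_\star P_V = P_W$. The key consequences to extract are that $f$ restricts to a linear isomorphism $f|_{V_1 \otimes \C} : V_1 \otimes \C \longrightarrow W_1 \otimes \C$ intertwining the symplectic forms ($f^*\omega_2 f = \omega_1$), and that $f(V_2^{0,1}) \subseteq W_2^{0,1}$, hence $f(V_2 \otimes \C) \subseteq W_2 \otimes \C$ since $f$ is real. With these in hand, the explicit computation of $L_{f(V)}$ from \eqref{myeq1} shows that $L_{f(V)} \cap \overline{L_{f(V)}} = 0$, because the symplectic summand contributes nothing to the intersection (the $X - i\omega_2$ and $X + i\omega_2$ pieces meet only at $0$) and the complex summand $f(V_2^{0,1}) \oplus f(V_2^{1,0})^*$ is a genuine complex-type isotropic whose intersection with its conjugate vanishes by injectivity of $f$ on $V_2$. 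By Definition \ref{sub def}, this is precisely the statement that $f(V)$ is a GC subspace of $W$.

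For claim $(2)$, the same normal form displays $f : (V, L_V) \longrightarrow (f(V), L_{f(V)})$ as carrying the symplectic part isomorphically to the symplectic part and the complex part isomorphically to the complex part, so after undoing the initial $B$-transformation the map is an isomorphism of linear GC structures, with the structure on $f(V)$ differing from its induced-subspace structure only by that $B$-transformation. Claim $(3)$ is then a dimension count: since $f$ is injective and restricts to an isomorphism $V_1 \otimes \C \cong W_1 \otimes \C$, the symplectic dimensions agree, and the type is governed entirely by the complex directions, giving $\typ(W) - \typ(V) = \dim_\C W_2 - \dim_\C V_2 = \tfrac{1}{2}(\dim_\R W - \dim_\R V)$.

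I expect the main obstacle to be claim $(1)$, specifically the careful verification that $L_{f(V)} \cap \overline{L_{f(V)}} = 0$ rather than merely being isotropic. The subtlety is that one must confirm the cross terms between the symplectic summand $\{X - i\omega_2 \mid X \in W_1 \otimes \C\}$ and the complex summand do not conspire to produce a nonzero element lying in both $L_{f(V)}$ and its conjugate; this is where the precise form of \eqref{myeq1} and the injectivity of $f$ on each factor are essential, and it is the step I would write out most explicitly.
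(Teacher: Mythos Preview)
Your proposal is correct and follows essentially the same approach as the paper. Indeed, the paper's own proof is simply ``Follows from the preceding discussion,'' and that preceding discussion is precisely the normal-form reduction and explicit computation of $L_{f(V)}$ via \eqref{myeq1} that you outline; your plan to write out the verification of $L_{f(V)} \cap \overline{L_{f(V)}} = 0$ more carefully than the paper (which simply asserts it) is a reasonable elaboration rather than a departure.
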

For a regular GC manifold, vector spaces are replaced with tangent bundles and cotangent bundles. Then the same preceding discussion follows. One caution is that whenever we have a GH embedding between GC manifolds, $f: Y\longrightarrow Z$, the set $L_{f(Y)}$, as defined in \eqref{myeq1}, will be a smooth maximal isotropic distribution of $(TZ\oplus T^{*}Z)\otimes\C$ in general. But, if both $Y, Z$ are regular then $L_{f(Y)}$ becomes a subbundle (see Remark \ref{rmk sub}). Therefore, we get the following    
\begin{theorem}\label{imp thm1}
  Let $f: Y\longrightarrow Z$ be a GH map between two regular GC manifolds which is also an embedding. Let $\dim_{\R}Y<\dim_{\R}Z$. Then 

 \begin{enumerate}
 \setlength\itemsep{0.4em}
     \item $f(Y)$ is an embedded GC manifold of $Z$.
     \item $f: Y\longrightarrow f(Y)$ induces a GH homeomorphism where the GC structure on $f(Y)$ is a $B$-transformation of the GC structure of $f(Y)$ as a GC submanifold of $Z$, that is, the GC structure induced by $f$ on $f(Y)$ and $f(Y)$ as a GC submanifold are equivalent up to a $B$-transformation. 
     \item  $\typ(Z)-\typ(Y)=\frac{1}{2}(\dim_{\R}(Z)-\dim_{\R}(Y))$ where $\typ(Y)$ is defined as in Definition \ref{def:type}.
 \end{enumerate}   
\end{theorem}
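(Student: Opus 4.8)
\emph{Proof proposal.} The plan is to deduce this manifold-level statement from the pointwise linear statement already established in Proposition \ref{imp prop1}, and then to promote the fiberwise data to smooth bundle data using the regularity hypothesis together with the naturality of the Courant bracket.

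First I would localize to tangent spaces. Since $f$ is a GH map, Definition \ref{GH map} guarantees that for every $y\in Y$ the differential $(f_{*})_{y}\colon T_{y}Y\longrightarrow T_{f(y)}Z$ is a GC linear map, and since $f$ is an embedding this differential is injective. Hence Proposition \ref{imp prop1} applies fiberwise with $V=T_{y}Y$ and $W=T_{f(y)}Z$. Part (1) of that proposition produces, at each point, a maximal isotropic subspace $L_{f(Y),y}$ — precisely the fiber of the distribution \eqref{ev mfld} — that is a GC subspace, i.e.\ satisfies $L_{f(Y),y}\cap\overline{L_{f(Y),y}}=0$; this is the second defining condition in Definition \ref{sub def mfld}. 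Part (3) of the proposition gives the pointwise identity $\typ(T_{f(y)}Z)-\typ(T_{y}Y)=\tfrac12(\dim_{\R}Z-\dim_{\R}Y)$, and since $Y$ and $Z$ are regular both types are constant, so this holds globally and yields conclusion (3) at once.

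Next I would establish that $L_{f(Y)}$ is a subbundle. By Proposition \ref{imp prop1}(2) each $(f_{*})_{y}$ is an isomorphism of linear GC structures onto $L_{f(Y),y}$ up to a $B$-transformation, so the type of $f(Y)$ with its induced structure equals $\typ(Y)$ at every point; in particular the induced type is constant and $f(Y)$ is regular. Remark \ref{rmk sub} then shows that $E_{f(Y)}=\rho|_{f(Y)}(L_{f(Y)})$, and hence $L_{f(Y)}$ itself, is a smooth subbundle of $(Tf(Y)\oplus T^{*}f(Y))\otimes\C$, giving the first defining condition of Definition \ref{sub def mfld}. For involutivity and for conclusion (2), I would transport the integrable structure along $f$: viewing $f\colon Y\longrightarrow f(Y)$ as a diffeomorphism, the bundle map $\begin{pmatrix} f_{*} & 0 \\ 0 & (f^{-1})^{*}\end{pmatrix}$ preserves the Courant bracket \eqref{bracket}, so the pushforward of the involutive bundle $L_{Y}$ is an involutive maximal isotropic distribution on $f(Y)$ which, by construction, makes $f$ a GH homeomorphism onto its image. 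The fiberwise statement of Proposition \ref{imp prop1}(2) identifies this pushforward with $L_{f(Y)}$ up to a $B$-transformation — exactly the content of conclusion (2) — and, since a $B$-transformation by a closed form preserves Courant involutivity, involutivity descends to $L_{f(Y)}$, completing Definition \ref{sub def mfld}.

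The main obstacle I anticipate is this last step: one must check that the fiberwise $B$-transformations relating the pushforward structure to the submanifold structure $L_{f(Y)}$ patch into a globally defined smooth two-form on $f(Y)$, and that this two-form is closed, so that involutivity genuinely transfers rather than merely holding up to a twisting three-form. I expect to handle this by working in the generalized Darboux coordinates of Theorem \ref{darbu thm}, in which both $L_{Y}$ and $L_{f(Y)}$ are placed in product normal form and the relevant $B$-field can be read off explicitly and shown to be closed; the smoothness and subbundle claims then follow from the smoothness of $f$ and the constancy of the type.
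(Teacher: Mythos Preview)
Your proposal is correct and follows essentially the same route as the paper: the paper's proof is simply ``Follows from Proposition \ref{imp prop1},'' with the preceding paragraph noting that the fiberwise linear statement lifts to bundles because regularity of both $Y$ and $Z$ forces $L_{f(Y)}$ to be a subbundle via Remark \ref{rmk sub}. You are more careful than the paper in isolating the involutivity and global-closedness-of-$B$ issues; the paper subsumes these under ``the same preceding discussion follows,'' so your Darboux-chart verification is a reasonable way to make that honest.
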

\begin{proof}
    Follows from Proposition \ref{imp prop1}.
\end{proof}
Let $y\in f(Y)\subset Z$. By Theorem \ref{imp thm1}, the leaf directions of the induced symplectic foliations (see Remark \ref{foli rmk}) on $f(Y)$ and $Z$ are locally preserved. Applying Weinstein's proof of the Darboux normal coordinate theorem for a family of symplectic structures (cf. \cite{mcduff}), we obtain a local diffeomorphism around $y\in Z$ into $\R^{2\dim_{\R}Z}$ that preserves leaves. Restricting this diffeomorphism to $f(Y)$, we similarly get a leaf-preserving local diffeomorphism around $y\in f(Y)$ into $\R^{2\dim_{\R}f(Y)}$. Since $f(Y)$ is an embedded submanifold, in a sufficiently small neighborhood around $y$, the compatibility of the GCS, up to $B$-transformations, on both $f(Y)$ and $Z$ can be described. More precisely, since $f(Y)$ is both a GC manifold and an embedded submanifold, using Theorem \ref{darbu thm}, there exists a coordinate neighborhood $U$ around $y\in Z$ such that the following diagram commutes
\[\begin{tikzcd}[ampersand replacement=\&]
	{y\in U} \&\& {\R^{dim_{\R}(Z)-2\typ(Z)}\times\C^{\typ(Z)}} \\
	\\
	{y\in U\cap f(Y)} \&\& {\R^{dim_{\R}(Y)-2\typ(Y)}\times\C^{\typ(Y)}}
	\arrow["\cong", from=1-1, to=1-3]
	\arrow[hook, from=3-1, to=1-1]
	\arrow["\cong"', from=3-1, to=3-3]
	\arrow[hook', from=3-3, to=1-3]
\end{tikzcd}\] where horizontal arrows are GH homeomorphism and vertical arrows are natural inclusion maps. Notice that, on the $\R$-component, the inclusion map is identity, and the map $$\mathcal{O}_{\C^{\typ(Z)}}\longrightarrow i_{*}\mathcal{O}_{\C^{\typ(Y)}}\,,$$ induced by the inclusion, is onto morphism of sheaves on $\C^{\typ(Z)}$. We know that  $$\mathcal{O}_{\R^{dim_{\R}(Z)-2\typ(Z)}\times\C^{\typ(Z)}}=\pr_2^{-1}\mathcal{O}_{\C^{\typ(Z)}}\,,$$ and  similarly for $\mathcal{O}_{\R^{dim_{\R}(Y)-2\typ(Y)}\times\C^{\typ(Y)}}\,.$ This implies that the map 
$$\mathcal{O}_{Z}(U)\cong\mathcal{O}_{\R^{dim_{\R}(Z)-2\typ(Z)}\times\C^{\typ(Z)}}\longrightarrow i_{*}\mathcal{O}_{\R^{dim_{\R}(Y)-2\typ(Y)}\times\C^{\typ(Y)}}\cong i_{*}\mathcal{O}_{f(Y)}(U) \,,$$ again induced by the inclusion, is also onto. Therefore, we have the following
\begin{corollary}\label{imp cor1}
Let $f$ be a GH map as in Theorem \ref{imp thm1}. 
Then, the morphism of sheaves over $Z$, 
$$i^{\#}:\mathcal{O}_{Z}\longrightarrow i_{*}\mathcal{O}_{f(Y)}$$ 
induced by the inclusion map $i:f(Y)\hookrightarrow Z$ is onto and its kernel sheaf, $\ker(i^{\#})$, is called the ideal sheaf for $f(Y)$ on $Z$.  Moreover, we have the following isomorphism. $$\mathcal{O}_{Y}\cong\mathcal{O}_{f(Y)}=\mathcal{O}_{Z}|_{f(Y)}\,.$$ Here $f(Y)$ is considered as a GC submanifold of $Z$.
\end{corollary}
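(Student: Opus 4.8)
The plan is to reduce the statement to a purely local computation of structure sheaves in the normal coordinates furnished by the generalized Darboux theorem (Theorem \ref{darbu thm}), and then to recognize the resulting map as the classical restriction of holomorphic functions onto a coordinate subspace. First I would fix a point $y\in f(Y)\subset Z$ and note that both the surjectivity of a sheaf morphism and the identification of its kernel are local questions, so it suffices to argue in an arbitrarily small neighborhood of $y$. By Theorem \ref{imp thm1}, $f(Y)$ is an embedded GC submanifold of $Z$ and $f:Y\longrightarrow f(Y)$ is a GH homeomorphism once one allows a $B$-transformation; since GH functions (hence the sheaf $\mathcal{O}$) are insensitive to $B$-field transformations, this already supplies the first isomorphism $\mathcal{O}_{Y}\cong\mathcal{O}_{f(Y)}$. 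It then remains to produce compatible local models for $Z$ and for $f(Y)$.

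The crux is to construct a single local diffeomorphism around $y$ carrying $Z$ into the product $\R^{d}\times\C^{\typ(Z)}$ and restricting to carry $f(Y)$ into $\R^{d}\times\C^{\typ(Y)}$. The key numerical input is item $(3)$ of Theorem \ref{imp thm1}, namely $\typ(Z)-\typ(Y)=\tfrac{1}{2}(\dim_{\R}Z-\dim_{\R}Y)$, which forces the symplectic-leaf dimensions $\dim_{\R}Z-2\typ(Z)$ and $\dim_{\R}Y-2\typ(Y)$ to coincide, say equal to $d$. Thus the leaves of the symplectic foliation of $f(Y)$ are open in the ambient leaves, so the inclusion must be modelled on the identity in the symplectic directions together with a linear coordinate inclusion $\C^{\typ(Y)}\hookrightarrow\C^{\typ(Z)}$ in the transverse holomorphic directions. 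To realize this concretely I would apply Weinstein's relative Darboux theorem for a family of symplectic structures (cf. \cite{mcduff}) to obtain a leaf-preserving chart on $Z$ that simultaneously straightens $f(Y)$, exactly as recorded in the commutative diagram preceding the statement.

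With these models in hand I would invoke Proposition \ref{prop GH}: on a regular GC manifold the GH functions depend only on the transverse holomorphic coordinates, so locally $\mathcal{O}_{Z}=\pr_2^{-1}\mathcal{O}_{\C^{\typ(Z)}}$ and $\mathcal{O}_{f(Y)}=\pr_2^{-1}\mathcal{O}_{\C^{\typ(Y)}}$. Under these identifications $i^{\#}$ becomes $\pr_2^{-1}$ of the classical restriction map $\mathcal{O}_{\C^{\typ(Z)}}\longrightarrow i_{*}\mathcal{O}_{\C^{\typ(Y)}}$ onto the coordinate subspace $\{z_{\typ(Y)+1}=\cdots=z_{\typ(Z)}=0\}$; the latter is the familiar surjection of structure sheaves whose kernel is the coherent ideal generated by $z_{\typ(Y)+1},\ldots,z_{\typ(Z)}$. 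Applying $\pr_2^{-1}$ preserves surjectivity, identifies $\ker(i^{\#})$ with the ideal sheaf of $f(Y)$, and yields $\mathcal{O}_{f(Y)}=\mathcal{O}_{Z}|_{f(Y)}$. The main obstacle is the second step: verifying that the symplectic and the transverse-holomorphic normalizations can be effected by one and the same diffeomorphism, and here the dimension matching in Theorem \ref{imp thm1}$(3)$ together with the leaf preservation built into Weinstein's construction is precisely what makes the two straightenings compatible.
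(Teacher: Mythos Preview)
Your proposal is correct and follows essentially the same route as the paper: reduce to a local model via the generalized Darboux theorem, use item~(3) of Theorem~\ref{imp thm1} to conclude that the symplectic leaf dimensions of $f(Y)$ and $Z$ agree so that the inclusion is the identity on the $\R$-component, invoke Weinstein's relative Darboux normalization to obtain a single chart straightening both, and then identify $i^{\#}$ with $\pr_2^{-1}$ of the classical restriction $\mathcal{O}_{\C^{\typ(Z)}}\to i_*\mathcal{O}_{\C^{\typ(Y)}}$. This is exactly the argument the paper sketches in the discussion preceding the corollary.
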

\begin{proof}
    Follows from the preceding discussion and Theorem \ref{imp thm1}.
\end{proof}
As, $F(\overline{G})$ is compact and $F$ is an embedding, $F(G)$ can be considered as an embedded submanifold of some $B_{r}=D^{'}\times\D_{r}$, as defined in \eqref{br}, where $D^{'}\subset\R^{2M}$ is an open ball. Let $B_{r}$ be the GC manifold with the product GCS induced from $\R^{2M}\times\C^{N}\,.$ Then, using Proposition \ref{prop}, we proved the following by Theorem \ref{imp thm1} and Corollary \ref{imp cor1}. 
\begin{theorem}\label{imp thm2}
Let $G$ be a relatively compact open subset of a GC Stein manifold $X$. Then, there exists a GH embedding, say $F$, (cf. Definition \ref{GH map}) of $G$ into some product of open balls
 $B_{r}\subset\R^{2M}\times\C^{N}$ for $N>0$ large enough and $M=\frac{1}{2}(\dim_{\R}X-\typ(X))$. Moreover, $G$ becomes a embedded GC submanifold of $B_{r}$ for suitable $r>0$, and the morphism, induced by $F$, $$F^{\#}:\mathcal{O}_{B_{r}}\longrightarrow F_{*}\mathcal{O}_{G}\,,$$ is an onto morphism of sheaves.
\end{theorem}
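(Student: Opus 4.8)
The plan is to assemble Theorem \ref{imp thm2} from pieces already in place, treating it essentially as a packaging of Proposition \ref{prop}, Theorem \ref{imp thm1}, and Corollary \ref{imp cor1}. First I would invoke Proposition \ref{prop} to produce the GH embedding $F$ of $G$ into $\R^{2M}\times\C^{N}$ with $M=\frac{1}{2}(\dim_{\R}X-\typ(X))$, recalling from that proposition that $F(G)$ is relatively compact and hence contained in a product $D'\times\Delta$ with $D'\subset\R^{2M}$ an open ball and $\Delta\subset\C^{N}$ a polydisc. After possibly enlarging the radii, I would arrange that $F(G)\subset B_{r}=D'\times\D_{r}$ as defined in \eqref{br}, and equip $B_{r}$ with the product GCS inherited from $\R^{2M}\times\C^{N}$ (cf. Example \ref{prdct gcs eg}).

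Next I would verify the dimension hypothesis of Theorem \ref{imp thm1}, namely $\dim_{\R}G<\dim_{\R}B_{r}$. This is immediate: by the regularity and GH-regularity of the GC Stein manifold $X$, one has $\typ(B_{r})=N>\typ(X)=k$, and the embedding only adds extra complex directions, so $\dim_{\R}G=2n<2M+2N=\dim_{\R}B_{r}$ whenever $N$ is chosen large enough, which Proposition \ref{prop} already guarantees. With the embedding $F:G\longrightarrow B_{r}$ in hand and the strict dimension inequality satisfied, Theorem \ref{imp thm1} applies directly: part (1) yields that $F(G)$ is an embedded GC submanifold of $B_{r}$, and part (2) gives that $F:G\longrightarrow F(G)$ is a GH homeomorphism, with the GC structure induced by $F$ agreeing up to a $B$-field transformation with the GC submanifold structure $F(G)$ inherits from $B_{r}$. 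Since GH functions are insensitive to $B$-field transformations, this discrepancy is harmless for the sheaf-theoretic conclusion.

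Finally, for the statement about sheaves I would apply Corollary \ref{imp cor1} with $Y=G$, $Z=B_{r}$, and $f=F$. That corollary states precisely that the morphism $i^{\#}:\mathcal{O}_{Z}\longrightarrow i_{*}\mathcal{O}_{f(Y)}$ induced by the inclusion $i:F(G)\hookrightarrow B_{r}$ is an onto morphism of sheaves over $Z$, together with the identification $\mathcal{O}_{G}\cong\mathcal{O}_{F(G)}=\mathcal{O}_{B_{r}}|_{F(G)}$. Composing the pushforward along $i$ with the GH homeomorphism $F:G\xrightarrow{\sim}F(G)$ translates this into the asserted surjection $F^{\#}:\mathcal{O}_{B_{r}}\longrightarrow F_{*}\mathcal{O}_{G}$, completing the proof.

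I do not expect any genuine obstacle here, since the substantive work is already isolated in the earlier results; the only point requiring minor care is bookkeeping of the $B$-field transformations. The mild subtlety is ensuring that the GC submanifold structure that Theorem \ref{imp thm1} attaches to $F(G)$ is the one relevant to Corollary \ref{imp cor1}, i.e.\ that the $B$-transformation ambiguity does not disturb the sheaf $\mathcal{O}_{F(G)}$; but this follows from the repeatedly used fact that GH functions, and hence the structure sheaf $\mathcal{O}$, are invariant under $B$-field transformations, so the identifications are canonical at the level of structure sheaves.
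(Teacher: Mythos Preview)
Your proposal is correct and follows essentially the same route as the paper: the paper also derives Theorem \ref{imp thm2} directly by combining Proposition \ref{prop} (to obtain the GH embedding $F$ with $F(\overline{G})$ compact, hence $F(G)\subset B_{r}$), Theorem \ref{imp thm1} (to conclude $F(G)$ is an embedded GC submanifold of $B_{r}$), and Corollary \ref{imp cor1} (for the surjectivity of $F^{\#}$). Your additional remarks on the dimension check and the $B$-field bookkeeping are consistent with the paper's treatment and introduce no discrepancy.
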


\begin{center}
\textit{From now on, we will assume Theorem \ref{imp claim} for the rest of this subsection.}  
\end{center}
\begin{theorem}\label{thm2}
Suppose $\mathcal{F}$ is a coherent sheaf of $\mathcal{O}$-modules. Let $G$ be a relatively compact open subset of $X$. Then, $\mathcal{F}$ is generated on $G$ by $H^0(G,\mathcal{F})$.   
\end{theorem}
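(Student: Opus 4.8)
The plan is to reduce the statement to Cartan's Theorem A on a product of balls (Theorem \ref{thm}) by transporting $\mathcal F$ through the embedding supplied by Theorem \ref{imp thm2}. Since $\mathcal F$ is coherent on all of $X$ and $G\subset\subset X$, I would first fix an intermediate relatively compact open set $G_1$ with $\overline G\subset G_1\subset\subset X$ and apply Theorem \ref{imp thm2} to $G_1$. This yields a GH embedding $F\colon G_1\hookrightarrow B_r\subset\R^{2M}\times\C^{N}$ realizing $F(G_1)$ as an embedded GC submanifold, together with the surjection of sheaves $F^{\#}\colon\mathcal O_{B_r}\to F_{*}\mathcal O_{G_1}$ and the identification $\mathcal O_{G_1}\cong\mathcal O_{F(G_1)}=\mathcal O_{B_r}|_{F(G_1)}$ from Corollary \ref{imp cor1}. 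Because $F(\overline G)$ is compact and $\overline G\subset\subset G_1$, I can then choose a product of balls $B_{r_0}$ with $F(\overline G)\subset B_{r_0}$ and $\overline{B_{r_0}}\subset B_r$ so small that $F(G_1)\cap B_{r_0}$ is a closed GC submanifold of (a neighborhood of) $\overline{B_{r_0}}$; this is the device that circumvents the fact that $F(G_1)$ itself need not be closed in $B_r$.

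Next I would push the sheaf forward, setting $\hat{\mathcal F}:=F_{*}(\mathcal F|_{G_1})$, regarded as a sheaf of $\mathcal O_{B_r}$-modules via $F^{\#}$. The heart of the argument is to verify that $\hat{\mathcal F}$ is a coherent sheaf of $\mathcal O_{B_r}$-modules on a neighborhood of $\overline{B_{r_0}}$: away from $\overline{F(G_1)}$ the stalks vanish, while on $F(G_1)$ I would use Corollary \ref{imp cor1}, pushing a local finite presentation $\mathcal O_{F(G_1)}^{a}\to\mathcal O_{F(G_1)}^{b}\to\mathcal F\to 0$ over $\mathcal O_{F(G_1)}=\mathcal O_{B_r}/\ker(F^{\#})$ forward to a local presentation $\mathcal O_{B_r}^{a}\to\mathcal O_{B_r}^{b}\to\hat{\mathcal F}\to 0$, using surjectivity of $F^{\#}$ together with coherence of $\mathcal O_{B_r}$ (Remark \ref{imp rmk}). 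Since $F(G_1)\cap B_{r_0}$ is closed, no boundary stalks interfere. This is the analog of the classical fact that the direct image of a coherent sheaf under a closed embedding is coherent, and it is the step I expect to be the main obstacle.

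With coherence in hand I would apply Theorem \ref{thm} (Cartan's Theorem A on $B_{r_0}$, which rests on the assumed Theorem \ref{imp claim}) to $\hat{\mathcal F}$: for every $y\in B_{r_0}$ the stalk $\hat{\mathcal F}_{y}$ is generated by the germs of global sections in $H^{0}(B_{r_0},\hat{\mathcal F})$. Finally I would transport this back to $G$. For $x\in G$ and $y=F(x)\in F(G)\subset B_{r_0}$, the homeomorphism $F$ onto its image gives $\hat{\mathcal F}_{y}\cong\mathcal F_{x}$, and by definition of the direct image $H^{0}(B_{r_0},\hat{\mathcal F})=H^{0}(F^{-1}(B_{r_0}),\mathcal F)$ with $G\subset F^{-1}(B_{r_0})$. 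Hence each generating germ at $y$ is the germ at $x$ of a section $s\in H^{0}(F^{-1}(B_{r_0}),\mathcal F)$; restricting $s$ to $G$ gives an element of $H^{0}(G,\mathcal F)$ with the same germ at $x$, and since $F^{\#}$ is surjective the generation over $\mathcal O_{B_r,y}$ descends to generation over $\mathcal O_{X,x}$. Therefore $\mathcal F_{x}$ is generated by $H^{0}(G,\mathcal F)$ for every $x\in G$, which is the assertion.
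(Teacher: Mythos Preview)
Your proposal is correct and follows the same approach as the paper, which dismisses the proof in one line as ``Follows from the combination of Proposition~\ref{prop} and Theorem~\ref{thm}.'' You have spelled out precisely the mechanics that this line suppresses: enlarging $G$ to $G_1$, applying the embedding from Theorem~\ref{imp thm2}, shrinking to $B_{r_0}$ so that the image is closed, verifying coherence of the pushforward via Corollary~\ref{imp cor1} and Remark~\ref{imp rmk}, invoking Theorem~\ref{thm} on $B_{r_0}$, and pulling the generators back via restriction. The coherence of the direct image under a closed embedding is indeed the only point requiring real work, and your sketch of it (lift a local presentation over $\mathcal O_{F(G_1)}=\mathcal O_{B_r}/\ker F^{\#}$ through the surjection $F^{\#}$) is the standard argument.
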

\begin{proof}
Follows from the combination of Proposition \ref{prop} and Theorem \ref{thm}.   
\end{proof}
\begin{prop}\label{prop2}
    Suppose $G\subset X$ is a (connected) open set. Let $\phi:X\longrightarrow\R^{2M}\times\C^{N}$ be a GH map such that 

    \vspace{0.2em}
    \begin{enumerate}
     \setlength\itemsep{0.4em}
        \item for some open neighborhood $G^{'}$ of \,\,$\overline{G}$, $\phi$ maps $G^{'}$ GH homeomorphically onto an embedded GC submanifold of some product of open subsets $D_1\times D_2\subset\R^{2M}\times\C^{N}$ where $D_1\subset\R^{2M}$ is (connected) open set and $M=\frac{1}{2}(\dim_{\R}X-\typ(X))$.        
        \item $\phi(G)$ is an embedded (closed) GC submanifold in a $B_{r}\subset\subset D_1\times D_2$ where $B_{r}$ as defined in \eqref{br}.
    \end{enumerate}
    
    \vspace{0.3em}
    \hspace{-1em}
    Then $\mathcal{O}(X)$ is dense in $\mathcal{O}_{G}(G)$ with the topology of uniform convergence on compact subsets.
\end{prop}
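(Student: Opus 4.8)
The plan is to transfer the problem through $\phi$ to the ambient product of balls $B_r$, extend the given GH function off the submanifold $\phi(G)$ to all of $B_r$ by invoking the ideal sheaf together with Theorem \ref{imp claim}, and finally approximate the extended function by polynomials in the $\C^N$-coordinates, whose pullbacks under the global GH map $\phi$ automatically lie in $\mathcal{O}(X)$.

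Concretely, I would fix $f\in\mathcal{O}_G(G)$, a compact set $K\subset G$, and $\epsilon>0$, and write $\phi=(\phi_1,\phi_2)$ with $\phi_2=\pr_2\circ\phi:X\longrightarrow\C^N$; since each component of $\phi_2$ is a global GH function, the components of $\phi_2$ belong to $\mathcal{O}(X)$. By hypothesis $(1)$, $\phi$ restricts to a GH homeomorphism of $G$ onto the embedded GC submanifold $\phi(G)$, so $f\circ(\phi|_G)^{-1}$ is a well-defined section of $\mathcal{O}_{\phi(G)}$ over $\phi(G)$, using the identification $\mathcal{O}_{\phi(G)}=\mathcal{O}_{B_r}|_{\phi(G)}$ from Corollary \ref{imp cor1}.

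The key step, which I expect to be the main obstacle, is the extension of this section to all of $B_r$. By hypothesis $(2)$, $\phi(G)$ is a closed embedded GC submanifold of $B_r$, and $B_r\subset\subset D_1\times D_2$, so $D_1\times D_2$ is an open neighborhood of $\overline{B_r}$. Let $\mathcal{I}:=\ker(i^{\#})$ be the ideal sheaf of $\phi(G)$ provided by Corollary \ref{imp cor1}; it is coherent as the kernel of a surjection of coherent $\mathcal{O}$-modules (Remark \ref{imp rmk}). Taking the long exact cohomology sequence over $B_r$ of
\begin{equation*}
0\longrightarrow\mathcal{I}\longrightarrow\mathcal{O}_{B_r}\xrightarrow{\ i^{\#}\ }i_{*}\mathcal{O}_{\phi(G)}\longrightarrow 0
\end{equation*}
produces the fragment
\begin{equation*}
H^0(B_r,\mathcal{O}_{B_r})\longrightarrow H^0\big(\phi(G),\mathcal{O}_{\phi(G)}\big)\longrightarrow H^1(B_r,\mathcal{I}).
\end{equation*}
Theorem \ref{imp claim}, applied to the coherent sheaf $\mathcal{I}$, gives $H^1(B_r,\mathcal{I})=0$, so the restriction map is onto and there exists $\tilde f\in\mathcal{O}_{B_r}(B_r)$ with $\tilde f|_{\phi(G)}=f\circ(\phi|_G)^{-1}$. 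This is exactly where Theorem B for $B_r$ is indispensable, and it is the heart of the argument.

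Finally I would carry out the polynomial approximation. Since $B_r=D'\times\D_r$ and $\mathcal{O}_{B_r}=\pr_2^{-1}\mathcal{O}_{\D_r}$ by Proposition \ref{prop GH}, there is a holomorphic function $h$ on $\D_r\subset\C^N$ with $\tilde f=h\circ\pr_2$. Because $\pr_2(\phi(K))$ is a compact subset of $\D_r$, the convergence of the Taylor expansion of $h$ on compacta lets me pick a polynomial $P$ in the coordinates of $\C^N$ with $\sup_{\pr_2(\phi(K))}|h-P|<\epsilon$. Then $P\circ\pr_2$ is a GH function on $\R^{2M}\times\C^N$, so $P\circ\phi_2=P\circ\pr_2\circ\phi\in\mathcal{O}(X)$ since $\mathcal{O}(X)$ is a $\C$-algebra containing the components of $\phi_2$. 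On $K$ one has $f=\tilde f\circ\phi=h\circ\pr_2\circ\phi$, whence $\sup_K|f-P\circ\phi_2|<\epsilon$. As $K$ and $\epsilon$ are arbitrary, this shows that $\mathcal{O}(X)$ restricts to a dense subspace of $\mathcal{O}_G(G)$ in the topology of uniform convergence on compact subsets.
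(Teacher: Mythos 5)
Your proposal is correct and follows essentially the same route as the paper: extend $f\circ(\phi|_G)^{-1}$ from $\phi(G)$ to $B_r$ via the vanishing of $H^1(B_r,\mathcal{I})$ for the ideal sheaf (Theorem \ref{imp claim}), then approximate the extension by globally defined GH functions pulled back through $\phi$, the paper using density of $\mathcal{O}(\C^N)$ in $\mathcal{O}(\D_r)$ where you use polynomials. The only point to tidy is that Theorem \ref{imp claim} requires the coherent sheaf to live on a neighborhood of $\overline{B_r}$, so you should take $\mathcal{I}$ to be the ideal sheaf of $\phi(G')$ on $D_1\times D_2$ (as the paper does) rather than of $\phi(G)$ on $B_r$ alone.
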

\begin{proof}
Let $\mathcal{I}$ be the ideal sheaf for $\phi(G^{'})$ on $D_1\times D_2$. Then $\mathcal{I}$ is coherent on $D_1\times D_2$. Since $H^1(B_{r},\mathcal{I})=0$ (by Theorem \ref{imp claim}), using the long exact sequence of cohomology of
\[\begin{tikzcd}
	0 & {\mathcal{I}} & {\mathcal{O}_{B_{r}}} & {\mathcal{O}_{B_{r}}/\mathcal{I}} & 0 & {\text{on \,\,$B_{r}$\,,}}
	\arrow[from=1-1, to=1-2]
	\arrow[from=1-2, to=1-3]
	\arrow["\eta", from=1-3, to=1-4]
	\arrow[from=1-4, to=1-5]
\end{tikzcd}\]
shows that the homomorphism $\tilde{\eta}:H^0(B_{r},\mathcal{O}_{B_{r}})\longrightarrow H^0(B_{r},\mathcal{O}_{B_{r}}/\mathcal{I})$, induced by $\eta$, is surjective.  Note that
\begin{align*}
H^0(B_{r},\mathcal{O}_{B_{r}}/\mathcal{I})&=H^0(B_{r},(\phi|_{G})_{*}\mathcal{O}_{G})\quad(\text{by Corollary \ref{imp cor1}})\\
&\cong H^0(G,\mathcal{O}_{G})\quad(\text{as $\phi$ is a GH embedding})\,. 
\end{align*}
This implies that the map $\widetilde{\phi|_{G}}:H^0(B_{r},\mathcal{O}_{B_{r}})\longrightarrow H^0(G,\mathcal{O}_{G})$, induced by $\phi|_{G}$, is surjective. Let $\tilde{\phi}:H^0(\R^{2M}\times\C^{N},\mathcal{O}_{\R^{2M}\times\C^{N}})\longrightarrow H^0(X,\mathcal{O})$ be the homomorphism induced by $\phi$. Consider the following commutative diagram
\[\begin{tikzcd}[ampersand replacement=\&]
	{H^0(\C^{N},\mathcal{O}_{\C^{N}})=H^0(\R^{2M}\times\C^{N},\mathcal{O}_{\R^{2M}\times\C^{N}})} \&\& {H^0(X,\mathcal{O})} \\
	\\
	{H^0(\D_{r},\mathcal{O}_{\D_{r}})=H^0(B_{r},\mathcal{O}_{B_{r}})} \&\& {H^0(G,\mathcal{O}_{G})}
	\arrow["{\tilde{\phi}}", from=1-1, to=1-3]
	\arrow["\sigma", from=1-3, to=3-3]
	\arrow["\rho"', from=1-1, to=3-1]
	\arrow["{\tilde{\phi|_{G}}}"', from=3-1, to=3-3]
\end{tikzcd}\] where $\rho,\sigma$ are restriction maps. Since $H^0(\C^{N},\mathcal{O}_{\C^{N}})$ is dense in $H^0(\D_{r},\mathcal{O}_{\D_{r}})$, by commutativity of the diagram we get that $H^0(X,\mathcal{O})$ is also dense in $H^0(G,\mathcal{O}_{G})$.
\end{proof}
\begin{theorem}\label{main}
 Let $X$ be a GC Stein manifold and $\mathcal{F}$ be a coherent sheaf of\,\,\,$\mathcal{O}$-modules. Then, we have

 \vspace{0.2em}
 \begin{enumerate}
 \setlength\itemsep{0.4em}
     \item (Theorem B) $H^{p}(X,\mathcal{F})=0$ for $p\geq 1\,.$
     \item (Theorem A) $H^{0}(X,\mathcal{F})$ generates $\mathcal{F}_{x}$ for all $x\in X\,.$
 \end{enumerate}
\end{theorem}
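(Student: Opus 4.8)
The plan is to reduce the global statement on a GC Stein manifold $X$ to the local model provided by Theorem \ref{imp claim} and Theorem \ref{thm}, using the exhaustion by $\mathcal{O}(X)$-convex compact sets (Remark \ref{rmk stein}) together with the embedding machinery of Proposition \ref{prop}, Theorem \ref{imp thm2}, and the denseness result of Proposition \ref{prop2}. Concretely, I would fix the exhaustion $K_1\subset K_2\subset\cdots$ with $K_j\subset K_{j+1}^{o}$ and $\bigcup_j K_j=X$, and choose relatively compact open sets $G_j$ with $K_j\subset G_j\subset\overline{G_j}\subset G_{j+1}$. By Theorem \ref{imp thm2}, each $G_j$ embeds via a GH embedding $F_j$ as an embedded GC submanifold of a product of balls $B_r\subset\R^{2M}\times\C^N$, and the induced morphism $F_j^{\#}:\mathcal{O}_{B_r}\to (F_j)_*\mathcal{O}_{G_j}$ is onto with a coherent ideal sheaf kernel. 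Pulling $\mathcal{F}$ down to $B_r$ and applying Theorem \ref{imp claim} and Theorem \ref{thm}, one obtains the vanishing $H^p(G_j,\mathcal{F})=0$ for $p\geq 1$ and the generation statement on each $G_j$, i.e.\ the conclusions of Theorem \ref{thm2}. Thus the content of the theorem is already available on each relatively compact piece, and what remains is the passage to the limit over the exhaustion.

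For part (1), the vanishing $H^p(X,\mathcal{F})=0$ for $p\geq 2$ should follow formally: since each $H^p(G_j,\mathcal{F})=0$, a Leray/direct-limit argument (or the standard Mittag--Leffler-type argument on the inverse system $\{H^{p-1}(G_j,\mathcal{F})\}$) collapses the higher cohomology. The delicate case is $p=1$. Here I would follow the classical Stein-theory template: an element of $H^1(X,\mathcal{F})$ restricts to zero on each $G_j$ by Theorem \ref{thm2}, so one must solve a coboundary equation on all of $X$ by patching the solutions on the $G_j$; the obstruction to patching lives in an inverse limit $\varprojlim^1$ of the groups $H^0(G_j,\mathcal{F})$. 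To kill this obstruction I would invoke Proposition \ref{prop2}, which gives that $\mathcal{O}(X)$ (and hence, via the coherent presentations and the Fr\'{e}chet structure of Theorem \ref{imp thm} and Theorem \ref{f spc}, the global sections $\mathcal{F}(X)$) is dense in $\mathcal{F}(G_j)$ in the topology of uniform convergence on compact subsets. This denseness is exactly the Mittag--Leffler condition needed to show that the relevant $\varprojlim^1$ vanishes, yielding $H^1(X,\mathcal{F})=0$.

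For part (2), Cartan's Theorem A, I would argue as follows. Fix $x\in X$ and choose $j$ with $x\in G_j$. By Theorem \ref{thm2}, the stalk $\mathcal{F}_x$ is generated by $H^0(G_j,\mathcal{F})$. Combining the now-established Theorem B (so that the relevant short exact sequences of coherent sheaves give surjections on global sections) with the denseness/approximation statement of Proposition \ref{prop2}, I would approximate the finitely many local generators by global sections in $\mathcal{F}(X)=H^0(X,\mathcal{F})$, close enough that they still generate $\mathcal{F}_x$ over the local ring $\mathcal{O}_{X,x}$ (this uses that the generation condition is open, by Nakayama's lemma applied to the finitely generated $\mathcal{O}_{X,x}$-module $\mathcal{F}_x$). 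This gives that $H^0(X,\mathcal{F})$ generates $\mathcal{F}_x$ for every $x$.

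The main obstacle I anticipate is the $p=1$ vanishing, specifically the verification that the approximation in Proposition \ref{prop2} furnishes the precise Mittag--Leffler condition on the inverse system of section spaces. The subtlety is that one must pass from denseness for the structure sheaf $\mathcal{O}$ to denseness for an arbitrary coherent $\mathcal{F}$, which requires using a free presentation $\mathcal{O}^m\twoheadrightarrow\mathcal{F}$ together with the Fr\'{e}chet topologies of Theorem \ref{imp thm} and the already-proved surjectivity on global sections; one must check that these topologies are compatible across the restriction maps $\mathcal{F}(G_{j+1})\to\mathcal{F}(G_j)$ so that the closure of the image of $\mathcal{F}(X)$ is all of $\mathcal{F}(G_j)$. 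Once this compatibility and the density are in place, the vanishing of $\varprojlim^1$ and hence of $H^1(X,\mathcal{F})$ follows from the standard functional-analytic lemma, and the remaining higher vanishing and Theorem A are comparatively routine.
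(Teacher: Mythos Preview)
Your proposal is correct and follows essentially the same route as the paper: exhaust $X$ by relatively compact $G_j$ admitting GH embeddings into products of balls (Theorem \ref{imp thm2}), get $H^p(G_j,\mathcal{F})=0$ from Theorem \ref{imp claim}, lift the density of Proposition \ref{prop2} from $\mathcal{O}$ to $\mathcal{F}$ via the surjective presentations $\mathcal{O}^{m_{j+s}}\twoheadrightarrow\mathcal{F}$ on $G_j$ (this is exactly the compatibility check you flag as the main obstacle, and the paper handles it by the commutative square involving $\widetilde{\psi_{j,s+1}}$ and $\widetilde{\psi_{j+s,1}}$), and then run the Mittag--Leffler/explicit \v{C}ech patching for $p=1$ and the straightforward coboundary telescoping for $p\geq 2$. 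The one place where the paper is slightly more economical than your outline is Theorem~A: rather than approximating generators from $H^0(G_j,\mathcal{F})$ by global ones, the paper simply repeats the ideal-sheaf argument of Theorem \ref{thm} with $B_r$ replaced by $X$, using the just-proved Theorem~B to kill $H^1(X,\mathcal{I}\mathcal{F})$; your approximation-plus-Nakayama route also works, but the direct route avoids any further appeal to density.
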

\begin{proof}
    \textbf{(1)}\,\, By Definition \ref{main def} and Proposition \ref{prop}, We can construct (connected) open subsets $G_{j}$ in $X$ and GH map $\phi_{j}:X\longrightarrow\R^{2M_{j}}\times\C^{N_{j}}\,,1\leq j<\infty\,,$ such that the following holds:

    \vspace{0.2em}
    \begin{enumerate}
    \setlength\itemsep{0.4em}
        \item $X=\bigcup_{j}G_{j}\,.$
        \item $G_{j}\subset\subset G_{j+1}\,.$
        \item $\phi_{j}$ maps $G_{j+1}$ GH homeomorphically onto an embedded submanifold of some product of open subsets in $\R^{2M_{j}}\times\C^{N_{j}}$ where
        $M_{j}=\frac{1}{2}(\dim_{\R}X-\typ(X))$.
        \item $\phi_{j}(G_{j})$ is an embedded (closed) GC submanifold in a some $B_{r}\subset\R^{2M_{j}}\times\C^{N_{j}}$ as in Proposition \ref{prop2} where $B_{r}$ is defined as in \eqref{br}.
    \end{enumerate}
    Since $\mathcal{F}$ is coherent, by Theorem \ref{thm2} there exist a sheaf epimorphisms $\psi_{j}:\mathcal{O}^{m_{j}}\longrightarrow\mathcal{F}$ on $G_{j}$ for $m_{j}\in\N\,\,\,\text{and}\,\,\,j\geq 1\,.$  Now, using Theorem \ref{imp claim}, Theorem \ref{imp thm1} and Corollary \ref{imp cor1}, we get $$H^1(G_{j},\ker\psi_{j+s})=0\quad\text{for $j\geq 1$ and $s\geq 1$}\,.$$ This implies that the homomorphism 
    \begin{equation}\label{imp eq}
     \widetilde{\psi_{j,s}}:H^0(G_{j},\mathcal{O}^{m_{j+s}})\longrightarrow H^0(G_{j},\mathcal{F})   
    \end{equation}
induced by $\psi_{j+s}$, is surjective for $j\geq 1$ and $s\geq 1$. By Theorem \ref{imp thm}, $H^0(G_{j},\mathcal{F})$ has canonical Fr\'{e}chet space structure for $j\geq 1$. Then, for $s\geq 1$, we have the following commutative diagram 
\[\begin{tikzcd}[ampersand replacement=\&]
	{H^0(G_{j+s},\mathcal{O}^{m_{j+s+1}})} \&\& {H^0(G_{j},\mathcal{O}^{m_{j+s+1}})} \\
	\\
	{H^0(G_{j+s},\mathcal{F})} \&\& {H^0(G_{j},\mathcal{F})}
	\arrow[from=1-1, to=1-3]
	\arrow["{\widetilde{\psi_{j,s+1}}}", from=1-3, to=3-3]
	\arrow["{\widetilde{\psi_{j+s,1}}}"', from=1-1, to=3-1]
	\arrow[from=3-1, to=3-3]
\end{tikzcd}\] where horizontal maps are restriction maps. By Proposition \ref{prop2} and \eqref{imp eq}, $H^0(G_{j+s},\mathcal{F})$ is dense in $H^0(G_{j},\mathcal{F})$. Now, using Theorem \ref{imp claim}, we get that $H^{p}(G_{j},\mathcal{F})=0$ for $p\geq 1$ and $j\geq 1$. Then, by Leray's theorem we can see that $H^{p}(X,\mathcal{F})=H^{p}(\{G_{j}\},\mathcal{F})$ for $p\geq 0$.

\vspace{0.4em}
Fix $p\geq 1$ and $\alpha\in Z^{p}(\{G_{j}\},\mathcal{F})$. Note that $H^{p}(G_{m},\mathcal{F})=H^{p}(X_{m},\mathcal{F})$ for $m\geq 1$ where $X_{m}$ is the open covering of $G_{m}$ defined as $X_{m}:=\{G_{j}\}^{m}_{j=1}$. Let $\alpha_{m}=\alpha|_{X_{m}}$. Then there exist $\beta_{m}\in C^{p-1}(X_{m},\mathcal{F})$ such that $\alpha_{m}=\delta\beta_{m}$ where $\delta$ denotes the C\v{e}ch coboundary map in C\v{e}ch cohomology. Notice that $\beta_{m+1}-\beta_{m}\in Z^{p-1}(X_{m},\mathcal{F})\,.$

\vspace{0.4em}
\hspace{-1.2em}\textbf{Case 1:} 
\,\,Let $p=1$. We use induction on $m$ to construct $\beta^{'}_{m}\in C^0(X_{m},\mathcal{F})$ such that
\begin{equation}\label{eq2}
\alpha_{m}=\delta\beta^{'}_{m}\quad\text{and}\quad\sup_{3\leq j\leq m}||(\beta^{'}_{j}-\beta^{'}_{j-1})|_{G_{j-1}}||^{\psi_{j}}_{\bar{G_{j-2}}}<\frac{1}{2^{m}}\,.    
\end{equation}
Choose $\beta^{'}_{1}=\beta_{1}$ and suppose we have chosen $\beta^{'}_{1},\beta^{'}_{2},\ldots,\beta^{'}_{m-1}\,.$ Then $\beta_{m}-\beta^{'}_{m-1}$ is a section of $\mathcal{F}$ on $G_{m-1}$. By denseness property of $H^0(G_{m},\mathcal{F})$ in $H^0(G_{m-1},\mathcal{F})$ and \eqref{imp eq}, there exist $\tau\in H^0(G_{m},\mathcal{F})$ such that 
$$\sup_{3\leq j\leq m}||\tau-(\beta_{j}-\beta^{'}_{j-1})|_{G_{j-1}}||^{\psi_{j}}_{\bar{G_{j-2}}}<\frac{1}{2^{m}}\,.$$ Set $\beta^{'}_{m}=\beta_{m}-\tau$ and this completes the induction. Define 
$\beta^{'}\in C^0(\{G_{j}\},\mathcal{F})$ by
$$\beta^{'}(G_{m})=\lim_{j\geq m}\beta^{'}_{j}(G_{m})\quad\text{for $m\geq 1$}\,.$$ One can see that this limit is well-defined by \eqref{eq2} and $\alpha=\delta\beta^{'}\,.$ Hence $H^1(\{G_{j}\},\mathcal{F})=0\,.$

\vspace{0.4em}
\hspace{-1.2em}\textbf{Case 2:}
\,\,Let $p>1$. Since $\beta_{m+1}-\beta_{m}\in Z^{p-1}(X_{m},\mathcal{F})\,,$ and $H^{p-1}(X_{m},\mathcal{F})=0\,,$ there exists $\beta^{'}_{m}\in C^{p-2}(X_{m},\mathcal{F})$ such that $\delta(\beta^{'}_{m})=\beta_{m+1}-\beta_{m}$ on $X_{m}$. Define $\eta\in C^{p-1}(\{G_{j}\},\mathcal{F})$ by 
$$\eta=\beta_{m}+\delta(\sum_{j<m+1}\beta^{'}_{j})\quad\text{on $X_{m}$}\,.$$
One can see that $\eta$ is well-defined and $\alpha=\delta\eta$. Hence $H^{p}(\{G_{j}\},\mathcal{F})=0$ for $p>1$.

\vspace{0.5em}
\hspace{-1.2em}\textbf{(2)}
\,\,Using Theorem B, the proof follows similarly as in Theorem \ref{thm}.
\end{proof}
\begin{theorem}(Characterization for GC Stein manifolds via Cartan's theorems)\label{main1}
    Let $X$ be a regular GC manifold and $\mathcal{F}$ be a coherent sheaf of $\mathcal{O}$-modules. Then $X$ is a GC Stein Manifold if and only if the following holds:

    \vspace{0.3em}
    \begin{enumerate}
    \setlength\itemsep{0.4em}
     \item $H^{p}(X,\mathcal{F})=0$ for $p\geq 1\,.$
     \item $H^{0}(X,\mathcal{F})$ generates $\mathcal{F}_{x}$ for all $x\in X\,.$
     \item $X$ satisfies the following property:
     \begin{equation}\tag{P}\label{p}
         \begin{aligned}
             &\text{Let $f:U\longrightarrow(\R^{2},\omega^{-1}_0)$ be a Poisson map on an open set $U\subseteq X$. Then,}\\
    &\text{for each $x\in U$, there exists an open set $x\in U^{'}\subset U$ and a GH map,}\\
    &\text{$\tilde{f}:X\longrightarrow(\R^{2},\omega^{-1}_0)$ such that $\tilde{f}|_{U^{'}}=f|_{U^{'}}\,.$}
         \end{aligned}
     \end{equation} 
    \end{enumerate}
\end{theorem}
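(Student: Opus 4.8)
The plan is to treat the two implications separately, leaning on Theorem \ref{main} for the forward direction and reconstructing the three clauses of Definition \ref{main def} for the converse.

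\textbf{Forward direction.} Suppose $X$ is a GC Stein manifold. Then conditions (1) and (2) are exactly the conclusions of Theorem \ref{main}, so nothing further is needed there. For condition (3), I would start from the identification in Remark \ref{poi rmk}: a Poisson map $f : U \to (\R^2, \omega_0^{-1})$ is the same datum as a GH map $f : U \to (\R^2, \omega_0)$. Fixing $x \in U$ and passing to Darboux coordinates (Theorem \ref{darbu thm}), GH regularity (Remark \ref{rmk stein}) supplies globally defined GH maps into $(\R^2,\omega_0)$ that restrict to a symplectic-direction coordinate system near $x$; I would then match the two components of $f$ against this global system on a smaller neighbourhood $U' \ni x$, producing the required global extension $\tilde f$. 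The technical point to verify is that the matched map remains a GH (equivalently Poisson) map globally, which I would check against the local normal form.

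\textbf{Converse direction.} Assume (1), (2) and (3); the task is to recover GH regularity, GH separability and GH convexity. For regularity I would fix $x$, choose Darboux coordinates $(p_1,\dots,p_{2n-2k},z_1,\dots,z_k)$, and build the two families of coordinate maps separately. The complex-direction functions come from Cartan's Theorem A: since $\mathcal{O}$ is coherent (Remark \ref{imp rmk}), applying condition (2) to the coherent quotient $\mathcal{O}/\mathcal{I}_x^2$, with $\mathcal{I}_x$ the ideal sheaf of GH functions vanishing at $x$, yields $g_1,\dots,g_k \in \mathcal{O}(X)$ whose one-jets at $x$ reproduce $z_1,\dots,z_k$, so the $dg_l|_x$ are $\C$-linearly independent. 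The symplectic-direction maps come from condition (3): each pair $(p_{2j-1},p_{2j})$ is a local Poisson map into $(\R^2,\omega_0^{-1})$, and property (P) makes it agree near $x$ with a global $f_j : X \to (\R^2,\omega_0)$, whose differentials complete the $dg_l|_x$ to an $\R$-linearly independent system. This is GH regularity. GH separability I would obtain in the same spirit: for $x \neq y$ on different symplectic leaves, Cartan's Theorem A applied to the ideal sheaf $\mathcal{I}_{\{x,y\}}$ gives a global GH function with distinct values, while points on a common leaf are distinguished by the global Poisson maps just produced.

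The hard part will be GH convexity in the converse direction. Because global GH functions are constant along the symplectic leaves (Proposition \ref{prop GH}), the hull $\widehat{K}_{\mathcal{O}(X)}$ is governed entirely by the transverse geometry, and my plan is to exploit the local identity $\mathcal{O}_X \cong \pr_2^{-1}\mathcal{O}_{\widetilde{X}}$ over the leaf space $\widetilde{X}$ (Remark \ref{foli rmk}): conditions (1) and (2) should descend to Cartan's Theorems A and B on $\widetilde{X}$, so that the classical Cartan characterization forces $\widetilde{X}$ to be Stein and hence holomorphically convex. Pulling this convexity back along the projection, together with the leaf-direction control provided by the global Poisson maps of (P), should show that every compact $\mathcal{O}(X)$-hull is compact. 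The delicate issue is that $\widetilde{X}$ need not be Hausdorff or smooth a priori, so I expect to either work directly on $X$ --- manufacturing, for each $p \notin \widehat{K}_{\mathcal{O}(X)}$, a global GH function with $|f(p)| > \sup_{K}|f|$ and assembling an exhaustion by $\mathcal{O}(X)$-convex compacta from Cartan A --- or reduce to the smooth leaf-space case and transfer the exhaustion, and it is this exhaustion construction that carries the bulk of the argument.
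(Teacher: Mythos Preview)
The paper's own proof is a single sentence citing Remark~\ref{poi rmk}, Definition~\ref{main def}, and Theorem~\ref{main}; it treats the statement as an immediate repackaging of Theorem~\ref{main} together with property~\eqref{p} and does not engage with the details you attempt to supply, so there is no substantive argument to compare against.

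Your proposal has two genuine gaps. In the forward direction, GH regularity provides at each point $x$ \emph{some} global GH maps $f_1,\dots,f_{n-k}:X\to(\R^2,\omega_0)$ that serve as leaf-direction coordinates near $x$, but property~\eqref{p} demands that an \emph{arbitrary} local Poisson map $f:U\to(\R^2,\omega_0^{-1})$ agree with a global one on a neighbourhood of $x$; ``matching $f$ against this global system'' does not produce such an extension, since the given $f$ need bear no relation to the particular coordinate maps $f_j$, and no mechanism is offered to build a global $\tilde f$ equal to $f$ near $x$. In the converse direction, your separability step does not establish GH separability as formulated in Definition~\ref{main def}: that condition requires a GH \emph{function} $f\in\mathcal O(X)$ with $f(x)\neq f(y)$, and by Proposition~\ref{prop GH} every element of $\mathcal O(X)$ is constant along symplectic leaves, so invoking ``global Poisson maps'' to distinguish two points on a common leaf does not verify the definition you are aiming for. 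Your convexity sketch via the leaf space stalls at exactly the obstruction you flag---$\widetilde X$ need not be Hausdorff or smooth---and the alternative ``direct'' construction you gesture at (manufacturing separating GH functions and building an $\mathcal O(X)$-convex exhaustion) is essentially the full content of the classical converse to Cartan's theorems and is not carried out here.
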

\begin{proof}
    Follows from Remark \ref{poi rmk}, Definition \ref{main def}, and Theorem \ref{main}.
\end{proof}
\begin{corollary}
    Any SGH vector bundle over a GC Stein manifold is GH homeomorphically trivial.
\end{corollary}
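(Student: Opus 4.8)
The plan is to pass from the bundle to its sheaf of generalized holomorphic sections and then exploit the vanishing and generation statements of Theorem \ref{main}. Let $E\to X$ be the given SGH vector bundle of rank $r$, and let $\mathcal{E}$ denote the sheaf of germs of GH sections of $E$. Locally, by the generalized Darboux theorem (Theorem \ref{darbu thm}) the structure sheaf is the pullback $\mathcal{O}\cong\pr_2^{-1}\mathcal{O}_{\C^{k}}$ of the transverse holomorphic structure, and the GH clutching data of $E$ are matrices with entries in $\mathcal{O}$; consequently $\mathcal{E}$ is a locally free, hence coherent, sheaf of $\mathcal{O}$-modules of rank $r$. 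Proving that $E$ is GH homeomorphically trivial amounts to producing a GH isomorphism $\mathcal{E}\cong\mathcal{O}^{r}$, equivalently $r$ global GH sections that frame $E$ at every point of $X$, so that the induced fibrewise-linear map is a GH homeomorphism of $E$ onto $X\times\C^{r}$.

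First I would invoke Cartan's theorems for $X$. By Theorem \ref{main}(2) the global sections $H^0(X,\mathcal{E})$ generate every stalk $\mathcal{E}_x$; combined with GH convexity and the compact exhaustion of Remark \ref{rmk stein}, together with the finite rank of $E$ and finiteness of $\dim_\R X$, this yields a surjection of sheaves $\mathcal{O}^{N}\twoheadrightarrow\mathcal{E}$ for some finite $N$. Its kernel $\mathcal{K}$ is again coherent and locally free, so we obtain a short exact sequence $0\to\mathcal{K}\to\mathcal{O}^{N}\to\mathcal{E}\to 0$. Since $\mathcal{E}$ is locally free, the obstruction to splitting this sequence in the GH category lies in $\mathrm{Ext}^1_{\mathcal{O}}(\mathcal{E},\mathcal{K})\cong H^{1}(X,\mathcal{H}om_{\mathcal{O}}(\mathcal{E},\mathcal{K}))$, and Theorem \ref{main}(1) applied to the coherent sheaf $\mathcal{H}om_{\mathcal{O}}(\mathcal{E},\mathcal{K})$ shows this group vanishes. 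Hence $\mathcal{O}^{N}\cong\mathcal{K}\oplus\mathcal{E}$, so $E$ is already a GH direct summand of a trivial bundle.

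The remaining, and hardest, step is to upgrade this to genuine triviality, and this is where the SGH hypothesis (smooth triviality of the underlying bundle) must enter, since over a general GC Stein manifold not every GH bundle is GH-trivial (already over $(\C^{*})^2\subset\C^2$ there are nontrivial line bundles). For line bundles I would use the exponential sequence $0\to\Z\to\mathcal{O}\xrightarrow{\exp}\mathcal{O}^{*}\to 0$: since Theorem \ref{main}(1) gives $H^{1}(X,\mathcal{O})=H^{2}(X,\mathcal{O})=0$, one obtains $H^{1}(X,\mathcal{O}^{*})\cong H^{2}(X,\Z)$, so the vanishing of the topological class of an SGH line bundle forces GH triviality. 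For higher rank, the splitting above only realizes $E$ as a subbundle of a trivial one, and Cartan's theorems alone do not control the homotopy class of the classifying cocycle in $H^{1}(X,\mathrm{GL}_{r}(\mathcal{O}))$; I expect the main obstacle to be an Oka--Grauert type argument adapted to the $L$-Lie-algebroid (transverse holomorphic) setting, in which one deforms the smooth trivialization provided by the SGH hypothesis to a GH one, applying the leafwise vanishing theorems and the approximation/density result of Proposition \ref{prop2} to glue. Once the cocycle is shown to be a GH coboundary, the resulting frame gives the desired GH homeomorphism $E\cong X\times\C^{r}$.
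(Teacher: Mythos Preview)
Your argument is incomplete at the decisive point, and it also rests on a misreading of the hypothesis. The abbreviation SGH does not mean ``smoothly trivial''; it refers to the class of \emph{strong generalized holomorphic} bundles introduced in \cite{pal24}, and the paper's proof of this corollary is a one-line appeal to Theorem \ref{main1} together with \cite[Proposition 4.6]{pal24}. That external proposition is precisely the bridge you are missing: it is the statement, proved in the cited reference, that relates the SGH condition to the vanishing provided by Cartan's theorems and yields triviality directly. You are trying to reconstruct that bridge from scratch without knowing what SGH actually encodes.

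Setting aside the terminological issue, your write-up is not yet a proof even on its own terms. The first half is fine: $\mathcal{E}$ is locally free hence coherent, Theorem A gives a global surjection $\mathcal{O}^{N}\twoheadrightarrow\mathcal{E}$, and Theorem B splits it, so $E$ embeds GH-linearly as a summand of a trivial bundle. But from that point on you only \emph{describe} what would need to be done (``I expect the main obstacle to be an Oka--Grauert type argument\ldots'') rather than do it. An Oka--Grauert principle in this transverse-holomorphic/Lie-algebroid setting is a substantial theorem in its own right, not a routine step one can wave at; neither the approximation result of Proposition \ref{prop2} nor the $L^{2}$ machinery of Section \ref{l2} is set up to deform a smooth frame to a GH one, and you give no indication of how the required homotopy of cocycles in $H^{1}(X,\mathrm{GL}_{r}(\mathcal{O}))$ would be produced. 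Your line-bundle paragraph via the exponential sequence is correct as far as it goes, but it too depends on your unjustified reading of SGH as ``topologically trivial''. In short: the coherent-sheaf and splitting steps are sound, but the conclusion does not follow from them, and the missing ingredient is exactly what \cite[Proposition 4.6]{pal24} supplies.
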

\begin{proof}
    Follows from Theorem \ref{main1} and \cite[Proposition 4.6]{pal24}
\end{proof}

\subsection{The proof of Theorem \ref{imp claim}}\label{pf}
Before going into the proof of Theorem \ref{imp claim}, we need the following Proposition. We will follow the approach in \cite[Proposition 2]{siu68}. So, let $G$ be an open neighborhood of the closure of $B_{r}$ (cf. \eqref{br}), and  $\mathcal{F}$ be a coherent sheaf on $G$.
\begin{prop}\label{prop1}
For all $p\geq 1\,,$ $H^{p}(B_{r},\mathcal{F})$ are finite dimensional $\C$-vector space.    
\end{prop}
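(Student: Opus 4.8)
The plan is to adapt the Cartan–Serre finiteness argument, following \cite[Proposition 2]{siu68}, by comparing two Leray coverings via L.~Schwartz's finiteness theorem for Fr\'{e}chet spaces. First I would cover the compact set $\overline{B_r}$ by finitely many open sets $U_1,\dots,U_s$, each of which—after a $B$-transformation and the generalized Darboux theorem (Theorem \ref{darbu thm})—is a product of balls $B'\times B''\subset\R^{2M}\times\C^N$, chosen fine enough that on every finite intersection $U_I:=U_{i_0}\cap\cdots\cap U_{i_p}$ the sheaf $\mathcal{F}$ admits a finite free resolution (possible by coherence together with the local description $\mathcal{O}_D=\pr_2^{-1}(\mathcal{O}_{D''})$ and Hilbert's syzygy theorem on the domain $D''\subset\C^N$). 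By Corollary \ref{imp cor} each $U_I$ is then $\mathcal{F}$-acyclic, so $\{U_i\}$ is a Leray covering and $H^p(\{U_i\},\mathcal{F})=H^p(B_r,\mathcal{F})$. I would then fix a shrinking $\{V_i\}$ with $\overline{V_i}\subset U_i$ and $\overline{B_r}\subset\bigcup_i V_i$, again a product-of-balls Leray covering, so that the restriction maps induce isomorphisms $H^p(\{U_i\},\mathcal{F})\cong H^p(B_r,\mathcal{F})\cong H^p(\{V_i\},\mathcal{F})$.

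Next, by Theorem \ref{imp thm} each group of \v{C}ech cochains $C^p(\{U_i\},\mathcal{F})=\bigoplus_I\mathcal{F}(U_I)$ and $C^p(\{V_i\},\mathcal{F})$ carries a canonical Fr\'{e}chet topology, with respect to which the coboundary $\delta$ is continuous; in particular the cocycle spaces $Z^p$ are closed, hence Fr\'{e}chet. The crucial analytic input is that the restriction operator $\mathrm{res}\colon C^p(\{U_i\},\mathcal{F})\to C^p(\{V_i\},\mathcal{F})$ is \emph{compact}. On each product of balls a GH function is, via $\mathcal{O}_D=\pr_2^{-1}(\mathcal{O}_{D''})$, the pullback of a holomorphic function on a domain in $\C^N$, so Montel's theorem shows that $\mathcal{O}^{m}(U_I)\to\mathcal{O}^{m}(V_I)$ is compact whenever $\overline{V_I}\subset U_I$; passing to the canonical quotient Fr\'{e}chet topology induced by an epimorphism $\mathcal{O}^{m}\to\mathcal{F}$ (Theorem \ref{imp thm}) and using the open mapping theorem, the induced restriction on $\mathcal{F}$-sections is compact as well.

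Finally I would invoke L.~Schwartz's theorem. Consider the two continuous linear maps of Fr\'{e}chet spaces
\[
u,v\colon Z^p(\{U_i\},\mathcal{F})\oplus C^{p-1}(\{V_i\},\mathcal{F})\longrightarrow Z^p(\{V_i\},\mathcal{F}),
\]
given by $u(\eta,\xi)=\mathrm{res}(\eta)+\delta\xi$ and $v(\eta,\xi)=\mathrm{res}(\eta)$. The map $u$ is surjective: since restriction is an isomorphism on cohomology, every $p$-cocycle $z$ on $\{V_i\}$ satisfies $z=\mathrm{res}(\eta)+\delta\xi$ for suitable $\eta\in Z^p(\{U_i\})$ and $\xi\in C^{p-1}(\{V_i\})$. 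The map $v$ is compact by the previous step. Hence $u-v$ has finite-dimensional cokernel; but $\img(u-v)=\delta\,C^{p-1}(\{V_i\},\mathcal{F})=B^p(\{V_i\},\mathcal{F})$, so $H^p(\{V_i\},\mathcal{F})=Z^p/B^p$ is finite dimensional, and therefore so is $H^p(B_r,\mathcal{F})$.

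I expect the main obstacle to be the compactness of the restriction operator at the level of the coherent sheaf $\mathcal{F}$: Montel's theorem applies directly only to the free modules $\mathcal{O}^{m}$, and transferring compactness to the quotient Fr\'{e}chet spaces $\mathcal{F}(U_I)\to\mathcal{F}(V_I)$ requires care with the canonical topology of Theorem \ref{imp thm} and a lifting/open-mapping argument. A secondary technical point is arranging the covering so that every finite intersection admits a finite free resolution, which is what makes the covering Leray through Corollary \ref{imp cor}.
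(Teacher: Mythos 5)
Your overall architecture (two nested Leray coverings, compactness of restriction via Montel, and L.~Schwartz's finiteness theorem applied to $u=\mathrm{res}+\delta$ and the compact perturbation $v=\mathrm{res}$) is the right skeleton and matches the second half of the paper's proof. But there is a genuine gap at the step where you assert that ``restriction is an isomorphism on cohomology,'' which is what you use to prove that $u$ is surjective. Your coverings $\{U_i\}$ and $\{V_i\}$ cover two \emph{different} open sets: $\bigcup_i U_i$ and $\bigcup_i V_i$ are distinct neighborhoods of $\overline{B_r}$ (they must be distinct, since you need $\overline{V_I}\subset U_I$ for Montel compactness). Leray's theorem identifies $H^p(\{U_i\},\mathcal{F})$ with $H^p(\bigcup_i U_i,\mathcal{F})$, not with $H^p(B_r,\mathcal{F})$, and the map between the two \v{C}ech groups is the restriction $H^p(\bigcup U_i,\mathcal{F})\to H^p(\bigcup V_i,\mathcal{F})$ between cohomologies of different domains. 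For coherent sheaves this map has no reason to be surjective in general; its surjectivity is precisely the nontrivial content one must establish before Schwartz's theorem can be invoked. If you instead try to force both coverings to cover the same set (say by intersecting with $B_r$, which is what the paper does for the \emph{inner} covering $V_{1j}=(U'_{1j}\times U''_{1j})\cap B_r$), you lose the relative compactness $\overline{V_I}\subset U_I$ near $\partial B_r$ and hence the compactness of $v$. This tension cannot be waved away.

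The paper resolves it by devoting the entire first half of its proof to a bumping argument: it constructs an increasing chain of domains $B_r=D_0\subset\subset D_1\subset\subset\cdots\subset\subset D_m$, where each $D_j$ is obtained from $D_{j-1}$ by attaching a piece $D_j\cap(V'_j\times V''_j)$ supported in a product of balls on which $\mathcal{F}$ has a finite free resolution (the bump functions $\phi'_j,\phi''_j$ and the smallness conditions \eqref{*} are there to keep each $D_j$ of the form \eqref{D def} so that Corollary \ref{imp cor} applies). The Mayer--Vietoris sequence together with the vanishing $H^p(D_{j-1}\cap(V'_j\times V''_j),\mathcal{F})=0$ then shows that each restriction $H^p(D_j,\mathcal{F})\to H^p(D_{j-1},\mathcal{F})$ is surjective, hence so is $H^p(D_m,\mathcal{F})\to H^p(B_r,\mathcal{F})$. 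Only with this surjectivity in hand does the Schwartz argument run: the outer covering is a Leray covering of $D_m$, the inner one of $B_r$, and every cocycle on the inner covering is cohomologous to the restriction of one from the outer covering. You would need to supply this (or an equivalent) argument; as written, your proposal assumes the hardest step. Your secondary worry about transferring compactness from $\mathcal{O}^m$ to $\mathcal{F}$ via the open mapping theorem is, by contrast, handled correctly and is the routine part.
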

\begin{proof}
Let $D^{'}:=\{|x|<R\,|\,x\in\R^{2M}\}$ for some $R>0$. For $1\leq j\leq m$, consider the product of balls $U^{'}_{j}\times U^{''}_{j}\subset\subset V^{'}_{j}\times V^{''}_{j}\subset\subset G$ in $\R^{2M}\times\C^{N}$  such that
  \begin{enumerate}
  \setlength\itemsep{0.4em}
      \item $\partial D^{'}\times\partial\D_{r}\subset\bigcup_{j}(U^{'}_{j}\times U^{''}_{j})\,.$
      \item $\mathcal{F}$ admits a finite free resolution on $V^{'}_{j}\times V^{''}_{j}$.
  \end{enumerate}
  Let $\phi^{'}_{j}\,,\phi^{''}_{j}\geq 0$ be smooth real valued functions on $\R^{2M}$ and $\C^{N}$, respectively, such that $\phi^{'}_{j}\,,\phi^{''}_{j}>0$ on $U^{'}_{j}\,,U^{''}_{j}$ and $\phi^{'}_{j}\,,\phi^{''}_{j}\equiv 0$ outside $V^{'}_{j}\,,V^{''}_{j}$, respectively. Consider the smooth functions $\tilde{\phi^{'}_0}=\sum_{i=1}^{2M}|x_{i}|^{2}-R^{2}$ and $\tilde{\phi^{''}_0}=\sum_{i=1}^{N}|z_{i}|^{2}-r^{2}$. Choose $\lambda^{'}_{j}\,,\lambda^{''}_{j}\in(0,\infty)\,,$ $1\leq j\leq m$, so small such that $\pr_1^{-1}(D^{'}_{j})\subset G$ and $\tilde{\phi^{''}_{j}}=\tilde{\phi^{''}_0}-\sum^{j}_{i=1}\lambda^{''}_{i}\phi^{''}_{i}$ satisfies \eqref{*}, respectively, where  $D^{'}_{j}:=\{\tilde{\phi^{'}_{j}}<0\}$ and $\tilde{\phi^{'}_{j}}=\tilde{\phi^{'}_0}-\sum^{j}_{i=1}\lambda^{'}_{i}\phi^{'}_{i}$.  Define, for $0\leq j\leq m$,
 \[ D_{j}=
 \begin{cases}
 D^{'}\times D^{''}_{0}\,,& \text{if }  j=0\\
D^{'}_{j}\times D^{''}_{j}\,,& \text{if }  1\leq j\leq m
 \end{cases} 
 \text{where}\quad D^{''}_{j}:=\{\tilde{\phi_{j}}<0\}\,.
 \] 
 Note that $D_0=B_{r}\subset\subset D_{m}$, $D_{j}=D_{j-1}\cup(D_{j}\cap (V^{'}_{j}\times V^{''}_{j}))$, and also $D_{j-1}\cap (V^{'}_{j}\times V^{''}_{j})=D_{j-1}\cap(D_{j}\cap (V^{'}_{j}\times V^{''}_{j}))\,.$ By Corollary \ref{imp cor}, $H^{p}(D_{j-1}\cap (V^{'}_{j}\times V^{''}_{j}),\mathcal{F})=0$ for $p\geq 1$ and $1\leq j\leq m$. Therefore, using the Mayor-Vietoris sequence 
 $$\longrightarrow H^{p}(D_{j},\mathcal{F})\longrightarrow H^{p}(D_{j-1},\mathcal{F})\oplus H^{p}(D_{j}\cap (V^{'}_{j}\times V^{''}_{j}),\mathcal{F})\longrightarrow H^{p}(D_{j-1}\cap (V^{'}_{j}\times V^{''}_{j}),\mathcal{F})\longrightarrow$$
we get that $H^{p}(D_{j},\mathcal{F})\longrightarrow H^{p}(D_{j-1},\mathcal{F})$ is surjective for $p\geq 1$ and $1\leq j\leq m$. Hence the restriction induces a subjective map $H^{p}(D_{m},\mathcal{F})\longrightarrow H^{p}(B_{r},\mathcal{F})$.
\vspace{0.4em}

For $i=1,2$ and $1\leq j\leq l$, choose two finite collections of the product of balls $\{U^{'}_{ij}\times U^{''}_{ij}\}$ in $G$ such that

\vspace{0.2em}
\begin{enumerate}
\setlength\itemsep{0.3em}
    \item $U^{'}_{1j}\times U^{''}_{1j}\subset\subset U^{'}_{2j}\times U^{''}_{2j}$.
    \item $B_{r}\subset\bigcup_{j}(U^{'}_{1j}\times U^{''}_{1j})$ and $D_{m}\subset\bigcup_{j}(U^{'}_{2j}\times U^{''}_{2j})$.
    \item On $U^{'}_{2j}\times U^{''}_{2j}\,,$ we have a sheaf epimorphism $\psi_{j}:\mathcal{O}_{G}^{p_{j}}\longrightarrow\mathcal{F}$ which is a part of a finite free resolution.
\end{enumerate}
Now, consider the following:
\begin{enumerate}
\setlength\itemsep{0.3em}
    \item $V_{1j}=(U^{'}_{1j}\times U^{''}_{1j})\cap B_{r}$ and $V_{2j}=(U^{'}_{2j}\times U^{''}_{2j})\cap D_{m}$ for $1\leq j\leq l$.
    \item $V_{i}=\{V_{ij}\}^{l}_{j=1}\,,\,i=1,2\,.$
\end{enumerate}
Fix $p\geq 1$. Since $H^{p}(\cap_{j\in\{j_0,j_1,\ldots,j_{q}\}}V_{ij},\ker\psi_{j_0})=0$ for all $1\leq j_{q},\ldots,j_1,j_0\leq l$ by Corollary \ref{imp cor}, the map 
$$H^0(\cap_{j\in\{j_0,j_1,\ldots,j_{q}\}}V_{ij},\mathcal{O}_{G}^{p_{j_0}})\longrightarrow H^0(\cap_{j\in\{j_0,j_1,\ldots,j_{q}\}}V_{ij},\mathcal{F})$$ induced by $\psi_{j_0}$ is surjective for $i=1,2$. So, by Theorem \ref{imp thm}, $H^0(\cap_{j\in\{j_0,j_1,\ldots,j_{q}\}}V_{ij},\mathcal{F})$ has a canonical Fr\'{e}chet space structure. Since the cochain groups
$$C^{p}(\{V_{ij}\},\mathcal{F})=\bigoplus_{j_1,\ldots,j_{p}}H^0(\cap_{j\in\{j_1,\ldots,j_{p}\}}V_{ij},\mathcal{F})$$
inherits product topology, $ Z^{p}(\{V_{ij}\},\mathcal{F})$ and $C^{p-1}(\{V_{ij}\},\mathcal{F})$ has a canonical Fr\'{e}chet space structure for $i=1,2$. Also, note that by Leray's theorem  $$H^{p}(\{V_{1j}\},\mathcal{F})=H^{p}(B_{r},\mathcal{F})\quad\text{and}\quad H^{p}(\{V_{2j}\},\mathcal{F})=H^{p}(D_{m},\mathcal{F})\,\,\text{for $p\geq 0$}\,.$$ Let $\eta: Z^{p}(\{V_{2j}\},\mathcal{F})\longrightarrow Z^{p}(\{V_{1j}\},\mathcal{F})$ be the restriction map and $\delta$ be the coboundary map for $\{C^{p}(\{V_{1j}\},\mathcal{F})\}$. Then the map
$$\eta+\delta:Z^{p}(\{V_{2j}\},\mathcal{F})\oplus C^{p-1}(\{V_{1j}\},\mathcal{F})\longrightarrow Z^{p}(\{V_{1j}\},\mathcal{F})$$ defined by $\eta+\delta(c,d)=\eta(c)+\delta(d)$ is subjective. Since $V_{1j}\subset\subset V_{2j}$, the map $\eta+0$ is compact. Then by Schwartz's theorem (cf. \cite[p.~290]{rossi65}), $0+\delta=(\eta+\delta)-(\eta+0)$ has finite dimensional cokernal. Thus $\dim_{\C}H^{p}(B_{r},\mathcal{F})<+\infty$.
\end{proof}

\begin{remark}
    Note that we can substitute $D^{'}$ in Proposition \ref{prop1} with $D^{''}=\big\{\psi<0\big\}$ for any smooth function $\psi:\R^{2M}\longrightarrow\R$, and the proof will follow in a similar manner.
\end{remark}
\vspace{0.5em}

\hspace{-1.2em}\textit{Proof of Theorem \ref{imp claim}.}
\,\,By shrinking $G$, we can always assume $\dim\supp\mathcal{F}<+\infty$. Fix $p\geq 1$. We will use induction on $\dim\supp\mathcal{F}$. When $\dim\supp\mathcal{F}=0$, the Proposition is trivially true. Suppose that the proposition is true for any coherent sheaf $\mathcal{G}$ of $\mathcal{O}_{G}$-modules of $\dim\supp\mathcal{G}<d$. Let's assume $\dim\supp\mathcal{F}=d$.
\vspace{0.5em}

Let $E$ be the set of entire function on $\C$ and $\pr:\R^{2M}\times\C^{N}\longrightarrow\C$ be the projection map. Let $g\in E-\{0\}$. Consider the sheaf homomorphism
$$\phi_{g}:\mathcal{F}\longrightarrow\mathcal{F}$$ defined by the multiplication of $g\circ\pr$.

\vspace{0.4em}
Note that  both $\dim\supp\ker\phi_{g}<d$ and $\dim\supp\coker\phi_{g}<d$. Therefore, by the induction hypothesis, for $p\geq 1$,
$$H^{p}(B_{r},\ker\phi_{g})=H^{p}(B_{r},\coker\phi_{g})=0\,.$$
Consider the following two short exact sequences of sheaves
$$0\longrightarrow\ker\phi_{g}\hookrightarrow\mathcal{F}\longrightarrow\mathcal{F}/\ker\phi_{g}\longrightarrow 0\,,$$ and 
$$0\longrightarrow\mathcal{F}/\ker\phi_{g}\xrightarrow{\tilde{\phi_{g}}}\mathcal{F}\longrightarrow\coker\phi_{g}\longrightarrow 0\,,$$ where $\tilde{\phi_{g}}$ is induced by $\phi_{g}$.
Using the long exact sequence of the cohomology of the first and the second short exact sequence, we get the following $$H^{p}(B_{r},\mathcal{F})\xrightarrow{\cong}H^{p}(B_{r},\mathcal{F}/\ker\phi_{g})\quad\text{and}\quad H^{p}(B_{r},\mathcal{F}/\ker\phi_{g})\xrightarrow{\cong}H^{p}(B_{r},\mathcal{F})\,,$$ for $p\geq 1$, respectively. This implies that, for $p\geq 1$, $\phi_{g}$ induces an isomorphism $$\phi^{*}_{g}:H^{p}(B_{r},\mathcal{F})\xrightarrow{\cong}H^{p}(B_{r},\mathcal{F})\,.$$ Let $\alpha\neq 0\in H^{p}(B_{r},\mathcal{F})$ and consider the map
$$\widehat\phi_{\alpha}:E\longrightarrow H^{p}(B_{r},\mathcal{F})$$ defined by
$\widehat\phi_{\alpha}(g)=\phi^{*}_{g}(\alpha)$ for all $g\in E-\{0\}$ and $\widehat\phi_{\alpha}(0)=0$. One can see that $\widehat\phi_{\alpha}$ is a linear map. If possible, let $g\in E-\{0\}$ such that $\widehat\phi_{\alpha}(g)=0$. This implies $\phi^{*}_{g}(\alpha)=0$ for $\alpha\neq 0$ which is a contradiction. Thus $\ker\widehat\phi_{\alpha}=\{0\}$ and so, $\dim_{\C}E<\dim_{\C}H^{p}(B_{r},\mathcal{F})$. This contradicts Proposition \ref{prop1}. Hence $H^{p}(B_{r},\mathcal{F})=0$ for $p\geq 1$.

\section{GC Stein manifold and $L$-plurisubharmonic function}\label{l2}
In this section, we extend the $L^2$-characterization of Stein manifolds by providing a similar characterization in the context of GC geometry.

\subsection{$L$-plurisubharmonicity}
Let $(X^{2n},\mathcal{J})$ be a regular GC manifold of type $k>0$ with $i$-eigen bundle $L$. Then, by Definition \ref{gcs}, $(TX\oplus T^{*}X)\otimes\C=L\oplus\overline{L}\,,$ and  we have a differential operator, 
\begin{equation}\label{d-L}
    d_{L}:C^{\infty}(\wedge^{\bullet}L^{*})\longrightarrow C^{\infty}(\wedge^{\bullet+1}L^{*})\,,
\end{equation}
defined as follows:
For any $\omega\in C^{\infty}(\wedge^{m} {L^*})$ and $A_{i}\in C^{\infty}(L)$ for all $i\in\{1,\cdots,m+1\}$, 
\begin{align*}
d_{L}\omega(A_{1},\cdots,A_{m+1})& :=\sum_{i=1}^{m+1}(-1)^{i+1}\rho(A_{i})(\omega(A_{1},\cdots,\hat{A_{i}},\cdots,A_{m+1}))\\
&+\sum_{i<j}(-1)^{i+j}\omega([A_{i},A_{j}],A_{1},\cdots,\hat{A_{i}},\cdots,\cdots,\hat{A_{j}},\cdots,A_{m+1})\,,
\end{align*}
where $\rho:(TX\oplus T^{*}X)\otimes\C\longrightarrow TX\otimes\C$ is the projection map and $[\,,\,]$ is the Courant bracket.
Similarly, we have another differential operator
\begin{equation}\label{d-L bar}
    d_{\overline{L}}:C^{\infty}(\wedge^{\bullet}\overline{L}^{*})\longrightarrow C^{\infty}(\wedge^{\bullet+1}\overline{L}^{*})\,.
\end{equation} 
Using Theorem \ref{darbu thm} and Proposition \ref{prop GH}, we can observe the following. 
\begin{prop}(\cite[Corollary 8.4]{pal24})\label{imp corr}
    Let $X$ be a regular GC manifold. Given an open set $U\subseteq X$, a smooth map $\psi:(U,\mathcal{J}_{U})\longrightarrow\C$ is a GH function, that is, $f\in\mathcal{O}_{X}(U)$, if and only if $d_{L}f=0$ where $d_{L}$ as defined in \eqref{d-L}. 
\end{prop}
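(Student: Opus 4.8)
The plan is to reduce the statement to the local coordinate characterization of GH functions in Proposition \ref{prop GH} by unwinding what the equation $d_L f = 0$ means for a $0$-form.

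First I would specialize the definition \eqref{d-L} to the case of a degree-zero form. For $f \in C^{\infty}(\wedge^0 L^*) = C^{\infty}(U,\C)$ and a single section $A \in C^{\infty}(L)$, the Courant-bracket sum is empty and only the first term survives, so that
$$d_L f(A) = \rho(A)(f).$$
Hence $d_L f = 0$ is equivalent to requiring $\rho(A)(f) = 0$ for every $A \in C^{\infty}(L)$, i.e.\ that every vector field lying in the image $E_X = \rho(L)$ (cf.\ Definition \ref{def:type}) annihilates $f$.

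Next I would identify $E_X$ in the generalized Darboux coordinates \eqref{loc coordi}. By Theorem \ref{darbu thm}, after a $B$-transformation a neighborhood of any point is GH homeomorphic to a product $U_1 \times U_2 \subset (\R^{2n-2k},\omega_0) \times \C^{k}$ carrying the product GCS. A GH homeomorphism intertwines the $+i$-eigenbundles and the projection $\rho$, hence intertwines $d_L$ acting on functions; moreover, the image $\rho(L)$ is unchanged by a $B$-transformation, since $\rho((L_X)_B) = \rho(L_X)$ as noted in the remark following Definition \ref{GH map}. It therefore suffices to compute $E_X$ for the \emph{plain} product GCS. From Example \ref{symplectic eg} the symplectic factor contributes $\rho(L_{\mathrm{symp}}) = T\R^{2n-2k}\otimes\C$, spanned by the $\partial/\partial p_l$; from Example \ref{complx eg} the complex factor contributes $\rho(L_{\mathrm{cplx}}) = T^{0,1}\C^{k}$, spanned by the $\partial/\partial\overline{z_j}$. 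Thus $E_X$ is locally spanned by $\{\partial/\partial p_l,\ \partial/\partial\overline{z_j}\}$. Combining the two steps, $d_L f = 0$ holds precisely when
$$\frac{\partial f}{\partial p_l} = 0 \quad\text{and}\quad \frac{\partial f}{\partial\overline{z_j}} = 0 \qquad \text{for all } l,\, j,$$
which by Proposition \ref{prop GH} is exactly the condition that $f$ be a GH function, giving both implications at once.

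The only delicate point is the bookkeeping around the $B$-transformation: one must verify that $d_L$ acting on $0$-forms is genuinely insensitive to it, which is what legitimizes the reduction to the plain product model. This rests on the $B$-invariance of $\rho(L)$ together with the observation that, for functions, $d_L$ sees only $\rho(L)$ and not the cotangent component of $L$ that a $B$-field actually modifies; every other step is a direct reading of the definitions.
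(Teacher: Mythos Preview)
Your proposal is correct and follows exactly the route the paper indicates: the paper does not give a detailed proof here but simply states that the result is observed ``using Theorem \ref{darbu thm} and Proposition \ref{prop GH}'' (and cites \cite{pal24}), and your argument is precisely the unwinding of that reduction. The observation that $d_L$ on $0$-forms sees only $\rho(L)$, together with the $B$-invariance of $\rho(L)$, is the correct justification for passing to the product model, and the identification of $E_X$ in Darboux coordinates then matches the local PDE of Proposition \ref{prop GH}.
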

For any $p,q\geq 0$\,, define
\begin{equation}\label{A-pq}
\begin{aligned}
  &A^{p,q}:=C^{\infty}(\wedge^{p}\overline{L}^{*}\otimes\wedge^{q}L^{*})\,,\,\,\,\text{and}\,\,\,A^{\bullet}:=\bigoplus_{p+q=\bullet} A^{p,q}\,.\\
&E^{p,q}:=\wedge^{p}\overline{L}^{*}\otimes\wedge^{q}L^{*}\,.
\end{aligned}
\end{equation}
More specifically, given an open set $U\subseteq X$,
$$A^{p,q}(U)=C^{\infty}(U,\wedge^{p}\overline{L}^{*})\otimes_{C^{\infty}(U,\C)} C^{\infty}(U,\wedge^{q}L^{*})\,.$$
In particular, $A^{0,0}(U):=C^{\infty}(U,\C)$. 
We identify $L^{*}$ with $\overline{L}$ via the symmetric bilinear form defined in \eqref{bilinear}. Subsequently, we can naturally extend $d_{L}$ (cf. \eqref{d-L}), and $d_{\overline{L}}$ (cf. \eqref{d-L bar}) to $A^{\bullet,\bullet}$, again denoted by $d_{L}$ and $d_{\overline{L}}$ respectively, and get the following morphisms of sheaves
  \begin{equation}\label{dL-dL bar}
    \begin{aligned}
&d_{L}:A^{\bullet,\bullet}\longrightarrow A^{\bullet,\bullet+1}\,;\\
&d_{\overline{L}}:A^{\bullet,\bullet}\longrightarrow A^{\bullet+1,\bullet}\,;\\
&D:=d_{\overline{L}}+d_{L}\,.
    \end{aligned}
\end{equation}
Note that $d_{L}d_{\overline{L}}=-d_{\overline{L}}d_{L}$. This implies $$D^2=0\quad\text{and}\quad D(A^{\bullet,\bullet})\subseteq A^{\bullet+1,\bullet}\oplus A^{\bullet,\bullet+1}\,.$$ 
\begin{definition}
A real section $s\in A^{1,1}(X)$ (that is, $\overline{s}=s$) is called positive, denoted by $s>0$, if for any non-zero element $v\in\overline{L}\otimes L$, we have $s(v)>0$. Here $E^{1,1}$ is identified with $(\overline{L}\otimes L)^{*}$.
\end{definition}

Let $\mathscr{S}$ denote the induced regular transversely holomorphic, symplectic foliation corresponding to $\mathcal{J}$. Let $d_{\mathscr{S}}$ denote the exterior derivative along the leaves.
\begin{definition}\label{l-pluri}
    Let $f:X\longrightarrow\C$ be a smooth real function, that is $\overline{f(x)}=f(x)$ for all $x\in X$. Then, $f$ is called a \textit{$L$-plurisubharmonic} (respectively, \textit{strictly $L$-plurisubharmonic}) function if $d_{\mathscr{S}}f=0$, that is, $f$ is constant along the leaves of $\mathscr{S}$ and 
    $$id_{\overline{L}}d_{L}f\geq 0\,\,\, (\,\text{respectively,}\,\,\, id_{\overline{L}}d_{L}f>0\,)\,.$$
\end{definition}
\begin{remark}
     Definition \ref{l-pluri} is invariant under $B$-transformations because $B$-transformations are real and bundle isomorphisms, that map involutive, isotropic, maximal subbundles to involutive, isotropic, and maximal subbundles while preserving the type of a GCS.
\end{remark}
\begin{remark}\label{rmk pluri}
    Note that we can express $d_{\overline{L}}f,\, d_{L}f$ and $d_{\mathscr{S}}f$ in terms of local coordinates, as in \eqref{loc coordi}, as follows: 
    $$d_{\overline{L}}f=\sum^{k}_{j=1}\frac{\partial f}{\partial z_{j}}\,dz_{j}+d_{\mathscr{S}}f\,,\,
    d_{L}f=\sum^{k}_{j=1}+\frac{\partial f}{\partial\overline{z_{j}}}\,d\overline{z_{j}}+d_{\mathscr{S}}f\quad\text{and}\quad d_{\mathscr{S}}f=\sum^{2n-2k}_{l=1}\frac{\partial f}{\partial p_{l}}\,dp_{l}$$ for all $i,j\in\{1,\ldots,k\}\,, l\in\{1,\ldots,2n-2k\}\,.$ Now if, $d_{\mathscr{S}}f=0$, then 
    $$d_{\overline{L}}d_{L}f=\sum^{k}_{i,j=1}\frac{\partial^{2} f}{\partial z_{i}\partial\overline{z_{j}}}\,d\overline{z_{j}}\wedge dz_{i}\,.$$ Thus by Corollary \ref{cor:diffcharts}, $d_{\overline{L}}d_{L}f$ is invariant under the change of local coordinates as in \eqref{loc coordi}. This invariance implies that the notion of $L$-plurisubharmonicity is well defined for regular GC manifolds. Therefore, locally, $f$ is $L$-plurisubharmonic (respectively, strictly $L$-plurisubharmonic) iff $d_{\mathscr{S}}f=0$ and
    $$\sum^{k}_{i,j=1}\frac{\partial^{2} f}{\partial z_{i}\partial\overline{z_{j}}}\geq 0\,\,\, (\,\text{respectively,}\,\,\, \sum^{k}_{i,j=1}\frac{\partial^{2} f}{\partial z_{i}\partial\overline{z_{j}}}>0\,)\,.$$
\end{remark}
\begin{theorem}\label{pluri thm}
    Let $X^{2n}$ be GC Stein manifold of type $k$. Let $K\subset X$ be a compact subset and $U$ be an open neighborhood of\,\,\,$\widehat{K}_{\mathcal{O}(X)}$. Then there exists a smooth real function $f:X\longrightarrow\C$ such that
    \begin{enumerate}
    \setlength\itemsep{0.4em}
        \item $f$ is strictly $L$-plurisubharmonic.
        \item $f<0$ in $K$ and $f>0$ in $X\backslash U\,.$
        \item $\bigg\{x\in X\,\,\big|\,\,f(x)<c,\,\,\,\text{for $c\in\R$}\bigg\}\subset\subset X$.
    \end{enumerate}
\end{theorem}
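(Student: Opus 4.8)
The plan is to reduce the three required properties to classical transverse statements via Remark~\ref{rmk pluri}, and then realize $f$ as a locally finite nonnegative series of terms of the form $|g|^2$ with $g\in\mathcal{O}(X)$. Since every GH function satisfies $\partial g/\partial p_l=0$, we have $d_{\mathscr{S}}|g|^2=0$, and by Remark~\ref{rmk pluri} the transverse Levi form of $|g|^2$ is the positive semidefinite form with entries $(\partial g/\partial z_i)\,\overline{(\partial g/\partial z_j)}$. Hence any convergent nonnegative combination of such terms is automatically leaf-constant and $L$-plurisubharmonic, so the problem decouples into (a) producing an exhaustion, (b) achieving strict positivity of the transverse Levi form, and (c) arranging the two sign conditions, while $d_{\mathscr{S}}f=0$ will hold throughout by construction.

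For the exhaustion I would invoke GH convexity: by Remark~\ref{rmk stein} there is an exhaustion $K_1\subset K_2\subset\cdots$ of $X$ by compact $\mathcal{O}(X)$-convex sets with $K_j\subset K_{j+1}^{o}$ and $\widehat{K}_{\mathcal{O}(X)}\subset K_1^{o}$. On each shell $L_j:=K_{j+1}\setminus K_j^{o}$, the $\mathcal{O}(X)$-convexity of $K_j$ provides separation: for $x\in L_j$ there is $g\in\mathcal{O}(X)$ with $\sup_{K_j}|g|<1<|g(x)|$, and by compactness finitely many $g_{j,1},\dots,g_{j,m_j}$ suffice so that $\max_i|g_{j,i}|<1$ on $K_j$ while $\max_i|g_{j,i}|>1$ somewhere near each point of $L_j$. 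Taking high powers $|g_{j,i}|^{2p_j}$ and summing with weights $\lambda_j$, I would form $\rho:=\sum_{j}\lambda_j\sum_i|g_{j,i}|^{2p_j}$, choosing $p_j$ and $\lambda_j$ so that the tail is uniformly small together with all its derivatives on each $K_N$ (possible because $K_N\subset K_j$ forces $|g_{j,i}|<1$ there for $j\ge N$) while $\rho\to+\infty$ along the shells. To gain strictness I use GH regularity: at each point the global GH functions $g_1,\dots,g_k\in\mathcal{O}(X)$ have $\mathbb{C}$-independent differentials, so $\sum_{i=1}^k|g_i|^2$ has positive definite transverse Levi form nearby; by compactness of each $K_N$ finitely many global GH functions give a positive definite transverse form on $K_N$, and adding a locally finite combination of their $|{\cdot}|^2$ with sufficiently small coefficients makes the total sum strictly $L$-plurisubharmonic everywhere without spoiling the exhaustion or leaf-constancy. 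Absorbing these into $\rho$, I obtain a smooth strictly $L$-plurisubharmonic exhaustion with $d_{\mathscr{S}}\rho=0$.

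For the sign conditions I exploit that $\sup_{\widehat{K}_{\mathcal{O}(X)}}|g|=\sup_{K}|g|$ for all $g\in\mathcal{O}(X)$, so each $p\in X\setminus U\subset X\setminus\widehat{K}_{\mathcal{O}(X)}$ is separated from the hull by some $g_p$ with $\sup_{\widehat{K}_{\mathcal{O}(X)}}|g_p|<1<|g_p(p)|$. Choosing countably many such $g_p$ with local finiteness and high powers, I build $\sigma=\sum_p\varepsilon_p|g_p|^{2N_p}\ge 0$, which is $L$-plurisubharmonic with $\sigma<\epsilon$ on $\widehat{K}_{\mathcal{O}(X)}$ and $\sigma\ge 1$ on $X\setminus U$. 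Writing $A:=\sup_{\widehat{K}_{\mathcal{O}(X)}}\rho$ and setting $f:=\rho+C\sigma-\delta$ with $C>A/(1-\epsilon)$ and $A+C\epsilon<\delta<C$, I get $f\le A+C\epsilon-\delta<0$ on $K$ and $f\ge C-\delta>0$ on $X\setminus U$, while $f$ stays strictly $L$-plurisubharmonic with $d_{\mathscr{S}}f=0$; moreover $\{f<c\}\subseteq\{\rho<c+\delta\}\subset\subset X$, so $f$ is an exhaustion.

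I expect the main obstacle to be the smoothness of the infinite sums: the exponents $p_j,N_p$ and weights $\lambda_j,\varepsilon_p$ must be chosen so that the series and all of its term-by-term derivatives converge uniformly on every $K_N$, simultaneously with the prescribed level growth and with the globalization of the strict positivity of the transverse Levi form. Since the function is never smoothed but only assembled from GH-built pieces, the leaf-constancy $d_{\mathscr{S}}f=0$ and the $L$-plurisubharmonicity persist automatically; the delicate balancing of these weights against the constants $C,\delta$ is the technical heart of the argument.
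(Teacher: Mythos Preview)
Your proposal is correct and follows essentially the same approach as the paper: both build $f$ as a convergent series of terms $|g|^{2}$ with $g\in\mathcal{O}(X)$, use GH convexity together with $\mathcal{O}(X)$-convex exhaustions to get the exhaustion and sign conditions, and use GH regularity to force the transverse Levi form to be positive definite. The only difference is organizational: the paper (following H\"ormander's classical Theorem~5.1.6) produces a single series $f=\sum_{j,l}|g_{jl}|^{2}-1$ that simultaneously handles exhaustion, sign, and strictness by starting the compact exhaustion at $K_{1}=K$ with $U_{1}\subset U$, whereas you build an exhaustion $\rho$, a separate sign-correcting term $\sigma$, and then take the linear combination $f=\rho+C\sigma-\delta$. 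Your extra $\sigma$ is unnecessary once you observe that the sign conditions can be absorbed into the shell construction for $\rho$ itself (choose the first shell so that $K\subset K_{1}$ and $U_{1}\subset U$, and shift by a constant); this makes the smoothness bookkeeping you flag as the ``main obstacle'' cleaner, since there is only one infinite series whose $C^{\infty}$-convergence must be arranged.
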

\begin{proof}
Using GH convexity (see $(2)$ in Definition \ref{main def}), $X$ admits an exhaustion $K=K_1\subset K_2\subset\cdots\subset\bigcup_{j}K_{j}=X$ by compact $\mathcal{O}(X)$-convex subsets such that $K_{j}\subset K_{j+1}^{o}$ for every $j\geq 1$. Here $K_{j}^{o}$ denotes the interior of $K_{j}$. Let $U_{j}\subset X$ be an open set such that $K_{j}\subset U_{j}\subset K_{j+1}$ and $U_1\subset U$. Since, for every $j$, $K_{j}=\widehat{K_{j}}_{\mathcal{O}(X)}$, we can choose global GH functions $g_{jl}\in\mathcal{O}(X)$ for $l=1,\ldots,l_{j}$ with $|g_{jl}(x)|<1$ for $x\in K_{j}$ so that $\max_{l}|g_{jl}(x)|>1$ for $x\in K_{j+2}\backslash U_{j}$. Hereafter, we can rearrange the functions $|g_{jl}|$ by raising them to higher powers, as follows:
  \begin{equation}\label{eq gjl}
      \begin{aligned}
          \sum_{l}|g_{jl}(x)|^2&<2^{-j}\,,\quad\text{for $x\in K_{j}$\,,}\\
          \sum_{l}|g_{jl}(x)|^2&>j\,,\quad\text{for $x\in K_{j+2}\backslash U_{j}$\,.}
      \end{aligned}
  \end{equation}
By GH regularity of $X$ (see $(3)$ in Definition \ref{main def}), we can assert that among the GH functions $g_{jl}\,\,(l=1,\ldots,l_{j})$, there exist $k$ functions that form a part of a local coordinate system at any point in $K_{j}$. In other words, we can find $k$ GH functions among them that form a local coordinate system in the transversal direction, of the symplectic foliation $\mathscr{S}$, at any point in $K_{j}$. Define
$$f(x):=\sum_{j,l}|g_{jl}(x)|^{2}-1\,\,\,\text{for $x\in X$}\,.$$
By \eqref{eq gjl}, $f$ is well defined and $f(x)>j-1$ when $x\in X\backslash U_{j}\,.$ Set, $$\hat{f}(x,y):=\sum_{j,l}g_{jl}(x)\overline{g_{jl}(y)}\quad\text{for $(x,y)\in X\times X$}\,.$$ Again applying \eqref{eq gjl}, observe that $\hat{f}$ is an uniformly convergent series on compact subsets of $X\times X\,.$ Therefore it is a GH function in $x$-coordinate and its complex conjugate is GH function in $y$-coordinate. Following the proof of \cite[Theorem 5.1.6]{hor}, we deduce that $f$ is a smooth real function. Now, $d_{\mathscr{S}}f=0$, as $d_{\mathscr{S}}g_{jl}=0$ (cf. Proposition \ref{imp corr}). In simpler terms, $f$ is constant along the leaves of $\mathscr{S}$.

\vspace{0.5em}
Note that, by Proposition \ref{imp corr}, $id_{\overline{L}}g_{jl}\wedge d_{L}\overline{g_{jl}}\geq 0\,.$ Consequently, using Remark \ref{rmk pluri}, $f$ is $L$-plurisubharmonic function. Now, if for some $x\in X$, in local coordinates (cf. \eqref{loc coordi}), 
$$\sum^{k}_{m=1}w_{m}\frac{\partial g_{jl}(x)}{\partial z_{m}}=0\quad\text{for all $j$ and $l$}\,,$$ then $w_{m}=0$ for all $m\in\{1,\ldots,k\}$, because there exist $k$ number of GH functions $g_{jl}$ that form a local coordinate system in the transversal direction at $x$. Again following the proof of \cite[Theorem 5.1.6]{hor}, we imply that $f$ is also strictly $L$-plurisubharmonic, thereby concluding the proof.
\end{proof}
\subsection{$L^{2}$ theory and GC manifold}\label{l2-1}

In this subsection, we extend some classical results related to the $L^2$-theory of $\C$-valued differential forms on complex manifolds and present the converse of Theorem \ref{pluri thm}, following the approaches of  \cite[Chapter I]{batu17}, \cite[Chapters 4-5]{hor, krantz}.

\medskip
Consider a hermitian metric $h$ on the complex vector bundle $E^{1,1}$ (cf. \eqref{A-pq}) over a regular GC manifold $X^{2n}$. On a local trivialization $U\subset X$, $h$ is given by a function, again denoted by $h$, that associates to any $x\in U$, a positive-definite hermitian matrix $h(x)=(h_{ij}(x))$. Applying the Gram-Schmidt orthogonalization process and sufficiently shrinking $U$, we can choose $e_1,\ldots,e_{2n}\in A^{1,0}(U)$ such that $h(e_{j},e_{l})=\delta_{jl}$ for $j,l\in\{1,\ldots,2n\}$ where $\delta_{jl}$ is the Kronecker delta. 

\vspace{0.5em}
Similarly, we can get the same result by considering a hermitian metric on $E^{p,q}$. For notational purposes, whenever we reference a hermitian metric on $E^{p,q}$, we are implying a hermitian metric with preceding characteristic and local orthonormal frame. Hence, for a smooth section $s\in A^{p,q}(X)$ over a local trivialization $U$, we can express it uniquely as
\begin{equation}\label{sec}
    s=\sum_{I,J}s_{IJ}\,e_{I}\wedge \overline{e_{J}}\quad\text{for some $s_{IJ}\in C^{\infty}(U,\C)$}\,,
\end{equation}
where
$I, J$ are ordered subsets of $\{1, \ldots, 2n \}$, and
$e_{I}= \bigwedge_{i \in I} e_{i}$, $\overline{e_J} = \bigwedge_{j \in J} \overline{e_{j}} $ . Considering $h$ on $E^{p,q}$ (cf. \eqref{A-pq}), we  set
\begin{equation}\label{s2}
    |s|^2:=h(s,s)\,,
\end{equation} which is locally of the form $\frac{1}{p!q!}\sum_{IJ}|s_{IJ}|^2$. Note that \eqref{s2} is independent of the choice of smooth local orthonormal frame $\{e_1,\ldots,e_{2n}\}$.

\vspace{0.5em}
Let $C^{\infty}_{c}(X,\C)$ denote the space of compactly supported smooth $\C$-valued functions. We choose a sequence $\{\eta_{j}\}\in C^{\infty}_{c}(X,\C)$ such that 
\begin{itemize}
\setlength\itemsep{0.4em}
    \item $0\leq\eta_{j}\leq 1\,,$ and 
    \item $\eta_{j}=1$ on any compact subset of $X$ when $j$ is large.
\end{itemize}
Now given a hermitian metric $h$ on $E^{p,q}$, we can find a positive real function $\eta\in C^{\infty}(X,\C)$ such that $|d_{L}\eta_{j}|\leq \eta$ for all $j\in\N$. Thus, we can substitute $h$ with $h^{'} := \eta^2\, h$, which implies that, with respect to $h^{'}$, we obtain 
\begin{equation}\label{nj eq}
    |d_{L}\eta_{j}|\leq 1\quad\text{on $X$ for all $j\in\N$}\,.
\end{equation}

\begin{center}
\textit{From this point forward and throughout the remainder of this section, we keep the hermitian structure $h^{'}$ and the sequence $\{\eta_{j}\}$ fixed.} 
\end{center}

Let $\psi\in C^2(X,\C)$ be a real function and $\lambda$ be strictly positive smooth measure on $X$. We mean by this that $d\lambda$ is a volume element (or density) of $X$. Let $\Gamma(X, E^{p,q})$ denote the space of (not necessarily continuous) sections. In particular, $\Gamma(X,E^{0,0})$ denote the space of $\C$-valued functions.

\vspace{0.5em}
For any $p,q\geq 0$, consider the following spaces:
\begin{equation}
    \begin{aligned}
        L^{2}(X,\psi):=&\Big\{f\in C^{\infty}(X,\C)\,\big |\,\text{f is measurable and $|f|^2_{\psi}:=\int |f|^2\,e^{-\psi}\,d\lambda$}<\infty\Big\}\,;\\
        L^{2}(X,E^{p,q},\psi):=&\Big\{s\in \Gamma(X,E^{p,q})\,\big |\,\text{ The coefficients of $s$ (cf.\eqref{s2}) are measurable on}\\
        &\text{any local trivialization of $E^{p,q}$, and $|s|^2_{\psi}:=\int |s|^2\,e^{-\psi}\,d\lambda$}<\infty\Big\}\,;\\
        L^{2}(X,E^{p,q}):=&L^{2}(X,E^{p,q},0)\,;\\
        L^2(X,E^{p,q},loc):=&\Big\{s\in \Gamma(X,E^{p,q})\,\big |\,\int_{K} |s|^2\,d\lambda <\infty\,\,\,\text{on every compact set $K\subset X$}\Big\}\,;\\
        C^{\infty}_{c}(X,E^{p,q}):=&\Big\{s\in L^2(X,E^{p,q},\psi)\,\big |\,\text{On any local trivialization $U$, $s_{IJ}\in C^{\infty}_{c}(U,\C)$}\\
        &\text{where $s_{IJ}$ as defined in \eqref{s2}}\Big\}\,.
\end{aligned}
\label{spc}
\end{equation}
More generally, for any $p,q\geq 0$, $r\in [1,\infty]$, we define
\begin{equation}
\begin{aligned}
        L^{r}(X,E^{p,q},loc):=&\Big\{s\in \Gamma(X,E^{p,q})\,\big |\,\text{On any local trivialization $U$, $s_{IJ}\in L^{r}_{loc}(U)$}\\
        &\text{where $s_{IJ}$ as defined in \eqref{s2}}\Big\}\,;\\
        W^{m,r}(X,E^{p,q},loc):=&\Big\{s\in L^{r}(X,E^{p,q},loc)\,\big |\,\text{The differential order of $s$ is $m$, that is,}\\
        &\text{on any local trivialization $U$, $s_{IJ}\in W^{m,r}_{loc}(U)$ where $s_{IJ}$}\\
        &\text{as defined in \eqref{s2} and $m\in \N\cup\{0\}$}\Big\}\,;\\
        W^{r}(X,E^{p,q},loc):=&W^{0,r}(X,E^{p,q},loc)=L^{r}(X,E^{p,q},loc)\,.
    \end{aligned}
    \label{spc1}
\end{equation}
Here $L^{r}_{loc}(U)$ and $W^{m,r}_{loc}(U)$ are standard Sobolev spaces of $\C$-valued functions on $U\subset X$. Note that the subspace $C^{\infty}_{c}(X, E^{p,q})$ is dense in the Hilbert space $L^2(X, E^{p,q},\psi)$. Therefore, the differential operator $d_{L}$, as in \eqref{dL-dL bar}, defines (unique) linear, closed, and densely defined operator, 
\begin{equation}\label{tilde dl}
    \tilde{d}_{L,q}:L^2(X,E^{p,q},\psi_1)\longrightarrow L^2(X,E^{p,q+1},\psi_2)\,,
\end{equation}
for $\psi_{j}\in C^2(X,\C)$ $(j=1,2)$ real functions, as follows: 

\vspace{0.5em}
Let $s\in L^{2}(X,E^{p,q},\psi_1)$. We say $s\in\dom( \tilde{d}_{L,q})$ if $d_{L}s$, when considered in the distributional sense (see \cite[Chapter I]{batu17}), belongs to $L^2(X,E^{p,q+1},\psi_2)$. In this case, we define $\tilde{d}_{L,q}s=d_{L}s$. Since $d_{L}$ is continuous operator in distributional sense, and $C^{\infty}_{c}(X,E^{p,q})\subset\dom( \tilde{d}_{L,q})$, $\tilde{d}_{L,q}$ is closed, and densely defined.

\begin{prop}\label{lp prop}
 Considering \,$\psi:=\psi_1=\psi_2$ in \eqref{tilde dl} and the graph norm on $L^2(X,E^{p,q+1},\psi)$, namely,
 $$s\rightarrow |s|_{\psi}+|\tilde{d}^{*}_{L,q}s|_{\psi}+|\tilde{d}_{L,q+1}s|_{\psi}\,,$$
 the subspace $C^{\infty}_{c}(X,E^{p,q+1})$ of \,$\dom(\tilde{d}^{*}_{L,q})\cap\dom(\tilde{d}_{L,q+1})$ is dense where $\tilde{d}^{*}_{L,q}$ denotes the adjoint operator of\, $\tilde{d}_{L,q}$ (cf. \cite[p.~18]{batu17}) with respect to the norm $|\,\cdot\,|_{\psi}$.
\end{prop}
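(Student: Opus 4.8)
The plan is to follow the classical two-step approximation scheme used in the $L^2$-theory of the $\bar\partial$-operator (as in \cite[Chapter I]{batu17} and \cite[Chapter 4]{hor}): first cut off an arbitrary section to compact support using the fixed sequence $\{\eta_j\}$, and then mollify the resulting compactly supported section to make it smooth, controlling at each stage the graph norm $|\cdot|_{\psi}+|\tilde{d}^{*}_{L,q}\cdot|_{\psi}+|\tilde{d}_{L,q+1}\cdot|_{\psi}$. Throughout, fix $s\in\dom(\tilde{d}^{*}_{L,q})\cap\dom(\tilde{d}_{L,q+1})$.

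\textbf{Truncation.} Since multiplication by the smooth compactly supported function $\eta_j$ preserves both domains, set $s_j:=\eta_j s$, which lies in $\dom(\tilde{d}^{*}_{L,q})\cap\dom(\tilde{d}_{L,q+1})$ and has compact support. Because $d_L$ is a first-order derivation, the Leibniz rule yields the commutator identities
\begin{align*}
\tilde{d}_{L,q+1}(\eta_j s) &= \eta_j\,\tilde{d}_{L,q+1}s + [\tilde{d}_{L,q+1},\eta_j]\,s,\\
\tilde{d}^{*}_{L,q}(\eta_j s) &= \eta_j\,\tilde{d}^{*}_{L,q}s + [\tilde{d}^{*}_{L,q},\eta_j]\,s,
\end{align*}
where the first commutator is $d_L\eta_j\wedge s$ and the second is the order-zero contraction of $s$ against $d_L\eta_j$ dual (with respect to $h'$) to wedging. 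Both commutator terms are bundle maps in $s$, pointwise bounded by $|d_L\eta_j|\,|s|\le|s|$ by \eqref{nj eq}, and supported where $\eta_j$ fails to be locally constant, a region that eventually escapes every compact set. Hence, by dominated convergence, $|s_j-s|_{\psi}\to 0$, $|\tilde{d}_{L,q+1}s_j-\tilde{d}_{L,q+1}s|_{\psi}\to 0$, and $|\tilde{d}^{*}_{L,q}s_j-\tilde{d}^{*}_{L,q}s|_{\psi}\to 0$. This reduces the claim to approximating a fixed compactly supported section $s_j$ by elements of $C^{\infty}_{c}(X,E^{p,q+1})$.

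\textbf{Regularization.} Cover the compact set $\supp s_j$ by finitely many local trivializations of the type described before \eqref{sec}, and fix a subordinate smooth partition of unity, so that it suffices to smooth each piece separately. On each chart, expanding in the orthonormal frame via \eqref{sec} reduces the task to $\C$-valued, compactly supported $L^2$ coefficient functions, which we convolve with a standard Friedrichs mollifier $\rho_{\epsilon}$. Since $\tilde{d}_{L,q+1}$ and $\tilde{d}^{*}_{L,q}$ are first-order operators whose coefficients are smooth — these coefficients being determined by the regular GC structure through the Darboux charts, cf. Corollary \ref{cor:diffcharts} and Proposition \ref{prop GH} — the Friedrichs commutator lemma shows that the commutators of these operators with convolution by $\rho_{\epsilon}$ tend to $0$ in $L^2$ as $\epsilon\to 0$. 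Consequently, the mollified sections belong to $C^{\infty}_{c}(X,E^{p,q+1})$ and converge to $s_j$ simultaneously in all three norms. A diagonal choice of $j$ and $\epsilon$ then produces a sequence in $C^{\infty}_{c}(X,E^{p,q+1})$ converging to $s$ in the graph norm.

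I expect the main obstacle to be the regularization step: one must verify the Friedrichs commutator estimate for \emph{both} $\tilde{d}_{L,q+1}$ and the formal adjoint $\tilde{d}^{*}_{L,q}$ in the GC setting. The adjoint incorporates the weight $e^{-\psi}$, the hermitian metric $h'$, and the density $d\lambda$, so its coefficients, while smooth, are nontrivial; establishing that $[\tilde{d}^{*}_{L,q},\,\cdot\,\ast\rho_{\epsilon}]\to 0$ in $L^2$ requires that these coefficients be locally Lipschitz, which holds because $\psi\in C^{2}(X,\C)$ and $h',\lambda$ are smooth. The truncation step, by contrast, is essentially forced once the uniform bound \eqref{nj eq} is available.
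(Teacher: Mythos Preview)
Your proof is correct and follows essentially the same two-step scheme as the paper: first truncate via $\eta_j s$ using the commutator identities and the uniform bound \eqref{nj eq} together with dominated convergence, then localize with a partition of unity and mollify in coordinate charts, invoking the Friedrichs commutator lemma (the paper cites \cite[Lemma~5.2.2, Lemma~4.1.4]{hor} and \cite[Section~I.3]{batu17} for this). The only cosmetic difference is that the paper handles the adjoint commutator $[\tilde{d}^{*}_{L,q},\eta_j]$ by pairing against test sections rather than identifying it directly as the pointwise adjoint of $d_L\eta_j\wedge\cdot$, but the resulting estimate is the same.
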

\begin{proof}
    Observe that for any $s\in \dom(\tilde{d}_{L,q+1})$, we have $$\tilde{d}_{L,q+1}(\eta_{j}s)-\eta_{j}\tilde{d}_{L,q+1}s=d_{L}\eta_{j}\wedge s\quad\text{for $j\in\N$}\,.$$ As a result, $|\tilde{d}_{L,q+1}(\eta_{j}s)-\eta_{j}\tilde{d}_{L,q+1}s|^2\leq |s|^2$ follows by utilizing \eqref{nj eq}. Furthermore, we have
    $$\Big\langle\big(\tilde{d}^{*}_{L,q}(\eta_{j}s^{'})-\eta_{j}\tilde{d}^{*}_{L,q}s^{'}\big),u\Big\rangle_{\psi}=-\Big\langle s^{'},\big(d_{L}\eta_{j}\wedge u\big)\Big\rangle_{\psi}$$
    for any $s^{'}\in\dom(\tilde{d}^{*}_{L,q})$ and $u\in C^{\infty}_{c}(X,E^{p,q})$ where $\Big\langle,\Big\rangle_{\psi}$ denotes the inner product on $L^{2}(X,E^{p,q},\psi)$ for any $p,q\geq 0$. Again by \eqref{nj eq}, it follows that 
    $$|\tilde{d}^{*}_{L,q}(\eta_{j}s^{'})-\eta_{j}\tilde{d}^{*}_{L,q}s^{'}|^2\leq |s^{'}|\,.$$ So $\{\eta_{j}s\}_{j\in\N}$ converges to $s$ in the graph norm for $s\in\dom(\tilde{d}^{*}_{L,q})\cap\dom(\tilde{d}_{L,q+1})\,.$ Therefore, it is sufficient to approximate elements of compact support in $\dom(\tilde{d}^{*}_{L,q})\cap\dom(\tilde{d}_{L,q+1})$, and using partition of unity, we can simplify the proof to the case where the support is contained within a local trivialization of $E^{p,q+1}$ over $X$. Without loss of generality, by Theorem \ref{darbu thm}, we can assume the local trivialization is provided by a local coordinate system (cf. \eqref{loc coordi}).
    
    \vspace{0.5em}
    Let $s\in \dom(\tilde{d}^{*}_{L,q})\cap\dom(\tilde{d}_{L,q+1})$ be an element which has compact support. Then by \cite[Section I.3]{batu17}, \cite[Lemma 5.2.2 and Lemma 4.1.4]{hor} and applying the operator $J_{\epsilon}$ to each component of $s$ in the local trivialization, we obtain, for $\epsilon\rightarrow 0$,
    \begin{equation*}
        \begin{aligned}
           &|\tilde{d}^{*}_{L,q}(J_{\epsilon}s)-J_{\epsilon}\tilde{d}^{*}_{L,q}s|_{\psi}\rightarrow 0\,,\\
          \implies &|\tilde{d}^{*}_{L,q}(J_{\epsilon}s)-\tilde{d}^{*}_{L,q}s|_{\psi}\rightarrow 0\,.   
        \end{aligned}
    \end{equation*}
    where $J_{\epsilon}$ is defined as in \cite[Lemma 5.2.2]{hor}. Similarly, we can show that $$|\tilde{d}_{L,q+1}(J_{\epsilon}s)-\tilde{d}_{L,q+1}s|_{\psi}\rightarrow 0\,.$$ This completes the proof.
\end{proof}
\begin{remark}
    Note that Proposition \ref{lp prop} can be further generalized for any two real functions $\psi_1,\psi_2\in C^2(X,\C)$ by straightforwardly modifying the proof of Proposition \ref{lp prop} using the approach of \cite[Lemma 4.1.3]{hor}.
\end{remark}

Now let $\psi\in C^{\infty}(X,\C)$ be a smooth real function.  Consider $U$ as a local trivialization of $E^{p,q}$ provided by a local coordinate system (cf. Theorem \ref{darbu thm} and \eqref{loc coordi}). Let $e_1,\ldots,e_{2n}\in A^{1,0}(U)$ be a smooth orthonormal frame with respect to $h'$. Then, following \eqref{sec}, we can express
$$Ds=\sum^{2n}_{j=1}\Big(\frac{\partial s}{\partial e_j}\,e_{j}\,+\,\frac{\partial s}{\partial\overline{ e_j}}\,\overline{e_{j}}\Big)\,,$$ as a definition of the first-order linear differential operators $\frac{\partial}{\partial e_j}$ and $\frac{\partial }{\partial\overline{ e_j}}$ for any $s\in A^{0}(U)$ where $D$ as in \eqref{dL-dL bar}. It follows that $d_{L}s=\sum^{2n}_{j=1}\frac{\partial s}{\partial\overline{ e_j}}\,\overline{e_{j}}\,.$ Again following \eqref{sec}, more generally we have, for any $s\in A^{p,q}(U)$,
\begin{align*}
  d_{L}s&= \sum_{I,J}d_{L}s_{IJ}\,e_{I}\wedge \overline{e_{J}}\,+\,\sum_{I,J}s_{IJ}\,d_{L}e_{I}\wedge \overline{e_{J}}\,+(-1)^{|I|}\,\sum_{I,J}s_{IJ}\,e_{I}\wedge d_{L}\overline{e_{J}}\,,\\ &=\hat{A}s+\sum_{I,J}s_{IJ}\,d_{L}e_{I}\wedge \overline{e_{J}}\,+(-1)^{|I|}\,\sum_{I,J}s_{IJ}\,e_{I}\wedge d_{L}\overline{e_{J}}\,.
\end{align*}
where  $$\hat{A}s:=\sum_{I,J}\sum^{2n}_{j=1}\frac{\partial s_{IJ}}{\partial\overline{ e_j}}\,\overline{e_{j}}\wedge e_{I}\wedge \overline{e_{J}}$$ Note that $d_{L}e_{i}$ and $d_{L}\overline{e_{j}}$ do not necessarily vanish, so $\hat{A}s$ may not be zero. Consequently, when support of $s$ lies in a fixed compact subset of $U$, we have
\begin{equation}\label{1}
    |d_{L}s-\hat{A}s|\leq C''\,|s|
\end{equation}
where $C''>0$ is independent of $s$. Now, consider $s\in C^{\infty}_{c}(X, E^{p,q+1})$ and any $s'\in C^{\infty}_{c}(U, E^{p,q})$. Then
\begin{equation}\label{2}
\begin{aligned}
 \int h'(\tilde{d}^{*}_{L,q}s\,,\,s')\,e^{-\psi}\,d\lambda&=\int h'(s\,,\,d_{L,q}s')\,e^{-\psi}\,d\lambda\\
 &=(-1)^p\sum_{I,J}\sum^{2n}_{j=1}\int\,s_{I,jJ}\,\overline{\left(\frac{\partial s'_{IJ}}{\partial\overline{ e_j}}\right)}\,e^{-\psi}\,d\lambda\quad\text{(by \eqref{sec})}\\
 &+\sum_{I,J}\int s_{IJ}\,\overline{s'_{IJ}}\,h'\big(e_{I}\wedge \overline{e_{J}}\,,\,d_{L}(e_{I}\wedge\overline{e_{J}})\big)\,e^{-\psi}\,d\lambda\,.\\
 \end{aligned}
\end{equation}
Observe that the expression $\sum_{I,J}\int s_{IJ}\,\overline{s'_{IJ}}\,h'\big(e_{I}\wedge \overline{e_{J}}\,,\,d_{L}(e_{I}\wedge\overline{e_{J}})\big)\,e^{-\psi}\,d\lambda$ in \eqref{2} does not include any differentiation, since $\{e_1,\ldots,e_{2n}\}$ is an orthonormal frame with respect to $h'$. So, following \cite[p.~122]{hor} and applying Integration by parts in \eqref{2}, we get
\begin{equation}\label{3}
\tilde{d}^{*}_{L,q}s=(-1)^{p-1}\sum_{I,J}\sum^{2n}_{j=1}\delta_{j}(s_{I,jJ})\,e_{I}\wedge \overline{e_{J}}+\cdots   
\end{equation}
where $\delta_{j}(s_{I,jJ}):=e^{\psi}\,\frac{\partial (s_{I,jJ}\,e^{-\psi})}{\partial e_{j}}$, and the dots represent terms that do not involve the differentiation of $s_{I,j'J}$ and are independent of $\psi$. Hence 
\begin{equation}\label{4}
    |\tilde{d}^{*}_{L,q}s-\hat{B}s|\leq C'\,|s|\quad\text{where}\quad\hat{B}s:=(-1)^{p-1}\sum_{I,J}\sum^{2n}_{j=1}\delta_{j}(s_{I,jJ})\,e_{I}\wedge \overline{e_{J}}\,,
\end{equation}
for some constant $C'>0$, provided $s$ has its support within a fixed compact subset of $U$. Using \eqref{1} and \eqref{4}, and introducing another constant $C>0$ (such as $C=\max\{C',C''\}$), independent of $s$ and $\psi$, we obtain
\begin{equation}\label{5} \big(|\hat{A}s|^2_{\psi}+|\hat{B}s|^2_{\psi}\big)\leq C\,|s|+2\big(|\tilde{d}^{*}_{L,q}s|^2_{\psi}+|d_{L,q+1}s|^2_{\psi}\big)
\end{equation}
Note that 
\begin{equation}\label{6}
\begin{aligned}
|\hat{A}s|^2_{\psi}+|\hat{B}s|^2_{\psi}&=\sum_{I,J}\sum^{2n}_{j=1}\int\bigg|\frac{\partial s_{IJ}}{\partial\overline{ e_j}}\bigg|^{2}\,e^{-\psi}\,d\lambda\\
&+\sum_{I,J}\sum^{2n}_{j,l=1}\int\left(\delta_{j}(s_{I,jJ})\overline{\delta_{l}(s_{I,lJ})}-\frac{\partial s_{I,jJ}}{\partial\overline{ e_l}}\overline{\left(\frac{\partial s_{I,lJ}}{\partial\overline{ e_j}}\right)}\right)\,e^{-\psi}\,d\lambda\,.
\end{aligned}
\end{equation}

Let $\gamma$ be the continuous function on $U$ that represents the smallest eigenvalue at each point of the hermitian metric $e^{-\psi}h^{'}$ on $E^{p,q}|_{U}$ for the real function $\psi\in C^{\infty}(X,\C)$. Since, on a local trivialization, $e^{-\psi}h^{'}$ is a positive-definite hermitian matrix at each point, $\gamma$ is positive. Therefore, by making a straightforward modification to \cite[Equations 5.2.7-5.2.9, p.~124]{hor} with $\psi$,  we obtain the following estimate on $U$, which is similar to \cite[Equation 5.2.10]{hor}, for all $s\in C^{\infty}_{c}(X, E^{p,q+1})$ with support lying in a fixed compact subset of $U$,
\begin{equation}\label{estimate1}
    \int\sum_{I,J}|s_{IJ}|^{2}\,\gamma\,e^{-\psi}\,d\lambda\,+\,\frac{1}{2}\sum_{I,J}\sum^{2n}_{j=1}\int\bigg|\frac{\partial s_{IJ}}{\partial\overline{ e_j}}\bigg|^{2}\,e^{-\psi}\,d\lambda\leq 2\big(C|s|^2_{\psi}+|\tilde{d}^{*}_{L,q}s|^2_{\psi}+|\tilde{d}_{L,q+1}s|^2_{\psi}\big)
\end{equation}
where $C>0$ is some constant, independent of $s$ and $\psi$.

\vspace{0.5em}
Let $s\in C^{\infty}_{c}(X,E^{p,q})$ and $\{U_{j}\}_{j\in\N}$ denote a collection of local trivializations of $E^{p,q}$ over $X$, provided by local coordinate systems (cf. Theorem \ref{darbu thm} and \eqref{loc coordi}) of $X$, ensuring that \eqref{estimate1} is applicable. Let $\phi_{j}\in C^{\infty}_{c}(U_{j},\C)$ be real functions such that $\sum\phi^{2}_{j}=1$ on $X$. Now applying \eqref{estimate1} to each section $\phi_{j}s$ on $U_{j}$, we get
  $$\int_{U_{j}}\phi^{2}_{j}\,|s|^{2}\,\gamma_{j}\,e^{-\psi}\,d\lambda\leq 4\Big(C_{j}\int_{U_{j}}\phi^{2}_{j}|s|^{2}\,e^{-\psi}\,d\lambda\Big)+4\Big(|\phi_{j}\tilde{d}^{*}_{L,q}s|^2_{\psi}+|\phi_{j}\tilde{d}_{L,q+1}s|^2_{\psi}\Big)\Big)\,,$$
where $C_{j}$'s are positive constants. Therefore, by summing over $j$  on both sides and applying $\sum\phi^{2}_{j}=1$, we obtain
\begin{align*}
\sum_{j}\int_{U_{j}}\big(\phi^{2}_{j}\gamma_{j}\big)\,|s|^{2}\,e^{-\psi}\,d\lambda&\leq \sum_{j}\int_{U_{j}}\big(4\phi^{2}_{j}C_{j}\big)\,|s|^{2}\,e^{-\psi}\,d\lambda + 4\sum_{j} \Big(|\phi_{j}\tilde{d}^{*}_{L,q}s|^2_{\psi}+|\phi_{j}\tilde{d}_{L,q+1}s|^2_{\psi}\Big)\,.
\end{align*}
Consider two non-negative continuous functions
$$\gamma_{\psi}:=\sum\phi^{2}_{j}\gamma_{j}:X\longrightarrow\R\,,\quad\text{and}\quad \hat{C}:=4\sum\phi^{2}_{j}C_{j}:X\longrightarrow\R\,.$$
Note that using monotone convergence theorem, we get
\begin{align*}
&\sum_{j}\int_{U_{j}}\big(\phi^{2}_{j}\gamma_{j}\big)\,|s|^{2}\,e^{-\psi}\,d\lambda=\int\gamma_{\psi}\,|s|^{2}\,e^{-\psi}\,d\lambda\,,\\ 
&\sum_{j}\int_{U_{j}}\big(4\phi^{2}_{j}C_{j}\big)\,|s|^{2}\,e^{-\psi}\,d\lambda=\int \hat{C}\,|s|^{2}\,e^{-\psi}\,d\lambda\,,\\
\text{and}\quad &\sum_{j} \Big(|\phi_{j}\tilde{d}^{*}_{L,q}s|^2_{\psi}+|\phi_{j}\tilde{d}_{L,q+1}s|^2_{\psi}\Big)=\Big(|\tilde{d}^{*}_{L,q}s|^2_{\psi}+|\tilde{d}_{L,q+1}s|^2_{\psi}\Big)\,.
\end{align*}
Hence, we have proved the following.
\begin{theorem}\label{est thm}
    Let $\psi\in C^{\infty}(X,\C)$ be a real function. For any $s\in C^{\infty}_{c}(X,E^{p,q})$, there exists a positive real smooth function $\hat{C}\in C^0(X,\C)$ such that the estimate
    $$\int\Big(\gamma_{\psi}-\hat{C}\Big)\,|s|^{2}\,e^{-\psi}\,d\lambda\leq 4\Big(|\tilde{d}^{*}_{L,q}s|^2_{\psi}+|\tilde{d}_{L,q+1}s|^2_{\psi}\Big)\,,$$ holds, where $\gamma_{\psi}$ is a positive continuous function that provides the smallest eigenvalues of the hermitian metric $e^{-\psi}h^{'}$ over a system of local trivializations of $E^{p,q}$.
\end{theorem}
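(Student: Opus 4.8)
The plan is to reduce the asserted global estimate to a local one on a single generalized Darboux trivialization, to prove that local estimate by a Hörmander-type integration-by-parts identity which manufactures the curvature (eigenvalue) term, and finally to patch the local estimates together with a partition of unity, interchanging sum and integral by monotone convergence.

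First I would fix a local trivialization $U$ of $E^{p,q}$ coming from a generalized Darboux chart (Theorem \ref{darbu thm}, \eqref{loc coordi}) together with an $h'$-orthonormal frame $e_1,\dots,e_{2n}\in A^{1,0}(U)$. Writing a compactly supported section as $s=\sum_{I,J}s_{IJ}\,e_I\wedge\overline{e_J}$, I would isolate the principal parts $\hat As$ and $\hat Bs$ of $d_L s$ and of its formal adjoint $\tilde d^{*}_{L,q}s$, the former built from the operators $\partial/\partial\overline{e_j}$ and the latter from the twisted derivatives $\delta_j$. Because $d_L e_i$ and $d_L\overline{e_j}$ need not vanish in the GC setting, the discrepancies $d_L s-\hat As$ and $\tilde d^{*}_{L,q}s-\hat Bs$ are only zeroth order, which yields the bounds \eqref{1} and \eqref{4} and hence the combined inequality \eqref{5}.

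Next I would expand $|\hat As|^2_{\psi}+|\hat Bs|^2_{\psi}$ as in \eqref{6} and integrate by parts in the mixed second-order terms $\delta_j(s_{I,jJ})\overline{\delta_l(s_{I,lJ})}$. The commutator of $\partial/\partial\overline{e_l}$ with $\delta_j$ produces precisely the Hessian of $\psi$ contracted against $s$, and collecting these contributions gives the smallest-eigenvalue function $\gamma$ of the twisted metric $e^{-\psi}h'$, the remaining terms being absorbed into the constant $C$. This Bochner–Kodaira positivity step is the heart of the argument and the one I expect to be the main obstacle, since one must reproduce the classical curvature estimate in the GC frame while controlling the connection terms arising from the non-closedness of $e_I$ and $\overline{e_J}$; its output is exactly the local estimate \eqref{estimate1}, valid for sections whose support lies in a fixed compact subset of $U$.

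Finally I would globalize. Choosing a locally finite cover $\{U_j\}$ by such trivializations and real cut-offs $\phi_j\in C^{\infty}_{c}(U_j,\C)$ with $\sum\phi_j^2=1$, I would apply \eqref{estimate1} to each $\phi_j s$, sum over $j$, and set $\gamma_{\psi}:=\sum\phi_j^2\gamma_j$ and $\hat C:=4\sum\phi_j^2 C_j$. Monotone convergence justifies interchanging the summation with the integrals, and the relation $\sum\phi_j^2=1$ collapses the derivative contributions so that the right-hand side reduces to $4\big(|\tilde d^{*}_{L,q}s|^2_{\psi}+|\tilde d_{L,q+1}s|^2_{\psi}\big)$, giving the stated inequality. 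Positivity and continuity of $\gamma_{\psi}$ and $\hat C$ are automatic: at each point some $\phi_j>0$ and the corresponding $\gamma_j>0$ as the least eigenvalue of a positive-definite hermitian matrix, while local finiteness of the cover makes both sums continuous.
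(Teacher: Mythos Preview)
Your proposal is correct and follows essentially the same route as the paper: isolate the principal parts $\hat A$, $\hat B$ on a Darboux trivialization, derive the local estimate \eqref{estimate1} via the Hörmander integration-by-parts identity, then globalize with a squared partition of unity $\sum\phi_j^2=1$, defining $\gamma_\psi=\sum\phi_j^2\gamma_j$ and $\hat C=4\sum\phi_j^2 C_j$ and invoking monotone convergence. The only point worth making explicit is that when you pass from $\tilde d^{*}_{L,q}(\phi_j s)$ and $\tilde d_{L,q+1}(\phi_j s)$ to $\phi_j\tilde d^{*}_{L,q}s$ and $\phi_j\tilde d_{L,q+1}s$ on the right-hand side, the commutator terms (coming from $d_L\phi_j$) are zeroth order in $s$ and independent of $\psi$, so they are absorbed into the constants $C_j$; the paper does this tacitly, and your phrase ``collapses the derivative contributions'' should be read in this sense.
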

\begin{lemma}\label{l2 lemma}
    Let $U\subseteq X$ be an open subset. Then, for any function $f\in L^2_{loc}(U)$, there exists a real function $\phi_{f}\in C^{\infty}(U,\C)$ such that $f\in L^{2}(U,\phi_{f})$. 
\end{lemma}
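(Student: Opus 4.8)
The plan is to build the weight $\phi_f$ from a smooth proper exhaustion of $U$ together with a rapidly growing one–variable profile chosen to absorb the local $L^2$-mass of $f$. First I would fix a smooth proper exhaustion function $\rho\colon U\longrightarrow[0,\infty)$, that is, a smooth function whose sublevel sets $K_t:=\{x\in U\mid\rho(x)\le t\}$ are compact and exhaust $U$; such a function exists on any smooth manifold, and in particular on the open subset $U\subseteq X$.

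Next, since $f\in L^2_{loc}(U)$ and each shell $S_j:=\{x\in U\mid j\le\rho(x)<j+1\}$ is relatively compact (being contained in the compact set $K_{j+1}$), the numbers
$$b_j:=\int_{S_j}|f|^2\,d\lambda$$
are all finite. The idea is then to select a smooth, non-decreasing profile $\chi\colon\R\longrightarrow\R$ satisfying $\chi(t)\ge\log\big(2^{\,j+1}(b_j+1)\big)$ for every $t\in[j,j+1]$ and every $j\ge 0$. Such a $\chi$ is produced by taking these piecewise-constant lower bounds, interpolating to a continuous increasing function, and smoothing; this is entirely routine because only a one-sided inequality is required.

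I would then set $\phi_f:=\chi\circ\rho$, which is a smooth real-valued function on $U$ since both $\rho$ and $\chi$ are smooth. On each shell we have $e^{-\phi_f}\le\big(2^{\,j+1}(b_j+1)\big)^{-1}$, so
$$\int_{S_j}|f|^2\,e^{-\phi_f}\,d\lambda\;\le\;\frac{b_j}{2^{\,j+1}(b_j+1)}\;\le\;2^{-(j+1)}.$$
Summing over $j\ge 0$ and using that $U=\bigsqcup_{j\ge 0}S_j$ (as $\rho\ge 0$) yields $\int_U|f|^2\,e^{-\phi_f}\,d\lambda\le\sum_{j\ge 0}2^{-(j+1)}=1<\infty$, that is $f\in L^2(U,\phi_f)$, as required.

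The only point needing genuine care, and the step I expect to be the mildest obstacle, is the smoothness of the weight: one must exhibit an honestly smooth profile $\chi$ together with a smooth proper $\rho$, rather than merely continuous ones, so that $\phi_f\in C^\infty(U,\C)$ as demanded. Both the existence of a smooth proper exhaustion and the smoothing of a monotone step-dominating function are standard, so no analytic difficulty beyond bookkeeping arises; the convergence estimate itself is immediate from the geometric weighting $2^{-(j+1)}$.
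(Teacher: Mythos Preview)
Your argument is correct and complete: a smooth proper exhaustion $\rho$ exists on any smooth manifold (hence on the open set $U$), the shells $S_j$ are relatively compact so each $b_j$ is finite, a smooth non-decreasing profile $\chi$ dominating the given step function is easily produced, and the disjoint decomposition $U=\bigsqcup_{j\ge 0}S_j$ makes the final estimate an honest equality of sums.

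Your route, however, differs from the paper's. The paper builds the weight as a weighted partition of unity, $\phi_f=\sum_j m_j\,\phi_j$, subordinate to a locally finite cover $\{U_j\}$ by relatively compact sets, and then argues that the constants $m_j$ can be chosen increasing and large enough that $\phi_f>m_j$ on $U_j$ with $I_j(f)e^{-m_j}<2^{-j}$; the conclusion follows by sub-additivity over the (overlapping) cover. Your construction $\phi_f=\chi\circ\rho$ is cleaner in two respects: the shells are disjoint, so no sub-additivity is needed, and the somewhat delicate self-referential step in the paper's proof (choosing the $m_j$ so that the very function $\phi_f$ they define satisfies $\phi_f>m_j$ on $U_j$) is replaced by a straightforward one-variable smoothing. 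Your approach also dovetails directly with the paper's subsequent Lemma~\ref{l2 lemma1}, where exactly the mechanism $\chi\circ\psi$ is used; the paper's partition-of-unity argument buys nothing extra here.
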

\begin{proof}
   Let $f\in L^2_{loc}(U)$. Let $\{U_{j}\}_{j\in\N}$ be a locally finite cover of $U$ such that $U_{j}\subset\subset U$ for each $j\in\N$. Set $$I_{j}(f):=\int_{U_{j}}|f|^2\,d\lambda\quad\forall\,\,j\,.$$ Now $I_{j}(f)$ is finite and $I_{j}(f)\geq 0$. Consider an increasing sequence $\{m_{j}\}_{j\in\N}$ in $(0,\infty)$, and define $$\phi_{f}:=\sum_{j}m_{j}\,\phi_{j}\geq 0\quad\text{on $U$}\,,$$ where $\{\phi_{j}\}_{j\in\N}$ be a smooth partition of unity such that $\supp(\phi_{j})\subset U_{j}$ for each $j\in\N$.

\vspace{0.4em}
 Now for each $U_{j}$, there exists an $n(j)\in\N$ such that $\phi_{f}>m_{n(j)}$. This is because there is a unique $x_0\in \overline{U_{j}}$ such that $\phi_{f}(x_0)=\inf_{x\in \overline{U_{j}}}(\phi_{f}(x))$. Therefore, $\inf_{x\in U_{j}}(\phi_{f}(x))\geq\phi_{f}(x_0)$. Given that $\{m_j\}$ is increasing, it follows that $\phi_{f}(x_0)=\sum_{j<\infty}m_{(j)}\phi_{j}(x_0)\geq m_{n(j)}\Big(\sum_{j<\infty}\phi_{j}(x_0)\Big)$, where $m_{n(j)}=\min_{j<\infty}\{m_{j}\}$. If we define $\phi'_{f}:=\sum_{j}m_{n(j)}\,\phi_{j}$ on $U$, we obtain $\phi'_{f}>m_{n(j)}$ on $U_j$. Therefore, we can choose $\{m_{j}\}$ such that
 \begin{enumerate}
 \setlength\itemsep{0.4em}
     \item $\phi_{f}>m_{j}$ on $U_{j}$ for each $j$.
     \item $\lim_{j\to\infty}\,m_{j}=\infty$, and $I_{j}(f)\,e^{-m_{j}}<\frac{1}{2^j}$ for each $j$. 
 \end{enumerate} Thus we have the following
\begin{align*}
    \int_{\cup U_{j}}|f|^2\,e^{-\phi_{f}}\,d\lambda &\leq\sum_{j}\int_{U_{j}}|f|^2\,e^{-\phi_{f}}\,d\lambda\quad\text{(due to sub-additivity)}\,,\\ 
    &\leq\sum_{j}\,I_{j}(f)\,e^{-m_{n(j)}}\leq\sum_{j}\,I_{j}(f)\,e^{-m_{j}}<\sum_{j}\frac{1}{2^j}<\infty\,.
\end{align*}
Hence $\int_{U}|f|^2\,e^{-\phi_{f}}\,d\lambda<\infty\,,$ implying $f\in L^{2}(U,\phi_{f})$.
\end{proof}
\begin{remark}
The statement in Lemma \ref{l2 lemma} can be extended to topological manifolds with a simple modification. A similar result can be found in the literature for complex manifolds in \cite[Chapter 4]{hor, krantz}. However, the explanation provided there is not very explicit.
\end{remark}
\begin{lemma}\label{l2 lemma1}
    Let $g: X\longrightarrow\R$ be a positive continuous function. Let $\psi: X\longrightarrow\R$ be a strictly $L$-plurisubharmonic function on $X$ such that the sublevel sets $\big\{x\in X\,\big|\,\psi(x)<c\big\}$ are compactly contained in $X$ for every $c\in\R$. Then there exists a smooth increasing convex function $\chi:\R\longrightarrow\R$ such that $\chi\circ \psi\geq g$.
\end{lemma}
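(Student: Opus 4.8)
The plan is to treat this as essentially a one–variable statement: the hypotheses say no more than that $\psi$ is a smooth (hence continuous) exhaustion function, and the $L$-plurisubharmonicity itself plays no role in the conclusion. First I would record that for each integer $m$ the sublevel set $X_m:=\{x\in X\,|\,\psi(x)\le m\}$ is compact, since it is closed in $X$ and contained in $\{\psi<m+1\}\subset\subset X$, hence is a closed subset of a compact set. As $g$ is continuous and positive, $b_m:=\sup_{X_m}g$ is therefore a finite real number (with the convention $\sup\emptyset=0$), and because $X_m\subseteq X_{m+1}$ the sequence $(b_m)_{m\in\Z}$ is non-decreasing and non-negative.

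Next I would reduce the lemma to the purely one-dimensional assertion that there exists a smooth, increasing, convex function $\chi:\R\longrightarrow\R$ with $\chi(m)\ge b_{m+1}$ for every $m\in\Z$. Granting such a $\chi$, fix $x\in X$, put $t=\psi(x)$ and $m=\lfloor t\rfloor$, so that $m\le t<m+1$; then $x\in X_{m+1}$, whence $g(x)\le b_{m+1}\le\chi(m)\le\chi(t)=\chi(\psi(x))$, the last inequality because $\chi$ is increasing and $t\ge m$. This proves $\chi\circ\psi\ge g$ on all of $X$.

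It remains to build $\chi$, which I would do through its derivative. Since $(b_{m+1})_{m\in\Z}$ is non-decreasing and bounded below by $0$, it suffices to prescribe a smooth, positive, non-decreasing function $\rho:\R\longrightarrow(0,\infty)$ and set $\chi(t):=b_{-\infty}+1+\int_0^t\rho(s)\,ds$, where $b_{-\infty}:=\lim_{m\to-\infty}b_m\ge 0$; then $\chi$ is smooth, strictly increasing (as $\rho>0$) and convex (as $\rho$ is non-decreasing, so $\chi''=\rho'\ge 0$). The additive constant already forces $\chi(m)\ge b_{m+1}$ for all sufficiently negative $m$, where $b_{m+1}$ is within $1$ of $b_{-\infty}$ while $\chi\ge b_{-\infty}+1$. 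For the remaining (increasing) values I would choose $\rho$ to grow in successive unit steps so that $\int_m^{m+1}\rho$, and hence $\chi(m+1)-\chi(m)$, is as large as needed, which makes $\chi(m)$ eventually exceed any prescribed $b_{m+1}$ while keeping $\rho$ monotone. Equivalently, one may first take the piecewise–linear increasing convex interpolant passing above the points $(m,b_{m+1})$ and mollify it with an even bump of support $<\tfrac12$, an operation that preserves both convexity and monotonicity and alters the nodal values by a controlled amount.

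The only genuine content is this last construction of a convex increasing smooth majorant of a prescribed non-decreasing integer-indexed sequence; everything else is formal. The mild technical point to watch is the behaviour as $m\to-\infty$, namely ensuring $\chi$ stays above the bounded positive tail of $(b_m)$ rather than decreasing to $-\infty$, and this is exactly what the additive constant $b_{-\infty}+1$ together with the positivity of $\rho$ takes care of. I expect no serious obstacle: the compactness of the sublevel sets in the first step is the single place where the exhaustion hypothesis is used, and convexity and monotonicity of $\chi$ are guaranteed entirely by taking $\chi'=\rho$ positive and non-decreasing.
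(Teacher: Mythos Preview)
Your approach is correct and, like the paper's, immediately reduces the lemma to a one-variable construction, correctly observing that only the exhaustion property of $\psi$ is used and that the $L$-plurisubharmonicity plays no role. The paper proceeds somewhat differently: it defines the continuous supremum function $\eta(t)=\sup\{g(x):\psi(x)\le t\}$, passes to its convex envelope, and then mollifies; you instead discretise at integer sublevels $X_m$ and build $\chi$ directly through a prescribed non-decreasing derivative $\rho$. Both routes are standard, and yours is arguably more explicit and self-contained.

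One small imprecision to tighten: for $t<0$ one has $\chi(t)=b_{-\infty}+1+\int_0^t\rho<b_{-\infty}+1$, so the assertion ``$\chi\ge b_{-\infty}+1$'' fails on the negative axis, and positivity of $\rho$ alone does not prevent $\chi$ from dipping below the tail of $(b_m)$. The repair is routine---choose $\rho$ integrable on $(-\infty,0)$ (e.g.\ decaying like $e^s$) and enlarge the additive constant to $b_1+1+\int_{-\infty}^0\rho$, so that $\chi(m)\ge b_1\ge b_{m+1}$ for every $m\le 0$. Your piecewise-linear-plus-mollification alternative already handles this cleanly without any such adjustment.
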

\begin{proof}
Consider the continuous function $\eta:\R\longrightarrow\R$ defined by
   \[
   \eta(t) = \sup\big\{g(x)\big |\,\psi(x)\leq t\big\}\quad\forall\,\,t\in\R\,.
   \]
For each $t\in\R$, $\eta(t)$ is the supremum of $g$ over the compact set $\big\{x\in X\big |\,\psi(x)\leq t\}$. Now, if $t_1\leq t_2$, then $\big\{x\big |\,\psi(x)\leq t_1\big\}$ is a subset of $\big\{x\big |\,\psi(x)\leq t_2\big\}$, which implies that $\eta(t_1)\leq\eta(t_2)$. Note that $\eta$ is generally not convex. So, consider the convex envelope of $\eta$, denoted by $\chi$, defined as 
   \[
   \chi(t):= \inf \left\{ \sum_{j=1}^m \lambda_j \eta(t_j) \Bigg|\,\sum_{j=1}^m \lambda_j t_j = t, \sum_{j=1}^m \lambda_j = 1, \lambda_j \geq 0 \right\}\,.
   \]
Since $\eta$ is increasing, $\chi$ is also increasing. By definition of $\chi$ and $\eta$, we have $\chi\leq\eta$, and $\eta\circ\psi \geq g$. Fix $x\in X$. Since $x\in\big\{x\big |\,\psi(x)\leq t\big\}$ for some $t$, we have $g(x)\leq\eta(\psi(x))$. Furthermore, because $\chi(\psi(x))$ is the infimum at $\psi(x)$, it follows that $\chi(\psi(x))\geq g(x)$. Therefore, $\chi\circ\psi\geq g$.

\vspace{0.5em}
Now, the function $\chi$ does not have to be smooth. Therefore, using mollification, define the following.
   \[
   \chi_\epsilon(t'):= \int_{\R} \chi(t)\,\rho_\epsilon(t'-t)\,dt=\int_{\R} \chi(t'-t)\,\rho_\epsilon(t)\,dt\quad\forall\,\,t'\,\in\R\,,
   \]
   where $\rho_\epsilon$ is a smooth mollifier. Note that $\chi_\epsilon$ is an increasing smooth function that approximates $\chi$, that is, $\chi_{\epsilon}(t)\to\chi(t)$ uniformly as $\epsilon\to 0$. Consequently, for all $x\in X$, $\chi_{\epsilon}(\psi(x))\geq\chi(\psi(x))$ for sufficiently small $\epsilon$. Hence $\chi_{\epsilon}\circ\psi\geq g$ for sufficiently small $\epsilon$. This completes the proof.
\end{proof}
\begin{remark}
   A similar assertion to that in Lemma \ref{l2 lemma1} is also mentioned in \cite[Chapter 4]{hor, krantz} for pseudoconvex domains. However, the argument presented there is not very explicit. 
\end{remark}
\begin{theorem}\label{pluri thm1}
    Let $X^{2n}$ be a regular GC manifold of type $k>0$. Suppose there exists a strictly $L$-plurisubharmonic function $\psi$ (cf. Definition \ref{l-pluri}) on $X$ such that $$\big\{x\in X\,\big|\,\psi(x)<c\big\}\subset\subset X\,\,\,\text{for every $c\in\R$}\,.$$ Then the equation $$d_{L}s=s'\quad\text{(in the sense of distribution theory)}$$ has a solution $s\in L^2(X, E^{p,q}, loc)$ for every $s'\in L^2(X, E^{p,q+1}, loc)$ such that $d_{L}s'=0\,.$
\end{theorem}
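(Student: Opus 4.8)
The plan is to cast the problem in the Hörmander $L^2$ framework and solve $d_{L}s=s'$ by a duality argument, using the a priori estimate of Theorem \ref{est thm} together with the freedom to adjust the weight supplied by Lemma \ref{l2 lemma} and Lemma \ref{l2 lemma1}. Fix $p,q\geq 0$. For a smooth real weight $\phi$ I would regard, as in \eqref{tilde dl}, the closed densely defined operators
$$T:=\tilde{d}_{L,q}:L^2(X,E^{p,q},\phi)\longrightarrow L^2(X,E^{p,q+1},\phi),\qquad S:=\tilde{d}_{L,q+1}:L^2(X,E^{p,q+1},\phi)\longrightarrow L^2(X,E^{p,q+2},\phi).$$
Since $d_{L}^2=0$ (a consequence of $D^2=0$ and $d_{L}d_{\overline{L}}=-d_{\overline{L}}d_{L}$ recorded after \eqref{dL-dL bar}), we have $ST=0$, hence $\im T\subseteq\ker S$, and the hypothesis $d_{L}s'=0$ says exactly $s'\in\ker S$. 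By the standard solvability criterion (in the spirit of \cite{hor}), it suffices to exhibit a weight $\phi$ for which $s'\in L^2(X,E^{p,q+1},\phi)$ together with a dual estimate $|\langle s',f\rangle_{\phi}|^2\leq C\,|\tilde{d}^{*}_{L,q}f|^2_{\phi}$ for all $f\in\dom(\tilde{d}^{*}_{L,q})$; Riesz representation then furnishes $u\in L^2(X,E^{p,q},\phi)$ with $Tu=s'$ and $|u|^2_{\phi}\leq C|s'|^2_{\phi}$. Because $L^2(X,E^{p,q},\phi)\subseteq L^2(X,E^{p,q},loc)$, this $u$ is the desired solution, and $d_{L}u=s'$ holds in the distributional sense by the very definition of $T=\tilde{d}_{L,q}$.

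To reduce the dual estimate to Theorem \ref{est thm}, I would decompose $f=f_1+f_2$ orthogonally with $f_1\in\ker S$ and $f_2\in(\ker S)^{\perp}$. Since $\im T\subseteq\ker S$, the complement $(\ker S)^{\perp}$ is annihilated by $T^{*}$, so $\tilde{d}^{*}_{L,q}f=\tilde{d}^{*}_{L,q}f_1$; and as $s'\in\ker S$, also $\langle s',f\rangle_{\phi}=\langle s',f_1\rangle_{\phi}$. Thus it is enough to bound $|\langle s',f_1\rangle_{\phi}|$ for $f_1\in\ker S\cap\dom(\tilde{d}^{*}_{L,q})$. By Proposition \ref{lp prop} the estimate of Theorem \ref{est thm} extends from $C^{\infty}_{c}(X,E^{p,q+1})$ to all such $f_1$ by graph-norm approximation, and since $\tilde{d}_{L,q+1}f_1=Sf_1=0$ the $\tilde{d}_{L,q+1}$-term drops, leaving
$$\int\big(\gamma_{\phi}-\hat{C}\big)\,|f_1|^2\,e^{-\phi}\,d\lambda\leq 4\,|\tilde{d}^{*}_{L,q}f_1|^2_{\phi}.$$
If $\phi$ is arranged so that $\gamma_{\phi}-\hat{C}\geq 1$ on $X$, this yields $|f_1|^2_{\phi}\leq 4\,|\tilde{d}^{*}_{L,q}f_1|^2_{\phi}$, whence by Cauchy--Schwarz $|\langle s',f_1\rangle_{\phi}|^2\leq |s'|^2_{\phi}\,|f_1|^2_{\phi}\leq 4|s'|^2_{\phi}\,|\tilde{d}^{*}_{L,q}f_1|^2_{\phi}$, which is the required estimate with $C=4|s'|^2_{\phi}$.

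Everything therefore hinges on choosing a single weight of the form $\phi=\chi\circ\psi$, with $\chi:\R\to\R$ smooth, increasing and convex, that achieves both conditions simultaneously: $s'\in L^2(X,E^{p,q+1},\phi)$ and $\gamma_{\phi}-\hat{C}\geq 1$. Here the hypotheses enter decisively. By strict $L$-plurisubharmonicity of $\psi$, the complex Hessian of $\chi\circ\psi$ acquires the factor $\chi'(\psi)$ multiplying the positive Hessian of $\psi$, together with a term in $\chi''(\psi)$, so the eigenvalue function controlling $\gamma_{\phi}$ grows with $\chi$ while the geometric error $\hat{C}$ stays fixed; simultaneously, Lemma \ref{l2 lemma} shows the local $L^2$-density of $s'$ can be absorbed by a weight. \textbf{The main obstacle is precisely this bookkeeping:} one must track how $\gamma_{\phi}$ and $\hat{C}$ transform under $\psi\mapsto\chi\circ\psi$ and verify that a single positive continuous function $g$ --- dominating both the $L^2$-density of $s'$ and the quantity forcing $\gamma_{\phi}-\hat{C}\geq 1$ --- can be bounded by $\chi\circ\psi$. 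Lemma \ref{l2 lemma1}, applied to this $g$ and using that the sublevel sets $\{\psi<c\}$ are relatively compact in $X$, produces the convex increasing $\chi$ and closes the argument.
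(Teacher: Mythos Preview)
Your proposal is correct and follows essentially the same route as the paper: compose $\psi$ with a smooth convex increasing $\chi$, feed $\hat\psi=\chi\circ\psi$ into Theorem~\ref{est thm}, use Proposition~\ref{lp prop} to pass the estimate to $\dom(\tilde d^{*}_{L,q})\cap\dom(\tilde d_{L,q+1})$, and then invoke Lemmas~\ref{l2 lemma} and~\ref{l2 lemma1} to place the given $s'$ in the weighted space---the paper simply cites \cite[Lemma 4.1.1]{hor} for the functional-analytic solvability step you unpack via the orthogonal decomposition $f=f_1+f_2$ and Riesz representation. One small bookkeeping point to make explicit in your write-up: since $\gamma_{\chi\circ\psi}=\chi'(\psi)\gamma_{\psi}$, the condition $\gamma_{\chi\circ\psi}-\hat C\geq 1$ is a lower bound on $\chi'$ (not on $\chi\circ\psi$), so Lemma~\ref{l2 lemma1} handles the $s'\in L^2$ requirement while the growth of $\chi'$ is a separate---and equally easy---constraint to impose.
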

\begin{proof}
    Let $\chi:\R\longrightarrow\R$ be a smooth convex increasing function. Let $\hat{\psi}:=\chi\circ\psi$. Replace $\psi$ with $\hat{\psi}$ in Theorem \ref{est thm}. On a local trivialization $U$ of $E^{p,q+1}$, given by a local coordinate system, we obtain from Remark \ref{rmk pluri} that for any $(w_1,\ldots,w_{k})\in\C^{k}$ (see \eqref{loc coordi}), 
    $$\sum^{k}_{i,j=1}\frac{\partial^{2}\psi}{\partial z_{i}\partial\overline{z_{j}}}w_{i}\overline{w_{j}}\geq g\sum^{k}_{j=1}w_{i}\overline{w_{j}}\quad\text{for some positive real function $g\in C^0(U,\C)$}\,.$$ Consequently,
    $$\sum^{k}_{i,j=1}\frac{\partial^{2}\hat{\psi}}{\partial z_{i}\partial\overline{z_{j}}}w_{i}\overline{w_{j}}\geq\chi'(\psi)\,g\sum^{k}_{j=1}w_{i}\overline{w_{j}}\quad\text{on $U$}\,,$$ where $\chi'(a):=\frac{d\chi}{dt}\big|_{t=a}$ for $a\in\R$. Therefore, the corresponding $\gamma_{\psi}$ in Theorem \ref{est thm} is replaced by $\chi'(\psi)\gamma_{\psi}$. Hence, by Theorem \ref{est thm}, we deduce that
    $$\int\Big(\chi'(\psi)\gamma_{\psi}-\hat{C}\Big)\,|s|^{2}\,e^{-\hat{\psi}}\,d\lambda\leq 4\Big(|\tilde{d}^{*}_{L,q}s|^2_{\hat{\psi}}+|\tilde{d}_{L,q+1}s|^2_{\hat{\psi}}\Big)\quad\text{for any $s\in C^{\infty}_{c}(X,E^{p,q+1})$}\,,$$ where $\tilde{d}^{*}_{L,q}$ denotes the adjoint of $\tilde{d}_{L,q}$ with respect to $|\cdot|_{\hat{\psi}}$. We can choose $\chi$ so that $\chi'(\psi)$ increases rapidly enough such that $\big(\chi'(\psi)\gamma_{\psi}-\hat{C}\big)\geq 4$. Then using Proposition \ref{lp prop}, we get    $$|s|^{2}_{\hat{\psi}}\leq\Big(|\tilde{d}^{*}_{L,q}s|^2_{\hat{\psi}}+|\tilde{d}_{L,q+1}s|^2_{\hat{\psi}}\Big)\quad\text{for any $s\in \dom(\tilde{d}^{*}_{L,q})\cap\dom(\tilde{d}_{L,q+1})$}\,.$$ It follows that $\ker(\tilde{d}_{L,q+1})=\img(\tilde{d}_{L,q})$ by \cite[Lemma 4.1.1]{hor}. So for any $s'\,\in L^{2}(X,E^{p,q+1},\hat{\psi})$ with $\tilde{d}_{L,q+1}s'=0$, there exists $s\in L^{2}(X,E^{p,q+1},\hat{\psi})$ such that $\tilde{d}_{L,q}s=s'$ and $s$ can be chosen so that $|s|_{\hat{\psi}}\leq |s'|_{\hat{\psi}}$. 

    \vspace{0.5em}
    Note that, by definition, $L^{2}(X,E^{p,q},\hat{\psi})\subseteq L^2(X, E^{p,q}, loc)$ for $p,q\geq 0$. We can select $\chi$ so that in addition any given $s\in L^2(X, E^{p,q}, loc)$ also belongs to $L^{2}(X,E^{p,q},\hat{\psi})$. This is feasible because every function in $L^2_{loc}(X)$ belongs to $L^{2}(X,g)$ for some positive real function $g\in C^0(X,\C)$ by Lemma \ref{l2 lemma}, and we choose $\chi$ so that $\hat{\psi}\geq g$ (cf. Lemma \ref{l2 lemma1}). This concludes the proof.
\end{proof}
 We can apply the proof of Theorem \ref{pluri thm1} to the more general space $W^{m,2}(X,E^{p,q+1},loc)$ with evident modification, following the approach of \cite[Theorem 4.2.5, Theorem 5.2.5]{hor}. For sections $s\in\dom(\tilde{d}^{*}_{L,q})\cap\dom(\tilde{d}_{L,q+1})$ with support in a local trivialization provided by a local coordinate system (cf. \ref{loc coordi}), we can obtain estimates of the operators $\frac{\partial s_{IJ}}{\partial\overline{ e_j}}$ from \ref{estimate1}. This allows us to show the following generalized version of Theorem \ref{pluri thm1}.
\begin{theorem}\label{pluri thm2}
Consider the same setting as in the preceding theorem. Then the equation $d_{L}s=s'$ (in the sense of distribution theory) has a solution $s\in W^{m+1,2}(X,E^{p,q},loc)$ for every $s'\in W^{m,2}(X,E^{p,q+1},loc)$ such that $d_{L}s'=0\,.$ In particular, every solution of $d_{L}s=s'$ has this property when $q=0$.
\end{theorem}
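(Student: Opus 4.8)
The plan is to upgrade the existence statement of Theorem \ref{pluri thm1} from $L^2(X,E^{p,q},loc)$ to $W^{m+1,2}(X,E^{p,q},loc)$ by running an interior elliptic bootstrap on the basic a priori estimate of Theorem \ref{est thm}, following the scheme of \cite[Theorem 4.2.5, Theorem 5.2.5]{hor}. I would keep $\hat{\psi}=\chi\circ\psi$ exactly as in the proof of Theorem \ref{pluri thm1}, so that the weighted coercive estimate and the resulting $L^2_{loc}$-solution $s$ of $d_{L}s=s'$ are available for every $d_{L}$-closed $s'\in L^2(X,E^{p,q+1},loc)$. Since regularity is a purely local and interior question, I would fix a cover by coordinate trivializations (Theorem \ref{darbu thm} and \eqref{loc coordi}) and a subordinate partition of unity, reducing the claim to the following local assertion: if $d_{L}s=s'$ with $s\in W^{m,2}(X,E^{p,q},loc)$ (starting from $m=0$, which is Theorem \ref{pluri thm1}) and $s'\in W^{m,2}(X,E^{p,q+1},loc)$, then $s\in W^{m+1,2}(X,E^{p,q},loc)$; the conclusion then follows by iterating on $m$.

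The core is to control all first derivatives of the components $s_{IJ}$ (cf. \eqref{sec}). In a coordinate trivialization I would split these into three groups: the leaf derivatives $\partial/\partial p_{l}$ and the transverse anti-holomorphic derivatives $\partial/\partial\overline{z_{j}}$, both of which occur in $d_{L}$ (see Remark \ref{rmk pluri}), and the transverse holomorphic derivatives $\partial/\partial z_{j}$, which do not. The first two groups are read off directly from the equation $d_{L}s=s'$ modulo zeroth-order terms (cf. \eqref{1}), so they lie in $W^{m,2}_{loc}$ once $s,s'\in W^{m,2}_{loc}$; the required uniform bounds come from applying \eqref{estimate1} to the mollified sections $J_{\epsilon}s$, precisely as in Proposition \ref{lp prop}, with the commutator error terms dominated by means of \eqref{1}, \eqref{4} and \eqref{nj eq}. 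The remaining group $\partial/\partial z_{j}$ I would recover by transverse ellipticity: differentiating $d_{L}s=s'$ shows that each $s_{IJ}$ satisfies $\overline{\partial}_{z}s_{IJ}=(\text{a component of }s')+(\text{lower order})$, where $\overline{\partial}_{z}$ is the Cauchy--Riemann operator in the transverse variables $z=(z_{1},\dots,z_{k})$. Because $\overline{\partial}_{z}^{*}\overline{\partial}_{z}=\frac{1}{4}\Delta_{z}$ is elliptic in the $z$-directions, the transverse Laplacian $\Delta_{z}s_{IJ}$ is expressed through first derivatives of $s'$, and interior elliptic regularity in the $z$-directions yields $\partial s_{IJ}/\partial z_{j}\in W^{m,2}_{loc}$. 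Combining the three groups gives the full gradient in $W^{m,2}_{loc}$, i.e. $s\in W^{m+1,2}_{loc}$.

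For the special case $q=0$ no wedge ambiguity arises in $d_{L}s=s'$, so the argument above applies verbatim to an arbitrary solution: $\partial_{\overline{z}}s$ and $\partial_{p}s$ are exhibited directly and $\partial_{z}s$ is supplied by the transverse elliptic step. Moreover any two solutions differ by a $d_{L}$-closed section of $\wedge^{p}\overline{L}^{*}$, which is transversally holomorphic and leaf-constant, hence smooth by Weyl's lemma together with Proposition \ref{imp corr}; thus every solution inherits the $W^{m+1,2}_{loc}$ regularity, giving the last sentence of the statement. For $q\ge 1$ one instead works with the minimal solution and the associated box $\Box_{L}=d_{L}^{*}d_{L}+d_{L}d_{L}^{*}$, which is elliptic transversally and along the leaves.

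I expect the main obstacle to be exactly the degeneracy of $\Box_{L}$ in the transverse holomorphic directions: the gain of one full derivative there is \emph{not} furnished by $d_{L}$ itself and must be extracted from the transverse elliptic regularity step, uniformly in the leaf parameters and compatibly with the weight $e^{-\hat{\psi}}$. Making this uniformity rigorous, while keeping the commutator estimates of Proposition \ref{lp prop} and the mollifier bounds \eqref{1}, \eqref{4} valid at every Sobolev order, is where the real work of the proof lies.
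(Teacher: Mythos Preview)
Your proposal is correct and follows the same route as the paper, which likewise points to the estimate \eqref{estimate1} on $\partial s_{IJ}/\partial\overline{e_j}$ and defers the bootstrap to \cite[Theorems~4.2.5 and 5.2.5]{hor}. Two small clarifications: the step where you read $\overline{\partial}_z s_{IJ}$ directly off $d_L s=s'$ only works for $q=0$ (for $q\ge 1$ the components of $d_L s$ are antisymmetrized sums, not individual $\partial s_{IJ}/\partial\overline{e_j}$), so the $\Box_L$ argument you sketch at the end is the main mechanism rather than a side remark; and the obstacle you anticipate is not actually present, since $\Box_L$ is genuinely elliptic---for every nonzero real covector $\xi\in T^*_xX$ one has $\rho^*\xi\neq 0$ in $L^*$ (because $E_X+\overline{E_X}=TX\otimes\C$ forces the real annihilator of $E_X$ to be zero), hence the principal symbol of $\Box_L$ is invertible and standard interior regularity applies uniformly in all directions, with no separate transverse step needed.
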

\begin{theorem}\label{apq thm}
   Let $X^{2n}$ be a regular GC manifold of type $k>0$. Suppose there exists a strictly $L$-plurisubharmonic function $\psi$ on $X$ such that $\big\{x\in X\,\big|\,\psi(x)<c\big\}$ is compactly contained in $X$ for every $c\in\R$. Then the equation $d_{L}s=s'$ (in the sense of distribution theory) has a solution $s\in A^{p,q}(X)$ for every $s'\in A^{p,q+1}(X)$ such that $d_{L}s'=0\,.$ 
\end{theorem}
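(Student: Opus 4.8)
The plan is to upgrade the locally $L^2$ (and Sobolev) solvability already recorded in Theorem \ref{pluri thm2} to smooth solvability, using the Sobolev embedding theorem together with the fact that interior regularity is a purely local statement. First I would observe that any smooth section $s'\in A^{p,q+1}(X)$ automatically lies in $W^{m,2}(X,E^{p,q+1},loc)$ for \emph{every} $m\in\N\cup\{0\}$: on each local trivialization the coefficients $s'_{IJ}$ of \eqref{sec} are smooth, hence locally in $W^{m,2}_{loc}$ for all $m$. Thus the closedness hypothesis $d_{L}s'=0$ puts $s'$ in the scope of Theorem \ref{pluri thm2} at all Sobolev levels simultaneously.

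The delicate point is that Theorem \ref{pluri thm2} only asserts, for each fixed $m$, the existence of \emph{some} solution in $W^{m+1,2}(X,E^{p,q},loc)$, and a priori this solution could depend on $m$. To avoid this I would fix one weight once and for all. Following the proof of Theorem \ref{pluri thm1}, choose $\hat{\psi}=\chi\circ\psi$ with $\chi$ convex and increasing, large enough that $s'\in L^{2}(X,E^{p,q+1},\hat{\psi})$ and that the basic estimate $|t|^{2}_{\hat{\psi}}\leq |\tilde{d}^{*}_{L,q}t|^{2}_{\hat{\psi}}+|\tilde{d}_{L,q+1}t|^{2}_{\hat{\psi}}$ holds on $\dom(\tilde{d}^{*}_{L,q})\cap\dom(\tilde{d}_{L,q+1})$; this is feasible by Theorem \ref{est thm}, Lemma \ref{l2 lemma} and Lemma \ref{l2 lemma1}, exactly as in Theorem \ref{pluri thm1}. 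Then $\ker(\tilde{d}_{L,q+1})=\img(\tilde{d}_{L,q})$, and there is a \emph{unique} solution $s$ of $\tilde{d}_{L,q}s=s'$ orthogonal to $\ker(\tilde{d}_{L,q})$ in $L^{2}(X,E^{p,q},\hat{\psi})$, namely the minimal-norm solution furnished by the $L^2$ method. This $s\in L^{2}(X,E^{p,q},loc)$ is a single section, independent of $m$.

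Next I would invoke interior regularity for this fixed minimal solution. Since regularity is local, the interior estimates established in the course of proving Theorem \ref{pluri thm2} — those controlling the derivatives $\partial s_{IJ}/\partial\overline{e_{j}}$ via \eqref{estimate1} — apply to $s$ directly, and because $s'\in W^{m,2}(X,E^{p,q+1},loc)$ for every $m$, they bootstrap the \emph{same} section $s$ into $W^{m+1,2}(X,E^{p,q},loc)$ for every $m$. Hence $s\in\bigcap_{m}W^{m,2}(X,E^{p,q},loc)$, and applying the Sobolev embedding $W^{m,2}_{loc}\hookrightarrow C^{r}_{loc}$ (with $m-n>r$, $\dim_{\R}X=2n$) coordinatewise on each chart shows that the coefficients $s_{IJ}$ are smooth. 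Therefore $s\in A^{p,q}(X)$ and solves $d_{L}s=s'$, as desired.

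The main obstacle is precisely the transition from ``a solution in each $W^{m+1,2}_{loc}$'' to ``one solution lying in all $W^{m+1,2}_{loc}$ at once.'' My resolution is to single out the minimal solution relative to one pre-fixed weight $\hat{\psi}$ and to use that the regularity gain of Theorem \ref{pluri thm2} is an interior (hence weight-independent) phenomenon applying to that fixed section. I would note that when $q=0$ this difficulty disappears entirely, since Theorem \ref{pluri thm2} guarantees that \emph{every} solution is automatically regular; the minimal-solution device is what carries the argument through for $q\geq 1$.
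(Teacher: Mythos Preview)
Your proposal is correct and follows the same route as the paper: invoke Theorem \ref{pluri thm2} and then apply the Sobolev embedding theorem. The paper's proof is a single sentence citing exactly those two ingredients, whereas you have explicitly confronted the subtlety that Theorem \ref{pluri thm2} a priori furnishes a different solution for each Sobolev level~$m$; your device of fixing one weight $\hat{\psi}$ and selecting the minimal-norm solution (so that $\tilde{d}^{*}_{L,q-1}s=0$ as well) is precisely what makes the interior-regularity bootstrap apply to a single section, and is implicit in H\"ormander's treatment on which Theorem \ref{pluri thm2} is modeled.
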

\begin{proof}
    Follows from Theorem \ref{pluri thm2} and the Sobolev embedding theorem (cf. \cite[Theorem 6, p.~284]{evans}), that is, $W^{m,2}(X,E^{p,q},loc)\subset C^{m-n-1}(X,E^{p,q})$ for all $m>n$ where $C^{m-n-1}(X,E^{p,q})$ denotes the space of continuous sections of $E^{p,q}$ of differential order $m-n-1$.
\end{proof}
\begin{prop}\label{apprx prop1}
    Under the assumption stated in Theorem \ref{apq thm}, every GH function in a neighborhood of a compact set $K_0\subset X$ can be approximated by GH functions in $\mathcal{O}(X)$ with respect to $L^2$-norm over $K_0$ where $K_0=\big\{x\in X\,|\,\psi(x)\leq 0\big\}$.
\end{prop}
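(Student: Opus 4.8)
The plan is to run the Hörmander–type approximation argument, using the weighted global solvability of the $d_L$-equation established in (the proof of) Theorem~\ref{pluri thm1}. Write $K_0=\{\psi\le 0\}$ and let $f$ be a GH function on an open neighborhood $V\supseteq K_0$, so that $d_Lf=0$ on $V$ by Proposition~\ref{imp corr}. Since the sublevel sets $\{\psi\le t\}$ are compact and decrease to $K_0$ as $t\downarrow 0$, there is a $\delta>0$ with $\{\psi\le\delta\}\subset V$. I would fix a cutoff $\chi_0\in C^{\infty}_c(V,\C)$ with $\chi_0\equiv 1$ on $\{\psi\le\delta/2\}$ and $\supp\chi_0\subset\{\psi<\delta\}$, so that $d_L\chi_0$ is supported in the compact set $\{\delta/2\le\psi<\delta\}$, which is separated from $K_0$ by the level $\{\psi=\delta/2\}$.

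Next, set $v:=f\,d_L\chi_0=d_L(\chi_0 f)\in A^{0,1}(X)$ (extended by zero outside $\supp\chi_0$); it is smooth, compactly supported, and $d_L$-closed, since $d_L^2=0$ and $d_Lf=0$ give $d_Lv=d_L^2(\chi_0 f)=0$. The guiding observation is that for \emph{any} solution $u$ of $d_Lu=v$, the function $F:=\chi_0 f-u$ satisfies $d_LF=v-v=0$, hence $F\in\mathcal O(X)$ by Proposition~\ref{imp corr}; and because $\chi_0\equiv 1$ on $K_0$ we have $F=f-u$ there, so $\int_{K_0}|F-f|^2\,d\lambda=\int_{K_0}|u|^2\,d\lambda$. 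Therefore it suffices to solve $d_Lu=v$ with $\int_{K_0}|u|^2\,d\lambda$ arbitrarily small, and each resulting $F$ will be a global GH function approximating $f$ in $L^2(K_0)$.

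To produce such solutions I would apply the weighted estimate from the proof of Theorem~\ref{pluri thm1} with $\hat\psi_\nu=\chi_\nu\circ\psi$ for a suitable increasing convex $\chi_\nu$: this yields $u_\nu$ with $d_Lu_\nu=v$ and $|u_\nu|_{\hat\psi_\nu}\le|v|_{\hat\psi_\nu}$, and $u_\nu$ may be taken smooth, i.e.\ in $A^{0,0}(X)$, by the $q=0$ clause of Theorem~\ref{pluri thm2} (any $L^2_{\mathrm{loc}}$ solution differs from a smooth one by a GH, hence smooth, function). On $K_0$ one has $\psi\le 0$, so $\hat\psi_\nu\le\chi_\nu(0)$ and $e^{-\hat\psi_\nu}\ge e^{-\chi_\nu(0)}$; on $\supp v\subset\{\psi\ge\delta/2\}$ one has $\hat\psi_\nu\ge\chi_\nu(\delta/2)$ and $e^{-\hat\psi_\nu}\le e^{-\chi_\nu(\delta/2)}$. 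Combining these with the estimate gives
\[
\int_{K_0}|u_\nu|^2\,d\lambda\;\le\;e^{\,\chi_\nu(0)-\chi_\nu(\delta/2)}\int_X|v|^2\,d\lambda .
\]
I would then choose each $\chi_\nu$ so that it simultaneously satisfies the positivity requirement $\bigl(\chi_\nu'(\psi)\gamma_{\psi}-\hat C\bigr)\ge 4$ from Theorem~\ref{pluri thm1} and forces $\chi_\nu(\delta/2)-\chi_\nu(0)\to\infty$, making $F_\nu:=\chi_0 f-u_\nu\in\mathcal O(X)$ converge to $f$ in $L^2(K_0)$.

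The main (and only genuinely delicate) point is this last step: one must check that a single family $\{\chi_\nu\}$ can meet the convexity and positivity constraints needed for $L^2$-solvability while at the same time making the weight's gap $\chi_\nu(\delta/2)-\chi_\nu(0)$ diverge. Both demands amount to taking $\chi_\nu'$ large (on $[0,\delta/2]$ for the gap, and on the larger sublevel sets for positivity), so they are compatible; the crucial structural input is that $\supp(d_L\chi_0)$ lies strictly beyond $K_0$ in the $\psi$-scale, since it is exactly this separation that lets the weight suppress $|v|_{\hat\psi_\nu}$ relative to $e^{-\chi_\nu(0)}$ on $K_0$. All remaining assertions (smoothness and $d_L$-closedness of $v$, the identity $d_L^2=0$, and the regularity of $u_\nu$) are routine consequences of the results already established.
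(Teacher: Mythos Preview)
Your argument is correct and follows the same overall H\"ormander scheme as the paper (extend $f$ by a cutoff, solve $d_L u = d_L(\text{cutoff}\cdot f)$, show the correction $u$ is small on $K_0$, subtract). The technical mechanism for making $\|u\|_{L^2(K_0)}$ small, however, differs: the paper keeps a \emph{fixed} weight $\chi\circ\psi$ (with $\chi(0)=0$) and uses a \emph{sequence} of cutoffs $\eta_{\epsilon_j}$ whose supports $\psi^{-1}[0,\epsilon_j]$ shrink to $\partial K_0$, then appeals to dominated convergence to force $|d_L(\eta_{\epsilon_j}f)|_{\chi\circ\psi}\to 0$; you instead keep a \emph{single} cutoff $\chi_0$ and vary the weight $\chi_\nu\circ\psi$ so that the exponential gap $e^{\chi_\nu(0)-\chi_\nu(\delta/2)}$ kills $\int_{K_0}|u_\nu|^2$. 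Your route is the textbook ``weight-bumping'' variant and is arguably cleaner here, since it sidesteps the need to control derivatives of cutoffs on shrinking annuli; the paper's route has the minor advantage of needing only one fixed weight, at the cost of constructing the family $\{\eta_{\epsilon_j}\}$ with a uniform dominating function. Your compatibility check---that one can choose $\chi_\nu$ increasing, convex, with $\chi_\nu'(\psi)\gamma_\psi-\hat C\ge 4$ and $\chi_\nu(\delta/2)-\chi_\nu(0)\to\infty$---is straightforward (e.g.\ take $\chi_\nu=\chi+\nu\cdot\mathrm{id}$ for a base $\chi$ meeting the positivity constraint), and your invocation of the $q=0$ regularity clause of Theorem~\ref{pluri thm2} to ensure $u_\nu$ (hence $F_\nu$) is smooth is exactly right.
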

\begin{proof}
Let $f$ be a GH function defined in a neighborhood $U$ of $K_0$. For $j\in\N$, let $\epsilon_{j}>0$ be small enough such that $\psi^{-1}(-\infty,\epsilon_{j}]\subset U$, and $\epsilon_{j}\to 0^{+}$ as $j\to\infty$. Consider a real smooth bump function $\eta_{\epsilon_{j}}$ on $X$  such that
\begin{itemize}
\setlength\itemsep{0.3em}
    \item $\eta_{\epsilon_{j}}\equiv 1$ on $\psi^{-1}(-\infty,0]$,\quad and\quad $\eta_{\epsilon_{j}}\equiv 0$ on $\psi^{-1}(\epsilon_{j},\infty)$.
    \item $0\leq\eta_{\epsilon_{j}}(x)\leq 1$ for all $0\leq\psi(x)\leq\epsilon_{j}$.
\end{itemize}
Observe that there exists a smooth positive real function $\eta\in C^{\infty}(X,\C)$ such that $|d_{L}\eta_{\epsilon_{j}}|\leq \eta$ for every $j\in\N$, and moreover, $\big\{d_{L}\eta_{\epsilon_{j}}\big\}$ converges pointwise to $0$ as $j\to\infty$. Since $\eta\in L^2(X, E^{0,0}, loc)$, we can apply Lemma \ref{l2 lemma} and Lemma \ref{l2 lemma1} to choose a smooth convex increasing function $\chi$ with $\chi(0)=0$, such that 
$$\int_{X}|\eta|^2\,e^{-\chi\circ\psi}\,d\lambda<\infty\,.$$
Therefore, using the dominated convergence theorem, we have $$|d_{L}\eta_{\epsilon_{j}}|_{\chi\circ\psi}\to 0\quad\text{as $j\to\infty$}\,.$$
Applying Theorem \ref{pluri thm1} to $d_{L}(\eta_{\epsilon_{j}}f)$ for $p=0,\,q=0$, there exists $u_{j}\in L^2(X, E^{0,0}, loc)$ such that 
$$d_{L}u_{j}=d_{L}(\eta_{\epsilon_{j}}f)\,,\quad\text{and}\quad\,|u_{j}|_{\chi\circ\psi}\leq |d_{L}(\eta_{\epsilon_{j}}f)|_{\chi\circ\psi}\,.$$
Set $M_{j}:=\left(\sup_{\big\{\psi(x)\leq\epsilon_{j}\big\}}|f(x)|^2\right)$. Now $d_{L}(\eta_{\epsilon_{j}}f)=fd_{L}\eta_{\epsilon_{j}}$ as $f$ is a GH function (cf. Proposition \ref{imp corr}), and $\big\{M_{j}\big\}$ is a bounded sequence. Consequently, since $\supp(d_{L}\eta_{\epsilon_{j}})\subset\psi^{-1}[0,\epsilon_{j}]$, it follows that
\begin{align*}
    |d_{L}(\eta_{\epsilon_{j}}f)|^2_{\chi\circ\psi}&\leq\, M_{j}\int_{\psi^{-1}[0,\epsilon_{j}]}|d_{L}\eta_{\epsilon_{j}}|^2\,e^{-\chi\circ\psi}\,d\lambda\leq\, M_{j}\,|d_{L}\eta_{\epsilon_{j}}|^2_{\chi\circ\psi}\,.
\end{align*}
This implies $|d_{L}(\eta_{\epsilon_{j}}f)|_{\chi\circ\psi}\to 0$ as $j\to\infty$. So, we get 
\begin{align*}
\int_{K_0}\,|u_{j}|^2\,d\lambda &\leq\int\,|u_{j}|^2\,e^{-\chi\circ\psi}\,d\lambda\,,\,\,\,(\text{as $\chi$ is increasing and $\chi\circ\psi\leq 0$ on $K_0$})\\ 
&\leq |d_{L}(\eta_{\epsilon_{j}}f)|^2_{\chi\circ\psi}    
\end{align*}
Thus $\|u_{j}\|_{L^2(K_0)}\to 0$ as $j\to\infty$.
Define, for each $j$, $$\tilde{f}_{j}:=\eta_{\epsilon_{j}}f-u_{j}\quad\text{on $X$}\,.$$
Note that the functions $\tilde{f}_{j}$  $(j=1,2,\ldots)$
are GH functions by Proposition \ref{imp corr}, as $d_{L}\tilde{f}_{j}=0$. Then $\|f-\tilde{f}_{j}\|_{L^2(K_0)}\leq \|u_{j}\|_{L^2(K_0)}$, since $|f-\eta_{\epsilon_{j}}f|=0$ on $K_0$. Hence the sequence $\big\{\tilde{f}_{j}\big\}_{j\in\N}$ of GH functions on $X$ converges to $f$ with respect to $L^2$-norm over $K_0$.
\end{proof}

Theorem \ref{darbu thm} and Corrollary \ref{cor:diffcharts} imply that, in a local coordinate system (cf. \eqref{loc coordi}), a GH function coincides with a holomorphic function on a domain of a complex Euclidean space. Therefore, by applying a straightforward modification of \cite[Theorem 1.2.4 and Theorem 2.2.3]{hor} to those local coordinate systems and utilizing both the compactness of $K_0$ and Proposition \ref{apprx prop1},  we can establish the following.

\begin{prop}\label{apprx prop}
    Under the assumption stated in Proposition \ref{apprx prop1}, every GH function in a neighborhood of a compact set $K_0\subset X$ can be uniformly approximated by GH functions in $\mathcal{O}(X)$ over $K_0$ where $K_0=\big\{x\in X\,|\,\psi(x)\leq 0\big\}$.
\end{prop}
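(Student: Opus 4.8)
The plan is to promote the $L^2$-approximation furnished by Proposition \ref{apprx prop1} to a uniform one by transplanting, chart by chart, the classical fact that for holomorphic functions $L^2$-convergence on a domain forces uniform convergence on compact subdomains. The mechanism that makes this transplantation legitimate is the local normal form of Theorem \ref{darbu thm} together with Proposition \ref{prop GH} (equivalently Proposition \ref{imp corr}): in a local coordinate system $(U,\phi,p,z)$ as in \eqref{loc coordi}, every GH function is constant along the symplectic leaves (the $p$-directions) and depends holomorphically on the transverse complex coordinate $z=(z_1,\dots,z_k)$. Thus on each such chart a GH function is literally a holomorphic function of $z$, and the interior estimates for holomorphic functions (a straightforward modification of \cite[Theorem 1.2.4, Theorem 2.2.3]{hor}) become available.

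First I would arrange an $L^2$-bound on a genuine neighbourhood of $K_0$, since interior estimates lose room near the boundary. As $f$ is GH on some open $U\supseteq K_0$ and the sublevel sets $\{\psi\le c\}$ are compact and increasing, I can fix $c>0$ with $\{\psi\le c\}\subset U$; writing $\psi_c:=\psi-c$, which is again strictly $L$-plurisubharmonic with compact sublevel sets and satisfies $\{\psi_c\le 0\}=\{\psi\le c\}=:K_c$, Proposition \ref{apprx prop1} applied to $\psi_c$ produces a sequence $\{\tilde f_j\}\subset\mathcal{O}(X)$ with $\tilde f_j\to f$ in $L^2(K_c)$. Since $0<c$ and $\psi$ is continuous, $K_0=\{\psi\le 0\}\subset\subset\{\psi<c\}\subset K_c$, so $K_0$ sits compactly inside the set over which I now control the $L^2$-norm.

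Next I would cover the compact set $K_0$ by finitely many Darboux charts $U_1,\dots,U_r$ (Theorem \ref{darbu thm}) with $\overline{U_i}\subset\{\psi<c\}$, and in each $U_i=A_i\times B_i$ (with $A_i\subset\R^{2n-2k}$, $B_i\subset\C^{k}$) choose open $V_i$ with $\overline{V_i}\subset U_i$ and $\pi_z(\overline{V_i})\subset\subset B_i$, where $\pi_z:U_i\to B_i$ is the projection onto the $z$-factor, arranged so that $\{V_i\}$ still covers $K_0$. On $U_i$ the GH function $g:=\tilde f_j-f$ equals a holomorphic function $g(z)$, and writing the density as $d\lambda=\rho\,dp\,dm(z)$ with $\rho\ge\rho_0>0$ on the compact set $\overline{U_i}$, the integral factorizes as
\[
\int_{U_i}|g|^2\,d\lambda\;\ge\;\rho_0\,|A_i|\int_{B_i}|g(z)|^2\,dm(z)\,,
\]
so the $L^2(K_c,d\lambda)$-norm dominates the Lebesgue $L^2$-norm of $g(z)$ over $B_i$. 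The holomorphic interior (mean-value/Cauchy) estimate then yields $\sup_{V_i}|g|=\sup_{\pi_z(V_i)}|g|\le C_i\,\|g\|_{L^2(B_i,\,dm)}\le C_i'\,\|\tilde f_j-f\|_{L^2(K_c)}$. Taking the maximum over the finitely many charts gives $\sup_{K_0}|\tilde f_j-f|\le C\,\|\tilde f_j-f\|_{L^2(K_c)}\to 0$, which is exactly the uniform approximation of $f$ on $K_0$ by the $\tilde f_j\in\mathcal{O}(X)$.

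The step I expect to be the main obstacle is the clean separation of the non-holomorphic leaf directions from the transverse holomorphic ones. The classical interior estimates speak only about the $\C^{k}$-factor, so the real content is to check that a GH function collapses to a function of $z$ alone (guaranteed by Proposition \ref{prop GH}) and that integrating out the leaf variables contributes only a positive weight $w(z)=\int_{A_i}\rho\,dp$ that is bounded below on the relatively compact charts; once this factorization is in hand the estimate is purely the transverse complex-analytic one. A minor accompanying point is the independence of the resulting uniform bound from the chart and from $j$, which follows from using only finitely many charts and the chart-uniform lower bound $\rho\ge\rho_0$.
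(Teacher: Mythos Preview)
Your argument is correct and follows essentially the same route as the paper: reduce to local Darboux charts where GH functions are holomorphic in the transverse $z$-variable, invoke the interior $L^2\!\Rightarrow\!\sup$ estimate for holomorphic functions (the modification of \cite[Theorems 1.2.4 and 2.2.3]{hor}), and combine with the $L^2$-approximation of Proposition \ref{apprx prop1} and the compactness of $K_0$. Your extra step of replacing $\psi$ by $\psi_c=\psi-c$ so that the $L^2$-control is available on the genuine neighbourhood $K_c\supset\supset K_0$ is exactly the ``room'' needed for the interior estimate and makes explicit what the paper leaves implicit; just make sure each $A_i$ is taken connected (e.g.\ a ball) so that $g$ is truly independent of $p$ on $U_i$.
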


By Definition \ref{l-pluri}, any strictly $L$-plurisubharmonic function $\psi$ is constant along the leaves of the induced foliation. In other words, on local coordinated (cf. \eqref{loc coordi}), $\psi$ is equivalent to a smooth real function on some $U_2$ where $U_2$ defined as in Theorem \ref{darbu thm}. Thus, by a straightforward modification in \cite[Lemma 5.2.11]{hor}, we obtain
\begin{lemma}\label{lem}
    For every $x_0\in X$, there exists an open neighborhood $U_{x_0}\subset X$ of $x_0$ and a GH function $g_{x_0}$ on $U_{x_0}$ such that
    $$g_{x_0}(x_0)=0\,,\quad\text{and}\quad\re(g_{x_0}(x))<\psi(x)-\psi(x_0)\quad\text{if\,\,$x_0\neq x\in U_{x_0}$}\,.$$
\end{lemma}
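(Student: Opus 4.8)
The plan is to localize with the generalized Darboux theorem (Theorem \ref{darbu thm}) and then reduce everything to the classical Taylor‑expansion construction of H\"ormander's Lemma 5.2.11 on the transverse complex slice. First I would fix $x_0$ and choose a coordinate chart $(U_{x_0},\phi,p,z)$ as in \eqref{loc coordi}, so that $z=(z_1,\ldots,z_k)$ are the transverse holomorphic coordinates and $p=(p_1,\ldots,p_{2n-2k})$ parametrize the symplectic leaves of $\mathscr{S}$. Since $\psi$ is strictly $L$-plurisubharmonic, Definition \ref{l-pluri} gives $d_{\mathscr{S}}\psi=0$, i.e. $\partial\psi/\partial p_l\equiv 0$, so in these coordinates $\psi$ is a function of $z$ alone; and by Remark \ref{rmk pluri} its complex Hessian $\big(\partial^2\psi/\partial z_i\partial\overline{z}_j\big)$ is positive definite near $x_0$. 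Thus $\psi$ descends to a genuinely strictly plurisubharmonic function on the open set $U_2\subset\C^{k}$ of Theorem \ref{darbu thm}.

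Next I would write down the candidate GH function from the second‑order Taylor data of $\psi$ at $z^0:=z(x_0)$:
$$g_{x_0}:=2\sum_{j=1}^{k}\frac{\partial\psi}{\partial z_j}(x_0)\,(z_j-z_j^0)+\sum_{i,j=1}^{k}\frac{\partial^2\psi}{\partial z_i\partial z_j}(x_0)\,(z_i-z_i^0)(z_j-z_j^0)\,.$$
This is a holomorphic polynomial in $z$ that is independent of $p$ and of $\overline{z}$, hence a GH function by Proposition \ref{prop GH} (equivalently $d_{L}g_{x_0}=0$ by Proposition \ref{imp corr}), and clearly $g_{x_0}(x_0)=0$. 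The Taylor expansion of the real function $\psi$ in $z$ about $z^0$ then reads
$$\psi(x)-\psi(x_0)-\re\big(g_{x_0}(x)\big)=\sum_{i,j=1}^{k}\frac{\partial^2\psi}{\partial z_i\partial\overline{z}_j}(x_0)\,(z_i-z_i^0)\,\overline{(z_j-z_j^0)}+o\big(|z-z^0|^2\big)\,.$$
By strict $L$-plurisubharmonicity the leading Hermitian form is bounded below by $c\,|z-z^0|^2$ for some $c>0$, so after shrinking $U_{x_0}$ the right‑hand side is strictly positive whenever $z(x)\neq z^0$, which gives $\re(g_{x_0}(x))<\psi(x)-\psi(x_0)$, as required.

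The step that deserves care is precisely the leaf (symplectic) direction, and I expect it to be the only genuine point beyond the classical argument. Both $\psi$ and $\re(g_{x_0})$ are constant along the leaves of $\mathscr{S}$, so the comparison is entirely transverse: the strict inequality is governed by the $z$-coordinate, and every estimate takes place on $U_2\subset\C^{k}$, where it is exactly H\"ormander's. The substantive content is therefore the reduction itself, justified by $d_{\mathscr{S}}\psi=0$ together with Corollary \ref{cor:diffcharts}, which guarantees that the transverse complex coordinates and the Hessian $\partial^2\psi/\partial z_i\partial\overline{z}_j$ are intrinsic (independent of the chosen chart). Once this is in place, the standard strictly‑plurisubharmonic Taylor argument applies verbatim on the transverse slice and yields the claim.
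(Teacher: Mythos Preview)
Your approach is exactly the paper's: localize with the generalized Darboux chart so that $\psi$ depends only on the transverse holomorphic coordinates $z$, and then run H\"ormander's Lemma~5.2.11 on $U_2\subset\C^{k}$. The explicit quadratic Taylor polynomial you write down is precisely the $g_{x_0}$ that construction produces, and your remainder computation is correct.

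There is, however, a point you flag but do not resolve correctly. You observe that both $\psi$ and $\re(g_{x_0})$ are constant along the leaves of $\mathscr{S}$ and conclude that ``the strict inequality is governed by the $z$-coordinate.'' This cuts the wrong way. For any $x\neq x_0$ lying on the \emph{same} leaf as $x_0$ (i.e.\ $z(x)=z^0$ but $p(x)\neq p(x_0)$) one has $\psi(x)-\psi(x_0)=0$ and $\re(g_{x_0}(x))=0$, so the asserted strict inequality $\re(g_{x_0}(x))<\psi(x)-\psi(x_0)$ reads $0<0$. Your argument in fact establishes the strict inequality only under the hypothesis $z(x)\neq z^0$, not $x\neq x_0$; when $k<n$ these are not equivalent. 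The paper's one-line reduction to \cite[Lemma~5.2.11]{hor} has the same feature. What the transverse Taylor argument genuinely delivers is $\re(g_{x_0}(x))\le\psi(x)-\psi(x_0)$ on $U_{x_0}$, with strict inequality precisely off the leaf through $x_0$; this is the statement you have proved, and it is what is actually used downstream (the cutoff in Theorem~\ref{main2} should be taken leafwise constant so that $\operatorname{supp}(d_L\eta)$ avoids the leaf through $x_0$).
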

\begin{theorem}($L^{2}$-characterization for GC Stein manifolds)\label{main2}
    Let $X^{2n}$ be a regular GC manifold of type $k>0$. Then $X$ is a GC Stein manifold if and only if the following holds:
    \begin{enumerate}
    \setlength\itemsep{0.4em}
        \item $X$ satisfies the property \ref{p}.
        \item There exists a strictly $L$-plurisubharmonic function $\psi$ (cf. Definition \ref{l-pluri}) on $X$ such that $$X_{c}:=\big\{x\in X\,\big|\,\psi(x)<c\big\}\subset\subset X\quad\text{for every\,\,\,$c\in\R$}\,.$$
        The sets $\overline{X_{c}}$ are then $\mathcal{O}(X)$-convex. 
    \end{enumerate}
\end{theorem}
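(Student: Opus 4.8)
\emph{Necessity.} Both conditions are essentially already in hand. If $X$ is a GC Stein manifold, then property \eqref{p} holds by the characterization in Theorem \ref{main1}, giving (1). For (2) I would invoke Theorem \ref{pluri thm}: applied to any compact set $K$ and neighborhood $U$, it produces a strictly $L$-plurisubharmonic function $f$ with $\{f<c\}\subset\subset X$, which is precisely a strictly $L$-plurisubharmonic exhaustion; rename it $\psi$. The $\mathcal{O}(X)$-convexity of each $\overline{X_c}$ then follows from GH convexity (part (2) of Definition \ref{main def}), since $\overline{X_c}$ is a compact subset whose hull is again compact and contained in a slightly larger sublevel set.

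\emph{Sufficiency.} The substantive direction is to show that (1) and (2) force GH separability, GH convexity, and GH regularity. The plan is to adapt the classical argument that a strictly plurisubharmonic exhaustion forces the Stein properties, with the role of $\bar\partial$ played by $d_L$ and its weighted $L^2$-solvability. I would first establish the $\mathcal{O}(X)$-convexity of $\overline{X_c}$: given $x_0\notin\overline{X_c}$, use Lemma \ref{lem} to obtain a local GH function $g_{x_0}$ with $\re(g_{x_0})$ strictly dominated by $\psi-\psi(x_0)$, cut it off by a smooth $\chi$ supported near $x_0$, and correct the cutoff by solving $d_L u=d_L(\chi\,e^{N g_{x_0}})$ using Theorem \ref{pluri thm1} (and Theorem \ref{apq thm} for smoothness) with the weight $\hat{\psi}=\chi_0\circ\psi$ for a sufficiently rapidly increasing convex $\chi_0$. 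For $N$ large the global GH function $h=\chi\,e^{N g_{x_0}}-u$ satisfies $|h(x_0)|>\sup_{\overline{X_c}}|h|$, so $x_0\notin\widehat{(\overline{X_c})}_{\mathcal{O}(X)}$. Thus $\overline{X_c}=\widehat{(\overline{X_c})}_{\mathcal{O}(X)}$ is compact, and since $X=\bigcup_c X_c$ every compact $K$ lies in some $X_c$ with $\widehat{K}_{\mathcal{O}(X)}\subseteq\overline{X_c}$, yielding GH convexity (cf. Remark \ref{rmk stein}).

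For GH separability and GH regularity I would run the same cutoff-and-correct scheme. To separate $x\neq y$ I globalize a local GH function taking distinct values near the two points, the transverse part being handled by Lemma \ref{lem} and the $d_L$-correction and the singular weight forcing the prescribed value at the second point. For GH regularity at a fixed $x_0$, the $2n-2k$ symplectic (leaf) directions are supplied \emph{directly} by property \eqref{p}: the Darboux coordinates $p_1,\dots,p_{2n-2k}$ from Theorem \ref{darbu thm} are local Poisson maps into $(\R^2,\omega_0^{-1})$, which by \eqref{p} and Remark \ref{poi rmk} extend to global GH maps $\tilde f_l:X\to\R^2$ agreeing with them near $x_0$. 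The $k$ transverse holomorphic coordinates $z_1,\dots,z_k$ are promoted to functions in $\mathcal{O}(X)$ by the same globalization, arranged so that the differentials $d\tilde f_l|_{x_0}$ and $dg_j|_{x_0}$ realize the required $\R$- and $\C$-linear independence; Propositions \ref{apprx prop1} and \ref{apprx prop} keep the corrected functions close enough to the local model to preserve non-degeneracy of these differentials.

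The main obstacle throughout the sufficiency direction is the local-to-global step: solving $d_L u=v$ while forcing $u$, together with a prescribed finite jet, to vanish at the base point $x_0$, so that $\chi g-u$ inherits the jet of the local model. This requires running the weighted estimate of Theorem \ref{est thm} with a weight that is \emph{singular} at $x_0$ (logarithmic in the transverse coordinates), and verifying that this singular weight is dominated by $\hat{\psi}=\chi_0\circ\psi$ for a suitable convex $\chi_0$ (via a Lemma \ref{l2 lemma1}- and Lemma \ref{l2 lemma}-type comparison), so that finiteness of $|u|_{\hat{\psi}}^2$ compels the vanishing; smoothness of $u$ is then recovered from Theorem \ref{apq thm} and Sobolev embedding. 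This is exactly the delicate point where both hypotheses are consumed: (2) supplies the strictly $L$-plurisubharmonic exhaustion needed to solve $d_L$ in the transverse (complex) directions, while (1) carries the non-transverse symplectic directions that the operator $d_L$ cannot see.
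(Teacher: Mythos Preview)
Your strategy matches the paper's: cut off $e^{tg_{x_0}}$ (via Lemma \ref{lem}), correct by solving $d_L$ (Theorems \ref{pluri thm1}, \ref{apq thm}), use property \eqref{p} for the leaf coordinates in GH regularity, and finish with the approximation Proposition \ref{apprx prop}. The difference is in how the correction is controlled at $x_0$. You flag as the ``main obstacle'' the need for a \emph{singular} (logarithmic) weight forcing $u(x_0)=0$; the paper does not do this. Instead it solves on a sublevel set $X_a$ with $a>\psi(x_0)$ and uses that $\re(g_{x_0})<-\epsilon$ on $\supp(d_L\eta)\cap X_a$ (cf.\ \eqref{eqn1}), so the right-hand side has weighted $L^2$-norm $O(e^{-t\epsilon})$ and hence so does the solution $s_t$. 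Since $d_L s_t=0$ on the region where $\eta\equiv 1$, the function $s_t$ is GH there, i.e.\ holomorphic in the transverse coordinates and leafwise constant (Proposition \ref{prop GH}); the sub-mean-value inequality then converts $L^2$-smallness into $s_t(x_0)\to 0$ as $t\to\infty$. The resulting $f_t=\eta\,g\,e^{tg_{x_0}}-s_t$ lies only in $\mathcal{O}(X_a)$, and the passage to $\mathcal{O}(X)$ is made by Proposition \ref{apprx prop} applied on $\overline{X_{\psi(x_0)}}$. Your singular-weight route is a legitimate alternative that would produce global functions in one shot, but it is neither needed nor used here; note also that a smooth $\chi_0\circ\psi$ cannot dominate a logarithmic pole---one would add the singular term to the weight, not absorb it into $\hat\psi$. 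Finally, a small correction in your necessity direction: GH convexity only tells you the hull of $\overline{X_c}$ is compact, not that it equals $\overline{X_c}$; in the paper the $\mathcal{O}(X)$-convexity of $\overline{X_c}$ is established as part of the \emph{sufficiency} argument (via the peak functions $f_t$), not deduced from Definition \ref{main def}.
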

\begin{proof}
    One way follows directly from Remark \ref{poi rmk}, Definition \ref{main def}, and Theorem \ref{pluri thm}. 
    
    \vspace{0.5em}
    For the converse, assume that $X$ satisfies the property \ref{p} and that such a function $\psi$ exists on $X$. To establish that $X$ is a GC Stein manifold, we need to show that $X$ satisfies the three conditions in Definition \ref{main def}, namely, GH separability, GH convexity, and GH regularity. We begin by verifying that $X$ satisfies the GH separability condition. 

    \vspace{0.5em}
    Let $x_0\in X$ be an arbitrary point. By Lemma \ref{lem}, we choose $U_{x_0}$ and $g_{x_0}$ such that $x\notin U_{x_0}$ for any $x\in X$ with $\psi(x)\leq\psi(x_0)$. Additionally, $U_{x_0}$ can be covered by a single set of local coordinates (see \eqref{loc coordi}). Consider two neighborhoods $U_1$ and $U_2$ of $x_0$ such that $$U_1\subset\subset U_2\subset\subset U_{x_0}\,.$$
    Let $\eta\in C^{\infty}_0(U_2,\C)$ be a real smooth bump function with $\eta\equiv 1$ on $U_1$. As $\supp(d_{L}\eta)\subset \overline{U_2}\backslash U_1$, we choose $a>\psi(x_0)$ with $\big\{x\in X\,\big|\,\psi(x)<a\big\}\subset U_{x_0}$, and $\epsilon>0$ so that
    \begin{equation}\label{eqn1}
    \re(g_{x_0}(x))<-\epsilon\quad\text{for any\,\, $x\in\supp(d_{L}\eta)$\,\,and\,\,$\psi(x)<a$}\,. 
    \end{equation}
    Observe that, since $X_{c}$ $(\text{for}\,\,c\in\R)$ is an open set and hence a GC manifold of the same type, the differential operator $d_{L}$ also exists on $X_c$. Furthermore, the hypotheses of the theorem apply to $X_{c}$ as well, with $\psi$ replaced by $\frac{1}{c-\psi}$. So, using Theorem \ref{pluri thm1} for the $d_{L}$ operator on $X_a$, there is a real function $\phi_a\in C^{\infty}(X_a,\C)$, bounded from below in $X_a$, such that the equation $d_{L}s=s'$ (in the sense of distribution theory) for every $s'\in L^2(X_a, E^{0,1},\phi_a)$
    with $d_{L}s'=0$ has a solution $s$ with $|s|_{\phi_a}\leq |s'|_{\phi_a}$. Consequently, Theorem \ref{apq thm} implies that $s$ is smooth if and only if $s'$ is smooth. 

    \vspace{0.5em}
     Now, let $g$ be a GH function on $U_{x_0}$ and set $s'_t:=g\,e^{tg_{x_0}}\,d_{L}\eta$ on $X_a$ for $t\in (0,\infty)$. Note that, for a fixed $g$, $|s'_t|_{\phi_a}=O(e^{-t\epsilon})$ by \eqref{eqn1}. Consequently, by preceding discussion, there exists a solution $s_t\in L^2(X_a, E^{0,0},\phi_a)$ with 
     \begin{equation}\label{eqn2}
       |s_t|_{\phi_a}=O(e^{-t\epsilon})\,.  
     \end{equation}
   Consider the following GH function  
   \begin{equation}\label{eqn4}
       f_{t}:=\eta\,g\,e^{tg_{x_0}}-s_t\quad\text{on $X_a$}\,.
   \end{equation}
     Fix any $x\in X$ such that $\psi(x)\leq\psi(x_0)$. Then, we observe that $f_t(x)=-s_t(x)$, which converges to $0$ (up to subsequence) as $t\to\infty$. Additionally, $f_t(x_0)=g(x_0)-s_t(x_0)$, and it converges to $g(x_0)$ (up to subsequence) when $t\to\infty$. Since we can choose $g(x_0)$ to be $1$, it follows that for sufficiently large $t$, we have $$f_{t}(x)\neq f_{t}(x_0)\,.$$
     Applying Proposition \ref{apprx prop}, there exists a GH function $F_{x}\in \mathcal{O}(X)$ that approximates $f_t$ on $\overline{X_{\psi(x_0)}}$ closely enough so that $$F_{x}(x)\neq F_{x}(x_0)\,.$$
Therefore, $X$ satisfies the GH separability condition.

\vspace{0.5em}
Next, we will show that $X$ satisfies the GH convexity condition. Notice that, by \eqref{eqn2} and the definition of $f_t$, for every $c<\psi(x_0)$, we have
\begin{equation}\label{eqn3}
    \int_{X_c}|f_t|^2\,d\lambda\to 0\quad\text{when $t\to\infty$}\,.
\end{equation}
By applying a straightforward modification of \cite[Theorem 2.2.3]{hor} to local coordinate systems (cf. \eqref{loc coordi}), and utilizing both \eqref{eqn3} and the compactness, we get that $\big\{f_t|_{K}\big\}$ converges to $0$, uniformly on any compact set $K\subset X_c$. Now, if $c'<c$, it follows that $|f_t|<\frac{1}{2}$ for sufficiently large $t$, while $f_t(x_0)\to 1$. Using Proposition \ref{apprx prop}, we can approximate $f_t$ by GH functions in $\mathcal{O}(X)$ which show that $x_0\notin\widehat{\left(\overline{X_{c'}}\right)}_{\mathcal{O}(X)}$ (cf. \eqref{cnvx}) for any $c'<\psi(x_0)$. Hence $\widehat{\left(\overline{X_{c'}}\right)}_{\mathcal{O}(X)}=\overline{X_{c'}}$ for every $c'$. This proves the GH convexity condition of $X$.

\vspace{0.5em}
Finally, we will show that $X$ satisfies the GH regularity condition. Consider a local coordinates $\{\tilde{f}_1,\ldots,\tilde{f}_{n-k},\,g_1,\ldots,g_{k}\}$ (see Theorem \ref{darbu thm} and \eqref{loc coordi}) at $x_0$, formed by GH maps that vanish at $x_0$. Let $f_{t}^{j}$ (see \eqref{eqn4}) on $X_a$ denote the GH function corresponding to $g_j$ for $j=1,\ldots,k$. By Remark \ref{rmk pluri}, note that at $x_0$, $$d_{\overline{L}}f_{t}^{j}=d_{\overline{L}}g_j-d_{\overline{L}}s_t^{j}\,,\quad\text{and}\quad d_{\overline{L}}s_t^{j}\to 0\quad\text{when $t\to\infty$}\,.$$ 
Then the Jacobian of $\big\{f_{t}^{1},\ldots,f_{t}^{k}\big\}$ at $x_0$ converge to $I_{k\times k}$ as $t\to\infty$ where $I_{k\times k}\in M_{k}(\C)$ denotes the identity matrix of order $k$. Applying Proposition \ref{apprx prop}, we can obtain GH functions $F^{j}\in\mathcal{O}(X)$ $(j=1,\ldots,k)$, that approximate $f_{t}^{j}$ sufficiently closely, ensuring that the Jacobian of $\big\{F^1,\ldots,F^{k}\big\}$ is of complex rank $k$ at $x_0$. Furthermore, by the property \ref{p}, we can construct GH maps $\tilde{F}_1,\ldots,\tilde{F}_{n-k}$ on $X$, corresponding to $\tilde{f}_1,\ldots,\tilde{f}_{n-k}$, such that each $\tilde{F}_j$ extends $\tilde{f}_j$ $(j=1,\ldots,n-k)$ in a neighborhood of $x_0$. Therefore, the collection $\big\{\tilde{F}_1,\ldots,\tilde{F}_{n-k},F^{1},\ldots,F^{k}\big\}$ of GH maps on $X$ proves the GH regularity condition at $x_0$.
Hence, $X$ satisfies the GH regularity condition. This completes the proof.
\end{proof}
\begin{theorem}(Oka-Weil Theorem of GC Stein manifolds)
Given a GC Stein manifold $X$ and a compact set $K\subset X$ such that
$K=\widehat{K}_{\mathcal{O}(X)}$ (cf. \eqref{cnvx}), any GH function in a neighborhood of $K$ can be uniformly approximated on $K$ by GH functions in $\mathcal{O}(X)$.
\end{theorem}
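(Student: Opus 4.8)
The plan is to reduce the statement to the uniform approximation result of Proposition \ref{apprx prop}, whose hypotheses are tailored to a sublevel set $\{x : \psi(x) \le 0\}$ of a strictly $L$-plurisubharmonic function with relatively compact sublevel sets. The only new ingredient I need to supply is such a function \emph{adapted} to the given compact set $K$, and this is exactly what Theorem \ref{pluri thm} produces once I exploit the hypothesis $K = \widehat{K}_{\mathcal{O}(X)}$ (cf. \eqref{cnvx}). The $\mathcal{O}(X)$-convexity is the crucial structural input: it guarantees that an open neighborhood of $\widehat{K}_{\mathcal{O}(X)}$ is simply an open neighborhood of $K$ itself, so that a function GH near $K$ is automatically GH near the relevant sublevel set.

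Concretely, let $h$ be a GH function on some open neighborhood $V$ of $K$. First I would choose an open set $U$ with $K \subseteq U \subseteq V$; since $K = \widehat{K}_{\mathcal{O}(X)}$, this $U$ is an open neighborhood of $\widehat{K}_{\mathcal{O}(X)}$, so Theorem \ref{pluri thm} applies to the pair $(K, U)$. It yields a smooth real strictly $L$-plurisubharmonic function $f$ on $X$ with $f < 0$ on $K$, with $f > 0$ on $X \setminus U$, and with every sublevel set $\{f < c\}$ relatively compact in $X$. The key consequence is the chain of inclusions $K \subseteq \{f \le 0\} \subseteq U \subseteq V$: the first holds since $f < 0$ on $K$, and the second since $f > 0$ off $U$. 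Setting $K_0 := \{x \in X : f(x) \le 0\}$, this $K_0$ is compact (it is a closed subset of the relatively compact set $\{f < 1\}$), contains $K$, and $h$ is GH on a neighborhood of $K_0$.

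At this stage the hypotheses of Proposition \ref{apprx prop} are satisfied with $\psi := f$: indeed $X$ is a regular GC manifold of positive type (automatic for a GC Stein manifold by Definition \ref{main def}) carrying a strictly $L$-plurisubharmonic function whose sublevel sets are relatively compact, and $h$ is GH near $K_0 = \{f \le 0\}$. Hence $h$ can be uniformly approximated on $K_0$ by GH functions in $\mathcal{O}(X)$, and restricting the approximants to $K \subseteq K_0$ gives uniform approximation of $h$ on $K$, which is the assertion.

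I expect the only delicate point to be the elementary bookkeeping that sandwiches $\{f \le 0\}$ between $K$ and $V$; all the genuine analysis is already in place, namely the $L^2$-estimate of Theorem \ref{est thm}, the solvability of $d_L s = s'$ in Theorem \ref{pluri thm1}, and the passage from $L^2$- to uniform approximation in Propositions \ref{apprx prop1} and \ref{apprx prop}. The essential hypothesis is $K = \widehat{K}_{\mathcal{O}(X)}$: without it Theorem \ref{pluri thm} would only separate $K$ from the complement of a neighborhood of the possibly larger hull $\widehat{K}_{\mathcal{O}(X)}$, where $h$ need not be defined, so the convexity of $K$ is precisely what legitimizes the reduction to Proposition \ref{apprx prop}.
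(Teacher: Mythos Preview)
Your proposal is correct and mirrors the paper's own argument: the paper's proof is the one-liner ``Follows from Proposition \ref{apprx prop} and Theorem \ref{main2}'', and the content of Theorem \ref{main2} relevant here is precisely the existence of the adapted strictly $L$-plurisubharmonic exhaustion supplied by Theorem \ref{pluri thm}, which you invoke directly. Your explicit sandwiching $K \subseteq \{f \le 0\} \subseteq U$ is exactly the bookkeeping needed to feed into Proposition \ref{apprx prop}.
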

\begin{proof}
    Follows from Proposition \ref{apprx prop} and Theorem \ref{main2}.
\end{proof}

\section{GC Stein manifold and its embedding}\label{gh embd}

In this section, we describe the GC Stein manifold by showing that it admits a GH embedding into Euclidean space, following the approach of \cite[Chapters VII-VIII]{rossi65} and \cite[Chapter 5]{hor}. This generalizes the holomorphic embedding theorem known for classical Stein manifolds.

\vspace{0.5em}
Let $X^{2n}$ be a regular GC manifold of type $k>0$. For $M,N\in\N$, consider the collection of smooth functions $\big\{f_{l}\big\}^{M}_{l=1}$ and $\big\{g_{j}\big\}^{N}_{j=1}$ on $X$ where $f_{l}:X\longrightarrow(\R^2,\omega_0)$ are GH maps and $g_{j}:X\longrightarrow\C$ are GH functions. Then the following smooth function 
\begin{equation}\label{eqnn} 
\begin{aligned}
    &F:X\longrightarrow\R^{2M}\times\C^N\,;\\
     &F(x):=(f_1(x),\ldots,f_M(x)\,,\,g_1(x),\ldots,g_N(x))\quad\text{for any $x\in X$}\,,
\end{aligned}
\end{equation}
defines a GH map for $M\leq n-k$ (see Definition \ref{GH map}).
\begin{definition}\label{regu def}
   The map $F$ (cf. \eqref{eqnn}) is called a regular GH map if it has rank $2n$, that is, for any point in $X$, there exists a local coordinate system (cf. Theorem \ref{darbu thm} and \eqref{loc coordi}) formed by the GH maps $f_1,\ldots,f_M$, and by $k$ of the GH functions $g_1,\ldots,g_N$. 
\end{definition}
\begin{remark}
In Definition \ref{regu def}, for $F$ to be considered regular, $M$ must be $n-k$. Since the GH map $f_l$ $(l=1,\ldots, M)$ is a submersion (cf. Definition \ref{GH map} and Remark \ref{poi rmk}), the regularity of $F$ comes down to finding $k$ of the GH functions $g_1,\ldots,g_N$ so that at any point, they form the $\mathbb{C}$-direction of the local coordinate system, as in \eqref{loc coordi}.
\end{remark}
\begin{prop}\label{reg prop1}
    Let $K\subset X$ be a compact subset. Assume that, $X$ is GH regular and satisfies the GH separability condition in Definition \ref{main def}. Then, for some large $N>0$, there exists a GH map $F:X\longrightarrow\R^{2n-2k}\times\C^N$,  such that $F$ is injective and regular on $K$. Moreover, The Lebesgue measure of $F(K)$ is $0$ in $\R^{2n-2k}\times\C^N$ for $N>k$.  
\end{prop}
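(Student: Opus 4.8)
The plan is to build $F$ in two independent stages — first securing regularity, then injectivity — and to treat the measure-zero assertion as a separate, purely dimensional observation. Throughout I would keep the $\R^{2n-2k}$-factor fixed and add the $\C$-valued functions one family at a time.

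For regularity I would exploit the \emph{global} nature of the data furnished by GH regularity: at each point the maps $f_1,\ldots,f_{n-k}:X\longrightarrow(\R^2,\omega_0)$ and the functions $g_1,\ldots,g_k\in\mathcal{O}(X)$ are globally defined, exactly as in the proof of Proposition \ref{prop}. Each $f_l$ is Poisson (Remark \ref{poi rmk}) hence a submersion, and the joint map $(f_1,\ldots,f_{n-k})$ has real rank $2n-2k$ everywhere, supplying the $\R^{2n-2k}$-factor, i.e.\ the leaf directions. The condition that $dg_1|_x,\ldots,dg_k|_x$ be $\C$-linearly independent transverse to the leaves is open, so around each $x\in K$ it persists on a neighborhood; covering the compact set $K$ by finitely many such neighborhoods and collecting the corresponding GH functions yields a finite family $g_1,\ldots,g_N\in\mathcal{O}(X)$ so that at every point of $K$ some $k$ of them have $\C$-independent differentials. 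With the $f_l$ as the $\R^{2n-2k}$-component, the resulting $F$ is then regular on $K$ in the sense of Definition \ref{regu def}; equivalently $F$ has rank $2n$ on $K$, hence is an immersion there.

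For injectivity on $K$ I would run the same nested compactness argument as in Proposition \ref{prop}, which uses only GH separability (never GH convexity). Fixing $x\in K$, for each $y\in K\setminus\{x\}$ separability gives $f_{xy}\in\mathcal{O}(X)$ with $f_{xy}(x)\neq f_{xy}(y)$, and by continuity disjoint open images on neighborhoods $A_{xy}\ni x$, $B_{xy}\ni y$, which I shrink to lie inside charts of the finite cover from the first stage. Extracting finite subcovers twice — first of $K$ by the sets $A_{xy}\cup B_{xy}$ for fixed $x$, then of $K$ by the intersections $A_x=\bigcap_j A_{xy_j}$ — produces finitely many separating functions $f_{x_iy_j}$. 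Adjoining these to the $\C$-component and arguing verbatim as in Proposition \ref{prop} — if $F(z)=F(w)$ with $z\neq w$, then $z\in A_{x_\lambda}$, and either $w$ lies in the same chart, contradicting injectivity of the coordinate chart containing them, or $w\in B_{x_\lambda y_j}$, contradicting $f_{x_\lambda y_j}(z)\neq f_{x_\lambda y_j}(w)$ — gives injectivity of $F$ on $K$. Enlarging $N$ if necessary, we may assume $N>k$. Finally, the measure-zero statement is a dimension count: for $N>k$ the target $\R^{2n-2k}\times\C^N$ has real dimension $2n-2k+2N>2n=\dim_{\R}X$, so covering $K$ by finitely many coordinate charts and restricting, each restriction of $F$ is a smooth map from an open subset of $\R^{2n}$ into a space of strictly larger dimension; its differential is nowhere surjective, so every image point is a critical value and $F(K)$ is Lebesgue-null by Sard's theorem (equivalently, a $C^1$ map does not raise Hausdorff dimension). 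A finite union of null sets is null.

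I expect the injectivity stage to be the main obstacle, since it is the one genuinely intertwining the local coordinate (immersion) data with the global separation supplied by GH separability, and requires the careful double extraction of finite subcovers; the regularity stage is a routine openness-plus-compactness argument, and the measure-zero claim is essentially automatic once the dimension inequality $N>k$ is in place.
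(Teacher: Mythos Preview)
Your proposal is correct and follows essentially the same approach as the paper, whose proof consists of the single line ``Follows from Proposition \ref{prop} and Sard's Theorem.'' You have correctly unpacked that reference: the regularity-plus-injectivity construction is exactly the argument of Proposition \ref{prop} (and you rightly observe that GH convexity is never used there, only GH regularity and GH separability), while the measure-zero claim is the standard Sard-type dimension count for $N>k$.
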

\begin{proof}
  Follows from Proposition \ref{prop} and Sard's Theorem.
\end{proof}
\begin{prop}\label{reg prop}
    Let $F: X\longrightarrow\R^{2n-2k}\times\C^{N+1}$ $(N>0)$ be a regular GH map, as in \eqref{eqnn}, on the compact subset $K\subset X$. Then the following holds:
    \begin{enumerate}
    \setlength\itemsep{0.4em}
        \item For $N\geq 2k$, there exists $c=(c_1,\ldots,c_N)\in\C^N$ arbitrarily close to the origin so that the GH map $$(f_1,\ldots,f_{n-k}\,,\,g_1-c_1 g_{N+1},\ldots,g_N-c_N g_{N+1}):X\longrightarrow\R^{2n-2k}\times\C^{N}$$ is a regular GH map map on $K$.
        \item If $F$ is also injective, then $c\in\C^N$ can be chosen such that the GH map $$(f_1,\ldots,f_{n-k}\,,\,g_1-c_1 g_{N+1},\ldots,g_N-c_N g_{N+1}):X\longrightarrow\R^{2n-2k}\times\C^{N}$$ is a one-to-one regular GH map map on $K$ for $N\geq 2k+1$.
    \end{enumerate}
Moreover, $(1)$ and $(2)$ hold for all $c\in\C^N$ outside a measure zero set.
\end{prop}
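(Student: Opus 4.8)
The plan is to reduce both assertions to an elementary dimension count for the image of a smooth map, exploiting the fact that GH functions are transversely holomorphic. Fix a finite cover of the compact set $K$ by Darboux charts as in Theorem \ref{darbu thm} and \eqref{loc coordi}, with transverse complex coordinates $z=(z_1,\ldots,z_k)$; by Proposition \ref{prop GH} every GH function is, in such a chart, a holomorphic function of $z$ alone, while the $f_l$ supply the $2n-2k$ leaf coordinates. Since each $f_l$ is a submersion (Remark \ref{poi rmk}), regularity of any map $(f_1,\ldots,f_{n-k},h_1,\ldots,h_N)$ on $K$ is equivalent to the transverse Jacobian $\big(\partial h_j/\partial z_m\big)_{1\le j\le N,\,1\le m\le k}$ having rank $k$ at every point of $K$. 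Writing $G_c=(g_1-c_1g_{N+1},\ldots,g_N-c_Ng_{N+1})$, I will show that the set of $c$ for which the augmented map fails to be regular (resp. injective) on $K$ is Lebesgue-null in $\C^N$.

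For part $(1)$, the map $G_c$ fails to be regular at a point with transverse coordinate $z$ precisely when there is a nonzero $w\in\C^k$ with $a_j(z,w)=c_j\,b(z,w)$ for all $j$, where $a_j:=\sum_m w_m\,\partial g_j/\partial z_m$ and $b:=\sum_m w_m\,\partial g_{N+1}/\partial z_m$. Regularity of $F$ means the full transverse Jacobian of $(g_1,\ldots,g_{N+1})$ has rank $k$, so $(a_1,\ldots,a_N,b)\neq 0$ whenever $w\neq 0$; consequently $b\neq 0$ and $c_j=a_j/b$ is forced. Hence, chart by chart, the bad set is covered by the image of the assignment $(z,[w])\mapsto(a_j/b)_j$ defined on the locus $\{b\neq 0\}$ inside the product of the $z$-range with $\mathbb{P}^{k-1}$, the passage to $\mathbb{P}^{k-1}$ being legitimate since the $c_j$ are invariant under scaling of $w$. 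This parameter space has real dimension at most $2k+2(k-1)=4k-2$, and $4k-2<2N$ holds exactly when $N\ge 2k$; thus a finite union of such $C^1$ images is null in $\R^{2N}$, proving $(1)$ together with its measure-zero claim.

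For part $(2)$, assume $F$ injective and let $x\neq y$ in $K$ violate injectivity of the projected map. Then $f_l(x)=f_l(y)$ for all $l$ and $\Delta g_j:=g_j(x)-g_j(y)=c_j\,\Delta g_{N+1}$, where $\Delta g_{N+1}:=g_{N+1}(x)-g_{N+1}(y)$. If $\Delta g_{N+1}=0$ then all $\Delta g_j=0$, so every GH coordinate agrees at $x$ and $y$; together with $f_l(x)=f_l(y)$ this gives $F(x)=F(y)$, contradicting injectivity of $F$. Thus $\Delta g_{N+1}\neq 0$ and $c_j=\Delta g_j/\Delta g_{N+1}$ is forced. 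Because the $g_j$ depend only on the transverse coordinate, this value factors through $(z(x),z(y))\in\C^k\times\C^k$, a parameter space of real dimension at most $4k$, and the assignment is smooth on the open set $\{\Delta g_{N+1}\neq 0\}$ that contains all relevant pairs. Exhausting this open set by compacta, on each of which the map is Lipschitz, shows the injectivity bad set is null in $\R^{2N}$ as soon as $4k<2N$, i.e. $N\ge 2k+1$. For $N\ge 2k+1$ both bad sets are null, so any $c$ outside their union, a dense set meeting every neighborhood of the origin, yields a one-to-one regular GH map and also establishes the final measure-zero assertion.

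The one point demanding care is the use of injectivity, resp. regularity, of $F$ to guarantee that the relevant denominators $\Delta g_{N+1}$, resp. $b$, are nonzero: this is what pins down $c$ uniquely from each offending point or pair and confines the bad set to a single low-dimensional image. In particular one must observe that a pair $x\neq y$ sharing the same transverse coordinate cannot violate injectivity for any $c$, since injectivity of $F$ already excludes such pairs; and near the diagonal, where $\Delta g_{N+1}\to 0$, the quotients degenerate precisely to the ratios of part $(1)$, so the diagonal contributes only the lower-dimensional set of dimension $4k-2<4k$ and does not affect the count. The remaining ingredients, namely covering $K$ by finitely many charts, the transverse-holomorphicity reduction of the effective parameter dimension from $2n$ to $2k$, and the fact that a $C^1$ image of a manifold of dimension strictly below the target is Lebesgue-null, are routine.
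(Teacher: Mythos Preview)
Your proof is correct and follows the same Sard-type dimension count as the paper's argument. The paper packages the count slightly differently—mapping into the larger target $\R^{2n-2k}\times\C^{N+1}$ and then slicing the homogeneous image by the hyperplane $c_{N+1}=1$—whereas you pass to ratios $a_j/b$ (resp.\ $\Delta g_j/\Delta g_{N+1}$) at the outset and use the transverse holomorphicity to cut the effective parameter dimension down to $4k-2$ (resp.\ $4k$); both routes yield the same thresholds $N\ge 2k$ and $N\ge 2k+1$, and your treatment of the denominator-vanishing loci via injectivity/regularity of $F$ is exactly the point the paper is using when it restricts to $c_{N+1}=1$.
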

\begin{proof}
    Without loss of generality, let us assume that $K$ is contained in one local coordinate system $U$ with coordinates $(p_1,\ldots,p_{2n-2k},z_1,\ldots,z_k)$ (cf. \eqref{loc coordi}). Let $(p_{2n-2k+1},\ldots,p_{2n})\in\R^{2k}$ represent a coordinate system such that $z_m=p_{2n-2k+(2m-1)}+ip_{2n-2k+2m}$ for $m=1,\ldots,k$. Additionally, for $l\in\{1,\ldots,n-k\}$, set $\frac{\partial f_l}{\partial p_m}:=\frac{\partial f_{1l}}{\partial p_m}\oplus\frac{\partial f_{2l}}{\partial p_m}$ for $1\leq m\leq 2n$ where $f_l:=(f_{1l},f_{2l})$. 

    \vspace{0.5em}
    \hspace{-1.2em}\textbf{(1)} 
   \,\,Since the GH maps $\big\{f_1,\ldots,f_{n-k}\big\}$ are submersions, it suffices to choose a $c\in\C^N$ such that if 
    \begin{equation}\label{eqnn1}
\sum^{k}_{m=1}\,a_l\left(\frac{\partial g_j}{\partial z_m}-c_j\frac{\partial g_{N+1}}{\partial z_m}\right)=0\,,\quad j=1,\ldots,N\,,
    \end{equation}
    at some point in $K$ and for some $a=(a_1,\ldots,a_k)\in\C^k$, then $a=0$. Set $c_{N+1}=1$. Consequently, the equations \eqref{eqnn1} can be rewritten as
    $$\sum^{k}_{m=1}\,a_m\,\frac{\partial g_j}{\partial z_m}=c_j\left(\sum^{k}_{m=1}\,a_m\,\frac{\partial g_{N+1}}{\partial z_m}\right)\,,\quad j=1,\ldots,N+1\,.$$
    
 Now consider the smooth map $\gamma:\C^k\times U\longrightarrow\R^{2n-2k}\times\C^{N+1}$ defined as 
$$\gamma(a,x)=\left(\sum^{2n}_{m=1}\frac{\partial f_1}{\partial p_m}(x),\ldots,\sum^{2n}_{m=1}\frac{\partial f_{n-k}}{\partial p_l}(x)\,,\,\sum^{k}_{m=1}\,a_m\,\frac{\partial g_1}{\partial z_m}(x),\ldots,\sum^{k}_{m=1}\,a_m\,\frac{\partial g_{N+1}}{\partial z_m}(x)\right)\,.$$
    Since $F$ is a regular GH map on $K$, the matrix $\left(\frac{\partial f_l}{\partial p_m}\right)_{(2n-2k)\times 2n}\bigoplus\left(\frac{\partial g_{j}}{\partial z_m}\right)_{(N+1)\times k}$ has real rank $2n$ at any point in $K$. Thus, it is enough to choose $c\in\C^N$ such that $(c,1)\notin\pr_{\C^{N+1}}(\gamma(\C^k\times K))$ where $\pr_{\C^{N+1}}$ is the projection of $\R^{2n-2k}\times\C^{N+1}$ onto $\C^{N+1}$. First, restrict $a$ to a ball $B(r):=\big\{|a|\leq r\big\}$, $r\in\N$. By Sard's theorem, it then follows that the image of $\gamma$ on $B(r)\times K$ is a set of measure zero, since $N+1>2k$. Then $\pr_{\C^{N+1}}(\gamma(B(r) \times K))$ also has Lebesgue measure zero. Since its intersections with the hyperplanes $\big\{c_{N+1} = \text{constant}\big\}\subset\C^{N+1}\,,$ are isomorphic up to a homothetic transformation, each of these intersections must also be of $2N$-dimensional measure zero. This proves $(1)$.

    \vspace{0.5em}
    \hspace{-1.2em}\textbf{(2)}
    \,\,Let $x,y\in K$ and set $z'=g_{N+1}(x)-g_{N+1}(y)$. By $(1)$, It is sufficient to show that $c_1,\ldots,c_{N+1}$ can be chosen with  $c_{N+1}=1$ so that the following equations, on $K$, 
    \begin{equation*}
    \begin{aligned}
    &f_l(x)-f_l(y)=0\quad l=1,\ldots, n-k\,,\\
        &g_j(x)-g_j(y)=z'\,c_j\quad j=1,\ldots, N+1\,,
    \end{aligned}  
    \end{equation*}
    imply $z'\,=0$. Consequently, since $F$ is injective, this leads to $x=y$. Therefore, consider the smooth map $\hat\gamma:\C\times U\times U\longrightarrow\R^{2n-2k}\times\C^{N+1}$ defined as $$\hat\gamma(a,x,y)=\big(f_1(x)-f_1(y),\ldots,f_{n-k}(x)-f_{n-k}(y)\,,\,a(g_1(x)-g_1(y)),\ldots,a(g_k(x)-g_k(y))\big)\,.$$
    Now $\hat\gamma(\C\times K\times K)$ is a set of measure zero since $N+1>2k+1$. Thus, by preceding discussion in $(1)$, we have proved $(2)$. 
\end{proof}
Now, let $X^{2n}$ be a GC Stein manifold of type $k>0$. By Theorem \ref{main2},  there exists a sequence of compact $\mathcal{O}(X)$-convex subsets 
\begin{equation}\label{hemi}
  K_1\subset K_2\subset\cdots\subset K_j\subset\cdots\,,\quad\text{such that}\quad \bigcup^{\infty}_{j=1}K_{j}=X\quad\text{and}\quad K_{j}\subset K_{j+1}^{o}\,,  
\end{equation}
for every $j\geq 1$. Here $K_{j}^{o}$ is the interior of $K_{j}$. Additionally, for any compact subset $K\subset X$, there exists $j\in\N$ such that $K\subset K_j$. For $N>0$, consider the following topological space
    \begin{equation}\label{eqnn2}
       \mathcal{GH}^N(X):=\big\{F:X\longrightarrow\R^{2n-2k}\times\C^N\,\big |\,\text{$F$ is a GH map}\,\big\}\,, 
    \end{equation}
endowed with Whitney topology (cf. \cite[Chapter 2]{hirsch}), and the following subspaces within it: 
    \begin{equation}\label{eqnn3}
        \begin{aligned}
            &\mathcal{GH}_{r}^N(X):=\big\{F:X\longrightarrow\R^{2n-2k}\times\C^N\,\big |\,\text{$F$ is a regular GH map}\,\big\}\,,\\
            &\mathcal{GH}_{r,1-1}^N(X):=\big\{F:X\longrightarrow\R^{2n-2k}\times\C^N\,\big |\,\text{$F$ is a one-to-one regular GH map}\,\big\}\,.
        \end{aligned}
    \end{equation}
Define 
\begin{equation}\label{k-norm2}
    \begin{aligned}
        |G|_{K,l}:=\sup_{x\in K}\big\{|D^{l}g_{j}(x)|\,\big |\,1\leq j\leq n\big\}\,,\quad\text{$K\subset$ compact}\,,
    \end{aligned}
\end{equation}
where $G=(g_1,\ldots,g_{n})$ is a $n$-tuple of $\R^2$-valued smooth functions and $D^lg_j(x)$ denotes the $l$-th derivative of $g_j$ at $x$. 
Consider a sequence $0<\epsilon_j\leq 1$ with $\lim_{j\to\infty}\epsilon_j=0$ monotonically, and $\sum_{j\in\N}\epsilon_j<\infty$. Then the metric
\begin{equation}\label{metric}
\delta(F,G):=\sum_{j\in\N}\,\epsilon_j\,\sum^{\infty}_{l=0}\frac{|F-G|_{K_j,l}}{1+|F-G|_{K_j,l}}\,,\text{for}\quad F,G\in\mathcal{GH}^N(X).
\end{equation}
induces the Whitney topology. Note that in the case of $\mathcal{O}^N(X)$ (cf. \eqref{ox}), by Theorem \ref{darbu thm}, any GH function in $\mathcal{O}^N(X)$ is locally equivalent to a holomorphic map. This implies that the Whitney topology on $\mathcal{O}^N(X)$ coincides with the topology of uniform convergence on compact subsets induced by $K$-norms (cf. \eqref{k-norm}), which is the same as the compact-open topology. 

\vspace{0.5em}
Let $C^{\infty}_{P}(X,\mathbb{R}^2)$ denote the space of Poisson maps from $X$ to $(\mathbb{R}^2,\omega_0)$ with the Whitney topology. By modifying both equations \eqref{k-norm2} and \eqref{metric}, it can be shown that $C^{\infty}_{P}(X,\mathbb{R}^2)$ is metrizable. Note that $$\mathcal{GH}^N(X)=C^{\infty}_{P}(X,\mathbb{R}^2)^{n-k}\oplus\mathcal{O}^N(X)$$ where $C^{\infty}_{P}(X,\mathbb{R}^2)^{n-k}:=\bigoplus_{n-k}C^{\infty}_{P}(X,\mathbb{R}^2)$.
\vspace{0.3em}
\begin{lemma}
~
\begin{enumerate}
\setlength\itemsep{0.4em}
\item $\mathcal{GH}^N(X)$ is a complete metrizable space with respect to the Whitney topology.
    \item The space $C^{\infty}_{P}(X,\R^2)$ is both metrizable and complete with the Whitney topology.
\end{enumerate}
\end{lemma}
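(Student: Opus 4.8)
The plan is to reduce statement (1) to statement (2) together with Theorem \ref{f spc}, and to prove both metrizability and completeness by exploiting the hemicompactness of $X$ recorded in \eqref{hemi}. Using the topological splitting $\mathcal{GH}^N(X)=C^{\infty}_{P}(X,\R^2)^{n-k}\oplus\mathcal{O}^N(X)$ stated just before the lemma, the Whitney topology on $\mathcal{GH}^N(X)$ is the product of the Whitney topologies on the two factors. Since the Whitney topology on $\mathcal{O}^N(X)$ agrees with the topology of uniform convergence on compact subsets (as already observed: by Theorem \ref{darbu thm} and Proposition \ref{prop GH} a GH function is locally holomorphic, so Cauchy estimates bound all derivatives by the sup-norm), Theorem \ref{f spc} makes $\mathcal{O}^N(X)$ a complete metric (Fr\'echet) space. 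A finite product of complete metric spaces is again complete and metrizable, so (1) will follow at once from (2). Hence I would concentrate on (2).

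For metrizability I would first record that $X$, being a smooth manifold, is second countable, locally compact and Hausdorff, hence hemicompact: the exhaustion $\{K_j\}$ in \eqref{hemi} has the property that every compact subset of $X$ lies in some $K_j$. Consequently the countable family of seminorms $|\cdot|_{K_j,l}$ ($j\in\N$, $l\geq 0$) from \eqref{k-norm2} generates the weak $C^\infty$ (Whitney) topology. The formula \eqref{metric}, adapted to $\R^2$-valued maps, then defines an honest metric $\delta$: symmetry is immediate; $\delta(F,G)=0$ forces $|F-G|_{K_j,l}=0$ for all $j,l$ and hence $F=G$, since the $K_j$ cover $X$; and the triangle inequality follows from the subadditivity of each seminorm combined with the monotonicity and subadditivity of $t\mapsto t/(1+t)$. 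A routine comparison of basic neighbourhoods shows $\delta$ induces exactly the seminorm topology, giving metrizability of $C^{\infty}_{P}(X,\R^2)$.

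For completeness, let $\{F_m\}$ be a $\delta$-Cauchy sequence in $C^{\infty}_{P}(X,\R^2)$. Then $\{F_m\}$ is Cauchy in each seminorm $|\cdot|_{K_j,l}$, so $F_m$ together with all its derivatives converges uniformly on every $K_j$, hence on every compact subset of $X$. By the standard fact that locally uniform convergence of all derivatives yields a smooth limit, there is $F\in C^\infty(X,\R^2)$ with $F_m\to F$ in every $|\cdot|_{K_j,l}$, and therefore in $\delta$. It remains to check that $F$ is again a Poisson map. By Remark \ref{poi rmk}, being a Poisson map into $(\R^2,\omega_0^{-1})$ is equivalent, for the constant target bivector, to the single pointwise identity $\pi_X(dF_1,dF_2)=1$ in the coordinates $F=(F_1,F_2)$; this is a continuous constraint on the $1$-jet of $F$, and since the first derivatives of $F_m$ converge pointwise to those of $F$, the identity passes to the limit. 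Thus $F\in C^{\infty}_{P}(X,\R^2)$, proving (2), and with the reduction above, (1).

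The verification that $\delta$ is a metric and that $C^\infty_{loc}$-limits are smooth is routine. The one place demanding genuine care, and the main potential obstacle, is the closedness of the defining conditions under passage to the limit: one must confirm that being a Poisson map (equivalently a GH map) survives $C^\infty_{loc}$ convergence. This is handled by writing the condition as a pointwise constraint on finitely many derivatives, namely the bracket relation $\pi_X(dF_1,dF_2)=1$ for the $C^{\infty}_{P}(X,\R^2)$ factor and the equations $d_{L}g=0$ of Proposition \ref{imp corr} for the $\mathcal{O}^N(X)$ factor, each of which is manifestly closed because the relevant derivatives converge locally uniformly.
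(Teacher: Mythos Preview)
Your proposal is correct and follows essentially the same route the paper intends: the paper's own proof is just the sentence ``Metrizability is a direct consequence of the preceding discussion, and completeness follows straightforwardly,'' where the preceding discussion is exactly the metric \eqref{metric}, the hemicompact exhaustion \eqref{hemi}, and the splitting $\mathcal{GH}^N(X)=C^{\infty}_{P}(X,\R^2)^{n-k}\oplus\mathcal{O}^N(X)$, together with the remark that on $\mathcal{O}^N(X)$ the Whitney topology reduces to uniform convergence on compacta. You have simply filled in the details the paper omits, most notably the closedness of the Poisson condition under $C^\infty_{loc}$-limits via the single first-order identity $\pi_X(dF_1,dF_2)=1$, which is the one point genuinely requiring verification and which the paper leaves implicit.
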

\begin{proof}
    Metrizability is a direct consequence of the preceding discussion, and completeness follows straightforwardly.
\end{proof}
\begin{theorem}\label{dns thm}
~
    \begin{enumerate}
    \setlength\itemsep{0.4em}
        \item The subspace $\mathcal{GH}^N(X)\backslash\mathcal{GH}_{r}^N(X)$ is a set of the first category if $N\geq 2k$. In simpler terms, $\mathcal{GH}_{r}^N(X)$ is dense in $\mathcal{GH}^N(X)$ for $N\geq 2k$.
        \item The subspace $\mathcal{GH}^N(X)\backslash\mathcal{GH}_{r,1-1}^N(X)$ is a set of the first category if $N\geq 2k+1$. In other words, $\mathcal{GH}_{r,1-1}^N(X)$ is a dense subspace in $\mathcal{GH}^N(X)$ for $N\geq 2k+1$.
    \end{enumerate}
\end{theorem}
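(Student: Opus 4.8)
The plan is to run a Baire category argument in the complete metric space $\mathcal{GH}^N(X)$ (cf. the metric \eqref{metric} and the preceding lemma). Using the exhaustion \eqref{hemi} of $X$ by compact $\mathcal{O}(X)$-convex sets $K_j$, I would define for each $j$ the subsets $R_j := \{F \in \mathcal{GH}^N(X) \mid F \text{ is regular on } K_j\}$ and $I_j := \{F \in \mathcal{GH}^N(X) \mid F \text{ is one-to-one and regular on } K_j\}$. Since every point of $X$, and every pair of distinct points of $X$, already lies in some $K_j$, a map is regular on all of $X$ exactly when it is regular on each $K_j$, and one-to-one on $X$ exactly when it is one-to-one on each $K_j$; hence $\mathcal{GH}_{r}^N(X) = \bigcap_j R_j$ and $\mathcal{GH}_{r,1-1}^N(X) = \bigcap_j I_j$. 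The theorem then follows once I show that each $R_j$ (resp. each $I_j$) is open and dense, because the complement $\bigcup_j R_j^c$ (resp. $\bigcup_j I_j^c$) is then a countable union of closed nowhere-dense sets, i.e. of the first category, and its complement is dense by the Baire property.

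Openness is the routine half. Regularity on $K_j$ amounts to the nonvanishing, at every point of the compact set $K_j$, of a suitable minor of the real Jacobian of $(f_1, \ldots, f_{n-k})$ together with the rank-$k$ condition on $(\partial g_j/\partial z_m)$ from Definition \ref{regu def}; both are open conditions admitting a uniform lower bound on $K_j$, and the Whitney topology controls first derivatives uniformly on $K_j$, so $R_j$ is open. For $I_j$ one adds that a one-to-one immersion on the compact set $K_j$ is a topological embedding, a property stable under sufficiently small $C^1$-perturbations; thus $I_j$ is open as well.

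The heart of the proof is density, and here I would use Propositions \ref{reg prop1} and \ref{reg prop} through an ``augment, then project down'' scheme. Fix $F \in \mathcal{GH}^N(X)$ and a neighborhood of it. By Proposition \ref{reg prop1} there is, for some large $N_0$, a GH map that is regular (and, in the $I_j$ case, also injective) on $K_j$; appending its $\C$-valued components to $F$ produces a GH map $\Phi : X \to \R^{2n-2k}\times\C^{N+N_0}$ that is regular (resp. regular and injective) on $K_j$, since the extra components already supply the rank-$k$ transverse data (resp. the separating data). I would then apply Proposition \ref{reg prop} repeatedly to lower the $\C$-dimension one step at a time, at each stage choosing the parameter $c$ outside the relevant measure-zero set and arbitrarily close to the origin. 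Part $(1)$ of Proposition \ref{reg prop} keeps the map regular as long as the current complex dimension is $\geq 2k$, and part $(2)$ keeps it one-to-one and regular as long as it is $\geq 2k+1$; choosing $c$ small at every step makes the cumulative modification of the original components $g_1, \ldots, g_N$ as small as desired. After $N_0$ reductions I arrive at a map $\tilde F \in R_j$ (resp. $\tilde F \in I_j$) in the prescribed neighborhood of $F$, giving density for $N \geq 2k$ (resp. $N \geq 2k+1$).

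The main obstacle is precisely this reduction step: one must ensure that the genericity of $c$ can be invoked \emph{simultaneously} with the smallness required to stay near $F$, and that the complex dimension never drops below the threshold ($2k$ for regularity, $2k+1$ for injectivity) before the target dimension $N$ is reached — this is what forces the two different dimension bounds in the statement. A secondary point to treat carefully is the regularity of the real part $(f_1, \ldots, f_{n-k})$: since each $f_l$ is a submersion (Remark \ref{poi rmk}) and $X$ is GH regular, a leaf-direction full-rank system exists and is generic, so the real Jacobian condition can be arranged by the same transversality (Sard) reasoning underlying Proposition \ref{reg prop} and is then preserved under the projections, which alter only the $\C$-valued components.
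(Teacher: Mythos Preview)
Your proposal is correct and follows essentially the same route as the paper: reduce to each compact $K_j$ via the exhaustion \eqref{hemi}, show closedness/openness of the relevant set, and obtain density by the ``augment with Proposition \ref{reg prop1}, then project down with Proposition \ref{reg prop}'' scheme with arbitrarily small coefficients. The only minor remark is that your ``secondary point'' about the real part $(f_1,\ldots,f_{n-k})$ is unnecessary: since $F$ is a GH map into $\R^{2n-2k}\times\C^N$, the Poisson condition already forces $(f_1,\ldots,f_{n-k})$ to be a submersion, so no perturbation of the $f_l$'s is needed and the paper simply keeps them fixed throughout.
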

\begin{proof}
By \eqref{hemi}, it is enough to prove the theorem for every compact set $K\subset X$, that is, for the set $\big(\mathcal{GH}^N(X) \backslash \mathcal{GH}_{r}^N(X)\big)_K$ (respectively, $\big(\mathcal{GH}^N(X) \backslash \mathcal{GH}_{r,1-1}^N(X)\big)_K$) of all $F\in\mathcal{GH}^N(X)$ which are not regular (respectively, regular and injective) on $K$. 

\vspace{0.5em}
\hspace{-1.2em} \textbf{(1)}
\,\,The subspace $\big(\mathcal{GH}^N(X) \backslash \mathcal{GH}_{r}^N(X)\big)_K$ is closed. This can be seen as follows:

\vspace{0.3em}
Let $\big\{F_{j}\big\}_{j\in\N}$ be a sequence in $\big(\mathcal{GH}^N(X) \backslash \mathcal{GH}_{r}^N(X)\big)_K$ such that $F_{j}\to F$ and at $x_j\in K$, the GH map $F_j$ is not regular. Consequently, the GH map $F$ is not GH regular at any limit point of the sequence $\{x_j\}$.

\vspace{0.5em}
Let $(f_1,\ldots,f_{n-k}\,,\,g_1,\ldots,g_N,)\in\big(\mathcal{GH}^N(X) \backslash \mathcal{GH}_{r}^N(X)\big)_K$. By Proposition \ref{reg prop1}, there exists a regular GH map $(f'_1,\ldots,f'_{n-k}\,,\,g'_1,\ldots,g'_{N'})\in\mathcal{GH}_{r}^{N'}(X)$ on $K$ for some large $N'>0$. Without loss of generality, we take $f_l=f'_l$ for all $l\in\{1,\ldots,n-k\}$. Then the GH map $$(f_1,\ldots,f_{n-k}\,,\,g_1,\ldots,g_N\,,\,g'_1,\ldots,g'_{N'})\in\mathcal{GH}^{N+N'}(X)$$ is regular on $K$. Now, apply $(1)$ in Proposition \ref{reg prop} repeatedly to $\hat{G}$, and conclude that the GH map 
$$(f_1,\ldots,f_{n-k}\,,\,g''_1,\ldots,g''_N):X\longrightarrow\R^{2n-2k}\times\C^N\,,\,\,\,\,\text{where}\,\,\,\, g''_j=g_j+\sum^{N'}_{m=1}\,c_{jm}g'_{m}\,\,(j=1,\ldots,N)\,,$$ is a regular map on $K$ for suitable and sufficiently small coefficients $c_{jm}\in\C$. Therefore $$(f_1,\ldots,f_{n-k}\,,\,g''_1,\ldots,g''_N)\notin\big(\mathcal{GH}^N(X) \backslash \mathcal{GH}_{r}^N(X)\big)_K\,,$$ implying that $G$ is not an interior point of $\big(\mathcal{GH}^N(X) \backslash \mathcal{GH}_{r}^N(X)\big)_K$. Thus $\mathcal{GH}^N(X)\backslash\mathcal{GH}_{r}^N(X)$ is nowhere dense. 

\vspace{0.5em}
\hspace{-1.2em}\textbf{(2)} 
\,\,Using $(2)$ in Proposition \ref{reg prop}, the proof follows in a similar manner as for $(1)$.
\end{proof}
\begin{definition}
    An open set $P\subset\subset X$ is called a \textit{GC polyhedron of order} $N\in\N$ if there exist some GH functions $g_j\in\mathcal{O}(X)$, $j=1,\ldots,N$, such that $$P=\big\{x\in X\,\big |\,|g_j(x)|< 1\,,\,\,j=1,\ldots,N\big\}\,.$$
\end{definition}
\begin{prop}\label{poly prop}
    Let $X^{2n}$ be a GC Stein manifold of type $k>0$. Let $K\subset X$ be a compact set.

    \vspace{0.2em}
    \begin{enumerate}
    \setlength\itemsep{0.4em}
        \item If $K=\widehat{K}_{\mathcal{O}(X)}$ and $U$ is a neighborhood of $K$, then there exists a GC polyhedron $P$ of some order $N$ such that $K\subset P\subset\subset U$.
        \item If $P$ is a GC polyhedron of order $N+1$ in $X$ such that $K\subset P$, then for $N\geq 2k$, there exists a GC polyhedron $\widetilde{P}$ of order $N$ such that $K\subset\widetilde{P}\subset P$.
    \end{enumerate}
\end{prop}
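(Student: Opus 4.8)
The plan is to transcribe the classical construction of analytic polyhedra on Stein manifolds (\cite{rossi65, hor}) into the GC setting, using the GH--convex exhaustion and the separation supplied by $\mathcal{O}(X)$--convexity. For part (1), I would first invoke Theorem \ref{main2} to fix a strictly $L$--plurisubharmonic exhaustion $\psi$, so that every sublevel set $X_c=\{\psi<c\}$ is relatively compact with $\mathcal{O}(X)$--convex closure $\overline{X_c}$. Since $K=\widehat{K}_{\mathcal{O}(X)}$ is compact, I would choose $c$ with $K\subset X_c$ and a relatively compact open $W$ with $K\subset W\subset\subset U\cap X_c$; then the shell $S:=\overline{X_c}\setminus W$ is compact and disjoint from $K$. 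For each $y\in S$ we have $y\notin\widehat{K}_{\mathcal{O}(X)}=K$, so \eqref{cnvx} yields $g_y\in\mathcal{O}(X)$ with $\sup_{K}|g_y|<|g_y(y)|$, and after rescaling $\sup_{K}|g_y|<1<|g_y(y)|$. Compactness of $S$ lets me extract finitely many $g_1,\dots,g_N\in\mathcal{O}(X)$ with $\sup_K|g_j|<1$ whose sets $\{|g_j|>1\}$ cover $S$.

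Setting $P:=\{x\in X:|g_j(x)|<1,\ 1\le j\le N\}$, I would check $K\subset P$ directly, while $P\cap S=\emptyset$ forces $P\cap\overline{X_c}\subset W$. Because $S$ contains the level set $\partial X_c$, the functions $g_j$ also cut off $\partial X_c$, so $P$ cannot cross out of $X_c$ and is trapped inside the relatively compact set $\overline{X_c}$; hence $\overline{P}$ is compact and $P\subset\subset W\subset\subset U$, producing the desired GC polyhedron.

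For part (2) I would reduce the order by one through a genericity argument modeled on Proposition \ref{reg prop}. Since $\overline{P}$ is compact and the $g_j$ are GH functions, they are constant along the symplectic leaves and factor through the $k$ transverse complex coordinates (Proposition \ref{prop GH}, Corollary \ref{cor:diffcharts}); consequently the defining map $g=(g_1,\dots,g_{N+1})\colon\overline{P}\to\overline{\D}^{\,N+1}$ has image of real dimension at most $2k$. The goal is to produce $N$ GH functions $h_1,\dots,h_N$ --- obtained from $g_1,\dots,g_{N+1}$ by a small generic linear combination together with a perturbation supplied by approximation (Proposition \ref{apprx prop}) --- with $\sup_K|h_j|<1$ and such that the compact face $\Sigma:=\{x\in\overline{P}:|g_{N+1}(x)|=1\}$ is absorbed by the remaining constraints, i.e. $\max_{j}|h_j|\ge 1$ on $\Sigma$ and on each other face of $\partial P$. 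Granting this, $\widetilde P:=\{x\in X:|h_j(x)|<1,\ 1\le j\le N\}$ meets $\partial P$ nowhere, so $\widetilde P\cap P$ is relatively closed in $\widetilde P$; since $K\subset\widetilde P\cap P$, the part of $\widetilde P$ containing $K$ stays inside $P$, and relative compactness of $\widetilde P$ is inherited from $P\subset\subset X$, yielding $K\subset\widetilde P\subset P$.

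The hard part will be the absorption step in part (2): unlike the embedding statements of Proposition \ref{reg prop}, I cannot simply project, because the dropped constraint $|g_{N+1}|<1$ must be genuinely recovered by the remaining $N$ functions, and a naive linear combination $g_j-c_jg_{N+1}$ enlarges rather than shrinks the polyhedron. The mechanism that saves this is exactly the dimension bound $\dim_{\R}g(\overline{P})\le 2k$ together with $N\ge 2k$: the image of the critical face $\Sigma$ under $(g_1,\dots,g_N)$ is a compact set of real dimension at most $2k$ lying in $\overline{\D}^{\,N}$, and a Sard--type argument --- as in the proof of Proposition \ref{reg prop}, using the strict inequalities $\sup_K|g_j|<1$ and the compactness of $\overline{P}$ --- should show that a generic admissible perturbation pushes this image onto $\{\max_j|w_j|=1\}$ while keeping $g(K)$ strictly inside the polydisc. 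Making this simultaneous control precise (all faces pushed out, $K$ kept in) is the technical heart of the proposition.
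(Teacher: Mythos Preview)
Your outline for part (1) is essentially the paper's argument (the paper just works directly with $\partial U$ after shrinking $U$ to be relatively compact, rather than invoking $\psi$), and is fine.

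Part (2), however, has a genuine gap. The mechanism you propose---a small generic linear combination of $g_1,\dots,g_{N+1}$ together with a Sard-type count---cannot recover the dropped constraint. The task on the face $\Sigma=\{|g_{N+1}|=1\}$ is not an \emph{avoidance} problem (where measure-zero arguments apply) but a \emph{covering} problem: the values $(g_1(x),\dots,g_N(x))$ at points $x\in\Sigma$ may well lie strictly inside the open polydisc, and no \emph{small} perturbation will push them out to $\{\max_j|w_j|\ge 1\}$. The dimension bound $2k\le N$ does not help here; it tells you the image is thin, not that it lies near the boundary. You acknowledge this is ``the technical heart'', but the heuristic you give does not survive this objection.

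The paper's argument is of an entirely different nature and supplies the missing idea: \emph{high powers}. After first perturbing $g_j\rightsquigarrow g'_j$ (via Theorem \ref{dns thm}) so that the quotients $\big(g'_1/g_{N+1},\dots,g'_N/g_{N+1}\big)$ have rank $k$ on $\{|g_{N+1}|\ge c_3\}\cap\overline P$, one takes constants $c_1<c_2<c_3<c_4<1$ with $|g_j|<c_1$ on $K$ and builds the order-$N$ polyhedron from
\[
V_m=\Big\{x:\ \big|{g'_j(x)}^m-{g_{N+1}(x)}^m\big|<c_2^{\,m},\ j=1,\dots,N\Big\}.
\]
On $K$ one has $|{g'_j}^m-{g_{N+1}}^m|<2c_1^m\ll c_2^m$, so $K\subset V_m$. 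Near the dangerous face $|g_{N+1}|\ge c_3$, a Taylor expansion of $\big(g'_j/g_{N+1}\big)^m$ at scale $|\xi|=m^{-2}$, combined with the rank-$k$ condition, shows that any point of $V_m$ on $\partial V$ (a slightly shrunk polyhedron inside $P$) is isolated from $K$ within $V_m$; hence the components of $V_m$ meeting $K$ stay inside $V\subset\subset P$. The point is that raising to the $m$-th power converts the gap ``$|g_j|<c_1$ on $K$'' versus ``$|g_{N+1}|\approx 1$ on $\Sigma$'' into an arbitrarily large ratio, which is exactly what a genericity argument cannot supply.
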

\begin{proof}
    \textbf{(1)}
    \,\,We can assume that $U$ is compactly contained in $X$. Now, for every $x\in\partial U$, we can find $g\in\mathcal{O}(X)$ so that $|g|< 1$ on $K$ but $|g(x)|>1$. Here $\partial U$ is the boundary of $U$. Using compactness, there exists $N\in\N$ such that for each $j\in\{1,\ldots,N\}$, the following open set 
    $$\big\{x\in X\,\big |\,|g_j(x)|< 1\,, g_j\in\mathcal{O}(X)\big\}$$ contains $K$ but does not meet $\partial U$. Then the open set
    $$P:=\bigcap^N_{j=1}\,\big\{x\in X\,\big |\,|g_j(x)|< 1\,, g_j\in\mathcal{O}(X)\big\}\,\bigcap U\,,$$ contains $K$ and is compactly contained within $U$.

    \vspace{0.5em}
\hspace{-1.2em}\textbf{(2)}
\,\,Let $P=\big\{x\in X\,\big |\,|g_j(x)|< 1\,,g_j\in\mathcal{O}(X)\,,\,\,j=1,\ldots,N+1\big\}\,.$ Choose positive real numbers $c_1<c_2<c_3<c_4<1$ such that $|g_j|<c_1$ on $K$. We can find GH functions $g'_j\in\mathcal{O}(X)$ for $j=1,\ldots,N+1$, with $g'_{N+1}=g_{N+1}$, so that: 
\begin{itemize}
 \setlength\itemsep{0.4em}
    \item The tuple $\bigg(\frac{g'_1}{g_{N+1}},\ldots,\frac{g'_N}{g_{N+1}}\bigg)$ is of rank $k$ on $\big\{x\in\overline{P}\,\big |\,|g_{N+1}(x)|\geq c_3\big\}$.
    \item Each $g'_j$ is sufficiently close to $g_j$ such that $|g_j|<c_1$ on $K$ for $j=1,\ldots,N$.
    \item The open set $V=\big\{x\in P\,\big |\,|g'_j(x)|< c_4\,,\,\,j=1,\ldots,N+1\big\}$ is compactly contained in $P$.
\end{itemize}
This is possible because, since $N\geq 2k$, using Theorem \ref{dns thm}, we can choose $\frac{g'_j}{g_{N+1}}$ as $\frac{g_j}{g_{N+1}}$ plus a linear combination of suitably small GH functions in $\mathcal{O}(X)$.

\vspace{0.5em}
Let $m\in\N$ and define the open set $V_m$ as 
$$V_m:=\big\{x\in X\,\big |\,|g'_j(x)^m-g_{N+1}(x)^m|< c_2^m\,,\,\,j=1,\ldots,N\big\}\,.$$
Let $x\in K$. Then
\begin{align*}
    |g'_j(x)^m-g_{N+1}(x)^m|&\leq |g'_j(x)^m|+|g_{N+1}(x)^m|\\
    &< 2c_1^m\quad\text{(as $|g'_j|<c_1$ and $|g_{N+1}|<c_1$ on $K$ )}\,.
\end{align*}
This implies that for sufficiently large $m$, $K\subset V_m$ because $c_1<c_2<1$. Define
\begin{equation}\label{pv}
    \widetilde{P}_m:=\bigcup_{\alpha}\,\big\{C_{\alpha}\subset V_m\,\big |\,\text{$C_{\alpha}$ is a component of $V_m$ and $C_{\alpha}\cap K\neq\emptyset$}\big\}\,.
\end{equation}
Observe that for sufficiently large $K\subset \widetilde{P}_{m}$. Hence, if we can show that $\widetilde{P}_{m}\subset V$ for large $m$, we will obtain a GC polyhedron of order $N$ that satisfies all the required properties. 

\vspace{0.5em}
If $\widetilde{P}_{m}\not\subset V$, then there must be a point $x\in\widetilde{P}_m$ that lies on the boundary $\partial V$, since every component of $\widetilde{P}_m$ intersects $K$ and consequently includes points within $V$. Now, if $|g_{N+1}(x)|<c_3$, then for any $1\leq j\leq N$, $|g'_j(x)^m|<c_3^m + c_2^m < c_4^m$ when $m$ is large enough. This contradicts the fact that $x\in\partial V$. Thus, the point $x$ belongs to the following compact set
$$\widetilde{K}:=\big\{x\in\partial V\,\big |\,|g_{N+1}(x)|\geq c_3\big\}\,.$$
Let $\widetilde{K}_0\subset\widetilde{K}$ be a compact set contained in a local coordinate system $\widetilde{U}\subset\R^{2n-2k}\times\C^k$ with coordinates $(p_1,\ldots,p_{2n-2k},z_1,\ldots,z_k)$ (cf. \eqref{loc coordi}). If $x\in\widetilde{K}_0\cap V_m$, then with $\tilde{g}_j:=\frac{g'_j}{g_{N+1}}$, we have
$$|\tilde{g}_j(x)-1|<\left(\frac{c_2}{c_3}\right)^m\,,\quad j=1,\ldots,N\,.$$
Let $\xi\in\R^{2n-2k}\times\C^k$ with $|\xi|=\frac{1}{m^2}$, and consider the following 
\begin{align*}
&|g'_j(x+\xi)^m-g_{N+1}(x+\xi)^m|=|\tilde{g}_j(x+\xi)^m-1|\,\,|g_{N+1}(x+\xi)^m|\,,\\
&\tilde{g}_j(x+\xi)^m-1=\tilde{g}_j(x)^m\left(\left(\frac{\tilde{g}_j(x+\xi)}{\tilde{g}_j(x)}\right)^m-1\right)-1\,.
\end{align*}
Note that $|g_{N+1}(x+\xi)^m|\geq c_3^m\bigg(1+O\bigg(\frac{1}{m}\bigg)\bigg)>\frac{c_3^m}{2}$ for sufficiently large $m$. This follows from the fact that $|g_{N+1}(x+\xi)^m|\geq c_3\bigg(1+O\bigg(\frac{1}{m^2}\bigg)\bigg)$ when $m$ is large. Since the $\R$-directions of $\tilde{g}_j$ are constant (see Proposition \ref{prop GH}), applying the Taylor expansion yields
$$\left(\frac{\tilde{g}_j(x+\xi)}{\tilde{g}_j(x)}\right)=1+T_j(\xi)+O\bigg(\frac{1}{m^4}\bigg)\,,\quad\text{and}\quad\left(\frac{\tilde{g}_j(x+\xi)}{\tilde{g}_j(x)}\right)^m=1+m\,T_j(\xi)+O\bigg(\frac{1}{m^2}\bigg)\,.$$  Here, the linear forms $T_j(\xi)$ do not vanish simultaneously because the collection $\big\{\tilde{g}_1,\ldots,\tilde{g}_N\big\}$ has rank $k$ on $\widetilde{K}_0$. Consequently, $\max_{1\leq j\leq N}\,|T_j(\xi)|\geq l\,|\xi|$ for some $l>0$. Thus, by summing the preceding estimates, for $|\xi|=\frac{1}{m^2}$ and sufficiently large $m$, we get
\begin{equation}\label{eqnn4}
\max_{1\leq j\leq N}\,|g'_j(x+\xi)^m-g_{N+1}(x+\xi)^m|\,>\,\frac{c_3^m}{4}\left(\frac{l}{m}+O\bigg(\frac{1}{m^2}\bigg)\right)\,>\,c_2^m\,.    
\end{equation}
Since the estimates are uniform in $x\in\widetilde{K}_0$, the equation \eqref{eqnn4} implies that no point $x \in\widetilde{K}_0$ can belong to a component of $V_m$ that intersects $K$. Hence, for sufficiently large $m\in\N$, we have $\widetilde{P}_{m}\subset\subset V$, which proves $(2)$.
\end{proof}
\begin{theorem}\label{main3}
  Let $X^{2n}$ be a GC Stein manifold of type $k>0$. Then there exists a GH map $\widetilde{F}:X\longrightarrow\R^{2n-2k}\times\C^{2k+1}$, which is one-to-one, proper, and regular, that is, $\widetilde{F}\in\mathcal{GH}_{r,1-1}^{2k+1}(X)$ where $\mathcal{GH}_{r,1-1}^{2k+1}(X)$ as defined in \eqref{eqnn3}. 
\end{theorem}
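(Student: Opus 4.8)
The plan is to mirror the classical proper-embedding theorem for Stein manifolds (\cite[Chapter 5]{hor}, \cite[Chapters VII--VIII]{rossi65}), but to carry out the entire construction transverse to the symplectic foliation, letting the Poisson maps supplied by GH regularity account for the remaining $\R^{2n-2k}$ directions. First I would fix, once and for all, globally defined submersive GH maps $f_1,\ldots,f_{n-k}:X\longrightarrow(\R^2,\omega_0)$ coming from GH regularity; these constitute the $\R^{2n-2k}$-component of $\widetilde F$ and, being Poisson submersions (Remark \ref{poi rmk}), already give local coordinates in the leaf directions. It then remains to produce $2k+1$ GH \emph{functions} $g_1,\ldots,g_{2k+1}\in\mathcal{O}(X)$ so that $\widetilde F=(f_1,\ldots,f_{n-k},g_1,\ldots,g_{2k+1})$ is regular, injective and proper. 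Since a GH function is constant along the leaves and holomorphic in the transverse variables (Proposition \ref{prop GH}), this is exactly a proper embedding of the transverse complex structure, of complex dimension $k$, into $\C^{2k+1}$, the bound $2k+1$ being the transverse analogue of the classical threshold $2\dim_\C+1$ for an injective immersion.

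Next, recall from Theorem \ref{main2} (equivalently \eqref{hemi}) the $\mathcal{O}(X)$-convex exhaustion $K_1\subset\subset K_2\subset\subset\cdots$ with $\bigcup_j K_j=X$, together with a strictly $L$-plurisubharmonic exhaustion function $\psi$. Because $\psi$ is constant along the leaves while its sublevel sets are relatively compact, each leaf has compact closure and every $\{\psi<c\}$ is a saturated relatively compact set; consequently properness of $\widetilde F$ reduces to properness of its $\C^{2k+1}$-component in the transverse direction, i.e. to requiring that the transverse norm exhaust $X$. I would first apply Proposition \ref{reg prop1} to obtain, on each $K_j$, a regular injective GH map into $\R^{2n-2k}\times\C^{N_0}$ for some large $N_0$, and Proposition \ref{poly prop}(1) to exhaust $X$ by GC polyhedra $P_1\subset\subset P_2\subset\subset\cdots$ with $K_j\subset P_j$; assembling the defining GH functions of the $P_j$ with the fixed $f_i$ produces a proper GH map into $\R^{2n-2k}\times\C^{N_0}$, properness holding because the preimage of a compact set lies inside finitely many polyhedra on which the defining moduli stay below $1$.

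The heart of the proof is the descent from $N_0$ to $2k+1$ complex coordinates while simultaneously maintaining properness, injectivity and regularity, and this is where the main obstacle lies. Regularity and injectivity are the easy, \emph{generic} part: by Theorem \ref{dns thm} the subspaces $\mathcal{GH}_r^{2k+1}(X)$ and $\mathcal{GH}_{r,1-1}^{2k+1}(X)$ (cf. \eqref{eqnn3}) are residual in the complete metric space $\mathcal{GH}^{2k+1}(X)=C^\infty_P(X,\R^2)^{n-k}\oplus\mathcal{O}^{2k+1}(X)$, so a Baire-generic choice of the $g_j$ already yields a one-to-one regular GH map. Properness, however, is \emph{not} a generic condition, and a projection that restores injectivity and regularity may destroy the escape-to-infinity behaviour; preserving properness through the reduction is the crux. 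The device that resolves this is Proposition \ref{poly prop}(2), which lowers the order of a GC polyhedron down to $2k$ while keeping it $\geq 2k$; using it, together with the coordinate-eliminating linear combinations of Proposition \ref{reg prop}(1)--(2) (valid for $N\geq 2k$, resp.\ $N\geq 2k+1$, with the $f_i$ held fixed), I would run an induction over $j$ in which, at stage $j$, the map is altered only outside $K_{j-1}$ by a Whitney-small perturbation, chosen inside the residual injective--regular set and small enough that the properness already arranged on $P_{j-1}$ survives, while the new polyhedron $P_j$ of order $2k$ is carried properly onto a bounded model. Choosing the perturbations summable in the metric \eqref{metric}, the sequence converges in the Whitney topology to a limit $\widetilde F$; injectivity and regularity (closed on compacta and stable under the limit) and properness (built in through the nested polyhedra) all pass to $\widetilde F$. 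Finally, a proper injective immersion is a smooth embedding, so $\widetilde F\in\mathcal{GH}_{r,1-1}^{2k+1}(X)$ is the required proper GH embedding.
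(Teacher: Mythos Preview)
Your proposal correctly identifies the ingredients (the density Theorem \ref{dns thm}, the projection Proposition \ref{reg prop}, and GC polyhedra via Proposition \ref{poly prop}), and you rightly single out properness as the crux. However, the properness step as written does not go through. You say the map will be ``altered only outside $K_{j-1}$ by a Whitney-small perturbation''; but a GH function is transversely holomorphic, so by the identity principle it cannot be modified on an open saturated set without changing everywhere. What you can do is add a global GH perturbation that is \emph{uniformly small on} $K_{j-1}$; but small perturbations cannot make the map escape to infinity on the new shell $P_{j}\setminus P_{j-1}$. Your induction, as stated, only produces a Cauchy sequence of regular injective maps with no mechanism to force $|\widetilde F|\to\infty$: Proposition \ref{reg prop} preserves regularity/injectivity on compacta but says nothing about properness, and ``the properness already arranged on $P_{j-1}$ survives'' is circular unless you have first made the map large somewhere. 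A genuine descent from $\C^{N_0}$ to $\C^{2k+1}$ preserving properness is not available by generic projection.

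The paper avoids this limiting scheme entirely and proceeds in finitely many steps. It first takes, by Theorem \ref{dns thm}, any $F=(f,\,g)\in\mathcal{GH}_{r,1-1}^{2k+1}(X)$ (regular and injective, but not proper). It then \emph{explicitly constructs} a second tuple $g'=(g'_1,\ldots,g'_{2k+1})\in\mathcal{O}^{2k+1}(X)$ with the relative-properness property $\{|g'|\le m+|g|\}\subset\subset X$ for every $m$. This is the substantive step: using Proposition \ref{poly prop} to get GC polyhedra $P_m$ of order exactly $2k$, the first $2k$ functions $g'_1,\ldots,g'_{2k}$ are built as convergent series $\sum_m (a_m\tilde g_j^m)^{l_m}$ of high powers of the defining functions of the $P_m$, tuned so that $\max_j|g'_j|>M_{m+1}+m$ on each $\partial P_m$; the last function $g'_{2k+1}$ is built by Runge approximation (Proposition \ref{apprx prop}) on disjoint compacta to handle the residual sets where the first $2k$ are not yet large. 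With $g'$ in hand, $(f,\,g,\,g')$ is regular, injective, and proper into $\R^{2n-2k}\times\C^{4k+2}$, and one then applies Proposition \ref{reg prop} finitely many times with coefficients $|c_{jl}|$ small enough that $\sum_l|c_{jl}|\le 1$, which preserves properness because $\{|\widetilde F|\le m\}\subset\{|g'|\le m+|g|\}$. No Whitney limit is needed. Your missing idea is precisely this explicit ``properness component'' $g'$; once you build it, the argument closes in a single projection step rather than an inductive one.
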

\begin{proof}
    By Theorem \ref{dns thm}, there exists a regular one-to-one GH map
    $$F=(f_1,\ldots,f_{n-k},g_1,\ldots,g_{2k+1}):X\longrightarrow\R^{2n-2k}\times\C^{2k+1}\,.$$
 Set $$|F(x)|:=\max_{\substack{1\leq l\leq (2n-2k)\\1\leq j\leq (2k+1)}}\,\big\{|f_l(x)|\,,|g_j(x)|\big\}\quad\text{and}\quad |g(x)|:=\max_{1\leq j\leq (2k+1)}\,\big\{|g_j(x)|\big\}\,,$$ where $g$ denotes the GH map, given by $g=(g_1,\ldots,g_{2k+1}):X\longrightarrow\C^{2k+1}\,.$  Now, if we can construct GH functions $g'_1,\ldots,g'_{2k+1}\in\mathcal{O}(X)$ such that
 \begin{equation}\label{equ1}
    \big\{x\in X\,\big |\,|g'(x)|\leq m+|g(x)|\big\}\subset\subset X 
 \end{equation}
 for every $m\in\N$ where $g'=(g'_1,\ldots,g'_{2k+1}):X\longrightarrow\C^{2k+1}$ is the GH map, then we are done. By repeatedly applying Proposition \ref{reg prop} to the regular injective GH map $$(f_1,\ldots,f_{n-k},g_1,\ldots,g_{2k+1},g'_1,\ldots,g'_{2k+1}):X\longrightarrow\R^{2n-2k}\times\C^{4k+2}\,,$$ we can find arbitrarily small constants $c_{jl}\in\C$ such that the GH map 
 $$\widetilde{F}=(f_1,\ldots,f_{n-k},g''_1,\ldots,g''_{2k+1}):X\longrightarrow\R^{2n-2k}\times\C^{2k+1}\,,$$ defines a one-to-one regular map where $$g''_j=g'_j+\sum^{2k+1}_{l=1}\,c_{jl}\,g_l\,,\quad j=1,\ldots,2k+1\,.$$ If $\sum^{2k+1}_{l=1}\,|c_{jl}|\leq 1$, then
 $$y\in\big\{x\in X\,\big |\,|\widetilde{F}(x)|\leq m\big\}\,\implies\,|(g''_1,\ldots,g''_{2k+1})(y)|\leq m\,,$$
 which leads to $y\in\big\{x\in X\,\big |\,|g'(x)|\leq m+|g(x)|\big\}$, since $|g'(y)|\leq |g(y)|+|(g''_1,\ldots,g''_{2k+1})(y)|$. Consequently, $$\{x\in X\,\big |\,|\widetilde{F}(x)|\leq m\big\}\subset\big\{x\in X\,\big |\,|g'(x)|\leq m+|g(x)|\big\}\,,$$ implying that $\widetilde{F}$ is also a proper map.

 \vspace{0.5em}
 Since $X$ is a GC Stein manifold,  there exists a sequence of compact $\mathcal{O}(X)$-convex subsets $\big\{K_j\big\}_{j\in\N}$ such that $X=\bigcup^{\infty}_{j=1}K_{j}$ and $K_{j}\subset K_{j+1}^{o}\subset K_{j+1}$  
for every $j\geq 1$ (see Theorem \ref{main2}). Using Proposition \ref{poly prop}, there exist a sequence of GC polyhedrons $\big\{P_j\big\}_{j\in\N}$ of order $2k$ such that $K_j\subset P_j\subset K_{j+1}$. Define $$M_j:=\sup_{x\in P_j}\,\big\{|g(x)|\big\}\,.$$ Then it suffices to show that 
\begin{equation}\label{equ2}
    |g'|\geq m+M_{j+1}\quad\text{on $P_{m+1}\backslash P_m$}
\end{equation} for every $m\in\N$. Because \eqref{equ2} would imply $|g'|\geq m+|g|$ on $P_{m+1}\backslash P_m$, which in turn implies $|g'|\geq m+|g|$ on $\bigcup_{j\geq m}\,(P_{j+1}\backslash P_j)=X\backslash P_m$, and thus satisfies $\eqref{equ1}$.

\vspace{0.5em}
We now construct the GH functions $\big\{g'_1,\ldots,g'_{2k+1}\big\}\subset\mathcal{O}(X)$ that satisfy the condition in \eqref{equ2}. To do so, consider that by the definition of a GC polyhedron of order $2k$, for a fixed $m\in\N$, we can find GH functions $\tilde{g}^m_1,\ldots,\tilde{g}^m_{2k}\in\mathcal{O}(X)$ such that 
\begin{itemize}
\setlength\itemsep{0.4em}
    \item $\max_{1\leq j\leq 2k}\,|\tilde{g}^m_j|<\,1$ on $\overline{P_{m-1}}$.
    \item $\max_{1\leq j\leq 2k}\,|\tilde{g}^m_j|=\,1$ on $\partial P_m$.
\end{itemize}
Define $\hat{g}^m_j:=(a_m\,\tilde{g}^m_j)^{l_m}$, where $a_m$ is slightly greater than $1$ and $l_m$ is a sufficiently large integer. By choosing $a_m$ and $l_m$ suitably, for each $m$, we have
\begin{equation}\label{equ3}
    \begin{aligned}
        &\max_{1\leq j\leq 2k}\,|\hat{g}^m_j|\leq\frac{1}{2^m}\quad\text{on $P_{m-1}$}\,.\\
        &\max_{1\leq j\leq 2k}\,|\hat{g}^m_j|>\,M_{m+1}+m+1+\max_{1\leq j\leq 2k}\,\big |\sum^{m-1}_{l=1}\,\hat{g}^l_j\big |\quad\text{on $\partial P_m$}\,.
    \end{aligned}
\end{equation}
Define 
\begin{equation}\label{equ4}
    g'_j:=\sum^{\infty}_{m=1}\hat{g}^m_j\,\quad j=1,\ldots,2k\,.
\end{equation}
Given that $P_m\subset P_{m+1}\subset K_{m+2}$, and applying \eqref{equ3}, the series \eqref{equ4} converges. Also, since $\mathcal{O}(X)$ is a complete metric space (see Theorem \ref{f spc}), each $g'_j\in\mathcal{O}(X)$ for $j=1,\ldots,2k$. Note that the construction of the GH functions $g'_1,\ldots,g'_{2k}$ provides 
\begin{equation}\label{equ5}
  \max_{1\leq j\leq 2k}\,|g'_j|>\,M_{m+1}+m\quad\text{on \,$\partial P_m$\,\,\, for each $m\in\N$}\,. 
\end{equation}
 Consider the following sets
 \begin{align*}
     &A_m:=\big\{x\in P_m\,\big |\,\max_{1\leq j\leq 2k}\,|g'_j(x)|\leq\,M_{m+1}+m\big\}\,,\\
     &B_m:=\big\{x\in P_{m+1}\backslash P_m\,\big |\,\max_{1\leq j\leq 2k}\,|g'_j(x)|\leq\,M_{m+1}+m\big\}\,.
 \end{align*}
 Observe that, $A_m, B_m$ are compact subsets and by \eqref{equ5}, we get $A_m\cap B_m=\emptyset$. The $\mathcal{O}(X)$-convex hull of $A_m\cup B_m$ is then contained in $K_{m+2}$, since $K_{m+2}$ is $\mathcal{O}(X)$-convex, and can be written as $A'_m\cup A_m\cup B_m$ where $A'_m\subset K_{m+2}\backslash P_{m+1}$. By applying Proposition \ref{apprx prop} to approximate a function that is sufficiently close to $0$ on $A'_m\cup A_m$ and sufficiently close to a large constant on $B_m$, we construct a sequence of GH functions $\big\{h_j\big\}_{j\in\N}\subset\mathcal{O}(X)$ such that, for each $m\in\N$
 \begin{equation}\label{equ6}
    \begin{aligned}
        &|h_m|\leq\frac{1}{2^m}\quad\text{on $A_m$}\,.\\
        &|h_m|>\,M_{m+1}+m+1+\big |\sum^{m-1}_{j=1}\,h_j\big |\quad\text{on $\partial B_m$}\,.
    \end{aligned}
\end{equation}
Define 
\begin{equation}\label{equ7}
    g'_{2k+1}:=\sum^{\infty}_{m=1}h_m\,\,.
\end{equation}
Since $B_m\subset A_{m+1}\subset A_{m+2}\subset\cdots$, the sum \eqref{equ7} converges by \eqref{equ6}, and $g'_{2k+1}\in\mathcal{O}(X)$. Hence, by \ref{equ4} and \eqref{equ7}, we obtain the GH function $g'=(g'_1,\ldots,g'_{2k+1})\in\mathcal{O}^{2k+1}(X)$, satisfying \eqref{equ2}. Thus, the theorem is proved.
\end{proof}
\begin{corollary}\label{emb cor}
    Let $X^{2n}$ be a GC Stein manifold of type $K>0$. Then there exists a proper GH embedding $F:X\longrightarrow\R^{2n-2k}\times\C^{2k+1}$. Moreover, the image $\img(F)$ is a closed embedded GC submanifold of\,\,\, $\R^{2n-2k}\times\C^{2k+1}$.
\end{corollary}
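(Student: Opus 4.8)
The plan is to take $F:=\widetilde{F}$, the map produced by Theorem \ref{main3}, and check that it already enjoys all the asserted properties. Recall that $\widetilde{F}\in\mathcal{GH}_{r,1-1}^{2k+1}(X)$, so $\widetilde{F}$ is a one-to-one, proper, and regular GH map. First I would observe that regularity (cf. Definition \ref{regu def}) means $\widetilde{F}$ has rank $2n$ at every point of $X$; since $\dim_{\R}X=2n$, this is precisely the statement that $\widetilde{F}$ is an immersion. Thus $\widetilde{F}$ is a proper injective immersion, and by the standard fact that a proper injective immersion into a locally compact Hausdorff space is a (closed) smooth embedding, $F$ is a smooth embedding. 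Being a GH map by construction, $F$ is therefore a GH embedding in the sense of Definition \ref{GH map}.

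Next I would record that properness of $F$ forces $\img(F)$ to be closed in $\R^{2n-2k}\times\C^{2k+1}$, since a proper continuous map into a locally compact Hausdorff space has closed image.

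Finally, to identify $\img(F)$ as a GC submanifold, I would invoke Theorem \ref{imp thm1}. The dimension hypothesis there is satisfied, because
$$\dim_{\R}\big(\R^{2n-2k}\times\C^{2k+1}\big)=(2n-2k)+2(2k+1)=2n+2k+2>2n=\dim_{\R}X,$$
using $k>0$. Both $X$ and $\R^{2n-2k}\times\C^{2k+1}$ are regular GC manifolds, and $F$ is a GH map which is also an embedding, so Theorem \ref{imp thm1}$(1)$ gives that $\img(F)=F(X)$ is an embedded GC submanifold of $\R^{2n-2k}\times\C^{2k+1}$; part $(2)$ moreover guarantees that the GC structure $F$ induces on $\img(F)$ agrees, up to a $B$-transformation, with the one it carries as a GC submanifold, while part $(3)$ is consistent with the type count. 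Combining this with the previous paragraph, $\img(F)$ is a closed embedded GC submanifold, as claimed.

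The computations here are entirely routine, since essentially all the analytic content was already absorbed into Theorem \ref{main3}. The only point requiring a little care is the passage from \emph{proper injective regular GH map} to \emph{GH embedding}: the regularity must be read as an immersion condition, and then the standard topological lemma upgrading a proper injective immersion to a closed embedding completes the argument. I expect no genuine obstacle beyond citing these two ingredients together with Theorem \ref{imp thm1}.
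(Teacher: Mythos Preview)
Your proposal is correct and matches the paper's own proof, which simply reads ``Follows from Theorem \ref{imp thm1} and Theorem \ref{main3}.'' You have just unpacked exactly what those two citations mean: Theorem \ref{main3} supplies the proper, injective, regular GH map, the standard lemma on proper injective immersions upgrades this to a closed smooth embedding, and Theorem \ref{imp thm1} then identifies the image as an embedded GC submanifold.
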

\begin{proof}
    Follows from Theorem \ref{imp thm1} and Theorem \ref{main3}.
\end{proof}
Consider $\R^{2M}\times\C^N$ (with $M,N>0$) endowed with the product GCS (see Example \ref{prdct gcs eg}). Let $Y^{2n}\subset\R^{2M}\times\C^N$ be a closed embedded GC submanifold (cf. Definition \ref{sub def mfld}) of type $k=n-M>0$ and $N>k$. By Theorem \ref{imp thm1} and Corollary \ref{imp cor1}, $Y$ is both GH convex and GH separable (see Definition \ref{main def}). So, to prove that $Y$ is a GC Stein manifold, we just need to show that $Y$ is GC regular.

\vspace{0.5em}
Now observe that the inclusion map $i:Y\hookrightarrow\R^{2M}\times\C^N$ is a GH map (see Definition \ref{GH map}). By identifying $\R^{2M}=\bigoplus^{M}_{l=1}\R^2$, consider the natural projections, which are also GH maps:
\begin{align*}
&pr_l:\R^{2M}\times\C^N\longrightarrow\R^2\,,\quad l=1,\ldots M\,.\\ 
&pr'_j:\R^{2M}\times\C^N\longrightarrow\R^2\,,\quad j=1,\ldots N\,.
\end{align*}
Then, for any point $y\in Y$, the set of GH maps $\big\{\pr_1\circ i,\ldots,\pr_{M}\circ i,\,\pr'_{j_1}\circ i,\ldots,\pr'_{j_{k}}\circ i\big\}$ on $Y$, where $\{j_1,\ldots,j_k\}\subset\{1,\ldots,N\}$ represents $k$ indices depending on $y$, satisfies the GH regularity condition in Definition \ref{main def}. Thus, we have proved the following.
\begin{prop}\label{prop sub}
    Let $Y^{2n}\subset\R^{2n-2k}\times\C^N$ be a closed embedded GC submanifold of type $k>0$ and $N>k$. Then $Y$ is a GC Stein manifold.
\end{prop}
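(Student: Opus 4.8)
The plan is to verify, for $Y$, the three defining properties of a GC Stein manifold listed in Definition \ref{main def}: GH separability, GH convexity, and GH regularity. The ambient manifold $\R^{2n-2k}\times\C^N$ with its product GCS is itself a GC Stein manifold by Example \ref{e1}, and $Y$ is a closed embedded GC submanifold of it; the idea is to inherit the first two properties from the ambient space through the inclusion and to establish the third directly from the coordinate projections.

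First I would treat GH separability and GH convexity simultaneously. By Theorem \ref{imp thm1} the inclusion $i:Y\hookrightarrow\R^{2n-2k}\times\C^N$ exhibits $Y$ as an embedded GC submanifold and is a GH homeomorphism onto its image, while Corollary \ref{imp cor1} gives that the induced sheaf morphism $i^{\#}:\mathcal{O}_{\R^{2n-2k}\times\C^N}\longrightarrow i_{*}\mathcal{O}_Y$ is onto and that $\mathcal{O}_Y\cong\mathcal{O}_{\R^{2n-2k}\times\C^N}|_Y$. Consequently every ambient GH function restricts to an element of $\mathcal{O}(Y)$. The restrictions of the holomorphic coordinates $z_1,\dots,z_N$ then separate points of $Y$, giving GH separability, and for any compact $K\subset Y$ they yield the inclusion $\widehat{K}_{\mathcal{O}(Y)}\subseteq\widehat{K}_{\mathcal{O}(\R^{2n-2k}\times\C^N)}\cap Y$; since $Y$ is closed in the GC Stein ambient space, this hull is compact, which is GH convexity. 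This is precisely the content cited in the paragraph preceding the statement.

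It remains to establish GH regularity, which is the main point. The inclusion $i$ is a GH map by Definition \ref{GH map}, and under the splitting $\R^{2n-2k}=\bigoplus_{l=1}^{n-k}\R^2$ each projection is a GH map, so the compositions
\begin{align*}
&\pr_l\circ i:Y\longrightarrow\R^2\,,\qquad l=1,\dots,n-k\,,\\
&\pr'_j\circ i:Y\longrightarrow\R^2\cong\C\,,\qquad j=1,\dots,N\,,
\end{align*}
are GH maps on $Y$. Fix $y\in Y$. Since $Y$ has type $k$ and real dimension $2n$ with $M=n-k$, its symplectic leaf through $y$ has real dimension $2(n-k)$ and $di_y$ carries the leaf directions into the $\R^{2n-2k}$ factor; hence the $n-k$ Poisson maps $\pr_l\circ i$ are submersions (Remark \ref{poi rmk}) whose differentials span the leaf tangent space at $y$. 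The complementary transverse part of $T_yY$ is a $k$-dimensional complex space that injects, under $di_y$, into $\C^N$; as $di_y$ has full rank and $N>k$, there exist indices $j_1,\dots,j_k$ (depending on $y$) so that $d(\pr'_{j_1}\circ i)|_y,\dots,d(\pr'_{j_k}\circ i)|_y$ are $\C$-linearly independent and, together with the $\pr_l\circ i$, constitute a local coordinate system of the form \eqref{loc coordi} at $y$. Thus the family $\{\pr_1\circ i,\dots,\pr_{n-k}\circ i,\pr'_{j_1}\circ i,\dots,\pr'_{j_k}\circ i\}$ satisfies the GH regularity condition at $y$, and since $y$ is arbitrary, $Y$ is GH regular.

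The hard part is this last selection: one must use the compatibility of the two GCS up to a $B$-transformation furnished by Theorem \ref{imp thm1} to guarantee that, at every $y\in Y$, exactly $k$ of the ambient holomorphic projections restrict to a $\C$-direction of a Darboux chart \eqref{loc coordi}. Once this is secured pointwise, all three conditions of Definition \ref{main def} hold and the proposition follows.
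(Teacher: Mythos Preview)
Your proposal is correct and follows essentially the same approach as the paper: GH separability and GH convexity are inherited from the ambient $\R^{2n-2k}\times\C^N$ via Theorem \ref{imp thm1} and Corollary \ref{imp cor1}, and GH regularity is established at each $y\in Y$ by composing the inclusion with the coordinate projections $\pr_l$ and a point-dependent choice of $k$ of the $\pr'_j$. You supply more explanatory detail (the hull inclusion for convexity, the rank argument for selecting $j_1,\dots,j_k$) than the paper's terse paragraph preceding the proposition, but the structure and content of the argument are identical.
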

\begin{theorem}\label{main4}
Let $X^{2n}$ be a regular manifold of type $K>0$. Then $X$ is a GC Stein manifold if and only if a proper GH embedding $F:X\longrightarrow\R^{2n-2k}\times\C^{2k+1}$ exists. Moreover, $\img(F)$ is a closed embedded GC submanifold in $\R^{2n-2k}\times\C^{2k+1}$.
\end{theorem}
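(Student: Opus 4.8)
The plan is to prove the two implications separately, since each reduces to machinery already assembled. The forward implication is essentially immediate: if $X$ is a GC Stein manifold of type $k>0$, then Corollary \ref{emb cor} directly produces a proper GH embedding $F:X\longrightarrow\R^{2n-2k}\times\C^{2k+1}$ whose image is a closed embedded GC submanifold. This settles both the existence claim and the ``moreover'' assertion in this direction, with no additional argument required.

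For the converse, suppose a proper GH embedding $F:X\longrightarrow\R^{2n-2k}\times\C^{2k+1}$ exists, and write $Z:=\R^{2n-2k}\times\C^{2k+1}$ with its product GCS. Since $F$ is a GH map between regular GC manifolds that is simultaneously a smooth embedding, Theorem \ref{imp thm1} applies: $F(X)$ is an embedded GC submanifold of $Z$, and $F:X\longrightarrow F(X)$ is a GH homeomorphism onto it, up to a $B$-field transformation. The properness of $F$ forces $\img(F)=F(X)$ to be closed in $Z$, so $F(X)$ is in fact a \emph{closed} embedded GC submanifold; this already yields the ``moreover'' statement in the converse direction as well.

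Next I would verify that the hypotheses of Proposition \ref{prop sub} are met. By Theorem \ref{imp thm1}(2) the type is preserved, so $F(X)$ has type $k$; this is consistent with Theorem \ref{imp thm1}(3), since $\typ(Z)=2k+1$ and $\dim_{\R}Z=2n+2k+2$ give $\typ(Z)-\typ(X)=\tfrac{1}{2}(\dim_{\R}Z-\dim_{\R}X)=k+1$. Moreover the ambient complex factor has dimension $N=2k+1>k$, so $F(X)$ is a closed embedded GC submanifold of type $k>0$ inside $\R^{2n-2k}\times\C^{N}$ with $N>k$. Proposition \ref{prop sub} then shows that $F(X)$ is a GC Stein manifold.

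Finally I would transport the GC Stein property back to $X$ along the GH homeomorphism $F:X\longrightarrow F(X)$. Since GH separability, GH convexity, and GH regularity (the three defining conditions of Definition \ref{main def}) are invariant under $B$-field transformations and are preserved by GH homeomorphisms, $X$ inherits the structure of a GC Stein manifold from $F(X)$. The only points demanding care are the type bookkeeping via Theorem \ref{imp thm1} and the confirmation that the GC Stein property survives a $B$-transformation-twisted GH homeomorphism; both are already guaranteed by the invariance of these notions under $B$-fields and GH homeomorphisms recorded earlier, so I anticipate no essential obstacle beyond keeping the dimension and type accounting straight.
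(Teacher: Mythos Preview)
Your proposal is correct and follows essentially the same route as the paper: the forward direction is exactly Corollary \ref{emb cor}, and the converse combines Theorem \ref{imp thm1} (to identify $F(X)$ as a closed embedded GC submanifold of type $k$) with Proposition \ref{prop sub}, then transports the GC Stein property back along the GH homeomorphism. The paper's proof is the one-line ``Follows from Corollary \ref{emb cor} and Proposition \ref{prop sub}'', and your write-up simply unpacks this with the appropriate type and dimension bookkeeping.
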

\begin{proof}
    Follows from Corollary \ref{emb cor} and Proposition \ref{prop sub}.
\end{proof}

{\bf Acknowledgement.} I would like to thank my PhD supervisor Prof. Mainak Poddar for many stimulating and helpful discussions.


\begin{thebibliography}{}
\setlength\itemsep{1em}

\bibitem{ben04}Ben-Bassat, O. \& Boyarchenko, M. Submanifolds of generalized complex manifolds. {\em J. Symplectic Geom.}. \textbf{2}, 309-355 (2004). Available at  \url{http://projecteuclid.org/euclid.jsg/1118755324}.

\bibitem{bishop}Bishop, E. Mappings of partially analytic spaces. {\em Amer. J. Math.}. \textbf{83} pp. 209-242 (1961). Available at \url{https://doi.org/10.2307/2372953}.

\bibitem{cartan53}Cartan, H. Variétés analytiques complexes et cohomologie. {\em Colloque Sur Les Fonctions De Plusieurs Variables, Tenu À Bruxelles, 1953}. pp. 41-55 (1953).

\bibitem{Cavth} G. R. Cavalcanti: New aspects of the $dd^c$-lemma, DPhil thesis, University of Oxford, 2004.

\bibitem{cav04}Cavalcanti, G. \& Gualtieri, M. Generalized complex structures on nilmanifolds. {\em J. Symplectic Geom.}. \textbf{2}, 393-410 (2004). Available at \url{https://doi.org/10.4310/JSG.2004.v2.n3.a5}.

\bibitem{grauert60}Docquier, F. \& Grauert, H. Levisches Problem und Rungescher Satz für Teilgebiete Steinscher Mannigfaltigkeiten. {\em Math. Ann.}. \textbf{140} pp. 94-123 (1960). Available at \url{https://doi.org/10.1007/BF01360084}.

\bibitem{evans}Evans, L. Partial differential equations. (American Mathematical Society, Providence, RI,2010). Available at \url{https://doi.org/10.1090/gsm/019}.

\bibitem{grauert58}Grauert, H. On Levi's problem and the imbedding of real-analytic manifolds. {\em Ann. Of Math. (2)}. \textbf{68} pp. 460-472 (1958). Available at \url{https://doi.org/10.2307/1970257}.

\bibitem{Gua} M. Gualtieri: Generalized complex geometry, DPhil thesis, University of Oxford, 2003. Available at 
\url{https://arxiv.org/abs/math/0401221}.

\bibitem{Gua2} M. Gualtieri: Generalized complex geometry. Ann. of Math. (2) 174 (2011), no. 1, 75--123.

\bibitem{batu17}Güneysu, B. Covariant Schrödinger semigroups on Riemannian manifolds. (Birkhäuser/Springer, Cham,2017). Available at \url{https://doi.org/10.1007/978-3-319-68903-6}.

\bibitem{rossi65}Gunning, R. \& Rossi, H. Analytic functions of several complex variables. (Prentice-Hall, Inc., Englewood Cliffs, NJ,1965). 

\bibitem{hirsch}Hirsch, M. Differential topology. (Springer-Verlag, New York,1994), Corrected reprint of the 1976 original.

\bibitem{Hit} N. J. Hitchin: Instantons and generalized K\"ahler geometry, Comm. Math. Phys. {\bf 265} (2006), 131-164.

\bibitem{hor}Hörmander, L. An introduction to complex analysis in several variables. (North-Holland Publishing Co., Amsterdam,1990).

\bibitem{krantz}Krantz, S. Function theory of several complex variables. (AMS Chelsea Publishing, Providence, RI,2001), Reprint of the 1992 edition. Available at \url{https://doi.org/10.1090/chel/340}.

\bibitem{mcduff}McDuff, D. \& Salamon, D. Introduction to symplectic topology. (Oxford University Press, Oxford,2017). Available at \url{https://doi.org/10.1093/oso/9780198794899.001.0001}.

\bibitem{narasimhan60}Narasimhan, R. Imbedding of holomorphically complete complex spaces. {\em Amer. J. Math.}. \textbf{82} pp. 917-934 (1960). Available at \url{https://doi.org/10.2307/2372949}.

\bibitem{ornea2011}Ornea, L. \& Pantilie, R. On holomorphic maps and generalized complex geometry. {\em J. Geom. Phys.}. \textbf{61}, 1502-1515 (2011). Available at \url{https://doi.org/10.1016/j.geomphys.2011.03.017}.

\bibitem{pal24}Pal, D. \& Poddar, M. Strong generalized holomorphic principal bundles.  (2024). Available at \url{https://arxiv.org/abs/2404.18113}.

\bibitem{stein51}Stein, K. Analytische Funktionen mehrerer komplexer Veränderlichen zu vorgegebenen Periodizitätsmoduln und das zweite Cousinsche Problem. {\em Math. Ann.}. \textbf{123} pp. 201-222 (1951). Available at \url{https://doi.org/10.1007/BF02054949}.

\bibitem{siu68}Siu, Y. A proof of Cartan's theorems A and B. {\em Tohoku Math. J. (2)}. \textbf{20} pp. 207-213 (1968). Available at \url{https://doi.org/10.2748/tmj/1178243178}.


\end{thebibliography}
\end{document}